\numberwithin{equation}{section}
\renewcommand{\dim}{\mathrm{dim}\,}
\newcommand{\RR}{\mathbb{R}}
\newcommand{\CC}{\mathbb{C}}
\newcommand{\ZZ}{\mathbb{Z}}
\newcommand{\eps}{\varepsilon}
\newcommand{\itemizeEqnVSpacing}{\rule{0pt}{1pt}\vspace*{-12pt}}
\newcommand{\poly}{\operatorname{Poly}}
\newtheorem{thm}{Theorem}
\numberwithin{thm}{section}
\newtheorem{prop}[thm]{Proposition}
\newtheorem{defn}[thm]{Definition}
\newtheorem{lem}[thm]{Lemma}
\newtheorem*{maximalFnBdNonconcentratedInformal}{Proposition \ref{maximalFnBdNonconcentrated}, informal version}
\newtheorem*{RectBoundThmInformal}{Theorem \ref{RectBoundThm}, informal version}
\newtheorem*{mainThmTechnical}{Theorem \ref{mainThm}$'$}
\theoremstyle{remark}
\newtheorem{rem}[thm]{Remark}
\newtheorem{example}[thm]{Example}
\newtheorem*{example1Contd}{Example \ref{runningExampleMomentCurve} continued}
\title{On Maximal Functions Associated to Families of Curves in the Plane}
\author{Joshua Zahl}
\date{\today}
\begin{document}
\maketitle

\begin{abstract}
We consider the $L^p$ mapping properties of maximal averages associated to families of curves, and thickened curves, in the plane. These include the (planar) Kakeya maximal function, the circular maximal functions of Wolff and Bourgain, and their multi-parameter analogues. We propose a framework that allows for a unified study of such maximal functions, and prove sharp $L^p\to L^p$ operator bounds in this setting. A key ingredient is an estimate from discretized incidence geometry that controls the number of higher order approximate tangencies spanned by a collection of plane curves. We discuss applications to the F\"assler-Orponen restricted projection problem, and the dimension of Furstenberg-type sets associated to families of curves.
\end{abstract}


\section{Introduction}
In this paper, we study the $L^p$ mapping properties of maximal functions associated to families of curves in the plane. The prototypical example is the (planar) Kakeya maximal function
\begin{equation}\label{KakeyaMaximalOperator}
K_\delta f (e) = \frac{1}{\delta} \sup_{\ell |\!| e}\int_{\ell^\delta}|f|,\quad e\in S^1.
\end{equation}
In the above expression, $\delta>0$ is a small parameter; the supremum is taken over all unit line segments $\ell$ parallel to the vector $e$; and $\ell^\delta$ denotes the $\delta$ neighborhood of $\ell$. Cordoba \cite{cordoba} obtained the estimate $\Vert K_\delta f\Vert_p \leq C (\log 1/\delta)^{1/p} \Vert f\Vert_{p}$ for $p\geq 2$. The range of Lebesgue exponents is sharp, and the dependence of the operator norm on $\delta$ is also best possible (up to the choice of constant $C$). In particular, the existence of measure zero Besicovitch sets (compact sets in the plane that contain a unit line segment pointing in every direction) shows that for $p<\infty$ the operator $K_\delta$ cannot be bounded in $L^p$ with operator norm independent of $\delta$.

A second Kakeya-type maximal function was introduced by Wolff \cite{WolffKakeyacircles}. Let $C^\delta(x,y,r)$ denote the $\delta$ neighborhood of the circle centered at $(x,y)$ of radius $r$, and define
\begin{equation}\label{WolffMaximalFn}
W_\delta f (r) = \frac{1}{\delta} \sup_{(x,y)\in\RR^2}\int_{C^\delta(x,y,r)}|f|,\quad r\in [1,2].
\end{equation}
Wolff \cite{WolffKakeyacircles} obtained the estimate $\Vert W_\delta f\Vert_p \leq C_\eps\delta^{-\eps} \Vert f\Vert_{p}$ for $p\geq 3$. The range of Lebesgue exponents is sharp, and the existence of measure zero Besicovitch-Rado-Kinney sets (compact sets in the plane that contain a circle of every radius $r\in[1,2]$) shows that for $p<\infty$, the operator $W_\delta$ cannot be bounded in $L^p$ with operator norm independent of $\delta$.

A second class of maximal functions contains the Bourgain circular maximal function and its generalizations. For $(x,y)\in\RR^2$, let
\begin{equation}\label{bourgainMaxmlFn}
\begin{split}
Bf(x,y) = \sup_{1\leq r\leq 2}\int_{C(x,y,r)}|f|.
\end{split}
\end{equation}
Bourgain \cite{Bour1, Bour2} proved that $B$ is bounded from $L^p\to L^p$ for $p>2$. This is the sharp range of Lebesgue exponents for $L^p\to L^p$ bounds (the full range of exponents for which $B$ is bounded from $L^p\to L^q$ is more complicated; see \cite{Schlag97, Lee03} for details). As a consequence, if $K\subset\RR^2$ has positive measure and if $X\subset\RR^2$ contains a circle centered at every point of $K$, then $|X|>0$, i.e.~there are no analogues of measure-zero Besicovitch sets or Besicovitch-Rado-Kinney sets in this setting. 

Finally, we recall the Erdo\smash{\u{g}}an elliptic maximal function

\begin{equation}
\begin{split}
Ef(x,y) = \sup_{W}\int_{W}|f|,
\end{split}
\end{equation}
where the supremum is taken over all ellipses centered at $(x,y)$ whose semi-major and semi-minor axes have lengths in $[1/2, 2]$. 
This is a multi-parameter generalization of the Bourgain circular maximal function. Erdo\smash{\u{g}}an \cite{Erd} conjectured that $E$ should be bounded from $L^p\to L^p$ for $p>4$. Prior to this work, the best-known bound was $p>12$ by Lee, Lee, and Oh \cite{LLO}.

\subsection{The Setup}\label{setupSection}
The above maximal functions can be described as follows: We have a family of plane curves $\mathcal{C}$ (e.g.~lines, circles, ellipses) and a projection $\Phi\colon\mathcal{C}\to\RR^d$ (e.g.~the map sending a line to its slope, a circle to its radius, a circle to its center, etc.). For each $z\in\RR^d$, the maximal function $Mf(z)$ is a maximal average of $f$ taken over all (possibly thickened) curves $\gamma\in \mathcal{C}$ with $\Phi(\gamma)=z$; this is a subvariety of $\mathcal{C}$ of codimension $d$. 

The above maximal functions exhibit two phenomena. First, when $d=1$, we have examples of measure zero Besicovitch-type sets (and hence no operator norm bounds that are independent of $\delta$), while for $d>1$ we have not seen such examples. Second, the dimension of the fibers $\Phi^{-1}(z)$ determines the range of Lebesgue exponents for which $L^p\to L^p$ bounds can hold. 

Our first task is to describe the family of curves associated to our maximal function. It will be convenient to describe such curves as the graphs of functions.
\begin{defn}\label{paramFamily}
Let $\mathcal{C}$ be an $m$-dimensional manifold and let $I\subset\RR$ be an interval. Let $h\colon \mathcal{C}\times I\to\RR$ and define 
\[
F^h_t(u)=\big(h(u;t),\ \partial_t h(u;t),\ldots,\partial_t^{m-1} h(u;t)\big).
\] 
We say that $h$ \emph{parameterizes an $m$-dimensional family of cinematic curves} if $F^h_t\colon \mathcal{C}\to\RR^m$ is a local diffeomorphism for each $t\in I$.
\end{defn}
\begin{example}\label{runningExampleMomentCurve}
Let $\mathcal{C} =  \RR^m$, $I = [0,1],$ and $h(u,t) =  (1, t, t^2,\ldots, t^{m-1})\cdot u.$ Then $h$ parameterizes an $m$-dimensional family of cinematic curves.
\end{example}
Next we discuss a transversality condition that controls the behavior of the fibers $\Phi^{-1}(z)$. Continuing with our setup above, let $1\leq s < m$. For $(u,t)\in \mathcal{C}\times I$, define
\begin{equation}\label{manifoldVut}
V_{u;t} = \{u'\in \mathcal{C}\colon \partial_t^j h(u';t)=\partial_t^j h(u;t),\quad j = 0,\ldots, s\}.
\end{equation}
The restriction of $V_{u;t}$ to a small neighborhood of $u$ is a $(m-s-1)$-dimensional manifold. 
\begin{defn}\label{defnQuantTranverse}
We say a smooth submersion $\Phi\colon \mathcal{C} \to\RR^{m-s}$ is \emph{transverse to $h$} if for each $(u,t)\in \mathcal{C}\times I$, the derivative of $\Phi|_{V_{u;t}}$ has maximal rank (i.e.~rank $m-s-1$) at $u$. Note that this condition is vacuously satisfied if $s=m-1$. 
\end{defn}
\begin{example1Contd}
Continuing Example \ref{runningExampleMomentCurve}, let $\Phi\colon \RR^m \to\RR^{m-s}$ be the projection to the last $m-s$ coordinates. For $(u,t)\in \RR^m\times [0,1]$, we have
\[
V_{u;t} = \{u'\in \RR^m \colon u_j'=u_j,\ j=0,\ldots s\}=(u_0,u_1,\ldots,u_s)\times\RR^{m-s-1}.
\]
If $s=m-1,$ then $V_{u;t} = \{u\}$, and there is nothing to show. If $s < m-1$, then $\Phi|_{V_{u;t}}$ is given by $\Phi(u_0,u_1,\ldots,u_s, u_{s+1}',\ldots,u_{m-1}') = (u_{s+1}',\ldots,u_{m-1}')$, from which it is clear that the derivative of $\Phi|_{V_{u;t}}$ has maximal rank. We conclude that in this example, $\Phi$ is transverse to $h$.
\end{example1Contd}

With these definitions, we can now describe our class of maximal functions. 

\begin{defn}\label{defnClassOfMaximalOperators}
Let $1\leq s<m$, let $h\colon \mathcal{C}\times I \to\RR$ parameterize an $m$-dimensional family of cinematic curves, and let $\Phi\colon \mathcal{C} \to\RR^{m-s}$ be transverse to $h$. Fix a compact set $\mathcal{C}_0\subset \mathcal{C}$ and a compact interval $I_0\subset I$. Abusing notation, we restrict $h$ and $\Phi$ to $\mathcal{C}_0\times I_0$ and $\mathcal{C}_0$, respectively. For each $u\in\mathcal{C}_0$, define the curve
\[
\gamma_u = \big\{\big(t, h(u;t)\big)\colon t\in I_0\big\},
\] 
and let $\gamma_u^\delta$ denote the $\delta$ neighborhood of $\gamma_u$. We define the maximal functions $M_\delta$ and $M$ by 
\begin{align}
M_\delta f(v) & = \frac{1}{\delta} \sup_{u \in \Phi^{-1}(v)}\Big| \int_{\gamma_u^\delta} f\Big|,\label{defnMaximalFnDelta}\\
Mf(v) & =  \sup_{u \in \Phi^{-1}(v)}\Big|\int_{\gamma_u} f\Big|.\label{defnMaximalFnNoDelta}
\end{align}
We call these  \emph{$s$-parameter maximal functions associated to an $m$-dimensional family of cinematic curves}. These maximal functions are defined on the compact set $\Phi(\mathcal{C}_0)$.
\end{defn}

We remark that the $L^p$ mapping properties of these operators remain unchanged if we replace the integrand $f$ by $|f|$, but for technical reasons (see Section \ref{bourgainIntroSection}) we adopt the formulation above. The Kakeya, Wolff, Bourgain, and Erdo\smash{\u{g}}an maximal functions can be re-written in the above framework, with $(m,s)$ equal to $(2,1), (3,2)$, $(3,1),$ $(5,3)$, respectively. This is a straightforward computation, which is described in Appendix \ref{examplesSection}.

\subsection{Kakeya-type maximal functions}
Our main result is a sharp $L^p\to L^p$ bound for the Kakeya-type maximal function $M_\delta$.

\begin{thm}\label{mainThmMaximalKakeya}
Let $m>s\geq 1$ be integers, and let $M_\delta$ be an $s$-parameter maximal function associated to an $m$-dimensional family of cinematic curves. Let $\eps>0$. Then for all $\delta>0$ sufficiently small, we have

\begin{equation}\label{maximalFnBdMDelta}
\Vert M_\delta f\Vert_p \leq \delta^{-\eps}\Vert f\Vert_p,\quad p\geq s+1. 
\end{equation}
\end{thm}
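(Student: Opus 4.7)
The plan is to reduce Theorem \ref{mainThmMaximalKakeya} to a discretized incidence estimate on higher-order approximate tangencies of the family $\{\gamma_u\}$, applied at a single scale. Since each $\gamma_u^\delta$ has area $\sim\delta$, the trivial $L^\infty$ bound $\|M_\delta f\|_\infty\leq C\|f\|_\infty$ holds; interpolating, it suffices to prove the endpoint $p=s+1$. A further dyadic pigeonhole over the level sets of $f$ reduces to the case $f=\mathbf{1}_E$, where the goal becomes to show
\[
|F_\lambda|\leq C_\eps\delta^{-\eps}\lambda^{-(s+1)}|E|,\qquad \lambda\in(0,1],
\]
with $F_\lambda:=\{v\in\Phi(\mathcal{C}_0):M_\delta\mathbf{1}_E(v)\geq\lambda\}$. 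I would then linearize: for each $v$ in a maximal $\delta$-separated subset $V\subset F_\lambda$ (so $|V|\sim|F_\lambda|\delta^{-(m-s)}$) pick $u_v\in\Phi^{-1}(v)$ with $|\gamma_{u_v}^\delta\cap E|\gtrsim\lambda\delta$, and set $\mu(x):=\#\{v\in V:x\in\gamma_{u_v}^\delta\}$, so that $|V|\lambda\delta\lesssim\int_E\mu$.

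A dyadic pigeonhole on $\mu$ produces $\mu_0\geq 1$ and $E'\subset E$ with $\mu\sim\mu_0$ on $E'$ and $\mu_0|E'|\gtrsim(\log 1/\delta)^{-1}|V|\lambda\delta$. The regime $\mu_0=O(1)$ is straightforward: it gives $|F_\lambda|\lesssim \delta^{m-s-1}|E|/\lambda$, which is stronger than needed. In the main regime $\mu_0\gg 1$ I would apply a higher-order tangency incidence bound of the following shape: for any $\delta$-separated subfamily $\mathcal{F}\subset\{\gamma_u\}$ (separated in the $\Phi$-coordinate), the $\mu_0$-rich set
\[
R_{\mu_0}(\mathcal{F}):=\big\{x:\#\{\gamma\in\mathcal{F}:x\in\gamma^\delta\}\geq\mu_0\big\}
\]
admits an upper bound in which $\mu_0$ enters through a factor $\mu_0^{-(s+1)}$, the exponent dictated by generic $(s+1)$-fold tangency of cinematic curves. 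Inserting this bound with $\mathcal{F}=\{\gamma_{u_v}\}_{v\in V}$ into $\mu_0|E'|\gtrsim|V|\lambda\delta$ and solving for $|F_\lambda|$ yields the desired inequality, up to the admissible loss $\delta^{-\eps}$.

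The hard part will be establishing this higher-order tangency incidence bound. Here Definitions \ref{paramFamily} and \ref{defnQuantTranverse} play complementary roles: cinematic non-degeneracy of $F^h_t$ prevents two distinct curves from sharing their full $s$-th order jet at a point, while transversality of $\Phi$ converts $\delta$-separation in $v=\Phi(u)$ into effective $s$-jet separation along the fibre $V_{u;t}$. Together, they force any $(s+1)$-tuple $(\gamma_{u_0},\ldots,\gamma_{u_s})$ passing $\delta$-close to a common point with pairwise $\delta$-separated $\Phi(u_i)$ to lie in a tuple set whose measure is controlled by the $s$-th order Wronskian of $h$ along the common fibre. Extracting a global bound on $R_{\mu_0}(\mathcal{F})$ from this local tangency count is the technical crux; I would expect it to proceed via an induction on scales (or polynomial partitioning), with a standard $\eps$-removal step absorbing single-scale losses into the $\delta^{-\eps}$ factor in \eqref{maximalFnBdMDelta}.
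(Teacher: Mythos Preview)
Your reduction to $p=s+1$ and the linearization step are fine, and the paper proceeds similarly (via $L^p$ duality rather than level sets, but this is interchangeable). There are, however, two real gaps.

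The first concerns the case $s<m-1$. Your claim that transversality ``converts $\delta$-separation in $v=\Phi(u)$ into effective $s$-jet separation'' is false. In Example \ref{runningExampleMomentCurve}, $\Phi(V_{u;0})$ is the hyperplane $\{u_s\}\times\RR^{m-s-1}\subset\RR^{m-s}$, so one can choose $\sim\delta^{-(m-s-1)}$ curves with pairwise $\delta$-separated $\Phi$-values that are \emph{exactly} tangent to order $s$ at a common point. No rich-set bound of the strength you need can hold for the full $m$-dimensional family. What transversality actually buys is a \emph{slicing} reduction: it guarantees that the images $\Phi(V_{u;t})$ are uniformly transverse to some direction $e\in\RR^{m-s}$, so that restricting to each line parallel to $e$ produces an $(s+1)$-dimensional cinematic subfamily with a one-dimensional $\Phi$. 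The paper first reduces to $s=m-1$ this way (Section \ref{proofOfThmMainThmMaximalKakeyaSection}); only then does one have a family that forbids $s$-th order tangency in the sense of Definition \ref{forbidTangencyDefn}, and the incidence machinery applies.

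The second gap is the form of the incidence bound once $s=m-1$. The exponent governing $\mu$ is $(s+1)/s$, not $s+1$: the key estimate (Theorem \ref{mainThm}) is $\big\|\sum_{f\in F}\chi_{f^\delta}\big\|_{(s+1)/s}\leq\delta^{-\eps}(\delta\#F)^{s/(s+1)}$, equivalently $\#\{\mu\text{-rich }(\delta;s)\text{-rectangles}\}\lesssim(\#F/\mu)^{(s+1)/s}$ (Theorem \ref{RectBoundThm}). The two exponents coincide only for $s=1$; for $s\geq 2$ an $(s+1)$-tuple count is not strong enough to close the argument (e.g.\ $N\sim\delta^{-1}$ curves meeting transversally at a point already give $\int\mu^{s+1}\gtrsim\delta^{1-s}$). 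Proving the correct $(s+1)/s$ estimate is the main content of the paper: one lifts the curves to their $(s-1)$-jets in $\RR^{s+1}$, applies polynomial partitioning there, and handles the algebraic case via a structure theorem for semi-algebraic sets of small volume together with a Gronwall-type rigidity argument (Sections \ref{tangencyRectSection}--\ref{tangenciesSemiSmallVolSec}). This is substantially more than a generic induction on scales.
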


Previous work in this setting has focused on the cases $m=2,s=1$ \cite{cordoba}; $m=3,s=1$ \cite{Stein, Bour2}; and $m=3,s=2$ \cite{KolasaWolff, PYZ, WolffKakeyacircles, Zahl2012b, Zahl2012}. The most interesting case is when $s=m-1$; the case $s<m-1$ can be reduced to $s=m-1$ by slicing (Definition \ref{defnQuantTranverse} is precisely the transversality condition needed to apply a slicing argument). The stated range of $p$ in \eqref{maximalFnBdMDelta} is sharp. This can be seen by selecting $\mathcal{C},h,$ and $\Phi$ as in Example \ref{runningExampleMomentCurve}, and selecting $f$ to be the characteristic function of the Knapp rectangle $[0, \delta^{1/s}]\times[0,\delta]$.

When $s=m-1$, the existence of measure-zero Besicovitch sets shows that for $p<\infty$ the operator $M_\delta$ cannot in general be bounded in $L^p$ with operator norm independent of $\delta$. This can be seen by choosing $\mathcal{C}$ and $h$ as in Example \ref{runningExampleMomentCurve}; $\Phi(u_0,u_1,\ldots,u_{m-1}) = u_1$; and $f$ the characteristic function of the $\delta$--thickening of a measure-zero Besicovitch set. More generally, Besicovitch and Rado \cite{BesRad} describe a procedure for constructing a measure-zero set that contains a translated copy of every algebraic curve from a one-parameter family.

\subsection{Bourgain-type maximal functions}\label{bourgainIntroSection}
In certain circumstances, Theorem \ref{mainThmMaximalKakeya} can be used to obtain sharp $L^p\to L^p$ bounds for the maximal function $Mf$ from Definition \ref{defnClassOfMaximalOperators}.
\begin{defn}\label{highFrequencyDecayCondition}
For $f\colon\RR^2\to\CC$, let $P_kf$ denote the Littlewood-Paley projection to the frequency annulus of magnitude $\sim 2^k$. We say that a sublinear operator $M$ has \emph{high frequency decay} if there exists $p<\infty$ and $C,c>0$ so that
\begin{equation}\label{highFrequencyDecayEqn}
\Vert M (P_k f)\Vert_p < C 2^{-ck}\Vert f\Vert_p,\quad f\in L^p(\RR^2).
\end{equation}
\end{defn}
Bourgain \cite{Bour2} (see also \cite{Sogge}) observed that if a maximal function $M$ has high frequency decay, then the estimate \eqref{highFrequencyDecayEqn} can be interpolated with an estimate of the form \eqref{maximalFnBdMDelta} to obtain $L^p\to L^p$ operator norm bounds for $M$, for all $p$ strictly larger than the range in \eqref{maximalFnBdMDelta}. Bourgain \cite{Bour2} followed this strategy (with slightly different notation) to obtain sharp $L^p$ bounds for his circular maximal function, and Chen, Guo, and Yang \cite{CGYv2} followed this strategy to obtain sharp $L^p$ bounds for the axis-parallel elliptic maximal function (see also \cite{LLO} for previous results on this operator).

These maximal functions are translation invariant, in the sense that for each point $(x,y)\in\RR^2$, the operator is a maximal average over a fixed family of curves that have been translated to the point $(x,y)$. We formalize this as follows:
\begin{defn}\label{defnTransInvariantMf}
Let $M$ be an $s$-parameter maximal function associated to an $(s+2)$-dimensional family of cinematic curves. Let $h\colon \mathcal{C}\times I\to\RR$ and $\Phi\colon \mathcal{C} \to\RR^{2}$ be the associated parameterization and projection functions. We say that $M$ is \emph{translation invariant} if in a neighborhood of each point of $\mathcal{C}\times I$, we can choose local coordinates $u=(x,y,w_1,\ldots,w_s)$ so that $\Phi$ has the form $\Phi(u) = (x,y)$, and $h$ has the form $h(u; t) = g(w_1,\ldots,w_s; t-x) + y$.
\end{defn}
\noindent The Bourgain circular maximal function and the elliptic maximal function are translation invariant according to this definition. 

Lee, Lee, and Oh \cite{LLO} recently proved a sharp local smoothing estimate for the elliptic and axis-parallel elliptic maximal functions, and in doing so they showed that these maximal functions have high frequency decay. Shortly thereafter, Chen, Guo, and Yang \cite{CGYv2} proved that every translation invariant maximal function (in the sense of Definition \ref{defnTransInvariantMf}) has high frequency decay (their result uses slightly different notation and applies to a slightly modified form of the maximal function \eqref{defnMaximalFnNoDelta}; see Proposition \ref{CGYProp} and the surrounding discussion for a precise statement). The Lee-Lee-Oh and Chen-Guo-Yang result has the following consequence.
\begin{thm}\label{LpMaximalFnBd}
Let $s\geq 1$ be an integer and let $M$ be an $s$-parameter translation invariant maximal function associated to a $(s+2)$-dimensional family of cinematic curves. Then

\begin{equation}\label{maximalFnBdMNoDelta}
\Vert M f\Vert_p \leq C_p\Vert f\Vert_p,\quad p> s+1. 
\end{equation}
\end{thm}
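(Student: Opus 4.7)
The plan is to follow the Bourgain interpolation scheme described in Section \ref{bourgainIntroSection}: combine the $\delta^{-\eps}$ estimate of Theorem \ref{mainThmMaximalKakeya} with the high frequency decay of $M$ asserted by Chen--Guo--Yang (to be recorded later as Proposition \ref{CGYProp}, and which by hypothesis applies to any translation invariant $M$ in our sense).

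I would start with a Littlewood--Paley decomposition $f = P_{\le 0}f + \sum_{k \ge 1}P_k f$. The low-frequency piece $M(P_{\le 0}f)$ is dominated pointwise by a constant times the Hardy--Littlewood maximal function of $f$ (since $P_{\le 0}$ is a convolution with a Schwartz bump at unit scale), and is therefore $L^p$-bounded for all $p > 1$. For $k \ge 1$, the function $P_k f$ is essentially constant on $2^{-k}$-tubes in an averaged sense, and a standard argument (relating the curve integral to the integral over the $2^{-k}$-neighborhood, with rapidly decaying Schwartz tail errors) gives
\begin{equation*}
M(P_k f)(v) \lesssim M_{2^{-k}}(P_k f)(v) + \text{negligible}.
\end{equation*}
Applying Theorem \ref{mainThmMaximalKakeya} at $\delta = 2^{-k}$ then yields, for any $\eps > 0$,
\begin{equation*}
\|M(P_k f)\|_{s+1} \le C_\eps\, 2^{k\eps}\|P_k f\|_{s+1}.
\end{equation*}
On the other hand, Proposition \ref{CGYProp} produces $p_0 < \infty$ and $c > 0$ with $\|M(P_k f)\|_{p_0} \le C 2^{-ck}\|P_k f\|_{p_0}$. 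Since $g \mapsto M(P_k g)$ is a sublinear operator in $g$, interpolating these two bounds (after a standard linearization of the supremum) gives, for $p \in (s+1, p_0)$,
\begin{equation*}
\|M(P_k f)\|_p \le C\, 2^{k((1-\theta)\eps - \theta c)}\|P_k f\|_p,
\end{equation*}
where $\theta = \theta(p) \in (0,1)$. Choosing $\eps$ small enough depending on $p$, the exponent is $-\eta k$ for some $\eta = \eta(p) > 0$.

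Summing via $\|Mf\|_p \le \|M(P_{\le 0}f)\|_p + \sum_{k\ge 1}\|M(P_k f)\|_p$ and using $\|P_k f\|_p \lesssim \|f\|_p$ uniformly in $k$ (for $1 < p < \infty$) gives $\|Mf\|_p \lesssim_p \|f\|_p$ for $p \in (s+1, p_0)$; the full range $p > s+1$ then follows by interpolating with the trivial $L^\infty \to L^\infty$ bound (average operators are contractions on $L^\infty$). The main technical obstacle will be the verification that the Chen--Guo--Yang high frequency decay applies to the maximal function $M$ in the exact form of Definition \ref{defnClassOfMaximalOperators}: the excerpt notes that their result uses slightly different notation and treats a slightly modified operator, and it is precisely the translation invariance hypothesis of Definition \ref{defnTransInvariantMf} that should allow one to bridge the two formulations. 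I would expect most of the bookkeeping in the actual write-up to be concentrated there, while the Bourgain interpolation step itself is routine once both endpoint estimates are in place.
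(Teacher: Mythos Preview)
Your proposal is correct and follows essentially the same approach as the paper: both combine Theorem \ref{mainThmMaximalKakeya} (giving $L^{s+1}$ bounds with $2^{\eps k}$ loss at frequency $2^k$) with the Chen--Guo--Yang high frequency decay, then interpolate \`a la Bourgain and sum over frequencies. The paper routes everything through the auxiliary operator $G_{\zeta,\phi}$ of \eqref{GuoM}, which dominates $M$ for non-negative $f$, and spends most of Section \ref{proofOfTheoremLpMaximalFnBdSec} verifying that the $s$-parameter curvature condition \eqref{GuoCinematic} follows from the cinematic hypothesis on $h$---exactly the technical obstacle you anticipated.
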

The stated range of $p$ is sharp, as can be seen by modifying an example due to Schlag \cite{Schlag}; see Appendix \ref{rangeOfPIsSharp} for details. In particular, Theorem \ref{LpMaximalFnBd} resolves Erdo\smash{\u{g}}an's conjecture by showing that the elliptic maximal operator is bounded from $L^p\to L^p$ in the sharp range $p>4$. Previously, Lee, Lee, and Oh \cite{LLO} (in the elliptic and axis-parallel elliptic case) and Chen, Guo, and Yang \cite{CGYv2} (in the translation invariant case) proved a variant of Theorem \ref{LpMaximalFnBd} for $p>s(s+1)$.

We conjecture that when $m=s+2$, every maximal function of the form \eqref{defnMaximalFnNoDelta} has high frequency decay. This was proved by Sogge \cite{Sogge} when $s=1$. If true, such a result could be combined with Theorem \ref{mainThmMaximalKakeya} and a slicing argument (see Section \ref{proofOfThmMainThmMaximalKakeyaSection}) to yield the analogue of Theorem \ref{LpMaximalFnBd} for all $m\geq s+2$ and all $s$-parameter maximal functions associated to an $m$-dimensional family of cinematic curves.

\medskip

{\bf Added Dec 2024:} Recently, Chen, Guo, and Yang \cite{CGYv6} made progress towards the above conjecture. They proved that a natural class of $s$-parameter maximal operators associated to an $(s+2)$-dimensional family of cinematic functions have high frequency decay. Chen, Guo, and Yang \cite{CGYv6} also combined their new result with Theorem \ref{mainThmMaximalKakeya} to obtain a variant of Theorem \ref{LpMaximalFnBd} in which the transversality hypotheses in Definition \ref{defnQuantTranverse} are weakened, at the cost of weakening the range of $p$ in the estimate \eqref{maximalFnBdMNoDelta} to $p>s+2$. See \cite{CGYv6} for further details.

\medskip

It is natural to ask about analogues of Theorems \ref{mainThmMaximalKakeya} and \ref{LpMaximalFnBd} for curves in $\RR^n$, in the spirit of the helical maximal function and its generalizations \cite{BGHS, KSO, KSO2}. This appears to be rather difficult at present, since our proof of Theorem \ref{LpMaximalFnBd} uses Theorem \ref{mainThmMaximalKakeya}, and the latter is at least as difficult as the Kakeya maximal function conjecture, which is open in dimension 3 and higher.

\subsection{An $L^p$ estimate for collections of plane curves}

To prove Theorem \ref{mainThmMaximalKakeya}, we begin by establishing \eqref{maximalFnBdMDelta} when $s=m-1$. This is a consequence of a slightly more general maximal function estimate associated to collections of thickened curves in the plane. The setting is as follows.

\begin{defn}\label{forbidTangencyDefn}
We say that a set $\mathcal{F}\subset C^k(I)$ \emph{forbids $k$--th order tangency} if there exists a constant $c>0$ so that for all $f,g\in \mathcal{F}$, we have
\begin{equation}\label{cinematicFunctionCondition}
\inf_{t\in I} \sum_{i=0}^k |f^{(i)}(t)-g^{(i)}(t)| \geq c \Vert f-g\Vert_{C^k(I)}.
\end{equation}
\end{defn} 
\noindent Here and throughout, we define $\Vert f\Vert_{C^k(I)}=\sum_{i=0}^k \Vert f^{(i)}\Vert_{\infty}$.
\begin{example}\label{forbidTangency}
$\phantom{1}$\smallskip
\begin{enumerate}
    \item[(i)] On a compact interval, linear functions forbid 1st order tangency. More generally, polynomials of degree at most $k$ forbid $k$--th order tangency. 
    \item[(ii)] An $m$--dimensional family $\mathcal{C}$ of cinematic curves restricted to a sufficiently small compact set forbid $(m-1)$--st order tangency.
\end{enumerate}
\end{example}

Recall that a set $\mathcal{F}\subset C^\infty(I)$ is \emph{uniformly smooth} if $\sup_{f\in \mathcal{F}}\Vert f^{(i)}\Vert_\infty<\infty$ for each $i\geq 0$. The functions in Example \ref{forbidTangency}(ii) are uniformly smooth. The functions in Example \ref{forbidTangency}(i) are uniformly smooth if we restrict the coefficients to a bounded set. With this definition, we can now state the main technical result of the paper.

\begin{thm}\label{mainThm}
Let $k\geq 1$, let $I$ be a compact interval, and let $\mathcal{F}\subset C^\infty(I)$ be uniformly smooth and forbid $k$--th order tangency. Let $\eps>0$. Then the following is true for all $\delta>0$ sufficiently small. Let $F\subset\mathcal{F}$ satisfy the non-concentration condition
\begin{equation}\label{ballCondition}
\#(F\cap B_r)\leq r/\delta\quad\textrm{for all balls}\ B_r\subset C^k(I)\ \textrm{of radius}\ r.
\end{equation}
Then 
\begin{equation}\label{LpBound}
\Big\Vert \sum_{f\in F} \chi_{f^\delta} \Big\Vert_{\frac{k+1}{k}} \leq \delta^{-\eps}(\delta\#F)^{\frac{k}{k+1}},
\end{equation}
where $f^\delta$ is the $\delta$ neighborhood of the graph of $f$.
\end{thm}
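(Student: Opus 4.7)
My strategy is a Wolff-type multiscale argument that reduces the $L^{(k+1)/k}$ bound to a pairwise intersection estimate together with an induction on the scale parameter $\delta$. First, by a standard dyadic pigeonholing on the level sets of $N(x) := \sum_{f\in F}\chi_{f^\delta}(x)$, it suffices to bound $\mu^{(k+1)/k}|E_\mu|$ for each dyadic $\mu$, where $E_\mu = \{x : N(x) \sim \mu\}$. The trivial $L^1$ bound $\mu|E_\mu| \leq \int N = \delta\#F$ handles $\mu \leq \delta^{-\eps/2}$, so the essential task is the rich regime $|E_\mu| \lesssim \delta^{-\eps'}\delta\#F\,\mu^{-(k+1)/k}$.

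Second, the basic geometric input is a two-curve intersection estimate: if $\sigma := \|f-g\|_{C^k} \geq \delta$, then by \eqref{cinematicFunctionCondition} one has $\max_{0\leq i\leq k}|(f-g)^{(i)}(t)| \geq c\sigma$ for every $t\in I$. A local Taylor expansion then shows that $\{t \in I : |f(t)-g(t)| \leq \delta\}$ has measure $\lesssim (\delta/\sigma)^{1/k}$, giving
\[
|f^\delta \cap g^\delta| \lesssim \delta\cdot(\delta/\sigma)^{1/k}.
\]
For $k=1$ this estimate already implies Theorem~\ref{mainThm} via Córdoba-style summation over dyadic $\sigma$ combined with non-concentration; this will serve as the base case of the induction.

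For general $k$, I would run an induction on $\log(1/\delta)$. Choose a dyadic scale $\sigma \in [\delta, 1]$ and partition $F$ into $C^k$-clusters of radius $\sigma$; by \eqref{ballCondition}, each cluster $\mathrm{Cl}(f_0)$ contains at most $\sigma/\delta$ curves. Inside a cluster, translate by $f_0$ and dilate by $\sigma^{-1}$ in the $y$-direction: the rescaled family $\{(f-f_0)/\sigma\}$ still forbids $k$-th order tangency (the condition being scale-invariant) and satisfies non-concentration at the effective thickness $\delta' := \delta/\sigma$. Since $\delta'\cdot\#\mathrm{Cl}(f_0) \leq 1$, the induction hypothesis gives a rescaled within-cluster bound of size $\lesssim (\delta/\sigma)^{-\eps}$; undoing the vertical dilation contributes a factor $\sigma^{k/(k+1)}$ in $L^{(k+1)/k}$. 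Cross-cluster contributions, where pairs $(f,g)$ lie in different $\sigma$-clusters and hence satisfy $\|f-g\|_{C^k} \gtrsim \sigma$, are controlled by the two-curve lemma. Choosing $\sigma$ so as to balance the within- and cross-cluster contributions closes the induction.

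The main obstacle is the cross-cluster bookkeeping: different clusters' geometric supports overlap in the plane, so one cannot simply sum within-cluster $L^{(k+1)/k}$ norms by the triangle inequality. Instead, one must control the overlaps via the pairwise intersection lemma and sum the contributions over dyadic $\sigma$ so that the decay $(\delta/\sigma)^{1/k}$ from the lemma balances exactly against the cluster count $\delta\#F/\sigma$; this matching is precisely what dictates the sharp exponent $k/(k+1)$. Executing this balance cleanly and verifying that the per-scale $\eps$-losses and logarithmic factors from the dyadic decomposition accumulate only to the allowed $\delta^{-\eps}$ is the technical heart of the argument, and would be the most delicate part to carry out.
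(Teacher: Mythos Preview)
Your proposal has a genuine gap for $k\geq 2$: the two-curve intersection estimate $|f^\delta\cap g^\delta|\lesssim \delta(\delta/\sigma)^{1/k}$ is $L^2$-type information, and it is not strong enough to reach the exponent $(k+1)/k$. Concretely, for each fixed $f$ the sum $\sum_g |f^\delta\cap g^\delta|$ over dyadic annuli $\|f-g\|_{C^k}\sim\tau$ is dominated by the coarsest scale $\tau\sim 1$ (since $(\tau/\delta)\cdot(\delta/\tau)^{1/k}=\tau^{(k-1)/k}\delta^{1/k}$ is increasing in $\tau$ for $k\geq 2$), so the cross-cluster $L^2$ bound does not improve as $\sigma$ shrinks. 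There is therefore no scale $\sigma$ at which the within- and cross-cluster terms balance to give the sharp exponent; the ``matching'' you describe does not occur. Equivalently, interpolating the $L^1$ bound $\delta\#F$ with the $L^2$ bound $\delta^{1/k}\#F$ yields $\|N\|_{(k+1)/k}\lesssim \delta^{(k^2-k+1)/(k(k+1))}(\#F)^{k/(k+1)}$, which is weaker than \eqref{LpBound} by $\delta^{-(k-1)/(k(k+1))}$.

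What is missing is higher-multiplicity incidence information: one must control how many curves can be \emph{simultaneously} tangent to order $k-1$ in a common $(\delta;k)$ rectangle. The paper supplies this via Theorem~\ref{RectBoundThm}, a bound $\#\mathcal{R}\leq\delta^{-\eps}(\#F/\mu)^{(k+1)/k}$ on $\mu$-rich broad tangency rectangles, proved by polynomial partitioning applied to the $(k-1)$-st order jet lifts in $\RR^{k+1}$ together with a structure theorem for semi-algebraic sets of small volume. The $L^{(k+1)/k}$ bound then follows by an induction on $k$ (not on scale): one finds $(\delta';k)$ rectangles covering most of the mass, bounds their number by Theorem~\ref{RectBoundThm}, and inside each rectangle performs an \emph{anisotropic} rescaling (both axes, not just vertical) so that the curves satisfy the hypotheses for $k-1$. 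Your purely vertical rescaling keeps the same $k$ and cannot access this descent.
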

The bound \eqref{LpBound} is a Kakeya-type estimate for families of curves that forbid $k$--th order tangency. The exponent $(k+1)/k$ is best possible, and the existence of measure zero Besicovitch sets shows that the $\delta^{-\eps}$ factor (or at least some quantity that becomes unbounded as $\delta\searrow 0$) is also necessary.

\begin{rem}
In the special case $k=2$, Pramanik, Yang, and the author \cite{PYZ} proved a stronger version of Theorem \ref{mainThm} in which the hypothesis $\mathcal{F}\subset C^\infty(I)$ is replaced by the weaker hypothesis $\mathcal{F}\subset C^2(I)$. Both the present paper and \cite{PYZ} reduce the estimate \eqref{LpBound} to a statement in (discretized) incidence geometry. This reduction is similar in spirit in both \cite{PYZ} and the present paper. The main technical innovation in \cite{PYZ} is a discretized incidence bound that controls the number of approximate tangencies determined by an arrangement of $C^2$ curves. This is done using tools from topological graph theory---the idea is that if two curves are (almost) tangent, then after a small perturbation these curves define an object called a lens. A result of Marcus and Tardos \cite{MarcusTardos} bounds the number of such lenses. The present paper takes a different approach and uses tools from real algebraic geometry and ODE to control the number of approximate higher order tangencies determined by an arrangement of smooth curves; this is similar in spirit to the ideas from \cite{Zahl2020}. The approach taken in this paper has the downside that it only works for smooth curves, but it has the benefit that it applies to higher order tangencies, and thus allows us to obtain the estimate \eqref{LpBound} for all $k$.
\end{rem}

We will prove a slightly more technical generalization of Theorem \ref{mainThm}, where the ball condition \eqref{ballCondition} is replaced by a Frostman-type condition, and the sets $f^\delta$ are replaced by subsets that satisfy a similar Frostman-type condition. This more technical generalization will be called Theorem \ref{mainThm}$'$. Theorem \ref{mainThm}$'$ implies Theorem \ref{mainThmMaximalKakeya} in the special case $s=m-1$. The result is also connected to questions in geometric measure theory. We discuss some of these connections below. 

\subsection{Applications to geometric measure theory}
\noindent \textbf{Restricted projections.}\ \ 
In \cite{KOV2021}, K\"{a}enm\"{a}ki, Orponen, and Venieri discovered a connection between maximal function estimates for families of plane curves, and Marstrand-type results for projections in a restricted set of directions; the latter question was first investigated by F\"assler and Orponen in \cite{FasslerOrponen}. Accordingly, Theorem \ref{mainThm} is closely related to the following Kaufman-type estimate for the restricted projection problem. In what follows, ``$\dim\!$'' refers to Hausdorff dimension.

\begin{thm}\label{KaufmanThm}
Let $\gamma\colon[0,1]\to\RR^n$ be smooth and satisfy the non-degeneracy condition 
    \begin{equation}\label{nonDegenCurveCondition}
        \det\big(\gamma(t),\gamma'(t),\ldots,\gamma^{(n-1)}(t)\big)\neq 0,\quad t\in [0,1].
    \end{equation}
Let $E\subset\RR^n$ be Borel and let $0\leq s\leq\min(\dim E, 1)$. Then 
    \begin{equation}\label{exceptionalSetEstimate}
        \dim\big\{ t\in [0,1]\colon \dim(\gamma(t)\cdot E)< s \big\}\leq s.
    \end{equation}
\end{thm}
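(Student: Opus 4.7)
The plan is to recast the problem as a question about plane curves that forbid $(n{-}1)$-th order tangency and then extract the dimension bound from Theorem \ref{mainThm}$'$ via Frostman / $\delta$-discretization. For $u$ in a bounded neighborhood of $E$ and $t\in[0,1]$, set $f_u(t)=\gamma(t)\cdot u$. Then $f_u^{(j)}(t)=\gamma^{(j)}(t)\cdot u$, so $f_u-f_{u'}=f_{u-u'}$ and
\[
\sum_{j=0}^{n-1}\bigl|f_u^{(j)}(t)-f_{u'}^{(j)}(t)\bigr|=\sum_{j=0}^{n-1}\bigl|\gamma^{(j)}(t)\cdot(u-u')\bigr|\gtrsim |u-u'|\gtrsim \|f_u-f_{u'}\|_{C^{n-1}([0,1])},
\]
where the lower bound uses the uniform invertibility of $[\gamma(t)\,|\,\gamma'(t)\,|\,\cdots\,|\,\gamma^{(n-1)}(t)]$ granted by \eqref{nonDegenCurveCondition}. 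Hence $\mathcal{F}=\{f_u\}$ is uniformly smooth and forbids $(n{-}1)$-th order tangency in the sense of Definition \ref{forbidTangencyDefn} (with $k=n-1$), and the map $u\mapsto f_u$ is bi-Lipschitz.

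I would argue by contradiction: assume $\dim B>s$, where $B=\{t:\dim(\gamma(t)\cdot E)<s\}$, pick $\eta>0$ with $s+3\eta<\dim B$, and set $\sigma:=\min(\dim E,1)-\eta$. Frostman's lemma furnishes a probability measure $\nu$ on a compact subset of $B$ with $\nu(B_r)\lesssim r^{s+2\eta}$ and a probability measure $\mu$ on a compact subset of $E$ with $\mu(B_r)\lesssim r^\sigma$. Since the $s$-dimensional Hausdorff measure of $\gamma(t)\cdot E$ vanishes for every $t\in\operatorname{supp}\nu$, dyadic pigeonholing (after discarding a $\nu$-null set) produces some $\alpha>0$ and a sequence of scales $\delta\to 0$ at which $\gamma(t)\cdot E$ can be covered by $\leq\delta^{-s+\alpha}$ intervals of length $\delta$, uniformly on a $\nu$-positive subset of $B$. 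Fix such a $\delta$; let $U\subset\operatorname{supp}\mu$ and $T\subset\operatorname{supp}\nu$ be maximal $\delta$-separated subsets, so that $\#U\gtrsim\delta^{-\sigma}$ and $\#T\gtrsim\delta^{-(s+2\eta)}$. Because $u\mapsto f_u$ is bi-Lipschitz, the Frostman property of $\mu$ transfers to a non-concentration condition on $F=\{f_u:u\in U\}\subset C^{n-1}([0,1])$ of exactly the form handled by the Frostman variant Theorem \ref{mainThm}$'$, yielding the upper bound
\[
\Big\|\sum_{u\in U}\chi_{f_u^\delta}\Big\|_{L^{n/(n-1)}}\lesssim \delta^{-\eps}\cdot\delta^{(n-1)(1-\sigma)/n}.
\]

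For the matching lower bound, let $R\subset[0,1]\times\RR$ be the union over $t\in T$ of the strip $\{|t'-t|\leq\delta\}$ intersected with the chosen $\delta$-cover of $\gamma(t)\cdot E$, so $|R|\lesssim \#T\cdot\delta\cdot\delta^{1-s+\alpha}$. For each $u\in U$ and each $t\in T$, the tube $f_u^\delta$ contributes area $\sim\delta^2$ to $R$ above $t$, whence $\int_R\sum_u\chi_{f_u^\delta}\gtrsim \#U\cdot\#T\cdot\delta^2$; Hölder's inequality then gives a lower bound on $\|\sum_u\chi_{f_u^\delta}\|_{L^{n/(n-1)}}$ strictly larger than the upper bound once $\eps\ll\min(\eta,\alpha,(\sigma-s)_+)$, which is the desired contradiction. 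The main obstacle will be the exponent accounting in this last step: one must use the full value of $\min(\dim E,1)$ in the choice of $\sigma$ (not merely the quantity $s$; this is essential whenever $s<1$) and convert the strict inequality $\dim(\gamma(t)\cdot E)<s$ into the quantitative gain $\alpha>0$ through pigeonholing that preserves enough of $B$. Once these choices are made, Theorem \ref{mainThm}$'$ supplies the Kakeya-type incidence bound and the two estimates become incompatible for small $\delta$.
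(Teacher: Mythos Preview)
Your overall strategy is the same as the paper's, but the exponent accounting in the final step does not close. The problem is that you apply Theorem \ref{mainThm}$'$ to the \emph{full} shadings $\chi_{f_u^\delta}$ (so the shading parameter is $\alpha=1$), and then try to extract the lower bound by restricting to $R$ via H\"older. Carrying out your computation carefully: with $\#U\sim\delta^{-\sigma}$, $\#T=N$, $|R|\le N\delta^{2-s+\alpha}$, and $\int_R g\gtrsim \#U\cdot N\cdot\delta^2$, H\"older gives
\[
\|g\|_{n/(n-1)}\gtrsim (\#U)\,N^{(n-1)/n}\,\delta^{2-(2-s+\alpha)/n},
\]
while your upper bound is $\delta^{-\eps}(\delta\,\#U)^{(n-1)/n}$. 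Dividing and using $N\ge\delta^{-\tau}$ (where $\tau>s$ is the assumed Frostman exponent of $\nu$), the inequality you need for a contradiction becomes
\[
\tau>1+\frac{s-\sigma-\alpha}{n-1}.
\]
For $n=3$, $s=1/2$, $\sigma\approx 1$ (best possible, requiring $\dim E\ge 1$), and $\alpha$ small, this reads $\tau>3/4$; so your argument only proves $\dim B\le 3/4$, not $\le 1/2$. For $n\ge 3$ the gap is unavoidable with full shadings, and for $n=2$ it already fails whenever $\dim E<1$.

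The paper repairs this by \emph{restricting the shadings} to the bad $t$-values: one takes $Y(f_u)=f_u^\delta\cap(T_\delta\times\RR)$, which is a $(\delta,\tau)$-set, and applies Theorem \ref{mainThm}$'$ with shading parameter $\alpha=\tau$ rather than $\alpha=1$. (The paper additionally reduces to $\dim S=\dim E$, which puts one in the clean case $\alpha=\beta$ of Theorem \ref{mainThm}$'$, but the essential point is the restricted shading.) With this change the upper bound becomes $\delta^{-\eps}(\delta^{2-\tau}\#F)^{(n-1)/n}$, and repeating the H\"older computation now yields a contradiction precisely when $q<\tau$ (or $s-\alpha<\tau$ in your notation), which is exactly the hypothesis $\dim B>s$. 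Your suggestion to ``use the full value of $\min(\dim E,1)$'' for $\sigma$ is necessary but not sufficient; the missing ingredient is that the shading parameter in Theorem \ref{mainThm}$'$ must reflect the dimension of the exceptional set, not of the domain $[0,1]$.
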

We comment briefly on the history of this problem. In \cite{FasslerOrponen}, F\"assler and Orponen introduced the non-degeneracy condition \eqref{nonDegenCurveCondition}, and they conjectured that if a smooth curve $\gamma\colon [0,1]\to\RR^3$ satisfies \eqref{nonDegenCurveCondition}, then $\dim(\gamma(t)\cdot E)=\min(1, \dim E)$ for a.e.~$t$; they made partial progress towards this conjecture. In \cite{KOV2021}, K\"{a}enm\"{a}ki, Orponen, and Venieri used circle tangency bounds proved by Wolff to resolve this conjecture in the special case where $\gamma(t) = (1, t, t^2)$. In \cite{PYZ}, Pramanik, Yang, and the author used a more general curve tangency bound (corresponding to $k=2$) to prove a mild generalization of Theorem \ref{KaufmanThm} when $n=3$; the result in \cite{PYZ} only requires that the curve $\gamma$ be $C^2$. In \cite{GGM}, Gan, Guth, and Maldague proved an estimate in a similar spirit to \eqref{exceptionalSetEstimate} (sometimes referred to as a ``Falconer-type'' exceptional set estimate) using techniques related to decoupling. Finally, in \cite{GGW}, Gan, Guo, and Wang proved a Falconer-type exceptional set estimate for general $n$, again using decoupling inequalities. 

\medskip

\noindent \textbf{Furstenberg sets.}\ \ 
As noted above, a consequence of Cordoba's Kakeya maximal function bound is that Besicovitch sets in the plane must have Hausdorff dimension 2. Similarly, Wolff's circular maximal function bound implies that Besicovitch-Rado-Kinney sets must have Hausdorff dimension 2. Theorem \ref{mainThm} has a similar consequence; in fact a slightly stronger statement is true in the spirit of the Furstenberg set conjecture. We first define a Furstenberg set of curves.
\begin{defn}\label{furstenbergSetCurves}
Let $\alpha,\beta\geq 0$ and let $\mathcal{F}\subset C^k(I)$. We say a set $E\subset\RR^2$ is an $(\alpha,\beta)$ \emph{Furstenberg set of curves from} $\mathcal{F}$ if there is a set $F\subset\mathcal{F}$ with $\dim(F)\geq\beta$ (here ``$\dim\!$'' refers to Hausdorff dimension in the metric space $C^k(I)$) so that $\dim(\operatorname{graph}(f) \cap E)\geq \alpha$ for each $f \in F$.
\end{defn}

\begin{thm}\label{generalizedFurstenbergSetsThm}
Let $k\geq 1$, let $I$ be a compact interval, and let $\mathcal{F}\subset C^\infty(I)$ be uniformly smooth and forbid $k$--th order tangency. Let $0\leq \beta \leq \alpha \leq 1$. Then every $(\alpha,\beta)$ Furstenberg set of curves from $\mathcal{F}$ has Hausdorff dimension at least $\alpha+\beta$. 
\end{thm}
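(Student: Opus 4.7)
The plan is to argue by contradiction. Suppose $\dim E < \alpha+\beta-\eta_0$ for some $\eta_0 > 0$, and let $F\subset\mathcal{F}$ be a set with $\dim F \geq \beta$ and $\dim(\mathrm{graph}(f) \cap E) \geq \alpha$ for every $f \in F$. The strategy is to invoke Frostman's lemma on $F$ and on each $\mathrm{graph}(f)\cap E$, discretize at a common scale $\delta$, apply the Frostman version of Theorem~\ref{mainThm} (namely Theorem~\ref{mainThm}$'$), and combine the resulting $L^{(k+1)/k}$ estimate with a H\"older inequality against $\chi_{E_\delta}$ to reach a contradiction.

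By Frostman's lemma, one obtains a finite nonzero Borel measure $\mu$ on $F\subset C^k(I)$ with $\mu(B_r)\lesssim r^{\beta-\eta_1}$ for all $C^k$-balls, and for each $f \in F$ a probability measure $\nu_f$ on $\mathrm{graph}(f)\cap E$ with $\nu_f(B_r) \lesssim r^{\alpha-\eta_1}$, where $\eta_1>0$ will be chosen small in terms of $\eta_0$. The hypothesis $\dim_H E < \alpha+\beta-\eta_0$, via vanishing of the corresponding Hausdorff content and dyadic pigeonholing over scales, yields arbitrarily small $\delta>0$ for which $E$ admits a $\delta$-cover of cardinality $\lesssim \delta^{-(\alpha+\beta-\eta_0/2)}$; in particular $|E_\delta|\lesssim \delta^{2-\alpha-\beta+\eta_0/2}$.

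At such a scale $\delta$, a standard $\delta$-discretization of $\mu$ and of each $\nu_f$ produces: a $\delta$-separated subset $F_\delta\subset F$ with $\#F_\delta\gtrsim \delta^{-\beta+O(\eta_1)}$ satisfying $\#(F_\delta\cap B_r) \lesssim (r/\delta)^{\beta-\eta_1}$ for all $r\geq\delta$; and, for each $f$ in a $\mu$-large subset of $F_\delta$, a union $E_f\subset f^\delta$ of $\gtrsim \delta^{-\alpha+O(\eta_1)}$ distinct $\delta$-balls that meet $\mathrm{graph}(f)\cap E$ and obey a Frostman-type non-concentration with exponent $\alpha$. Theorem~\ref{mainThm}$'$ applied to this data should yield, in the quantitative regime in which both Frostman conditions hold,
\[
\Big\|\sum_{f \in F_\delta}\chi_{E_f}\Big\|_{(k+1)/k} \;\leq\; \delta^{-\eta_1}\,\delta^{k(2-\alpha-\beta)/(k+1)}.
\]
Since $E_f\subset E_\delta$, integrating $\sum_f \chi_{E_f}$ against $\chi_{E_\delta}$ and applying H\"older's inequality with conjugate exponent $k+1$ gives
\[
\delta^{2-\alpha-\beta+O(\eta_1)} \;\lesssim\; \sum_f |E_f| \;\leq\; |E_\delta|^{1/(k+1)} \Big\|\sum_f \chi_{E_f}\Big\|_{(k+1)/k} \;\lesssim\; \delta^{2-\alpha-\beta+\eta_0/(2(k+1))-\eta_1}.
\]
Choosing $\eta_1 \ll \eta_0/(2(k+1))$ produces a contradiction for all sufficiently small $\delta$, establishing $\dim_H E \geq \alpha+\beta$.

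The main obstacle is the discretization step: the Hausdorff-dimension hypotheses on $F$ and on each $\mathrm{graph}(f)\cap E$ are a priori realized at different scales for different $f$, so extracting a single scale $\delta$ at which Frostman non-concentration holds simultaneously for a rich subset of $F$ together with the single-scale cover estimate for $E$ requires careful multiscale pigeonholing. The second essential input is Theorem~\ref{mainThm}$'$; without the Frostman refinement, using Theorem~\ref{mainThm} directly only gives the weaker bound $\dim_H E \geq \alpha(k+1)+\beta-k$, which is strictly less than $\alpha+\beta$ whenever $\alpha<1$.
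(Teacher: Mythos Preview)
Your H\"older argument in the final display is exactly the paper's proof of Lemma~\ref{volumeBdDiscretizedFurstenbergCurves}: apply Theorem~\ref{mainThm}$'$ and pair against $\chi_{E_\delta}$. The paper then invokes Lemma~3.3 of H\'era--Shmerkin--Yavicoli \cite{HSY} as a black box to pass from this discretized volume estimate back to the Hausdorff dimension bound, whereas you attempt the discretization directly.

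Your discretization sketch has a real gap. The assertion that $\dim E < \alpha+\beta-\eta_0$ yields, via pigeonholing, arbitrarily small $\delta$ for which $E$ admits a $\delta$-cover of cardinality $\lesssim \delta^{-(\alpha+\beta-\eta_0/2)}$ is false in general: small Hausdorff dimension does not control the covering number at \emph{any} single scale (a countable set can have Hausdorff dimension $0$ and lower box dimension $1/2$). A cover $\{B_i\}$ witnessing $\mathcal{H}^{\alpha+\beta-\eta_0}_\infty(E) < \eta$ is multiscale; pigeonholing on the cover alone produces a dyadic layer $\{i : r_i \sim 2^{-j}\}$ containing at most $\eta\, 2^{j(\alpha+\beta-\eta_0)}$ balls, but this layer need not cover $E$, nor even cover $\operatorname{graph}(f) \cap E$ for any single $f$. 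The repair is to pigeonhole \emph{through the Frostman measures}: since $\nu_f\big(\bigcup_i B_i\big) = 1$, for each $f$ some layer $j(f)$ carries $\nu_f$-mass $\gtrsim j(f)^{-2}$; then pigeonhole in $\mu$ to find a common $j_0$ shared by a $\mu$-positive subset $F'\subset F$. At scale $\delta = 2^{-j_0}$ one now has both the required $(\delta,\alpha)$ and $(\delta,\beta)$ structure on a rich subconfiguration \emph{and} the ball-count $N_{j_0} \lesssim \eta\,\delta^{-(\alpha+\beta-\eta_0)}$ needed for the H\"older step. You correctly flag discretization as the main obstacle, but the specific mechanism you propose does not work as stated; this joint pigeonholing is essentially the content of the HSY lemma the paper cites.
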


We comment briefly on the history of this problem. In \cite{WolffKakeya}, Wolff defined a class of Besicovitch-type sets, inspired by the work of Furstenberg \cite{Fur}, which he called Furstenberg sets. In brief, for $0\leq\alpha\leq 1$, an $\alpha$-Furstenberg set is a compact set $E\subset\RR^2$ with the property that for each direction $e\in S^1$, there is a line $\ell$ parallel to $e$ with $\dim(E\cap\ell)\geq\alpha$. Wolff proved that every set of this type must have dimension at least $\max\big\{2\alpha,\alpha+\frac{1}{2}\big\}$, and he constructed examples of such sets that have dimension $\frac{3\alpha}{2} + \frac{1}{2}$. He conjectured that the latter bound is sharp. This conjecture was recently proved by Ren and Wang \cite{RW}; see also \cite{OrpSh2}. In \cite{MR}, Molter and Rela introduced the related notion of an $(\alpha,\beta)$-Furstenberg set. In the plane, their definition coincides with Definition \ref{furstenbergSetCurves}, where $\mathcal{F}$ is the set of linear functions. See \cite{OrpSh, OrpSh2} and the references therein for a survey of the Furstenberg set problem, and \cite{HKLO} for variants in higher dimensions. 

Recently, F\"assler, Liu, and Orponen \cite{FLO} considered the analogous problem where lines are replaced by circles; they formulated the analogous definition of a Furstenberg set of circles, and they proved that if $0\leq \alpha\leq\beta\leq 1$, then every $(\alpha,\beta)$  Furstenberg set of circles must have dimension at least $\alpha+\beta$. Theorem \ref{generalizedFurstenbergSetsThm} generalizes the F\"assler-Liu-Orponen result from circles to a larger class of curves. Theorem \ref{generalizedFurstenbergSetsThm} is clearly sharp in the stated range $0\leq \beta \leq \alpha \leq 1$. When $\alpha<\beta$, it is not obvious what dimension bounds should hold for $(\alpha,\beta)$ Furstenberg sets of curves.


\subsection{Curve tangencies, and tangency rectangles}\label{introTangencySection}
The main input to Theorem \ref{mainThm} is a new estimate in discretized incidence geometry that controls the number of approximate higher-order tangencies spanned by a collection of plane curves; this is Theorem \ref{RectBoundThm} below. Theorem \ref{RectBoundThm} requires several technical definitions. We will give an informal explanation of these definitions and then state an informal version of Theorem \ref{RectBoundThm}.

A $(\delta;k)$ \emph{tangency rectangle} $R$ is the $\delta$ neighborhood of the graph of a function with $C^k$ norm at most 1, above an interval $I$ of length $\delta^{1/k}$ (we are abusing terminology slightly, since the set $R$ need not be rectangle in the usual geometric sense). We say that a function $f$ is \emph{tangent} to $R$ (denoted $f\sim R$) if $I$ is contained in the domain of $f,$ and the graph of $f$, restricted to $I$, is contained in $R$. If $F$ is a set of functions and $\mu\geq 1$, we say a tangency rectangle is $\mu$-\emph{rich} with respect to $F$ if it is tangent to at least $\mu$ functions $f\in F$. We say two $(\delta;k)$ tangency rectangles $R_1,R_2$ are comparable if they are contained in a common $(2^k\delta;k)$ tangency rectangle. Otherwise they are incomparable (the factor $2^k$ simplifies certain parts of the proof, but any constant larger than 1 would suffice).

Observe that if two functions $f_1,f_2\colon [0,1]\to\RR$ with $C^k$ norm at most 1 are both tangent to a common $(\delta;k)$ tangency rectangle $R$ above the interval $[a, a+\delta^{1/k}]$, then for all $t\in[0,1]$ we have 
\begin{equation}\label{f1mF2Bd}
|f_1(t)-f_2(t)|\lesssim |t-a|^k+\delta.
\end{equation}
We say that $R$ is \emph{broad} with respect to $F$ if for most pairs of functions $f_1,f_2\in F$ that are tangent to $R$, the inequality \eqref{f1mF2Bd} is almost tight, i.e.~there is a matching lower bound $|f_1(t)-f_2(t)|\gtrsim |t-a|^k$ for all $t\in[0,1]$. The precise definition of broadness involves additional quantifiers; see Definition \ref{robustlyBroadDefn} for details. With these (informal) definitions, we can now state an informal version of Theorem \ref{RectBoundThm}.

\begin{RectBoundThmInformal}
Let $k\geq 1,\mu\geq 2$ and let $\delta>0$. Let $F$ be a set of low degree polynomials, and let $\mathcal{R}$ be a set of pairwise incomparable $(\delta;k)$ tangency rectangles, each of which are $\mu$-rich and broad with respect to $F$. Provided $\delta>0$ is sufficiently small, we have
\begin{equation}\label{informalRectBd}
\#\mathcal{R}\leq\delta^{-\eps}\Big(\frac{\#F}{\mu}\Big)^{\frac{k+1}{k}}.
\end{equation}
\end{RectBoundThmInformal}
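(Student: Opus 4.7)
My plan is to combine polynomial partitioning in the coefficient space of $F$ with a multi-scale bootstrap, using real algebraic geometry and ODE methods to control the higher-order tangency varieties; this matches the strategy the author attributes to \cite{Zahl2020} in the remark. I would begin with a standard dyadic pigeonholing to pass to a refinement in which every $R\in\mathcal{R}$ is tangent to $\sim\mu$ polynomials and the rich/broad incidence structure is essentially uniform, and then formalize the informal broadness condition as: a positive fraction of pairs of polynomials tangent to $R$ satisfy the matching lower bound $|f_i(t)-f_j(t)|\gtrsim |t-a|^k$ outside the tangency interval of length $\delta^{1/k}$.

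Next, I would identify each polynomial in $F$ with its coefficient vector in $\RR^{d+1}$ and apply the polynomial partitioning theorem at degree $D$ to this point set. This produces $\sim D^{d+1}$ open cells, each containing $\lesssim \#F/D^{d+1}$ points of $F$, together with the zero set $Z(P)$ of the partitioning polynomial. Each rectangle $R\in\mathcal{R}$ then falls into either a \emph{cellular} class (its $\mu$ tangent polynomials concentrate in a single cell, which I would bound inductively by self-improvement within each cell) or an \emph{algebraic} class (its tangent polynomials straddle or lie on $Z(P)$).

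For the algebraic class, the polynomials tangent to a fixed rectangle $R$ form an approximate solution set of a $k$-th order ODE; at the algebraic level they sit inside a variety $V_R\subset\RR^{d+1}$ of codimension $k$ and controlled degree. The broadness condition is precisely what ensures that $F\cap V_R$ is not further concentrated in any proper subvariety of $V_R$, so $P|_{V_R}$ cannot vanish on most of the tangent polynomials unless $V_R\subset Z(P)$. An effective Bezout/Milnor--Thom count then bounds the number of distinct varieties $V_R$ that can be contained in $Z(P)$, and hence bounds the algebraic class. Optimizing $D$ balances the cellular and algebraic contributions and yields \eqref{informalRectBd}, with the $\delta^{-\eps}$ loss absorbed by a standard $\eps$-accounting across the bootstrap iterations.

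The main obstacle I anticipate is the quantitative step: translating the informal broadness condition into a genuinely algebraic non-degeneracy statement for $V_R$, so that the Bezout bound is effective at the discretized scale $\delta$. Broadness only constrains $F\cap V_R$ on a single interval of length $\delta^{1/k}$, while the algebraic argument needs global control of $V_R$, and this gap is the heart of the proof. Achieving the exact exponent $(k+1)/k$ also requires a delicate balance between the within-cell induction (which contributes roughly $(\#F/D^{d+1}\mu^{-1})^{(k+1)/k}$ per cell) and the codimension-$k$ contribution from the tangency varieties on $Z(P)$; it is precisely this balance between the dimension of the ambient coefficient space and the codimension of the tangency variety that forces the target exponent.
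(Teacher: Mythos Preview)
Your proposal diverges from the paper's proof in two essential ways, and the second is a genuine gap.

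\textbf{Ambient space for partitioning.} You partition in coefficient space $\RR^{d+1}$, treating polynomials as points and each rectangle $R$ as a codimension-$k$ tangency variety $V_R$. The paper instead lifts each $f$ to the curve $\zeta_f=\operatorname{graph}(\mathcal{J}_{k-1}f)\subset\RR^{k+1}$ and each rectangle to a prism $\hat R$, then partitions in $\RR^{k+1}$ with respect to the points $p_R\in\hat R$. The crucial advantage is that the ambient dimension $k+1$ is fixed, independent of the polynomial degree (which is allowed to be as large as $\delta^{-\eta}$). In your setup the dimension $d+1$ could be $\sim\delta^{-\eta}$, and both the cell count $D^{d+1}$ and the Bezout exponents degrade accordingly. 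The cellular numerology $(\#F)^{(k+1)/k}$ in the paper comes from the fact that each jet-lifted curve is one-dimensional in a $(k+1)$-dimensional space, so Barone--Basu gives $O(E)$ cells per curve against $E^{k+1}$ total cells; your coefficient-space picture does not reproduce this ratio unless you artificially restrict to degree $\le k$.

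\textbf{Algebraic case.} You propose to bound the number of $V_R$ contained in $Z(P)$ via Bezout/Milnor--Thom, and you correctly flag the obstacle: broadness is a local statement about $F\cap V_R$ over an interval of length $\delta^{1/k}$, not an algebraic non-degeneracy of $V_R$ itself. The paper does \emph{not} try to convert broadness into an algebraic statement. Instead it proves a structure theorem (Proposition~\ref{coverByLipschitzGraph}) decomposing any semi-algebraic set of small volume in $\RR^{k+1}$ into thin neighborhoods of Lipschitz graphs $x_{k-1}=L(t,x_0,\dots,x_{k-2})$, plus a piece with small projection to $\RR^k$ (which is handled by iterating down in dimension). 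If two jet-lifted curves $\zeta_f,\zeta_g$ lie in such a Lipschitz piece, then $f$ and $g$ almost satisfy the same $(k-1)$-st order ODE $f^{(k-1)}=L(t,\mathcal{J}_{k-2}f)$; if they are also tangent to a common prism $\hat R$, they almost share initial conditions. A quantitative Gronwall argument then forces $|f-g|$ to stay small on a long interval, directly contradicting broadness. This ODE mechanism is the paper's main innovation and is entirely absent from your outline; without it, the algebraic case does not close.
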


\begin{rem}\label{RectBoundThmInformalRemarks}
$\phantom{1}$
\smallskip
\begin{enumerate}

\item[(i)] The requirement that the tangency rectangles in $\mathcal{R}$ are broad (or some analogous requirement) is necessary. Without this assumption, we could construct a counter-example to Theorem \ref{RectBoundThm} as follows. Let $F$ be a set of functions with $\#F = \mu$, each of which is an infinitesimal perturbation of the same function $f_0$, and let $\mathcal{R}$ be a set of $\delta^{-1/k}$ pairwise incomparable tangency rectangles arranged along the graph of $f_0$. 

\item[(ii)] When $k=1$, the bound \eqref{informalRectBd} follows from double-counting triples $(f_1, f_2, R)$, where $f_1,f_2$ are functions whose graphs transversely intersect inside $R$. When $k=2$ and the graphs of the functions in $F$ are (arcs of) circles, a bilinear variant of \eqref{informalRectBd} was proved by Wolff \cite{Wolff2000} using techniques from computational geometry originating from \cite{CEGSW}. This was generalized by the author in \cite{Zahl2012} for more general curves (again with $k=2$). Recently, Pramanik, Yang, and the author \cite{PYZ} proved a variant of Theorem \ref{RectBoundThm} for $k=2$ that works for $C^2$ functions.

\item[(iii)] The exponent $\frac{k+1}{k}$ follows from the numerology inherent in the polynomial method. For $k=2$, there are at least three independent proofs of this same bound, using different techniques (see Remark \ref{RectBoundThmInformalRemarks}(ii) above). However, it is not clear whether the exponent $\frac{k+1}{k}$ in \eqref{informalRectBd} is sharp. For $k=2$ the current best construction comes from a sharp Szemer\'edi-Trotter construction, and yields a lower bound with exponent $\frac{4}{3}$. 

\end{enumerate}
\end{rem}


\subsection{Main ideas, and a sketch of the proof}\label{introProofSketch}
In this section, we sketch the proofs of Theorems \ref{mainThm} and \ref{RectBoundThm}. We begin with Theorem \ref{RectBoundThm}. For simplicity during this proof sketch, we will suppose that $\mu$ has size close to 1 (for example $\mu = \delta^{-\eps}$) and $I=[0,1]$. When writing or describing inequalities, we will ignore constants that are independent of $\delta$ and $\#F$. We will prove the result by induction on the cardinality of $F$.
The induction step proceeds as follows. For each curve $f\in F$, we consider the graph of the $(k-1)$--st order ``jet lift'' 
\[
\zeta_f = \big\{\big(t, f(t), f'(t), \ldots, f^{(k-1)}(t)\big)\colon t\in [0,1]\big\}\subset\RR^{k+1}.
\] 
For each tangency rectangle $R\in\mathcal{R}$, we consider the corresponding ``tangency prism'' $\hat R\subset \RR^{k+1},$ which is a (curvilinear) prism of dimensions roughly $\delta^{1/k}\times \delta^{k/k}\times\delta^{(k-1)/k}\times\ldots\times\delta^{1/k}$. This prism will be constructed in such a way so that if $f\in F$ is tangent to a tangency rectangle $R\in \mathcal{R}$, then $\zeta_f$ intersects $\hat R$ in a curve of length roughly $\delta^{1/k}$. If this happens then we say $\zeta_f$ is incident to $\hat R$.

We have transformed the problem of estimating the number of broad tangency rectangles in the plane into a problem about incidences between curves and tangency prisms in $\RR^{k+1}$. To attack the latter problem, we use the Guth-Katz polynomial partitioning theorem. Let $E$ be a large number, and let $Q\subset\RR[t, x_0,\ldots,x_{k-1}]$ be a polynomial of degree at most $E$, so that $\RR^{k+1}\backslash\{Q=0\}$ is a union of about $E^{k+1}$ ``cells'' (open connected regions), with the property that at most $(\#\mathcal{R}) E^{-k-1}$ prisms are contained in each cell (if a prism intersects more than one cell, it is not counted here). Using a variant of B\'ezout's theorem and the assumption that $\mathcal{F}$ is uniformly smooth, we can ensure that at most $(\#F) E^{-k}$ curves intersect a typical cell. 

Since each prism $\hat R$ is connected, it is either contained inside a cell, or it must intersect the partitioning hypersurface $\{Q=0\}$. Our argument now divides into two cases: If at least half of prisms are contained inside a cell, then we are in the ``cellular case.'' If at least half of the prisms intersect the partitioning hypersurface, then we are in the ``algebraic case.''

We handle the cellular case as follows. Using our induction hypothesis, we conclude that since a typical cell $\Omega$ intersects roughly $(\#F) E^{-k}$ curves, there are at most $\big((\#F)E^{-k}\big)^{\frac{k+1}{k}}=(\#F)^{\frac{k+1}{k}}E^{-k-1}$ tangency rectangles $R\in \mathcal{R}$ with $\hat R \subset \Omega$. Thus the total contribution from all of the cells is at most $E^{k+1}\cdot (\#F)^{\frac{k+1}{k}}E^{-k-1} = (\#F)^{\frac{k+1}{k}}$. With some care (and a slight weakening of exponents, which introduces the $\delta^{-\eps}$ factor in \eqref{informalRectBd}), the induction closes. It is this argument (and the associated numerology) that determines the shape of the bound \eqref{informalRectBd}. 

The ideas described above that are used to handle the cellular case are not new; they were inspired by similar arguments in \cite{Guth}. To handle the algebraic case, however, new ideas are needed. This is the main innovation in this paper. We now sketch the proof of the algebraic case. We begin with several simplifying assumptions. \emph{Simplifying Assumption (A)}: the surface $\{Q=0\}$ can be written as a graph $\{x_{k-1} = L(t, x_0,\ldots, x_{k-2})\}$, where $L$ is $1$--Lipschitz. As a consequence of Assumption (A), if a tangency prism $\hat R$ intersects $\{Q=0\}$, then $\hat R$ is contained in a thin neighborhood of the graph of $L$, i.e.~$\hat R \subset Z^*$, where
\begin{equation}\label{informalDefnZStar}
 Z^* = \big\{(t, x_0,\ldots, x_{k-1})\in[0,1]^{k+1}\colon |x_{k-1} - L(t, x_0,\ldots,x_{k-2})|\leq \delta^{1/k}\big\}.
\end{equation}
Next we make \emph{Simplifying Assumption (B)}: each curve $\zeta_f$ is contained in $Z^*$. This means that $f$ almost satisfies the ODE $f^{(k-1)}(t) = L\big(t, f(t), f'(t), \ldots, $ $f^{(k-2)}(t)\big)$. More precisely, we have 
\begin{equation}\label{informalODE}
\big|f^{(k-1)}(t) - L\big(t, f(t), f'(t), \ldots, f^{(k-2)}(t)\big)\big|\leq \delta^{1/k},\quad t\in [0,1].
\end{equation}

If $\zeta_f$ and $\zeta_g$ are both incident to a common prism $\hat R$, then a straightforward calculus exercise shows that there must exist some $t_0$ for which the first $k-1$ derivatives of $f$ and $g$ almost agree, in the sense that
\begin{equation}\label{informalIVP}
|f^{(i)}(t_0) - g^{(i)}(t_0)|\leq\delta^{1/k},\quad i=0,\ldots,k-1.
\end{equation}
Inequality \eqref{informalODE} (and its analogue for $g$) say that $f$ and $g$ almost satisfy the same ODE, and \eqref{informalIVP} says that $f$ and $g$ almost have the same initial conditions, and hence $f$ and $g$ almost satisfy the same initial value problem. Since $L$ is $1$--Lipschitz, we can use a quantitative version of Gronwall's inequality to conclude that $|f(t)-g(t)|$ is small for all $t\in [0,1]$. We conclude that all of the curves tangent to a common tangency rectangle $R\in\mathcal{R}$ must remain close for all time $t\in [0,1]$; but this contradicts the requirement that the tangency rectangles in $\mathcal{R}$ are broad. This implies $\mathcal{R}$ must be empty. Thus we have established Theorem \ref{RectBoundThm}, except that we have not yet justified Simplifying Assumptions (A) and (B). 

First, we will explain how to remove Simplifying Assumption (B); this is mostly a technical matter. While the curves $\zeta_f$ need not be contained in $Z^*$, each curve intersects $Z^*$ in a small number of curve segments, and the curve-prism incidences occur within these segments. Thus we can find a typical length $\ell$, so that most curve-prism incidences occur within segments that have length roughly $\ell$. After partitioning space into rectangular prisms of the appropriate dimensions and rescaling, we reduce to the case where $\ell=1$. 

Next, we will explain how to remove Simplifying Assumption (A); this issue is more serious. In general, we may suppose that each tangency prism is contained in the $\delta^{1/k}$ neighborhood of the variety $\{Q=0\}$. This is a semi-algebraic set, and after restricting to $[-1,1]^{k+1}$, this set has volume roughly $\delta^{1/k}$. We prove a new structure theorem which says that any semi-algebraic set in $[0,1]^{k+1}$ with small $(k+1)$-dimensional volume can be decomposed into a union of pieces, each of which is the thin neighborhood of a Lipschitz graph (with controlled Lipschitz constant), plus a final piece whose projection to the first $k$ coordinates has small $k$-dimensional volume. If the majority of prisms and curves are contained in one of the Lipschitz graph pieces, then (a slight weakening of) Simplifying Assumption (A) holds, and we can argue as above. If instead the majority of prisms and curves are contained in the final piece, then we project from $\RR^{k+1}$ to the first $k$--coordinates. The Tarski–Seidenberg theorem says that the image under this projection is a semi-algebraic subset of $[0,1]^k$, and thus we can apply the same decomposition again. After iterating this procedure at most $k$ times, we arrive at a situation where Simplifying Assumption (A) holds, and we can apply the arguments described above.

\medskip

\noindent {\bf From Tangency Rectangles to Maximal Functions} 

We now sketch the proof of Theorem \ref{mainThm}. The proof is complicated by the fact that the collection of curves $F$ can be arranged in many different ways. To begin, we will examine three specific arrangements that will give the reader a sense of the range of possibilities. For clarity when writing inequalities, we will ignore constants that are independent of $\delta$, and we will omit terms of the form $\delta^{-\eps}$. 

\medskip

\noindent {\it Arrangement 1.} Suppose that for a typical pair of functions $f,g\in F$ for which $f^\delta\cap g^\delta$ is non-empty, we have that the graphs of $f$ and $g$ intersect transversely. This means that $|f^\delta\cap g^\delta|$ typically has size about $\delta^2$, and thus we might expect
\begin{equation}\label{ElemL2Bd}
\Big\Vert \sum_{f\in F} \chi_{f^\delta} \Big\Vert_2 \leq \Big(\sum_{f,g\in F}|f^\delta\cap g^\delta|\Big)^{1/2}\leq \delta (\#F).
\end{equation}
On the other hand, we have 
\begin{equation}\label{L1BdIntro}
\Big\Vert \sum_{f\in F} \chi_{f^\delta} \Big\Vert_1 \leq \delta(\#F).
\end{equation}
Interpolating \eqref{ElemL2Bd} and \eqref{L1BdIntro} using H\"older, we obtain
\begin{equation}\label{arrangement1Bd}
\Big\Vert \sum_{f\in F} \chi_{f^\delta} \Big\Vert_{\frac{k+1}{k}}\leq \delta(\#F).
\end{equation}
Note that this is stronger than \eqref{LpBound}, since the ball condition \eqref{ballCondition} implies that $\delta(\#F)\lesssim 1$.

\medskip

\noindent {\it Arrangement 2.} Suppose that for a typical pair of functions $f,g\in F$ for which $f^\delta \cap g^\delta$ is non-empty, we have that the graphs of $f$ and $g$ are tangent to order $k-1$. This means that $|f^\delta\cap g^\delta|$ is a curvilinear rectangle of dimensions roughly $\delta\times\delta^{1/k}$. In this situation, we can find a number $2\leq\mu\leq\#F$ and a set $\mathcal{R}$ of $\mu$-rich, broad $(\delta;k)$ tangency rectangles, so that 
\begin{equation}\label{elemRectEstimateLp}
\Big\Vert \sum_{f\in F} \chi_{f^\delta} \Big\Vert_{\frac{k+1}{k}}^{\frac{k+1}{k}} 
\leq \sum_{R\in\mathcal{R} }\int_R \Big(\sum_{\substack{f\in F\\ f\sim R}}\chi_{f^\delta}\Big)^{\frac{k+1}{k}}.
\end{equation}
By Theorem \ref{RectBoundThm}, $\#\mathcal{R}\leq  \big(\frac{\#F}{\mu}\big)^{\frac{k+1}{k}},$ and the contribution from each $R\in\mathcal{R}$ to the RHS of \eqref{elemRectEstimateLp} is at most $(\mu\delta)^{\frac{k+1}{k}}$. Thus we again have the bound
\begin{equation}\label{arrangement2Bd}
\Big\Vert \sum_{f\in F} \chi_{f^\delta} \Big\Vert_{\frac{k+1}{k}} \leq \Big(\big(\frac{\#F}{\mu}\big)^{\frac{k+1}{k}} \cdot (\mu\delta)^{\frac{k+1}{k}}\Big)^{\frac{k}{k+1}}= \delta(\#F).
\end{equation}

\medskip

\noindent {\it Arrangement 3.} Suppose that $F=\{f\}$. Then
\[
\Big\Vert \sum_{f\in F} \chi_{f^\delta} \Big\Vert_{\frac{k+1}{k}} = \delta^{\frac{k}{k+1}} = \big(\delta \#F\big)^{\frac{k}{k+1}}.
\]

\medskip

Note that our bounds \eqref{arrangement1Bd} and \eqref{arrangement2Bd} for Arrangements 1 and 2 are stronger than the corresponding estimate \eqref{LpBound} from Theorem \ref{mainThm}. In this direction, we will first prove a variant of Theorem \ref{mainThm}, where the non-concentration condition \eqref{ballCondition} is replaced by a (local) two-ends type non-concentration condition on the set of curves passing through each point. This is Proposition \ref{maximalFnBdNonconcentrated} below. Informally, the statement is as follows
\begin{maximalFnBdNonconcentratedInformal}
Let $k\geq 1$ and let $\delta>0$. Let $F$ be a set of functions that come from a uniformly smooth family of curves. Suppose that for a typical point $x\in \RR^2$, a typical pair of curves from $F$ whose $\delta$ neighborhoods contain $x$ diverge from each other at speed at least $t^k$ in a neighborhood of $x$. Then

\begin{equation}\label{NonconcentratedPropLpBoundInformal}
\Big \Vert \sum_{f\in F}\chi_{f^\delta}\Big\Vert_{\frac{k+1}{k}}\leq \delta(\#F).
\end{equation}
\end{maximalFnBdNonconcentratedInformal}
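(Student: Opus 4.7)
The plan is to carry out a dyadic pigeonholing on the pointwise multiplicity $m(x) = \sum_{f \in F} \chi_{f^\delta}(x)$, and at each multiplicity scale $\mu$ cover the level set $E_\mu = \{x : m(x) \sim \mu\}$ by pairwise incomparable broad $\mu$-rich $(\delta; k)$ tangency rectangles, after which Theorem \ref{RectBoundThm} supplies the count. This is morally the strategy labeled ``Arrangement 2'' in the introduction, with the two-ends hypothesis replacing the assumption of genuine $(k-1)$-st order tangency between pairs of curves.

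Concretely, it suffices (up to a $\log(1/\delta)$ loss absorbed into $\delta^{-\eps}$) to prove the single-scale bound $\mu^{(k+1)/k} |E_\mu| \lesssim \delta^{-\eps}(\delta \#F)^{(k+1)/k}$ for each dyadic $\mu$. For each $x \in E_\mu$, fix a representative curve $f_0 \in F$ with $x \in f_0^\delta$ and let $a$ be the $t$-coordinate of $x$. Define $R_x$ to be the $(C\delta; k)$ tangency rectangle consisting of the $C\delta$-neighborhood of the graph of $f_0$ over the interval $[a - \delta^{1/k}, a + \delta^{1/k}]$, for a sufficiently large constant $C$. If $C$ is chosen correctly, any other $g \in F$ with $x \in g^\delta$ that does not diverge from $f_0$ at rate $|t - a|^k$ on this interval is automatically tangent to $R_x$, while the two-ends hypothesis guarantees that for a typical $x$ a typical pair $(f_0, g)$ does diverge at that rate, realizing the broadness lower bound of Definition \ref{robustlyBroadDefn}. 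After two further layers of pigeonholing---one to restrict to $x$ for which the two-ends condition holds, another to fix the order of tangency exhibited by a typical pair---one extracts a rectangle $R_x$ that is simultaneously $\gtrsim \mu$-rich and broad. Taking a maximal pairwise incomparable subfamily $\mathcal{R} \subset \{R_x\}$, the $O(1)$-fold covering of $E_\mu$ by dilates of $\mathcal{R}$ yields $|E_\mu| \lesssim \#\mathcal{R} \cdot \delta^{(k+1)/k}$.

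Applying Theorem \ref{RectBoundThm} to $\mathcal{R}$ now gives $\#\mathcal{R} \leq \delta^{-\eps}(\#F/\mu)^{(k+1)/k}$, and combining these bounds produces
$$\mu^{(k+1)/k} |E_\mu| \leq \delta^{-\eps}\, \mu^{(k+1)/k} (\#F/\mu)^{(k+1)/k}\, \delta^{(k+1)/k} = \delta^{-\eps}(\delta \#F)^{(k+1)/k},$$
as desired; summing dyadically over $\mu$ and taking $(k+1)/k$-th roots yields \eqref{NonconcentratedPropLpBoundInformal}. The principal obstacle is the construction step: the two-ends hypothesis is a ``typical point, typical pair'' statement, and converting it into a uniform pointwise assertion that each selected rectangle is both rich and broad (with constants independent of $x$) requires layered pigeonholing that accounts for the multiplicity scale, the order of tangency actually realized by the curves, and the failure set where the two-ends condition breaks down. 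A secondary subtlety is that one must arrange, before invoking Theorem \ref{RectBoundThm}, that the resulting rectangles are truly pairwise incomparable (rather than merely distinct), which is handled by a standard greedy extraction.
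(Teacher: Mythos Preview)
Your argument has a genuine gap at the richness step. The two-ends hypothesis says that typical pairs of curves through $x$ diverge at speed \emph{at least} $t^k$; it does not say they diverge at speed \emph{exactly} $t^k$. Consequently, the $(\delta;k)$ rectangle $R_x$ you build around $f_0$ over an interval of length $\delta^{1/k}$ need not be $\mu$-rich: a curve $g$ with $x\in g^\delta$ only satisfies $|g(a)-f_0(a)|\lesssim\delta$, and on $[a-\delta^{1/k},a+\delta^{1/k}]$ one can have $|g(t)-f_0(t)|\sim |t-a|\cdot|g'(a)-f_0'(a)|$, which is of order $\delta^{1/k}\gg\delta$ whenever $g$ and $f_0$ intersect transversely. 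In the extreme case where all intersections are transverse (Arrangement 1 in the paper), each $(\delta;k)$ rectangle is tangent to only $O(1)$ curves, and Theorem \ref{RectBoundThm} at scale $\delta$ gives nothing. Your identification of the strategy with ``Arrangement 2'' is the source of the error: Arrangement 2 is precisely the special case where pairs are tangent to order $k-1$, i.e.\ diverge at speed exactly $t^k$, which is strictly stronger than the hypothesis here.

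The paper resolves this by induction on $k$. The base case $k=1$ is an $L^2$ estimate via transverse intersections. For the induction step, one does \emph{not} work at scale $\delta$; instead, Proposition \ref{rectangleDecompProp} locates an intermediate scale $\delta'\in[\delta,1]$ at which the curves through a typical point genuinely cluster into $(\delta';k)$ rectangles that are both $\mu$-rich and broad, so that Theorem \ref{RectBoundThm} applies at scale $\delta'$. Inside each such rectangle one rescales to the unit square; the rescaled curves now satisfy the hypothesis with $k-1$ in place of $k$, and the induction hypothesis gives an $L^{k/(k-1)}$ bound, which is interpolated against the trivial $L^1$ bound to produce the $L^{(k+1)/k}$ estimate inside each rectangle. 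The multi-scale structure and the descent in $k$ are both essential; a single-scale argument at $\delta$ cannot succeed.
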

Note that if \eqref{NonconcentratedPropLpBoundInformal} is established for some value of $k$, then the analogous result immediately follows for all larger $k$ by interpolation with the trivial $L^1$ estimate \eqref{L1BdIntro}. This observation will play an important role in the proof. We prove Proposition \ref{maximalFnBdNonconcentrated} by induction on $k$. In the inequalities that follow, we will ignore all constants that are independent of $\delta$, and all factors of the form $\delta^{-\eps}$.

The base case $k=1$ is essentially the estimate \eqref{ElemL2Bd}. For the induction step, we select the smallest $\rho\in[\delta,1]$ so that the intersection of a typical pair of curves is localized to a $\rho\times\rho^{1/k}$ curvilinear rectangle. This allows us to find a set $\mathcal{R}$ of $(\rho;k)$ tangency rectangles, each of which have roughly the same richness $\mu$, so that
\begin{equation}\label{LpNormBreaksOverRectangles}
\int \Big(\sum_{f\in F} \chi_{f^\delta} \Big)^{\frac{k+1}{k}} \leq \sum_{R\in\mathcal{R}}\int_R \Big(\sum_{\substack{f\in F \\ f \sim R}} \chi_{f^\delta}\Big)^{\frac{k+1}{k}}.
\end{equation}
Furthermore, the tangency rectangles in $\mathcal{R}$ are broad, and hence by Theorem \ref{RectBoundThm} we have $
\#\mathcal{R}\leq \big(\frac{\#F}{\mu}\big)^{\frac{k+1}{k}}.$
If $\rho$ has size roughly $\delta$, then we are in the situation of Arrangement 2 and we can immediately apply \eqref{arrangement2Bd}. If instead $\rho$ is substantially larger than $\delta$ (and hence $\delta/\rho$ is substantially smaller than $1$), then our definition of $\rho$ has the following consequence: If we rescale a tangency rectangle $R\in\mathcal{R}$ to the unit square, then the images of the functions $\{f\in F\colon f \sim R\}$ under this rescaling satisfy the hypothesis of (the informal version of) Proposition \ref{maximalFnBdNonconcentrated}, with $k-1$ in place of $k$. Denote the image of $f$ under this rescaling by $\tilde f$, and let $\tilde \delta=\delta/\rho$. Then we have 
\begin{equation}\label{introLpInterpolation}
\Vert h\Vert_{\frac{k+1}{k}}^{\frac{k+1}{k}}  
\leq \Vert h\Vert_{1}^{\frac{1}{k}} \Vert h\Vert_{\frac{k}{k-1}} 
\leq \Big(\tilde\delta \mu \Big)^{\frac{1}{k}}\Big( \tilde\delta\mu \Big)
=\big(\frac{\delta}{\rho}\mu\big)^{\frac{k+1}{k}},\qquad h = \sum_{\substack{f\in F \\ f \sim R}} \chi_{\tilde f^{\tilde \delta}}.
\end{equation}
In the above inequality, we use \eqref{L1BdIntro} to obtain an $L^1$ estimate, and we use the induction hypothesis to obtain an $L^{\frac{k}{k-1}}$ estimate. Note that the rescaling from $R$ to the unit square distorts volumes by a factor of $\rho^{1+1/k}$, and thus \eqref{introLpInterpolation} says that for each $R\in\mathcal{R}$ we have
\begin{equation}\label{boundInsideR}
\int_R \Big(\sum_{\substack{f\in F \\ f \sim R}} \chi_{f^\delta}\Big)^{\frac{k+1}{k}}\leq \big(\delta\mu)^{\frac{k+1}{k}}.
\end{equation}
Inserting the estimate \eqref{boundInsideR} into \eqref{LpNormBreaksOverRectangles} and using our bound on the size of $\mathcal{R}$, we obtain 
\[
\int \Big(\sum_{f\in F} \chi_{f^\delta} \Big)^{\frac{k+1}{k}}
\leq \sum_{R\in\mathcal{R}}\big(\delta\mu)^{\frac{k+1}{k}}
\leq (\delta\#F)^{\frac{k+1}{k}},
\]
which is \eqref{NonconcentratedPropLpBoundInformal}. This closes the induction. The details of this argument are discussed in Section \ref{tangenciesToMaximalFnSection}.

Finally, we remark that Arrangements 1 and 2 satisfy the hypotheses of Proposition \ref{maximalFnBdNonconcentrated}, and thus are amenable to the above argument. Arrangement 3 does not satisfy the hypotheses of Proposition \ref{maximalFnBdNonconcentrated}, and indeed the conclusion of Proposition \ref{maximalFnBdNonconcentrated} is false for Arrangement 3. The final step in the proof of Theorem \ref{mainThm} is to reduce an arbitrary arrangement of curves that forbid $k$--th order tangency and satisfy the non-concentration condition \eqref{ballCondition} to a collection of (rescaled) non-interacting sub-arrangement, each of which satisfies the hypotheses of Proposition \ref{maximalFnBdNonconcentrated}. This is a standard ``two-ends rescaling'' type argument.


\subsection{Paper organization}
In Sections \ref{tangencyRectSection} and \ref{tangenciesSemiSmallVolSec}, we execute the proof sketch described in Section \ref{introProofSketch} to prove Theorem \ref{RectBoundThm}. In Section \ref{tangenciesToMaximalFnSection}, we continue following the proof sketch to show how Theorem  \ref{RectBoundThm} implies Proposition \ref{maximalFnBdNonconcentrated}. The remaining Sections \ref{pfOfMainThmSec}, \ref{proofOfThmMainThmMaximalKakeyaSection}, \ref{proofOfTheoremLpMaximalFnBdSec}, and \ref{reductionMainThmsSection} are devoted to the proofs of Theorems \ref{mainThm}, \ref{mainThmMaximalKakeya}, \ref{LpMaximalFnBd}, and \ref{KaufmanThm} + \ref{generalizedFurstenbergSetsThm}, respectively.

\subsection{Thanks}
The author would like to thank Young-Heon Kim for helpful conversations and discussions about Gronwall's inequality, which helped shape Section \ref{jetLiftSection}. The author would like to thank Shaoming Guo for helpful conversations and discussions about local smoothing and its implications for maximal functions over curves, which helped shape Section \ref{proofOfTheoremLpMaximalFnBdSec}. The author would like to thank Mingfeng Chen, Jonathan Hickman, Sanghyuk Lee, Chenjian Wang, Tongou Yang, and the anonymous referees for suggestions and corrections to an earlier version of this manuscript. The author was supported by an NSERC Discovery grant and by the Nankai Zhide Foundation.


\subsection{Notation}
We use $A\lesssim B$ or $A = O(B)$ or $B = \Omega(A)$ to mean $A\leq KB$, where $K$ is a quantity that may depend on the parameter $k$ from the statement of Theorem \ref{mainThm}. Some of the intermediate results used to prove Theorem \ref{mainThm} will be stated in slightly greater generality; in these cases the implicit constant $K$ will also be allowed to depend on the ambient dimension, which will be denoted by $\RR^n$. When these intermediate results are applied to prove Theorem \ref{mainThm}, we will always have $n\leq k+1$. If $K$ is allowed to depend on an additional parameter $\eps$, then we denote this by $A\lesssim_\eps B$ or $A = O_\eps(B)$. Finally, if $A\lesssim B$ and $B\lesssim A$, then we write $A\sim B$. The notation $f\sim R$ is also used to denote tangency between functions and rectangles, but it will always be apparent from context which of these two meanings is intended.

Unless otherwise specified, all intervals will be assumed to be subsets of $[0,1]$, and all functions will be assumed to have domain $[0,1]$ and co-domain $\RR$. We abbreviate $C^k([0,1])$ as $C^k$, and $\Vert f \Vert_{C^k([0,1])}$ as $\Vert f \Vert_{C^k}.$


\section{Curves and tangency rectangles}\label{tangencyRectSection}
In this section we will state the precise version of Theorem \ref{RectBoundThm}, and begin the proof. We begin with precise versions of the informal definitions from Section \ref{introTangencySection}. In what follows, the vertical $\delta$ neighborhood of a set $S\subset\RR^2$ is the union of vertical line segments of length $2\delta$ centered at the points of $S$.

\begin{defn}\label{defnTangencyRectangle}
Let $\delta,T>0$ and $k\geq 1$. A $(\delta;k;T)$ \emph{tangency rectangle} is the (closed) vertical $\delta$ neighborhood of the graph of a function with $C^k$ norm at most 1, above a closed interval of length $(T\delta)^{1/k}$. When $T=1$, we abbreviate this to $(\delta;k)$ tangency rectangle, or $(\delta;k)$  rectangle.
\end{defn}

\begin{defn}
If $R$ is a $(\delta;k;T)$ tangency rectangle above an interval $I$, and $f\colon [0,1]\to\RR$, we say $f$ is \emph{tangent} to $R$ if the graph of $f$ above $I$ is contained in $R$. We denote this by $f\sim R$.
\end{defn}

Next, we will describe what it means for two tangency rectangles to be distinct.

\begin{defn}\label{comparableDefn}
We say two $(\delta;k;T)$ tangency rectangles are \emph{comparable} if there is a $(2^k\delta;k;T)$ rectangle that contains them both. Otherwise they are \emph{incomparable}.
\end{defn}
The factor $2^k$ in the above definition was chosen to make the following true: if $R_1,R_2$ are incomparable $(\delta; k; T)$ rectangles above intervals $I_1$ and $I_2$ respectively, and if $R_1$ and $R_2$ are both tangent to a common function $f$ with $C^k$ norm at most 1, then $I_1$ and $I_2$ are disjoint.

\begin{defn}\label{robustlyBroadDefn}
Let $R$ be a $(\delta;k)$ rectangle and let $F$ be a set of functions from $[0,1]$ to $\RR$. Let $\mu\geq 1$. We say that $R$ is $\mu$--\emph{rich} and $\eps$--\emph{robustly broad with error at most} $B$ (with respect to $F$) if there is a set $F(R)\subset \{f\in F\colon f\sim R\}$ with $\#F(R)\geq\mu$ that has the following property: For every $\rho\in [\delta,1]$, every $T\in [1, \rho^{-1}]$, and every $(\rho; k; T)$ rectangle $R'$ containing $R$, we have
\begin{equation}\label{fExtendedRectangleBound}
\#\{f\in F(R)\colon f \sim R'\} \leq B T^{-\eps}\#F(R).
\end{equation}
\end{defn}
\begin{rem}\label{caveatRemarkMu}
Note that if $R$ is $\mu$-rich and $\eps$--robustly broad with error at most $B$, then $\mu\geq B^{-1}(2\delta)^{-\eps}$. This is because we can choose $\rho=2\delta$, $T = (2\delta)^{-1}$, and choose $R'$ to be the vertical $2\delta$ neighbourhood of the graph of a function $f\in F(R)$. 
\end{rem}

During informal discussion, we will say that $R$ is \emph{robustly broad} if we do not wish to emphasize the role of $\mu,$ $\eps$, or $B$.

\begin{rem}
When $k=1$, a $(\delta;1)$ rectangle $R$ is robustly broad if many of the pairs $f_1,f_2\in F(R)$ have graphs that intersect transversely. If $k>1$, then all of the functions in $F(R)$ will intersect (almost) tangentially, but if $R$ is robustly broad then many pairs of functions will diverge outside of $R$ at speed roughly $t^k$---this is the fastest possible speed of divergence that is allowed by the geometry of $R$ and the constraint that the functions have $C^k$ norm at most 1. 
\end{rem}

With these definitions, we can now precisely state our incidence bound. 

\begin{thm}\label{RectBoundThm}
Let $k,N\geq 1$ and $\eps>0$. Then there exists $\eta,\delta_0>0$ so that the following holds for all $\delta\in(0,\delta_0]$. 

Let $F$ be a set of (univariate) polynomials of degree at most $\delta^{-\eta}$, each of which has $C^{k}$ norm at most 1. Suppose that $\#F\leq\delta^{-N}$. Let $\mu\geq 1$ and let $\mathcal{R}$ be a set of pairwise incomparable $(\delta,k)$ rectangles that are $\mu$-rich and $\eps$-robustly broad with error at most $\delta^{-\eta}$ with respect to $F$. Then
\begin{equation}\label{RectBdEqn}
\# \mathcal{R} \leq \delta^{-\eps} \Big(\frac{\# F}{\mu}\Big)^{\frac{k+1}{k}}.
\end{equation}
\end{thm}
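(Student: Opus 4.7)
My plan is to argue by induction on $\#F$, with the parameters $k$, $\eps$, and $\eta$ held fixed; the inductive hypothesis asserts the bound \eqref{RectBdEqn} for all strictly smaller polynomial families. The heart of the matter will be a polynomial partitioning argument carried out in an auxiliary space $\RR^{k+1}$ obtained by jet-lifting. Specifically, to each $f\in F$ I associate the curve
\[
\zeta_f=\{(t,f(t),f'(t),\ldots,f^{(k-1)}(t)):t\in[0,1]\}\subset\RR^{k+1},
\]
and to each $R\in\mathcal{R}$ a curvilinear \emph{tangency prism} $\hat R\subset\RR^{k+1}$ of dimensions roughly $\delta^{1/k}\times\delta\times\delta^{(k-1)/k}\times\cdots\times\delta^{1/k}$, designed so that $f\sim R$ forces $\zeta_f$ to traverse $\hat R$ along an arc of length $\approx\delta^{1/k}$. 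Applying Guth--Katz partitioning, I pick a polynomial $Q$ of degree $E=\delta^{-\eta'}$ (for $\eta'\ll\eta$) whose zero set divides $\RR^{k+1}$ into $\sim E^{k+1}$ cells, each meeting at most $(\#F)E^{-k}$ curves $\zeta_f$ (here I use B\'ezout together with uniform smoothness) and each containing at most $(\#\mathcal{R})E^{-k-1}$ prisms.

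In the \emph{cellular case}, where a definite fraction of the prisms is strictly contained in cells, the inductive hypothesis applied inside each cell gives a contribution bounded by $((\#F)E^{-k})^{(k+1)/k}\cdot E^{k+1}=(\#F/\mu)^{(k+1)/k}\mu^{(k+1)/k}$, and a careful accounting of robust-broadness inside subcells lets the induction close with an $\eps$-loss absorbed by the $\delta^{-\eps}$ factor. In the \emph{algebraic case}, a definite fraction of the prisms meets $\{Q=0\}$, and since each prism has thickness $\delta$ in the worst direction, every such prism lies in the $\delta^{1/k}$-neighborhood $Z^{*}$ of $\{Q=0\}$. Under the temporary \textbf{Simplifying Assumption (A)}, namely that $Z^{*}$ is a vertical $\delta^{1/k}$-neighborhood of a $1$-Lipschitz graph $\{x_{k-1}=L(t,x_{0},\ldots,x_{k-2})\}$, and \textbf{Simplifying Assumption (B)}, that each $\zeta_{f}\subset Z^{*}$, I deduce that every $f$ contributing satisfies the approximate ODE
\[
\bigl|f^{(k-1)}(t)-L\bigl(t,f(t),\ldots,f^{(k-2)}(t)\bigr)\bigr|\lesssim\delta^{1/k}.
\]
Two curves incident to a common $\hat R$ share approximate initial data to tolerance $\delta^{1/k}$ across all derivatives of order $<k$, so a quantitative Gronwall inequality forces $|f-g|\lesssim\delta^{1/k}$ uniformly on $[0,1]$. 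But this means all of $F(R)$ lies inside a single $(C\delta;k;T)$ dilate of $R$ with $T\approx 1$, flatly contradicting robust broadness (Definition \ref{robustlyBroadDefn}) once $\mu>B$. Hence in this case $\mathcal{R}$ must be empty.

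The main obstacle, and the step requiring genuinely new ideas, is removing Simplifying Assumption (A). The set $Z^{*}$ is merely the $\delta^{1/k}$-neighborhood of a semi-algebraic variety of bounded degree in $[-1,1]^{k+1}$, hence a semi-algebraic set of $(k+1)$-volume $\lesssim\delta^{1/k}$. I plan to prove a structure theorem: \emph{any semi-algebraic subset of $[0,1]^{k+1}$ of bounded complexity and $(k+1)$-volume $\le V$ can be written as a union of a bounded number of pieces, each of which is either the $V^{1/(?)}$-neighborhood of a Lipschitz graph with controlled constant, or else has image under the projection to the first $k$ coordinates of $k$-volume $\le V^{?}$}. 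On the Lipschitz-graph pieces the Gronwall argument runs directly. On the remaining piece I project to $\RR^{k}$ and reapply the structure theorem; the Tarski--Seidenberg theorem ensures the image is semi-algebraic of controlled complexity. After at most $k$ iterations only the Lipschitz-graph pieces remain, and the argument concludes. Assumption (B) can be removed by a more routine device: partitioning $\zeta_{f}\cap Z^{*}$ into connected arcs, pigeonholing the typical arc-length $\ell$, covering by axis-parallel boxes at scale $\ell$ and rescaling to reduce to the case $\ell=1$. Throughout, $\eps$-losses from iteration, rescaling, and polynomial partitioning must be balanced against the $\delta^{-\eps}$ slack in \eqref{RectBdEqn}, which forces a careful ordering: first fix $\eps$, then choose $\eta\ll\eps$, then the partitioning degree $E$, then $\delta_{0}$ small enough that the $O(\log\log\delta^{-1})$ nested inductions and projections all fit within the allotted $\eps$-budget.
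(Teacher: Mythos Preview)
Your overall strategy---jet-lift to $\RR^{k+1}$, polynomial partition, handle the cellular case by induction, and handle the algebraic case via a semi-algebraic structure theorem plus Gronwall---matches the paper's approach exactly; in fact your sketch closely parallels the paper's own outline in Section~\ref{introProofSketch}. However, there is a genuine gap in the cellular case. If you induct directly on the statement \eqref{RectBdEqn} with the parameter $\mu$ present, the numerology does not close: summing the inductive bound $\delta^{-\eps}(\#F_\Omega/\mu)^{(k+1)/k}$ over the $\sim E^{k+1}$ cells with $\#F_\Omega\lesssim \delta^{-k\eta}E^{-k}\#F$ returns $C(k)\delta^{-O(\eta)}\cdot\delta^{-\eps}(\#F/\mu)^{(k+1)/k}$, i.e.\ the same bound multiplied by a constant strictly larger than $1$. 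Your phrase ``a careful accounting of robust-broadness inside subcells lets the induction close with an $\eps$-loss absorbed by the $\delta^{-\eps}$ factor'' does not explain where the slack comes from. The paper resolves this in two steps you omit: first, a random-sampling argument (using Chernoff bounds, with some care to verify that robust broadness survives the sampling) reduces to the case $\mu\lesssim\delta^{-O(\eps)}$; second, one proves instead the weaker bound $\#\mathcal{R}\le C\delta^{-B\eps}(\#F)^{(k+1)/k+\eps}$ (Proposition~\ref{tangencyRectanglesAlgCurvesProp}). The extra $+\eps$ in the exponent produces a factor $E^{-k\eps}$ in the cellular computation, which absorbs all stray constants once $E$ is chosen of the form $\delta^{-c\eta/\eps}$.

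A minor correction in the algebraic case: under your Simplifying Assumptions (A) and (B), Gronwall gives $|f-g|\lesssim\delta^{1/k}$ on $[0,1]$, which places $F(R)$ inside a $(\rho;k;T)$ rectangle with $\rho\sim\delta^{1/k}$ and $T\sim\delta^{-1/k}$, not $T\approx 1$ as you wrote. It is precisely because $T$ is a positive power of $\delta^{-1}$ that robust broadness yields the contradiction. When Assumption (A) is removed, the Lipschitz constant in the structure theorem may be large, so Gronwall only controls $|f-g|$ on a short subinterval; the paper tracks this carefully via the set $\mathcal{X}$ of scales in Proposition~\ref{cutCurvesAdaptedSASet}, ultimately producing a $(\tau;k+1;c)$ rectangle with $\tau\in[\delta,\delta^{c}]$ rather than a single scale.
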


\begin{rem}
The hypotheses of Theorem \ref{RectBoundThm} state that $\mu\geq 1$. However, the theorem is only interesting when $\mu\geq \delta^{\eta}(2\delta)^{-\eps}$. Indeed, if $1\leq \mu < \delta^{\eta}(2\delta)^{-\eps}$, then by Remark \ref{caveatRemarkMu} we automatically have $\mathcal{R} = \emptyset.$
\end{rem}


\subsection{Initial reductions}
Our proof of Theorem \ref{RectBoundThm} will follow the outline discussed in Section \ref{introProofSketch}. We begin by reducing Theorem \ref{RectBoundThm} to a version that is weaker in several respects. First, the hypotheses are strengthened: we only need to consider the case where $\mu$ has size roughly $\delta^{-\eps}$. Second, the conclusion is weakened: the exponent $\frac{k+1}{k}$ is weakened to $\frac{k+1}{k}+\eps$.

\begin{prop}\label{tangencyRectanglesAlgCurvesProp}
Let $k\geq 1$, $\eps>0$. Then there exist (large) constants $B=B(k)$ and $C= C(k,\eps)$ and a small constant $\eta=\eta(k,\eps)>0$ so that the following holds. Let $F$ be a set of polynomials of degree at most $\delta^{-\eta}$, each of which has $C^{k}$ norm at most 1. Let $\mathcal{R}$ be a set of pairwise incomparable $(\delta,k)$ rectangles, each of which are $\mu$-rich for some $\mu\geq 1$ (see Remark \ref{caveatRemarkMu}), and $\eps$ robustly broad with error at most $\delta^{-\eta}$ with respect to $F$. Then
\begin{equation}\label{RectBdEqnForPolysMuSmall}
\# \mathcal{R} \leq C  \delta^{-B \eps} (\# F)^{\frac{k+1}{k}+\eps}.
\end{equation}
\end{prop}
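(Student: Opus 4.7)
The plan is to induct on $\#F$, following the outline in Section \ref{introProofSketch} closely. In the base case (when $\#F \lesssim \delta^{-O_k(\eps)}$) the bound \eqref{RectBdEqnForPolysMuSmall} is trivial after choosing $C$ large enough. For the inductive step, associate to each $f \in F$ its $(k-1)$-jet lift
\[
\zeta_f = \bigl\{(t, f(t), f'(t), \ldots, f^{(k-1)}(t)) : t \in [0,1]\bigr\} \subset \RR^{k+1},
\]
an algebraic curve of degree $\lesssim\delta^{-\eta}$; and to each $R \in \mathcal{R}$ a curvilinear tangency prism $\hat R \subset \RR^{k+1}$ of dimensions $\sim\delta^{1/k}\times\delta\times\delta^{(k-1)/k}\times\cdots\times\delta^{1/k}$, arranged so that $f\sim R$ forces $\zeta_f$ to traverse $\hat R$ in a single arc of length $\sim\delta^{1/k}$, and so that incomparable $R_1,R_2$ have essentially disjoint prisms. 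This recasts the question as bounding curve-prism incidences in $\RR^{k+1}$.

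Set $E := \delta^{-c\eps}$ for a small $c=c(k)$ to be chosen, and apply the Guth--Katz polynomial partitioning theorem to the prism centers, producing $Q$ of degree $\leq E$ whose complement $\RR^{k+1}\setminus\{Q=0\}$ has $\lesssim E^{k+1}$ open cells, each meeting at most $\#\mathcal{R}/E^{k+1}$ prism centers. A B\'ezout argument (using that each $\zeta_f$ is algebraic of degree $\lesssim \delta^{-\eta}$) shows that $\zeta_f \cap \{Q=0\}$ has $\lesssim E\delta^{-\eta}$ points, so $\zeta_f$ enters at most that many cells. In the \emph{cellular case}---when at least half of $\mathcal{R}$ consists of prisms fully contained in some cell---a dyadic pigeonhole over cells (by the number of curves they meet) and the induction hypothesis yield
\[
\#\mathcal{R} \lesssim E^{k+1}\cdot C\delta^{-B\eps}\bigl((\#F)E^{-k}\delta^{-\eta}\bigr)^{(k+1)/k+\eps} = C\delta^{-B\eps} E^{-k\eps}\delta^{-O_k(\eta)}(\#F)^{(k+1)/k+\eps}.
\]
Choosing $\eta$ small compared to $c\eps^2$, the factor $E^{-k\eps} = \delta^{ck\eps^2}$ absorbs both the implicit constant and the $\delta^{-O_k(\eta)}$ loss, closing the induction.

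The remaining \emph{algebraic case}---when at least half the prisms meet $\{Q=0\}$---is the main innovation. Such prisms lie in $Z^*$, the $\delta^{1/k}$-neighborhood of $\{Q=0\}\cap[-1,1]^{k+1}$, a semi-algebraic set of $(k{+}1)$-volume $\lesssim \delta^{1/k}E$. The structure theorem developed in Section \ref{tangenciesSemiSmallVolSec} will decompose any such small-volume semi-algebraic set into $\poly(E,\delta^{-\eta})$ pieces, each either (i) the $O(\delta^{1/k})$-thickening of a graph $\{x_{k-1} = L(t,x_0,\ldots,x_{k-2})\}$ for some $L$ Lipschitz of controlled constant, or (ii) a piece whose projection to the first $k$ coordinates has small $k$-volume. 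Pigeonhole on a single piece; inside a type-(ii) piece project to $\RR^k$ and iterate the decomposition (using Tarski--Seidenberg to preserve semi-algebraicity), which terminates after at most $k$ steps in a type-(i) piece. There, each incident $f$ approximately satisfies the ODE $|f^{(k-1)}(t) - L(t,f(t),\ldots,f^{(k-2)}(t))| \lesssim \delta^{1/k}$; and any two $f,g$ sharing a prism $\hat R$ must match $(k-1)$-jets at some $t_0 \in I_R$ to error $\delta^{1/k}$. A quantitative Gronwall inequality applied to the first-order system for $(f,f',\ldots,f^{(k-1)})$---whose driving field is Lipschitz of bounded constant---then yields $\sup_{[0,1]}|f-g| \lesssim \delta^{1/k}$. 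Consequently $f$ and $g$ both lie in a common $(\delta;k;T)$-rectangle with $T \sim \delta^{-1/k}$, contradicting $\eps$-robust broadness with error $\delta^{-\eta}$ as soon as $T^\eps \gg \delta^{-\eta}$, i.e.\ once $\eta \ll \eps/k$. This forces the algebraic case to contribute nothing, completing the induction.

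The principal obstacle is the structure theorem invoked in the algebraic case: decomposing a semi-algebraic set of small $(k{+}1)$-volume into a polynomially bounded number of thin Lipschitz-graph neighborhoods, with Lipschitz constant tame enough that Gronwall's inequality only loses an $e^{O(1)}$-factor rather than an $e^{\delta^{-\Omega(1)}}$-factor, is the technical heart of the argument. Once this decomposition is in hand, the remaining bookkeeping---iterating the decomposition inside residual type-(ii) pieces, controlling all semi-algebraic complexities, and ordering the parameter choices $\eps \gg c \gg \eta$ correctly---is forced by the numerology already visible in Section \ref{introProofSketch}.
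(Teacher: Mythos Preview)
Your overall strategy matches the paper's: induction on $\#F$, jet lifts to $\RR^{k+1}$, polynomial partitioning, and a cellular/algebraic dichotomy. The cellular case is essentially correct (the paper takes $E \sim \delta^{-3\eta/\eps}$ rather than $\delta^{-c\eps}$, but either parameterization closes the induction once $\eta$ is small enough).

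The algebraic case, however, has two genuine gaps. First, you assert that each incident $f$ ``approximately satisfies the ODE'' and then apply Gronwall on all of $[0,1]$ to get $\sup_{[0,1]}|f-g|\lesssim\delta^{1/k}$. But $\zeta_f$ only lies in $Z^*$ on the short segments where it meets a prism; nothing forces $\zeta_f\subset Z^*$ globally. The paper addresses this (Step 3.1--3.3) by a two-ends reduction along each curve: pigeonhole to find a common dyadic length $\ell$ so that most incidences occur inside intervals $I_f$ of length $\ell$ with $\zeta_f|_{I_f}\subset Z^*$, then \emph{rescale} each $(\ell^k;k)$ rectangle to the unit square before invoking the structure theorem. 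This rescaling is not optional bookkeeping---without it the ODE hypothesis holds nowhere useful.

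Second, your hope that the structure theorem delivers a Lipschitz graph with $L=O(1)$ is too optimistic. In Proposition~\ref{coverByLipschitzGraph} the residual set $T_0$ has measure $\poly(M)(L^{-1/n}+|W|/u)$, so making it negligible forces $L$ to be a negative power of $\delta$; Gronwall on an interval of length $1$ then loses $e^{\delta^{-\Omega(1)}}$, which is fatal. The paper's fix (Proposition~\ref{cutCurvesAdaptedSASet}) is to iterate the decomposition with a sequence $L_0<L_1<\cdots<L_n$ and apply Gronwall only on intervals of length $L_i^{-1}$. The conclusion is not closeness on $[0,1]$ but rather $|f-g|\lesssim\tau$ on an interval of length $\tau^{1/(k+1)}$ for some $\tau\in[\delta,\delta^{c}]$ with $c=c(k)>0$. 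After undoing the rescaling, this yields a $(\rho_1;k;T)$ rectangle with $T\gtrsim\delta^{-c\eps/(k+1)}$---a much smaller power of $\delta^{-1}$ than your claimed $\delta^{-1/k}$, but still enough to contradict $\eps$-robust broadness once $\eta$ is chosen small relative to $c\eps^2$.
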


To reduce to the case where $\mu$ is small, we will refine the set $F$ by randomly keeping each element with probability roughly $\mu^{-1}$. To ensure that the resulting refinement satisfies the hypotheses of Proposition \ref{tangencyRectanglesAlgCurvesProp}, we will use the following special case of Chernoff’s inequality \cite{Cher}.

\begin{thm}[Chernoff]\label{ChernoffThm}
Let $X_1,\ldots,X_n$ be independent random variables taking value 1 with probability $p$ and value 0 with probability $1-p$. Let $X$ denote their sum. Let $A\geq 2$. Then

\begin{equation}\label{ChernoffBd}
\mathbb{P}\big(X \leq pn/2  \big)  < e^{\frac{-pn}{8}};\qquad \mathbb{P}\big(X \geq Apn \big)  < e^{\frac{-Apn}{6}}.
\end{equation}
\end{thm}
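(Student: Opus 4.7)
The plan is to use the classical Cram\'er--Chernoff exponential moment method: apply Markov's inequality to $e^{\pm tX}$ and optimize in $t>0$. By independence and the identity $\mathbb{E}[e^{tX_i}] = 1-p+pe^t$, together with the elementary estimate $1+y\leq e^y$, I would obtain the moment generating function bound $\mathbb{E}[e^{tX}] = (1-p+pe^t)^n \leq \exp\bigl(pn(e^t-1)\bigr)$, and the analogous bound with $t$ replaced by $-t$.

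For the upper tail, applying Markov to $e^{tX}$ for $t>0$ gives
\[
\mathbb{P}(X \geq Apn) \leq e^{-tApn}\,\mathbb{E}[e^{tX}] \leq \exp\bigl(pn(e^t-1-tA)\bigr),
\]
and optimizing at $t=\ln A$ (valid since $A\geq 2$) yields $\mathbb{P}(X \geq Apn) \leq \exp\bigl(pn(A-1-A\ln A)\bigr)$. To close with the stated constant $1/6$, I would verify that $A-1-A\ln A \leq -A/6$ for all $A\geq 2$; equivalently, $\phi(A) := A\ln A - \tfrac{7}{6}A + 1 \geq 0$. Since $\phi(2) = 2\ln 2 - \tfrac{4}{3} + 1 \approx 0.053 > 0$ and $\phi'(A) = \ln A - \tfrac{1}{6} > 0$ for $A \geq 2$, the inequality follows immediately.

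For the lower tail, the same argument with $-tX$ in place of $tX$ gives
\[
\mathbb{P}(X \leq pn/2) \leq e^{tpn/2}\,\mathbb{E}[e^{-tX}] \leq \exp\bigl(pn(e^{-t} - 1 + t/2)\bigr)
\]
for any $t>0$. Optimizing at $t = \ln 2$ produces the exponent $pn \cdot \tfrac{\ln 2 - 1}{2}$, and since $(1-\ln 2)/2 \approx 0.153 > 1/8$, this is bounded above by $\exp(-pn/8)$ as required.

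This is a classical result and there is no real conceptual obstacle; the only subtlety lies in the numerical calibration of the constants $1/6$ and $1/8$, each of which reduces to a one-variable calculus check of the kind above. In practice one would simply cite \cite{Cher} or a standard probability textbook, since the theorem is invoked here purely as a black-box probabilistic tool for randomly sparsifying the set $F$ in the proof of Theorem \ref{RectBoundThm}.
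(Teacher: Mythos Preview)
Your proof is correct and follows the standard Cram\'er--Chernoff exponential moment method; the paper itself does not prove this result but simply states it as a classical theorem with a citation to \cite{Cher}. Your closing remark is apt: the theorem is used as a black box, and the numerical checks you provide for the constants $1/6$ and $1/8$ are exactly what is needed.
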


We can now explain the reduction from Proposition \ref{tangencyRectanglesAlgCurvesProp} to Theorem \ref{RectBoundThm}.

\begin{proof}[Proof that Proposition \ref{tangencyRectanglesAlgCurvesProp} implies Theorem \ref{RectBoundThm}]
Suppose that Proposition \ref{tangencyRectanglesAlgCurvesProp} is true. Let $k\geq 1$, $\eps>0$, $\delta>0$, $\mu\geq 1$, $F$, and $\mathcal{R}$ satisfy the hypotheses of Theorem \ref{RectBoundThm}. Our goal is to show that if $\eta>0$ and $\delta_0>0$ are selected appropriately (depending on $k$ and $\eps$), then \eqref{RectBdEqn} holds.

First, we may suppose that $\mu\leq\#F$, since otherwise $\mathcal{R}=\emptyset$ and we are done. 

\medskip

\noindent\textbf {Step 1: Random sampling}.
Let $\eps_1=\eps_1(k,\eps)>0$ be a small quantity to be chosen below; we will select $\eps_1$ small compared to $\eps$. If $\mu\leq\delta^{-2\eps_1}$, define $F'=F$ and proceed to the computation \eqref{finalCompRectBd} below. Otherwise, after dyadic pigeonholing the set $\mathcal{R}$ (this decreases the size of $\mathcal{R}$ by $O(\log\# F)\lesssim N\log\delta$) and increasing $\mu$ if necessary, we can suppose that for each $R\in\mathcal{R}$, there is a set $F(R)\subset\{f\in F\colon f\sim R\}$ that satisfies \eqref{fExtendedRectangleBound}, with $\mu\leq \#F(R) < 2\mu$. 

Let $p = (\delta^{2\eps_1}\mu)^{-1}$ (since $\mu>\delta^{-2\eps_1}$, we have $0<p<1$). Let $F'\subset F$ be obtained by randomly selecting each $f\in F$ with probability $p$. $F'$ has expected cardinality $p(\#F)\geq\delta^{-2\eps_1}$. 

\medskip

\noindent\textbf {Step 2: Robust broadness with respect to $F'$}.
We claim that if $\delta_0$ is chosen sufficiently small depending on $\eps_1$, then with probability at least $1/2$ the following is true:
\begin{itemize}
    \item $\#F' \leq 2p(\#F)=2\delta^{-2\eps_1}\mu^{-1}(\#F).$
    \item Each rectangle in $\mathcal{R}$ is $\frac{1}{4}p\mu$-rich and $\eps_1$-robustly broad with error at most $O(\delta^{\eta})$ with respect to $F'$. 
\end{itemize}
The first item holds with probability at least $3/4$ (in fact much higher probability!) by Theorem \ref{ChernoffThm}.  

We will show that the second item also holds with probability at least $3/4$. Fix $R\in\mathcal{R}$ with an associated set $F(R)$. By Theorem \ref{ChernoffThm}, we have
\begin{align*}
&\mathbb{P}\Big[\#(F(R)\cap F')\leq \frac{1}{2}p(\#F(R)) \Big]\leq e^{\frac{-p(\#F)}{8}},\\
&\mathbb{P}\Big[\#(F(R)\cap F')\geq 2p(\#F(R)) \Big]\leq e^{\frac{-p(\#F)}{3}},
\end{align*}
and hence the probability that at least one of these events occurs is at most $e^{-\delta^{-\eps_1}}$. Suppose that neither of these events occur, and hence
\[
p\mu/4 \leq \#(F(R)\cap F') \leq 4p\mu. 
\]
Let $\rho\in[\delta,1]$, let $T\in[1,1/\rho]$, and let $R'\supset R$ be a $(\rho; k; T)$ rectangle. We would like to show that with high probability,
\begin{equation}\label{probabilityEventGood}
\# \{f\in (F(R)\cap F')\colon f\sim R'\}= O\big(\delta^{-\eta} T^{-\eps_1} \mu p\big).
\end{equation}
We will estimate the probability that \eqref{probabilityEventGood} fails. First, we will estimate the probability that
\begin{equation}\label{probabilityEvent}
\# \{f\in (F(R)\cap F')\colon f\sim R'\} > 4\delta^{-\eta} T^{-\eps_1} \mu p.
\end{equation}
Define $n= \#\{f\in F(R)\colon f\sim R'\}$. By hypothesis, the rectangles in $\mathcal{R}$ are $\mu$ rich and $\eps$ robustly broad with error at most $\delta^{-\eta}$ with respect to $F$, and hence $n\leq \delta^{-\eta}T^{-\eps}(\# F(R))\leq 2\delta^{-\eta} T^{-\eps_1}\mu$. Write $4\delta^{-\eta}T^{-\eps_1}\mu p = Apn$, i.e.~$A=\frac{4\delta^{-\eta}T^{-\eps_1}\mu}{n}\geq 2$. Applying Theorem \ref{ChernoffThm} with $n$, $p$, and $A$ as above and using the fact that $T\leq \rho^{-1}\leq\delta^{-1}$, we conclude that the probability that \eqref{probabilityEvent} holds is at most 

\begin{equation}\label{prob26HoldsOneRect}
e^{\frac{-Apn}{6}}= e^{-2\delta^{-\eta}T^{-\eps_1}(\mu p)/6 } \leq e^{-2\delta^{\eps_1-\eta}(\delta^{-2\eps_1})/6 }\leq e^{-\delta^{-\eps_1}}.
\end{equation}
Our goal is to show that with high probability, \eqref{probabilityEventGood} holds for all $\rho\in [\delta,1]$; all $T\in [1,1/\rho]$, and all $(\rho;k;T)$ rectangles $R'$. We claim that it suffices to show that with high probability, \eqref{probabilityEvent} fails when we consider rectangles with the following three properties:
\begin{itemize}
\item[(i)] $\rho$ is of the form $\delta 2^j$ for $j\geq 0$ an integer.
\item[(ii)] $T$ is of the form $2^\ell$ for $\ell\geq 0$ an integer.
\item[(iii)] $R'$ is the vertical neighborhood of the graph of a function from $F$.
\end{itemize}
To verify this claim, we argue as follows. By the triangle inequality, if $C_0 = O(1)$ is chosen appropriately and if there is a rectangle $R'$ for which \eqref{probabilityEventGood} fails with (implicit) constant $C_0$, then there is a rectangle $R''$ satisfying Properties (i), (ii), and (iii), for which \eqref{probabilityEvent} holds. Taking the contrapositive, if \eqref{probabilityEvent} fails with high probability for every rectangle $R'\supset R$ satisfying Properties (i), (ii), and (iii), then \eqref{probabilityEventGood} holds with high probability for every rectangle $R'\supset R$, provided the implicit constant has been chosen appropriately.

We have shown that the probability that \eqref{probabilityEvent} holds for a fixed choice of rectangle $R\in\mathcal{R}$ and rectangle $R'$ is bounded above by \eqref{prob26HoldsOneRect}. Since there are $\delta^{-O(1)}$ rectangles that satisfy Properties (i), (ii), and (iii), we use the union bound to conclude that for a fixed choice of $R\in\mathcal{R}$, the probability that \eqref{probabilityEvent} holds for at least one rectangle $R'$ satisfying Properties (i), (ii), and (iii) is at most $e^{-\delta^{-\eps_1}}\delta^{-O(1)}$. This means that the probability that \eqref{probabilityEventGood} holds for a fixed rectangle $R\in\mathcal{R}$ is at least $1-e^{-\delta^{-\eps_1}}\delta^{-O(1)}$. Since the rectangles in $\mathcal{R}$ are pairwise incomparable, we have $\#\mathcal{R}=O(\delta^{-k-1})$, and hence the probability that \eqref{probabilityEventGood} holds for every rectangle in $\mathcal{R}$ is at least $1-e^{-\delta^{-\eps_1}}\delta^{-O(1)}$. If $\delta_0$ (and hence $\delta$) is selected sufficiently small depending on $k$ and $\eps_1$ (recall that $\eps_1$ in turn depends on $k$ and $\eps$), then this quantity is at least $3/4$. This completes the proof of our claim.

\medskip

\noindent\textbf {Step 3: Applying Proposition \ref{tangencyRectanglesAlgCurvesProp}}.

Next, let $\eps_2=\eps_2(k,\eps)<\eps_1$ be a quantity to be determined below, and let $\eta_1 = \eta_1(k, \eps_2)$ be the quantity from the statement of Proposition \ref{tangencyRectanglesAlgCurvesProp} (with $k$ as above and $\eps_2$ in place of $\eps$). If $\eta\leq\eta_1/2$ and $\delta_0$ is sufficiently small, then the rectangles in $\mathcal{R}$ are $\eps_2$ robustly broad with error at most $\delta^{-\eta_1}$ with respect to $F'$. Thus we can apply Proposition \ref{tangencyRectanglesAlgCurvesProp} (with $\eps_2$ in place of $\eps$ and $\eta_1$ in place of $\eta$) to conclude that 
\begin{equation}\label{finalCompRectBd}
\begin{split}
\#\mathcal{R} & \leq C \delta^{-B \eps_2} (\#F')^{\eps_2} (\# F')^{\frac{k+1}{k}} 
\leq C  \delta^{-B \eps_2} \delta^{-k\eps_2} \Big(\delta^{-2\eps_1}\frac{\#F}{\mu}\Big)^{\frac{k+1}{k}}.
\end{split}
\end{equation}
The result now follows by selecting $\eps_1<\eps/10$; $\eps_2<\eps/(10(B+k))$ (recall that $B$ depends only on $k$); and $\delta_0$ sufficiently small. 
\end{proof}


\subsection{Tangency Rectangles and Tangency prisms}
We now turn to the proof of Proposition \ref{tangencyRectanglesAlgCurvesProp}. We begin by analyzing the structure of tangency rectangles. Recall that a $(\delta;k)$ rectangle is the vertical $\delta$ neighborhood of a function $f$ with $C^k$ norm at most 1, above an interval $I$ of length $\delta^{1/k}$. For notational convenience, we will write this as $R^f(I)$ or $R(I)$, and we will write $I(R)$ to denote the interval $I$ associated to $R$. The next result says that the tangency rectangle $R^f(I)$ is accurately modeled by the $(k-1)$--st order Taylor expansion of $f$.

\begin{lem}[Structure of tangency rectangles]\label{tangencyRectStructure}
Let $R=R^f(I)$ be a $(\delta,k)$ tangency rectangle, with $I = [a, a+\delta^{1/k}]$. Let 
\[
g(t) = f(a)+ \sum_{j=1}^{k-1}\frac{f^{(j)}(a)}{j!}(t-a)^j
\] 
be the $(k-1)$-st order Taylor expansion of $f$ around $a$. Then $R$ is contained in the vertical $2\delta$ neighborhood of the graph of $g$ above $I$.
\end{lem}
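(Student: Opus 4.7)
The plan is a direct application of Taylor's theorem with remainder. A point $(t,y) \in R$ satisfies $t \in I$ and $|y - f(t)| \leq \delta$. By the triangle inequality,
\[
|y - g(t)| \leq |y - f(t)| + |f(t) - g(t)| \leq \delta + |f(t) - g(t)|,
\]
so it suffices to show $|f(t) - g(t)| \leq \delta$ for all $t \in I$.

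Since $g$ is the degree $k-1$ Taylor polynomial of $f$ at $a$, Taylor's theorem with Lagrange remainder gives
\[
f(t) - g(t) = \frac{f^{(k)}(\xi)}{k!}(t-a)^k
\]
for some $\xi$ between $a$ and $t$. Using the two available bounds, namely $\|f\|_{C^k} \leq 1$ (which implies $|f^{(k)}(\xi)| \leq 1$) and $|t-a| \leq \delta^{1/k}$ (since $t \in I = [a, a+\delta^{1/k}]$), we conclude
\[
|f(t) - g(t)| \leq \frac{1}{k!}\,\delta \leq \delta.
\]

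Combining the two displays finishes the argument: every $(t,y) \in R$ lies in the vertical $2\delta$-neighborhood of the graph of $g$ above $I$. There is no real obstacle here; the lemma is essentially a restatement of the quantitative Taylor estimate tailored to the specific aspect ratio $\delta^{1/k} \times \delta$ built into the definition of a $(\delta;k)$ tangency rectangle, and it serves mainly to set up the jet-lift/prism construction that follows.
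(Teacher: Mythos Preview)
Your proof is correct and follows exactly the approach the paper indicates: the paper simply states ``This is a consequence of Taylor's theorem,'' and you have written out precisely that argument. There is nothing to add.
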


\noindent This is a consequence of Taylor's theorem. We now define the ``tangency prisms'' introduced in Section \ref{introProofSketch}.

\begin{defn}\label{defnTangencyPrism}
A $(\delta;k)$ \emph{tangency prism} is a set $P$ of the form
\begin{equation}\label{tangencyPrismEqn}
\begin{split}
\Big\{(t, y_0,\ldots&,y_{k-1})\in\RR^{k+1}\colon  t\in [a, a+\delta^{1/k}],\\
&\Big| y_j - \sum_{i=j}^{k-1}\frac{(t-a)^{i-j}}{(i-j)!}b_i\Big|\leq K\delta^{1-j/k},\ j=0,\ldots,k-1\Big\}.
\end{split}
\end{equation}
In the above expression, $a\in[0,1]$ and $b_0,\ldots,b_{k-1}\in [-1,1]$ are parameters that define the tangency prism, and $K$ is a constant depending on $k$; the specific choice of $K$ will be fixed in Lemma \ref{tangencyPrismVsRect} below. We call $I = [a, a+\delta^{1/k}]$ the \emph{interval associated to} $P$.
\end{defn}

\begin{defn}
Let $P\subset\RR^{k+1}$ be a $(\delta;k)$ tangency prism with associated interval $I\subset[0,1]$, and let $h\colon [0,1]\to\RR^{k}$. We say $h\sim P$ if the graph of $h$ above $I$ is contained in $P$.
\end{defn}

\begin{defn}\label{jetLiftDefn}
Let $f\in C^k$ and let $0\leq j\leq k$. We define the $j$--th order jet lift of $f$, denoted $\mathcal{J}_jf,$ to be the function $\mathcal{J}_jf(t) = \big(f(t),f'(t),\ldots,f^{(j)}(t)\big)$. When $j=0$, we have $\mathcal{J}_0f(t)=f(t)$.
\end{defn}

\begin{defn}
Let $R=R^f(I)$ be a $(\delta,k)$ tangency rectangle. We define the tangency prism $\hat R$ to be a set of the form \eqref{tangencyPrismEqn}, where $a$ is the left endpoint of $I$, and $b_i = f^{(i)}(a)$ for $i=0,\ldots,k-1$.
\end{defn}

\begin{lem}\label{tangencyPrismVsRect}
If the quantity $K=O(1)$ from Definition \ref{defnTangencyPrism} is chosen appropriately (depending on $k$), then the following is true. Let $R$ be a $(\delta,k)$ tangency rectangle, let $f$ be a function with $C^k$ norm at most 1, and suppose that $f\sim R$. Then $\mathcal{J}_{k-1}f\sim \hat R$.
\end{lem}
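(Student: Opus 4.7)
\textbf{Proof plan for Lemma \ref{tangencyPrismVsRect}.}

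Let $\tilde f$ be the function defining $R$, so $R = R^{\tilde f}(I)$ with $I = [a, a+\delta^{1/k}]$. Let
\[
g(t) = \sum_{i=0}^{k-1} \frac{(t-a)^i}{i!}\,\tilde f^{(i)}(a),
\]
which is a polynomial of degree at most $k-1$ whose derivatives are
\[
g^{(j)}(t) = \sum_{i=j}^{k-1} \frac{(t-a)^{i-j}}{(i-j)!}\,\tilde f^{(i)}(a).
\]
The conclusion $\mathcal{J}_{k-1}f \sim \hat R$ unwinds exactly to the bounds
\[
\bigl| f^{(j)}(t) - g^{(j)}(t) \bigr| \leq K \delta^{1-j/k}, \quad t \in I,\ j=0,\ldots,k-1,
\]
so the goal is to prove these for some constant $K = K(k)$.

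The first step is to control $e := f - g$ at the level of the function itself. By Lemma \ref{tangencyRectStructure}, $R$ is contained in the vertical $2\delta$ neighborhood of the graph of $g$ above $I$; since $f \sim R$, this gives $|e(t)| \leq 2\delta$ for $t \in I$. At the level of the top derivative, $g$ is a degree $k-1$ polynomial, so $e^{(k)}(t) = f^{(k)}(t)$, and the hypothesis $\Vert f\Vert_{C^k} \leq 1$ yields $|e^{(k)}(t)| \leq 1$ on $I$.

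The heart of the argument is an interpolation inequality bridging these two estimates. Define the rescaled function $u(s) = \delta^{-1} e(a + \delta^{1/k} s)$ for $s \in [0,1]$. Then
\[
|u(s)| \leq 2, \qquad |u^{(k)}(s)| = \delta^{-1} \cdot \delta \cdot |e^{(k)}(a+\delta^{1/k}s)| \leq 1,
\]
so $u$ is uniformly bounded in $C^k([0,1])$ by a constant depending only on $k$. By a standard Landau--Kolmogorov type estimate on the unit interval (which can be proved either by a Taylor/divided-difference argument using $k$ sample points, or by invoking the classical interpolation inequality), we obtain $|u^{(j)}(s)| \leq K(k)$ for all $s \in [0,1]$ and $j = 0,\ldots,k$. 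Undoing the rescaling, $u^{(j)}(s) = \delta^{j/k - 1} e^{(j)}(a + \delta^{1/k} s)$, so
\[
|e^{(j)}(t)| \leq K(k)\, \delta^{1 - j/k}, \qquad t \in I,\ j=0,\ldots,k-1,
\]
which is precisely the desired conclusion, with $K$ chosen to match the constant in Definition \ref{defnTangencyPrism}.

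The main (minor) obstacle is justifying the Landau--Kolmogorov step on a compact interval with boundary rather than on the whole line. I would handle this by the divided-difference route: given $|u| \le 2$ and $|u^{(k)}| \le 1$, for any $t \in [0,1]$ one can choose $k$ sample points $t_0,\ldots,t_{k-1}$ in $[0,1]$ at distance $\Omega(1)$ from each other, expand $u$ in a Taylor series around $t$ at each $t_i$, and solve the resulting linear system (whose coefficient matrix is a Vandermonde with determinant bounded away from $0$) for $u^{(1)}(t),\ldots,u^{(k-1)}(t)$; the bounds on $u$ and $u^{(k)}$ then give the $O(1)$ bounds on the intermediate derivatives. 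This finishes the proof.
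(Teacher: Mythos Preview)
Your proposal is correct and follows essentially the same strategy as the paper. The paper applies its Lemma~\ref{smallImpliesSmallCkOnJ} (proved in Appendix~\ref{geometricLemmasSection} via P\'olya's inequality) to $f-\tilde f$ and then Taylor-expands $\tilde f$, whereas you first pass to the Taylor polynomial $g$ of $\tilde f$ (via Lemma~\ref{tangencyRectStructure}) and then invoke a Landau--Kolmogorov interpolation on $e=f-g$; the interpolation step is the same content as Lemma~\ref{smallImpliesSmallCkOnJ}, and your divided-difference sketch is a valid alternative proof of it.
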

\begin{proof}
Write $R=R^g(I)$, with $I = [a, a+\delta^{1/k}]$. Since $f\sim R$, we have $|f(t)-g(t)|\leq\delta$ on $I$, and thus by Lemma \ref{smallImpliesSmallCkOnJ} there exists a constant $K_1=K_1(k)\geq 1$ so that $|f^{(j)}(t)-g^{(j)}(t)|\leq K_1 \delta^{1-j/k}$ for $j=0,\ldots,k-1$. On the other hand, by Taylor's theorem, for each index $j< k$ and each $t\in I$, there exists $t_1$ between $a$ and $t$ so that
\[
g^{(j)}(t) = \sum_{i=j}^{k-1} \frac{(t-a)^{i-j}}{(i-j)!} g^{(i)}(a) + \frac{(t-a)^{k-j}}{(k-j)!}g^{(k)}(t_1).
\]
Define $b_i= g^{(i)}(a)$ for $j=0,\ldots,k-1$. Then for each $j=0,\ldots,k-1$ and each $t\in I$, we have
\begin{equation}
\begin{split}
\Big| f^{(j)}(t) - \sum_{i=j}^{k-1}\frac{(t-a)^{j-i}}{(j-i)!}b_i\Big| & \leq |f^{(j)}(t)-g^{(j)}(t)| + \Big| g^{(j)}(t) - \sum_{i-j}^{k-1}\frac{(t-a)^{i-j}}{(i-j)!} b_i\Big|\\
& \leq |f^{(j)}(t)-g^{(j)}(t)| + \Big| \frac{(t-a)^{k-j}}{(k-j)!}g^{(k)}(t_1)\Big|\\
&\leq K_1 \delta^{1-j/k} + \delta^{1-j/k},
\end{split}
\end{equation}
where the final line used the assumption that $\Vert g \Vert_{C^k}\leq 1$. Thus the lemma holds with $K=K_1+1$.
\end{proof}


\subsection{Tangency and rescaling}
In this section, we will explore how rescaling a tangency rectangle $R$ induces a rescaling of functions tangent to $R$, and also induces a rescaling of (smaller) tangency rectangles contained in $R$.

\begin{defn}\label{rectangleCoveringDefn}
Let $0<\delta<\rho\leq 1$. Let $R$ be a $(\delta;k)$ tangency rectangle, and let $S$ be a $(\rho;k)$ tangency rectangle. We say $S$ \emph{covers} $R$, denoted $S\succ R$ or $R\prec S$, if $\hat{R}\subset \hat S$.
\end{defn}

\begin{defn}\label{pullbackRect}
Let $\rho>0$ and let $S=S^g(I)$ be a $(\rho;k)$ rectangle; here $\Vert g \Vert_{C^k}\leq 1$ and $I=[a, a+\rho^{1/k}]$. Let $K=K(k)\geq 1$ be the constant from Definition \ref{defnTangencyPrism}, and let $c=\big((k+1)K\big)^{-1}$. 

\smallskip

\noindent (A): For $x\in I$, define
\[
\phi^S(x,y) = \big( \rho^{-1/k}(x-a),\ c\rho^{-1}(y-g(x))\big).
\]
With this definition, we have $\phi^S(S)=[0,1]\times [-c,c]$.

\smallskip

\noindent (B): For $f\colon [0,1]\to\RR$, define $f_S$ to be the function whose graph is $\phi^S(\operatorname{graph}f|_I)$. 

\smallskip

\noindent (C): For $x\in I$ and $y_0,\ldots,y_{k-1}\in\RR^k$, define 
\begin{equation*}
\begin{split}
\psi^S(x,&y_0,\ldots,y_{k-1}) \\
= \Big(&\rho^{-1/k}(x-a),\ c\rho^{-1}(y_0-g(x)),\ c\rho^{-1+1/k}(y_1-g'(x)),\\
& \qquad c\rho^{-1+2/k}(y_2-g''(x)),\ldots, c\rho^{-1/k}(y_{k-1}-g^{(k-1)}(x))\Big).
\end{split}
\end{equation*}
With this definition, we have $\psi^S(\hat S)=[0,1]\times [-Kc,Kc]^k = [0,1]\times [\frac{-1}{k+1},\frac{1}{k+1}]^k$.
\end{defn}

\begin{lem}\label{CkNormRectangleRescaling}
Let $S$ be a $(\rho;k)$ rectangle, let $\Vert f\Vert_{C^k}\leq 1$, and suppose $f\sim S$. Then $\Vert f_S \Vert_{C^k}\leq 1$. 
\end{lem}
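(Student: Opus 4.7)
\textbf{Proof plan for Lemma \ref{CkNormRectangleRescaling}.}

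The plan is a direct computation using the chain rule, after setting up the difference function $h = f - g$ where $S = S^g(I)$ with $I = [a, a+\rho^{1/k}]$. Unwinding Definition \ref{pullbackRect}(B), the rescaled function is
\[
f_S(u) = c\rho^{-1}\bigl( f(a+\rho^{1/k} u) - g(a+\rho^{1/k} u) \bigr) = c\rho^{-1} h(a+\rho^{1/k} u), \qquad u \in [0,1],
\]
so by the chain rule $f_S^{(j)}(u) = c\,\rho^{-1+j/k}\, h^{(j)}(a+\rho^{1/k} u)$ for each $0 \le j \le k$. The goal reduces to showing that $\sum_{j=0}^{k} \|f_S^{(j)}\|_\infty \le 1$, which in turn reduces to appropriate pointwise bounds on $h^{(j)}$ on the interval $I$ of length $\rho^{1/k}$.

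The key input is the hypothesis $f \sim S$, which gives $|h(x)| \le \rho$ for every $x \in I$, together with the bound $\|h\|_{C^k} \le \|f\|_{C^k} + \|g\|_{C^k} \le 2$. The paper's Lemma \ref{smallImpliesSmallCkOnJ} (the same lemma invoked in the proof of Lemma \ref{tangencyPrismVsRect}) then upgrades these two facts to the intermediate-derivative estimate
\[
|h^{(j)}(x)| \le K_1\, \rho^{1-j/k}, \qquad x \in I,\ \ j = 0, 1, \ldots, k-1,
\]
for a constant $K_1 = K_1(k)$, while the top derivative is trivially bounded by $|h^{(k)}(x)| \le 2$. Substituting these into the chain-rule formula above, the $\rho$-powers cancel exactly: for $j \le k-1$ one gets $\|f_S^{(j)}\|_\infty \le cK_1$, and for $j = k$ one gets $\|f_S^{(k)}\|_\infty \le 2c$. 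Summing over $j$ yields
\[
\|f_S\|_{C^k} \le c\bigl( kK_1 + 2 \bigr).
\]

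The final step is to verify that the constant $c = 1/((k+1)K)$ fixed in Definition \ref{pullbackRect} makes the right-hand side at most $1$. This amounts to the inequality $K \ge (kK_1 + 2)/(k+1)$, which is a purely $k$-dependent condition on $K$; since $K$ in Definition \ref{defnTangencyPrism} was only required to be a sufficiently large constant depending on $k$ (it was chosen as $K_1 + 1$ to make Lemma \ref{tangencyPrismVsRect} work, but is allowed to be taken larger), we enlarge $K$ once and for all so that both this bound and the earlier bound from Lemma \ref{tangencyPrismVsRect} hold simultaneously. The only mild obstacle is this bookkeeping of the constant $K$; the analytic content is entirely the cancellation of $\rho$-powers between the anisotropic rescaling $\phi^S$ and the derivative bounds on $h$, which is what the $(\delta;k)$-tangency geometry was designed to produce.
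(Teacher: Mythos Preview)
Your approach is correct and essentially the same as the paper's: both reduce to the chain-rule identity $f_S^{(j)} = c\,\rho^{-1+j/k}(f-g)^{(j)}$ together with the derivative bounds coming from Lemma~\ref{smallImpliesSmallCkOnJ}; the paper simply routes these bounds through Lemma~\ref{tangencyPrismVsRect} and the intertwining relation $\operatorname{graph}(\mathcal{J}_{k-1}f_S)=\psi^S(\operatorname{graph}\mathcal{J}_{k-1}f|_I)$, which unwinds to exactly your computation. Your caution about possibly enlarging $K$ is unnecessary: with the value $K=K_1+1$ already fixed in Lemma~\ref{tangencyPrismVsRect}, the required inequality $(k+1)K\ge kK_1+2$ is equivalent to $K_1+k\ge 1$, which always holds.
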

\begin{proof}
By the chain rule, 
\begin{equation}\label{intertwinePullback}
\operatorname{graph}\big(\mathcal{J}_{k-1}(f_S)\big) = \psi^{S}(\operatorname{graph}\mathcal{J}_{k-1} f|_I).
\end{equation}
As a consequence, if $f\sim S$ and $\Vert f \Vert_{C^k}\leq 1$, then by Lemma \ref{tangencyPrismVsRect} we have $\mathcal{J}_{k-1}f\sim \hat S$, i.e.~$\operatorname{graph}\mathcal{J}_{k-1} f|_I\subset \hat S$, and thus
\[
\psi^{S}(\operatorname{graph}\mathcal{J}_{k-1} f|_I) \subset \psi^{S}(\hat S) = [0,1]\times [\frac{-1}{k+1},\frac{1}{k+1}]^k.
\]
In particular, we have 
\begin{equation}\label{supBdPhif}
\sup_{x\in[0,1]} |f_S^{(j)}(x)|\leq (k+1)^{-1},\quad j=0,\ldots,k-1.
\end{equation}
Write $S = S^g(I)$. Then for $x\in[0,1]$ we have
\[
f_S(x) = c\rho^{-1}\big( f(a+\rho^{1/k}x) - g(a+\rho^{1/k}x)\big).
\]
Using the chain rule and the triangle inequality, we compute 
\begin{equation}\label{finalDerivative}
\sup_{x\in[0,1]} |f_S^{(k)}(x)|\leq c\big( \sup_{z\in I}|f^{(k)}(z)| + \sup_{z\in I}|g^{(k)}(z)| \big)\leq \frac{1}{k+1},
\end{equation}
where the final inequality used the assumptions that $\Vert f\Vert_{C^k}\leq 1$ and $\Vert g\Vert_{C^k}\leq 1$, and the fact that $K\geq 2$. Combining \eqref{supBdPhif} and \eqref{finalDerivative}, we conclude that $\Vert f_S \Vert_{C^k}\leq 1$. 

\end{proof}

Motivated by the above computation, we introduce the following definition.

\begin{defn}\label{defnCapPhiRect}
Let $R$ be a $(\delta;k)$ tangency rectangle and let $S$ be a $(\rho;k)$ tangency rectangle. Suppose $R\prec S$. Then $\phi^S(R)$ is the vertical $c \delta/\rho$ neighborhood of a function $h$ (which has $C^k$ norm at most 1) above an interval $J$ of length $(\delta/\rho)^{1/k}$. Define $R_S$ to be the $(\delta/\rho; k)$  tangency rectangle given by the vertical $\delta/\rho$ neighborhood of $h$ above $J$.
\end{defn}

The next lemma says that our definitions of $f_S$ and $R_S$ preserve broadness. 
\begin{lem}
Let $R\prec S$ be tangency rectangles. Let $F$ be a set of functions with $C^k$ norm at most 1, all of which are tangent to $S$. Let $F(R)\subset\{f\in F\colon f\sim R\}$ satisfy \eqref{fExtendedRectangleBound} for some $B\geq 1$ and some $\eps>0$. Then the functions $\{f_S\colon f\in F(R)\}$ are tangent to $R_S,$ and satisfy the analogue of \eqref{fExtendedRectangleBound} with $B$ replaced by $O(B)$ and $\eps$ as above.
\end{lem}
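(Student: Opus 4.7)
My plan is to reduce the second claim to an application of the hypothesis \eqref{fExtendedRectangleBound} in the original (pre-rescaling) coordinates, via the pullback $(\phi^S)^{-1}$. The first claim is immediate from Definitions \ref{pullbackRect} and \ref{defnCapPhiRect}: by Definition \ref{pullbackRect}(B), $\operatorname{graph}(f_S)=\phi^S(\operatorname{graph}f|_{I(R)})\subset\phi^S(R)$, and Definition \ref{defnCapPhiRect} presents $\phi^S(R)$ as the vertical $c\delta/\rho$ neighborhood of a certain function $h$ over an interval $J$ of length $(\delta/\rho)^{1/k}$, while $R_S$ is the vertical $\delta/\rho$ neighborhood of the same $h$ over $J$; since $c\leq 1$, this yields $f_S\sim R_S$.

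For the broadness claim, I would fix $\rho'\in[\delta/\rho,1]$, $T\in[1,1/\rho']$, and a $(\rho';k;T)$ rectangle $R''$ in the rescaled coordinates containing $R_S$. Writing $R''$ as the vertical $\rho'$ neighborhood of some $h''$ with $\Vert h''\Vert_{C^k}\leq 1$ over an interval $J'$ of length $(T\rho')^{1/k}$, and writing $S=S^g([a,a+\rho^{1/k}])$, a direct application of Definition \ref{pullbackRect}(A) shows that $(\phi^S)^{-1}(R'')$ is the vertical $c^{-1}\rho\rho'$ neighborhood of
\[
\tilde h(x)=g(x)+c^{-1}\rho\,h''\big(\rho^{-1/k}(x-a)\big)
\]
over an interval $\tilde I\subset I(S)$ of length $(T\rho\rho')^{1/k}$. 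By the chain rule, $|\tilde h^{(j)}(x)|\leq |g^{(j)}(x)|+c^{-1}\rho^{1-j/k}|(h'')^{(j)}|\leq 1+c^{-1}$ for $0\leq j\leq k$ (using $\rho\leq 1$, so $\rho^{1-j/k}\leq 1$), and hence $\Vert\tilde h\Vert_{C^k}=O_k(1)$. Consequently $(\phi^S)^{-1}(R'')$ can be enclosed in a genuine $(\rho'';k;T'')$ tangency rectangle $R'$ with $\rho''\leq C_k\rho\rho'$ and $T''\leq C_k T$ (possibly after enlarging slightly to absorb the $O_k(1)$ factor into the thickness and to handle the edge case $T''<1$). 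Moreover $R\subset R'$, since $\phi^S(R)\subset R_S\subset R''$.

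Since $\phi^S$ is a bijection on its domain, I would then conclude
\[
\#\{f\in F(R)\colon f_S\sim R''\}=\#\{f\in F(R)\colon f\sim R'\}\leq B(T'')^{-\eps}\#F(R)\leq B C_k^{\eps}\,T^{-\eps}\,\#F(R),
\]
by applying \eqref{fExtendedRectangleBound} to $R'$. This yields the desired analogue of \eqref{fExtendedRectangleBound} for $R_S$ with $B$ replaced by $O_k(B)$, while preserving the exponent $\eps$ exactly. The main obstacle in this plan is the bookkeeping in the previous paragraph: Definition \ref{defnTangencyRectangle} requires the defining function of a tangency rectangle to have $C^k$ norm at most $1$, whereas our $\tilde h$ only has $C^k$ norm $O_k(1)$. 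Handling this cleanly (by mildly inflating $R'$ in thickness and, if necessary, in length so that $T''\geq 1$) forces the various $k$-dependent constants that accumulate in $B$; once this is done carefully, no further subtleties arise.
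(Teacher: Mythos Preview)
Your approach is essentially the same as the paper's: pull the candidate rectangle back through $(\phi^S)^{-1}$, observe that the pullback is a curvilinear rectangle of the correct dimensions containing $R$, and apply \eqref{fExtendedRectangleBound} at the original scale. The difference lies only in how the $C^k$-norm bookkeeping you flagged is handled. Rather than pulling back $R''$ directly and then trying to enclose the result (centered on your $\tilde h$ with $\Vert\tilde h\Vert_{C^k}=O_k(1)$) in a genuine tangency rectangle, the paper first picks some $g\in F(R)$ with $g_S\sim R''$ and, by the triangle inequality, replaces $R''$ by the vertical $2\rho'$ neighborhood of $g_S$ over $J'$. The pullback of \emph{that} set is the vertical $2c^{-1}\rho\rho'$ neighborhood of $g$ itself over $\tilde I$, and since $\Vert g\Vert_{C^k}\leq 1$ by hypothesis, this is already a bona fide $(\tfrac{2}{c}\rho\rho';k;\tfrac{c}{2}T)$ tangency rectangle---no further enclosure step is needed.

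Your proposed fix (``enlarging slightly to absorb the $O_k(1)$ factor into the thickness'') is not quite right as stated: thickening alone does not produce a tangency rectangle unless the center function has $C^k$ norm at most $1$, and there is no reason a generic function with $C^k$ norm $O_k(1)$ can be uniformly approximated by one with $C^k$ norm $\leq 1$ at cost comparable to the thickness. What rescues your argument is precisely the recentering trick above: some $f\in F(R)$ with $\Vert f\Vert_{C^k}\leq 1$ is already tangent to your pulled-back region, so recenter on it. Once you do that, your argument and the paper's coincide.
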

\begin{proof}
Let $M\geq 1$ and suppose there exists a $(\tau;k;T)$ rectangle $R'=R^h(J) \supset R_S$ that is tangent to $M$ functions from $\{f_S \colon f\in F(R)\}$; denote this set of functions by $F_1$. Our goal is to show that
\begin{equation}\label{boundOnM}
M \lesssim B T^{-\eps}\#F(R).
\end{equation}

Fix a function $g_S\in F_1$. By the triangle inequality, the graph of each $f_S\in F_1$ above $J$ is contained in the vertical $2\tau$ neighborhood of $g_S$ above $J$; denote this latter set by $R''$ (note that $R''\supset R_S$ ). 

We have that $(\phi^S)^{-1}(R'')$ is the vertical $(2\tau)(c^{-1}\rho)$ neighborhood of $g$ (here $g$ is a function of $C^k$ norm at most 1), above an interval of length $(T\rho\tau)^{1/k}$, and this set contains $R$. In summary, we have constructed a $\big(\frac{2}{c}\tau\rho; k; \frac{cT}{2}\big)$ tangency rectangle that is tangent to at least $M$ functions from $F(R)$. Comparing with \eqref{fExtendedRectangleBound}, we conclude that
\[
M \leq B (\frac{cT}{2})^{\eps} \#F(R) \leq (2B/c) T^\eps\#F(R).
\]
Since $c>0$ depends only on $k$, this establishes \eqref{boundOnM}.
\end{proof}

We close this section with a final observation about tangency rectangles and covering.
\begin{lem}\label{tangencyImpliesCover}
Let $0<\delta<\rho\leq 1$ with $\rho\geq 2^k \delta$. Let $S=S^g(I)$ be a $(\rho;k)$ tangency rectangle, let $R=R^h(I')$ be a $(\delta;k)$ tangency rectangle, and let $f\colon I\to\RR$. Suppose that $I'\subset I$, that 
\begin{equation}\label{fVeryTangentG}
\sup_{t\in I}|f(t)-g(t)|\leq \frac{1}{2}\rho
\end{equation}
(this in particular implies $f\sim S$), and suppose that $f\sim R$. Then $R\prec S$.
\end{lem}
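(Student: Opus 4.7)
The plan is to unwrap $R\prec S$ to the inclusion $\hat R\subset\hat S$, reduce that inclusion to a pointwise comparison of truncated Taylor polynomials, and then use $f$ as a bridge via Lemma \ref{smallImpliesSmallCkOnJ}. Let $a,a'$ be the left endpoints of $I,I'$ and, following Definition \ref{defnTangencyPrism}, write
$$
T_h^j(t)=\sum_{i=j}^{k-1}\frac{(t-a')^{i-j}}{(i-j)!}\,h^{(i)}(a'),\qquad T_g^j(t)=\sum_{i=j}^{k-1}\frac{(t-a)^{i-j}}{(i-j)!}\,g^{(i)}(a),
$$
so that $\hat R=\{(t,y)\colon t\in I',\ |y_j-T_h^j(t)|\le K\delta^{1-j/k}\}$ and $\hat S$ is the analogous set with $g,a$ in place of $h,a'$. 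Since $I'\subset I$ by hypothesis, the triangle inequality shows that $\hat R\subset\hat S$ will follow (after possibly enlarging $K$, which we are free to do in Definition \ref{defnTangencyPrism}) once we establish
\begin{equation*}
|T_h^j(t)-T_g^j(t)|\le K\bigl(\rho^{1-j/k}-\delta^{1-j/k}\bigr),\qquad t\in I',\ 0\le j\le k-1.
\end{equation*}
The hypothesis $\rho\ge 2^k\delta$ gives $\rho^{1-j/k}-\delta^{1-j/k}\ge\tfrac{1}{2}\rho^{1-j/k}$ for $j\le k-1$, so it suffices to show $|T_h^j-T_g^j|\lesssim_k\rho^{1-j/k}$ on $I'$.

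Taylor's theorem with remainder, together with the global bounds $\Vert h\Vert_{C^k},\Vert g\Vert_{C^k}\le 1$, yields $|T_h^j(t)-h^{(j)}(t)|\le\delta^{1-j/k}/(k-j)!$ on $I'$ and $|T_g^j(t)-g^{(j)}(t)|\le\rho^{1-j/k}/(k-j)!$ on $I$. The problem therefore reduces to the pointwise derivative comparison
$$
|h^{(j)}(t)-g^{(j)}(t)|\lesssim_k\rho^{1-j/k},\qquad t\in I',\ 0\le j\le k-1.
$$
This is where $f$ enters. From $f\sim R$ I get $|f-h|\le\delta$ on $I'$ (an interval of length $\delta^{1/k}$); by hypothesis $|f-g|\le\rho/2$ on $I$ (length $\rho^{1/k}$). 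Applying the Markov/Landau-type estimate of Lemma \ref{smallImpliesSmallCkOnJ} to $f-g$ on the long interval $I$ gives $|f^{(j)}-g^{(j)}|\lesssim_k\rho^{1-j/k}$ on $I$, and applying it to $f-h$ on $I'$ gives $|f^{(j)}-h^{(j)}|\lesssim_k\delta^{1-j/k}\le\rho^{1-j/k}$ on $I'$. A triangle inequality on $I'$ then completes the derivative comparison.

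The main technical point is the $C^k$ regularity of $f$: Lemma \ref{smallImpliesSmallCkOnJ}, as used above, requires $\Vert f\Vert_{C^k}\le 1$, not something stated explicitly in the lemma. In the intended applications inside Section \ref{tangencyRectSection} the function $f$ is always drawn from the uniformly smooth family of curves under consideration, so this hypothesis is automatic. If one insists on the lemma as stated for an arbitrary $f\colon I\to\RR$, the witness $f$ yields only the pointwise bound $|h-g|\le\delta+\rho/2\le\rho$ on $I'$, which by itself is too weak on the short interval $I'$ of length $\delta^{1/k}\ll\rho^{1/k}$ — the naive Markov estimate gives $|h^{(j)}-g^{(j)}|\lesssim\rho\delta^{-j/k}$, exceeding the required $\rho^{1-j/k}$ whenever $\delta<\rho$. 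Resolving this either amounts to making the $C^k$ hypothesis on $f$ explicit, or to replacing $f$ by a mollification that preserves both sup-norm inequalities up to constants, after which the argument above applies verbatim.
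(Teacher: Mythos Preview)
Your approach is essentially the paper's: apply Lemma \ref{smallImpliesSmallCkOnJ} to $f-g$ on $I$ and to $f-h$ on $I'$, combine by the triangle inequality, and then pass from derivative bounds to the inclusion $\hat R\subset\hat S$ via the Taylor expansion built into Definition \ref{defnTangencyPrism}. The paper's sketch is terser (it stops at the derivative estimates and asserts the inclusion ``follows from the triangle inequality''), but the content is the same.

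Your observation about the missing $C^k$ hypothesis on $f$ is correct, and the paper's sketch has exactly the same gap: the claimed bound $|f^{(j)}(t)-g^{(j)}(t)|\le\tfrac{1}{2}K\rho^{1-j/k}$ is obtained from \eqref{fVeryTangentG} via Lemma \ref{smallImpliesSmallCkOnJ}, which requires $\Vert f-g\Vert_{C^k}\lesssim 1$. As you note, in every application of the lemma (Step 3.3 of the proof of Proposition \ref{tangencyRectanglesAlgCurvesProp}) the function $f$ is drawn from the set $F$ of polynomials with $C^k$ norm at most $1$, so the hypothesis holds in practice; the lemma statement is simply missing the words ``with $\Vert f\Vert_{C^k}\le 1$.'' Your counterexample sketch (that for arbitrary $f$ one only gets $|h-g|\le\rho$ on the short interval $I'$, yielding the useless bound $\rho\delta^{-j/k}$) is the right diagnosis.

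One minor point: you write ``after possibly enlarging $K$, which we are free to do.'' Strictly speaking $K$ is fixed once and for all in Lemma \ref{tangencyPrismVsRect}, so the cleaner phrasing is that $K$ should have been chosen large enough (depending only on $k$) to absorb the $O_k(1)$ constants arising here as well; since all such constants depend only on $k$, this is harmless.
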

The proof of Lemma \ref{tangencyImpliesCover} is similar to that of Lemma \ref{tangencyPrismVsRect}, so we will just provide a brief sketch. The hypothesis \eqref{fVeryTangentG} implies that for all $t\in I$ we have
\[
|f^{(j)}(t)-g^{(j)}(t)|\leq \frac{1}{2} K \rho^{1-j/k},\quad j = 0,\ldots,k-1,
\]
while the hypothesis $f\sim R$ implies that for all $t\in I'$ we have
\[
|f^{(j)}(t)-g^{(j)}(t)|\leq  K \delta^{1-j/k}\leq \frac{1}{2}K\rho^{1-j/k},\quad j = 0,\ldots,k-1.
\]
The inclusion $\hat R \subset \hat S$ follows from the triangle inequality.


\subsection{Proof of Proposition \ref{tangencyRectanglesAlgCurvesProp} Part 1: Space curves, partitioning, and induction}\label{proofOfProptangencyRectanglesAlgCurvesPropPt1Sec}
We are now ready to begin the proof of Proposition \ref{tangencyRectanglesAlgCurvesProp}. Our basic strategy is as follows. We lift each function $f\in F$ to its $(k-1)$-st order jet $\mathcal{J}_{k-1}f$, and we lift each rectangle $R\in\mathcal{R}$ to its corresponding tangency prism $\hat R$. Proposition \ref{tangencyRectanglesAlgCurvesProp} then becomes an incidence theorem between (polynomial) curves and prisms in $\RR^{k+1}$. Roughly speaking, the statement is as follows: given a set of $N$ polynomial curves in $\RR^{k+1}$ that come from the jet lifts of plane curves, there can be at most $N^{\frac{k+1}{k}+\eps}$ prisms that are (broadly) incident to these curves. We prove this statement by induction on $N$. For the induction step, we use the Guth-Katz polynomial partitioning theorem to divide $\RR^{k+1}$ into cells, most of which interact with only a small fraction of the (lifted) curves from $F$. The precise statement is a consequence of the following two theorems. The first is the celebrated Guth-Katz polynomial partitioning theorem \cite{GuthKatz}.

\begin{thm}\label{polyPartitioningTheorem}
Let $\mathcal{P}\subset\RR^n$ be a finite set of points. Then for each $E\geq 1$, there is a polynomial $Q\in\RR[x_1,\ldots,x_n]$ so that $\RR^n\backslash \{Q=0\}$ is a union of $O(E^n)$ open connected sets, and each of these sets intersects $O(E^{-n}\#\mathcal{P})$ points from $\mathcal{P}$.
\end{thm}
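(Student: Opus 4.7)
The plan is to prove this by iteratively applying the polynomial ham sandwich theorem and then invoking a bound on the number of connected components of the complement of a real variety. Recall that the polynomial ham sandwich theorem (a consequence of the Borsuk--Ulam theorem applied to the Veronese embedding) states that for any $\binom{n+d}{n}-1$ finite Borel measures on $\RR^n$, there is a nonzero polynomial of degree at most $d$ whose zero set simultaneously bisects each measure. For finite point sets, one uses the standard variant in which each point is replaced by a small ball of equal mass; a compactness argument then yields a polynomial whose zero set partitions $\RR^n$ so that each open side contains at most half the points of each of the given sets.

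First I would iteratively bisect. At stage $0$, apply ham sandwich with $d=1$ to $\mathcal{P}$ and record the bisecting polynomial $p_0$; this yields two subsets, each containing at most $\#\mathcal{P}/2$ points in the open sides $\{p_0>0\}$ and $\{p_0<0\}$. At stage $j$, suppose we have $2^j$ subsets; pick the minimal $d_j$ with $\binom{n+d_j}{n}-1\geq 2^j$, so $d_j \lesssim 2^{j/n}$, and apply ham sandwich to obtain a polynomial $p_j$ of degree $d_j$ that bisects all $2^j$ subsets simultaneously. After $J$ stages the $2^J$ sign patterns $(\operatorname{sgn} p_0,\dots,\operatorname{sgn} p_{J-1})$ partition $\mathcal{P}\setminus\{p_0\cdots p_{J-1}=0\}$ into subsets of size at most $\#\mathcal{P}/2^J$. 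Choose $J$ so that $2^J \sim E^n$.

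Next, set
\[
Q = \prod_{j=0}^{J-1} p_j, \qquad \deg Q = \sum_{j=0}^{J-1} d_j \;\lesssim\; \sum_{j=0}^{J-1} 2^{j/n} \;\lesssim\; 2^{J/n} \;\sim\; E.
\]
Every connected component of $\RR^n\setminus\{Q=0\}$ lies inside a single sign pattern of the $p_j$, hence contains $O(E^{-n}\#\mathcal{P})$ points of $\mathcal{P}$. To bound the number of components, I would invoke the Oleinik--Petrovskii--Thom--Milnor theorem (or Warren's bound), which gives that $\RR^n\setminus\{Q=0\}$ has at most $O((\deg Q)^n) = O(E^n)$ connected components. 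This matches both claims of the theorem.

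The steps that require the most care are the application of the ham sandwich theorem to finite point sets (handled via the mass-replacement trick above, with a tie-breaking convention for points that happen to land on $\{p_j=0\}$) and the verification that the combinatorial degree bound $d_j\sim 2^{j/n}$ really sums to $O(E)$ rather than incurring a logarithmic loss; this is just a geometric sum. I do not anticipate any serious obstacle beyond these standard technicalities, since the argument is now classical and is presented in essentially this form in the original Guth--Katz paper.
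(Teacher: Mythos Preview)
Your proposal is correct and is precisely the classical Guth--Katz argument. The paper does not prove this theorem at all: it is stated as Theorem~\ref{polyPartitioningTheorem} with a citation to \cite{GuthKatz} and then invoked as a black box in the proof of Proposition~\ref{tangencyRectanglesAlgCurvesProp}, so there is no ``paper's own proof'' to compare against.
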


The second is a variant of B\'ezout's theorem for real varieties. This is a special case of the main result from \cite{BaroneBasu}.
\begin{prop}\label{BBProp}
Let $\zeta\subset\RR^n$ be a one-dimensional real variety defined by polynomials of degree at most $D$. Let $Q\in\RR[x_1,\ldots,x_n]$ be a polynomial of degree $E\geq D$. Then $\zeta$ intersects $O(D^{n-1}E)$ connected components of $\RR^n\backslash\{Q=0\}$. 
\end{prop}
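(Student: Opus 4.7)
The plan is to bound the number of connected components of $\zeta \setminus \{Q = 0\}$, since any connected component of $\RR^n \setminus \{Q = 0\}$ that meets $\zeta$ must contain at least one such component. So the task reduces to counting arcs of $\zeta$ together with closed subcurves of $\zeta$ that avoid $\{Q = 0\}$ entirely.

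First I would reduce to the case where $Q$ does not vanish identically on any complex irreducible component of $\zeta$: any such component lies wholly inside $\{Q = 0\}$ and contributes nothing to the count. After this reduction, $\zeta \cap \{Q = 0\}$ is zero-dimensional, and I would establish the Bezout-type bound $|\zeta \cap \{Q = 0\}| \lesssim D^{n-1}E$. The idea is that since $\zeta$ is one-dimensional and defined by polynomials of degree $\le D$, one can take $n-1$ generic $\RR$-linear combinations of the defining polynomials to produce a complete intersection complex curve $V \supset \zeta$ of degree $\le D^{n-1}$. Bezout for a curve and a hypersurface of degree $E$ then gives $|V \cap \{Q = 0\}| \le D^{n-1}E$, which controls $|\zeta \cap \{Q = 0\}|$.

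Second, I would invoke a Milnor-Thom-type estimate to bound the number of connected components of $\zeta$ itself by $O(D^n)$; under the assumption $E \ge D$ this is $O(D^{n-1} E)$. Combining the two ingredients: each connected component of $\zeta \setminus \{Q = 0\}$ is either an entire connected component of $\zeta$ that misses $\{Q = 0\}$ (at most $O(D^n) \le O(D^{n-1}E)$ of these), or is obtained by excising points of $\zeta \cap \{Q = 0\}$ from some connected component of $\zeta$ (at most $O(D^{n-1}E)$ of these, since each removed point can create only a bounded number of new arcs). Adding these yields the desired $O(D^{n-1}E)$ bound.

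The main obstacle I anticipate is the Bezout step: for a 1-dimensional real variety that need not be a complete intersection, bounding its complex degree by $D^{n-1}$ requires care, and the enclosing-complete-intersection trick sketched above has to be justified (alternatively, one can project generically to a line and iterate Bezout, which then forces a careful examination of implicit constants and of singular fibers). These issues are handled in a uniform way by the Barone-Basu machinery, which bounds Betti numbers of real varieties defined by systems of polynomials of mixed degrees, and the desired estimate then comes out as a clean consequence of their main theorem.
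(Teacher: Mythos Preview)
The paper does not prove this proposition at all: it is stated with the remark ``This is a special case of the main result from \cite{BaroneBasu}'' and then used as a black box. So there is no paper proof to compare against; the paper simply cites Barone--Basu.

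Your sketch is a reasonable outline of how such a bound is established, and indeed you arrive at the same endpoint: the technical issues (degree bounds for a one-dimensional variety that is not a complete intersection, controlling the number of local branches at singular points so that ``each removed point creates boundedly many arcs'' is actually true) are exactly what the Barone--Basu machinery is designed to handle uniformly. One small point: the claim that removing a point of $\zeta\cap\{Q=0\}$ creates only $O(1)$ new arcs is not literally true at singular points of $\zeta$---the number of local real branches can grow with $D$---so the honest accounting really does require either a careful treatment of singularities or a direct appeal to the Barone--Basu Betti number bounds, as you note at the end. Since the paper itself takes the latter route without further comment, your proposal is consistent with (and more detailed than) what the paper does.
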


We apply the induction hypothesis inside each cell, and sum the resulting contributions. The exponent $\frac{k+1}{k}+\eps$ was chosen so that the induction closes. There is also a contribution from the boundary of the partition. This will be described in greater detail (and dealt with) later. We now turn to the details.

\begin{proof}[Proof of Proposition \ref{tangencyRectanglesAlgCurvesProp}]
Fix $k$ and $\eps$.  We will prove the result by induction on $\# F$. The induction will close, provided $B,C,$ and $\eta$ have been chosen appropriately. When $F=\emptyset,$ there is nothing to prove. 

\medskip

\noindent {\bf Step 1. Polynomial partitioning.} 
Suppose that $\#F = N$, and that the result has been proved for all sets of curves $F'$ of cardinality less than $N$. To each $(\delta;k)$ tangency rectangle $R^f(I)\in\mathcal{R}$, associate the point $p_R = (a, f(a), f'(a),\ldots, f^{(k-1)}(a))\in\RR^{k+1}$, where $a$ is the left endpoint of $I$. Observe that $p_R\in\hat R$. It is easy to verify that distinct (and hence incomparable) rectangles in $\mathcal{R}$ give rise to distinct (in fact $\gtrsim\delta$ separated) points. Let $\mathcal{P}=\{p_R\colon R\in\mathcal{R}\}$.

Let $E\geq 1$ be a number to be specified below. Use Theorem \ref{polyPartitioningTheorem} to select a polynomial $Q\in\RR[t, y_0,\ldots,y_{k-1}]$ of degree at most $E$, so that $\RR^{k+1}\backslash\{Q=0\}$ is a union of $O(E^{k+1})$ open connected components, each of which contain $O(E^{-k-1}\#\mathcal{P})$ points from $\mathcal{P}$. Let $\mathcal{O}$ denote the set of connected components.

Define $Z=\{Q=0\}$, and define $Z^*$ to be the union of all $(\delta;k)$ tangency prisms that intersect $Z$. We claim that for each $R\in\mathcal{R}$, at least one of the following must hold:
\begin{itemize}
\item There is a cell $\Omega\in\mathcal{O}$ so that $\hat R\subset \Omega$.

\item  $\hat R\subset Z^*$.
\end{itemize}
Indeed, if the second item does not hold then $\hat R$ is disjoint from $Z$. Since $\hat R$ is connected, we must have $\hat R\subset \Omega$ for some $\Omega\in \mathcal{O}$. 

For each $\Omega\in \mathcal{O}$, define 
\[
\mathcal{R}_\Omega=\{R\in\mathcal{R}\colon \hat R \subset \Omega\}.
\]
We have $\#\mathcal{R}_{\Omega} \leq \#(\mathcal{P}\cap \Omega) =O(E^{-k-1}\#\mathcal{R})$. If $R\in\mathcal{R}_{\Omega}$ and $f\in F$ with $f\sim R$, then $\operatorname{graph}(\mathcal{J}_{k-1}f)\cap \hat R\neq\emptyset,$ and hence $\operatorname{graph}(\mathcal{J}_{k-1}f)\cap \Omega\neq\emptyset$.

Define $\mathcal{R}_Z=\mathcal{R}\backslash\bigcup_{\Omega\in \mathcal{O} }\mathcal{R}_{\Omega}$. We say we are in the \emph{cellular case} if 
\[
\#\bigcup_{\Omega\in \mathcal{O}}\mathcal{R}_\Omega \geq \frac{1}{2}\#\mathcal{R}.
\] 
Otherwise we are in the \emph{algebraic case}. We remark that if $E^{k+1}$ is substantially larger than $\#\mathcal{R}$, then the bound $O(E^{-k-1}\#\mathcal{P})$ from the application of Theorem \ref{polyPartitioningTheorem} might be smaller than 1, i.e.~each cell contains zero points from $\mathcal{P}$. If this happens, then $\mathcal{P}\subset \{Q=0\}$, and we are most certainly in the algebraic case. 

\medskip

\noindent {\bf Step 2. The cellular case.} 
Suppose we are in the cellular case. Then we may select a set $\mathcal{O}'\subset \mathcal{O}$ so that $\sum_{\Omega\in \mathcal{O}'}\#\mathcal{R}_{\Omega} \geq \frac{1}{4}\#\mathcal{R}$, and 
\begin{equation}\label{eachCellManyRectangles}
\#\mathcal{R}_{\Omega}\geq c_1(k) E^{-k-1}(\#\mathcal{R})\quad\textrm{for each}\ \Omega\in \mathcal{O}',
\end{equation} 
where $c_1(k)>0$ is a quantity depending only on $k$. To simplify notation, write $\zeta_f$ for $\operatorname{graph}(\mathcal{J}_{k-1}f)$. Note that if $f$ is a polynomial of degree $D$, then $\zeta_f$ is a one-dimensional real variety defined by polynomials of degree at most $D$.

By Proposition \ref{BBProp}, since each polynomial in $F$ has degree at most $\delta^{-\eta}$, then provided $E\geq\delta^{-\eta}$ (our choice of $E$ will satisfy this hypotheses; see below), there are at most $K_1(k) \delta^{-k\eta} E(\# F)$ pairs $(\Omega, f)\in\mathcal{O}'\times F$ with $\zeta_f\cap \Omega\neq\emptyset$ (here $K_1(k)\geq 1$ is a constant depending only on $k$). Thus there is a cell $\Omega \in\mathcal{O}'$ with 
\begin{equation}\label{numberOfGInteractingF}
\#\{f\in F \colon \zeta_f\cap \Omega \neq\emptyset\}\leq K_1(k) \delta^{-k\eta} E^{-k}(\#F).
\end{equation}
Denote the above set by $F_\Omega$. If we choose $E$ sufficiently large ($E \geq K_1(k)\delta^{-\eta}$ will suffice), then  $\#F_\Omega<N$. Since the $(\delta,k)$ rectangles in $\mathcal{R}_\Omega$ are $\mu$ rich (for some $\mu\geq 1$) and $\eps$ robustly broad with error at most $\delta^{-\eta}$ with respect to $F_{\Omega}$, we may apply the induction hypothesis to conclude that
\begin{equation}\label{inductionHypothesisOnRO}
\#\mathcal{R}_\Omega \leq C  \delta^{-B\eps}(\# F_\Omega)^{\frac{k+1}{k}+\eps}.
\end{equation}

Combining \eqref{eachCellManyRectangles}, \eqref{inductionHypothesisOnRO}, and then \eqref{numberOfGInteractingF}, we conclude that
\begin{equation}\label{boundCardR}
\begin{split}
\#\mathcal{R}&\leq \Big(c_1(k)^{-1} E^{k+1}\Big)\Big( C  \delta^{-B\eps}(\# F_\Omega)^{\frac{k+1}{k}+\eps}\Big)\\
&\leq \Big(c_1(k)^{-1} E^{k+1}\Big)\Big(C  \delta^{-B\eps}\big(K_1(k)\delta^{-k\eta} E^{-k}\#F\big)^{\frac{k+1}{k}+\eps}\Big)\\
&\leq \Big(c_1(k)^{-1} K_1(k)^{\frac{k+1}{k}+\eps}\delta^{-(k+1)\eta-k\eps\eta}  E^{-k\eps} \Big) \Big( C  \delta^{-B\eps} (\#F)^{\frac{k+1}{k}+\eps}\Big).
\end{split}
\end{equation}
At this stage we fix a choice of $E$ of the form $E = C_1 \delta^{-3\eta/\eps}$. If we select $C_1$ sufficiently large (depending on $k$ and $\eps$), then the first bracketed term on the final line of \eqref{boundCardR} is at most $1$, and thus
\[
\#\mathcal{R}\leq C  \delta^{-B\eps} (\#F)^{\frac{k+1}{k}+\eps},
\]
and the induction closes. This completes the proof of Proposition \ref{tangencyRectanglesAlgCurvesProp} when we are in the cellular case.


\medskip
\noindent {\bf Step 3. The algebraic case.} 
Next we consider the algebraic case. Observe that the tangency prisms associated to rectangles in $\mathcal{R}_Z$ are contained in a thin neighborhood of the variety $Z$. The following theorem of Wongkew \cite{Wongkew} controls the volume of the thin neighborhood of a variety. 
\begin{thm}\label{WonkewThm}
Let $Z=\{Q=0\}\subset\RR^n$, where $Q$ is a non-zero polynomial. Let $B\subset\RR^n$ be a ball of radius $r$. Then for all $\rho>0$, we have 
\begin{equation}\label{WonkewBd}
|B \cap N_\rho(Z)|\lesssim (\operatorname{deg} Q)^{n}\rho r^{n-1},
\end{equation}
where $N_\rho(Z)$ denotes the $\rho$-neighbourhood of $Z$.
\end{thm}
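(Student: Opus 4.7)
My plan is to prove Theorem \ref{WonkewThm} by induction on the ambient dimension $n$, combined with a slicing argument. First I would rescale so that $r = 1$; since both sides of \eqref{WonkewBd} scale by the same power of $r$ under the dilation $B \mapsto r^{-1} B$ (with $\rho$ replaced by $\rho/r$), this reduction is free. For the base case $n = 1$, the hypothesis $Q \not\equiv 0$ with $\deg Q = D$ implies that $Z = \{Q = 0\}$ consists of at most $D$ points, so $|N_\rho(Z) \cap B| \leq 2D\rho$, matching the claimed bound.

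For the inductive step I would fix a generic direction $e \in S^{n-1}$ and slice $\RR^n$ by the affine hyperplanes $H_t = \{x \cdot e = t\}$. For almost every $t$, the restriction $Q|_{H_t}$ is a non-zero polynomial of degree at most $D$ on $H_t \cong \RR^{n-1}$, so by induction the in-slice $\rho$-neighborhood satisfies
\begin{equation*}
|N^{H_t}_{\rho}(Z \cap H_t) \cap (H_t \cap B)| \lesssim D^{n-1} \rho.
\end{equation*}
The main subtlety is that $N_\rho(Z) \cap H_t$ is in general strictly larger than the in-slice neighborhood $N^{H_t}_\rho(Z \cap H_t)$: a point $p\in H_t$ can lie within distance $\rho$ of $Z$ because of a witness $q\in Z\cap H_s$ with $|s-t|\leq\rho$, even when $p$ is far from $Z\cap H_t$ within $H_t$. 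To handle this, I would replace slicing by single hyperplanes with slicing by slabs $S_t = \bigcup_{|s-t|\leq\rho} H_s$ of thickness $2\rho$; orthogonally projecting $q$ to $H_t$ along $e$ produces a point of the semi-algebraic set $\pi_e(Z\cap S_t) \subset H_t$ within $2\rho$ of $p$. A single representative non-degenerate slice inside the slab then controls this projected set (up to a constant factor coming from Tarski--Seidenberg-type degree estimates for projections), and integrating the resulting slicewise estimate in $t \in [-1,1]$ closes the induction.

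The step I expect to be the main obstacle is handling the exceptional slices on which $Q|_{H_t}$ vanishes identically, and more generally ruling out degenerate behavior of $Z$ with respect to the chosen direction $e$. For generic $e$ such exceptional $t$ form a finite set when $Q$ is squarefree, but in general one must decompose $Q$ into irreducible factors and separately account for the contributions from lower-dimensional components of $Z$. A cleaner alternative route, which I would keep in reserve, is to first bound the $(n-1)$-dimensional Hausdorff measure of $Z \cap B(0,2)$ by $O(D^n)$ using the same inductive slicing---where the in-slice versus ambient distinction does not arise, since Hausdorff measure behaves well under projection via the coarea formula---and then deduce \eqref{WonkewBd} from the tube inequality $|N_\rho(A)| \lesssim \rho\cdot\mathcal{H}^{n-1}(A)$, valid for $(n-1)$-rectifiable sets such as real algebraic hypersurfaces, with the lower-dimensional singular locus contributing only a negligible term.
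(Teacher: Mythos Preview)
The paper does not prove Theorem \ref{WonkewThm}; it is quoted from Wongkew \cite{Wongkew} and used as a black box. So there is no ``paper's proof'' to compare against---you are attempting to supply an argument the paper simply imports.

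Both of your proposed routes contain genuine gaps. In the main slab argument, the assertion that ``a single representative non-degenerate slice inside the slab controls the projected set $\pi_e(Z\cap S_t)$'' is unjustified and in general false. If $Z$ is locally a graph $\{x_n=f(x')\}$ with $|\nabla f|$ small, then $\pi_e(Z\cap S_t)=f^{-1}([t-\rho,t+\rho])$ can have \emph{positive} $(n-1)$-dimensional Lebesgue measure in $H_t$, whereas each individual slice $Z\cap H_s$ is a hypersurface of measure zero. The induction hypothesis, which concerns zero sets of polynomials, does not apply to such a projection, and no Tarski--Seidenberg degree bound repairs this: the difficulty is geometric, not one of complexity.

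Your alternative route fails because the tube inequality $|N_\rho(A)|\lesssim \rho\,\mathcal{H}^{n-1}(A)$ is false even for smooth compact hypersurfaces: a sphere $S_\eps$ of radius $\eps\ll\rho$ has $\mathcal{H}^{n-1}(S_\eps)\sim\eps^{n-1}$ but $|N_\rho(S_\eps)|\sim\rho^n$. The obstruction is not the singular locus you flagged, but the higher-order curvature terms in Weyl's tube formula. Wongkew's actual bound is of the form $|B\cap N_\rho(Z)|\lesssim\sum_{j=1}^{n} D^{j}\rho^{j} r^{n-j}$, and the reason the exponent on $D$ in the paper's stated inequality is $n$ rather than $1$ is precisely that one must control all $n$ terms, not just the leading one. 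Your Crofton bound $\mathcal{H}^{n-1}(Z\cap B)\lesssim D\,r^{n-1}$ recovers only the $j=1$ contribution; to make this route work you would need analogous degree-dependent bounds on $\int_{Z\cap B}|\kappa_j|$ for each elementary symmetric curvature $\kappa_j$, which is essentially the content of \cite{Wongkew}.
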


The set on the LHS of \eqref{WonkewBd} is described by a Boolean combination of polynomial (in)equalities. Sets of this form are called semi-algebraic; we give a precise definition below. 
\begin{defn}\label{complexitySemiAlgSet}
Let $M\geq 1$. A set $W \subset\RR^n$ is called a \emph{semi-algebraic set} of complexity at most $M$ if there exists an integer $N\leq M$; polynomials $P_1,\ldots,P_N$, each of degree at most $M$; and a Boolean formula $\Phi\colon \{0,1\}^N\to\{0,1\}$ such that
\[
W = \big\{x\in\RR^n \colon \Phi\big( P_1(x)\geq 0, \ldots, P_N(x)\geq 0 \big)=1\big\}.
\]
\end{defn}

The next result describes the structure of arrangements of tangency rectangles whose corresponding tangency prisms are contained in a semi-algebraic set of small volume. 
\begin{prop}\label{tangencyRectanglesInsideSemiAlgSetProp}
Let $k\geq 1$, $\eps>0$. Then there exist positive numbers $c=c(k)$, $\eta=\eta(k,\eps),$ and $\delta_0=\delta_0(k,\eps)$ so that the following holds for all $\delta\in(0,\delta_0]$. 
Let $F$ be a set of polynomials of degree at most $\delta^{-\eta}$, each of which has $C^k$ norm at most 1. Let $\mathcal{R}$ be a set of pairwise incomparable $(\delta;k)$ rectangles. For each $R\in\mathcal{R}$, let $F(R)\subset\{f\in F\colon f\sim R\}$. Define the dual relation $\mathcal{R}(f) = \{R\in\mathcal{R}\colon f\in F(R)\}$. Suppose that for each $f\in F$, the rectangles in $\mathcal{R}(f)$ satisfy the following ``two-ends'' type non-concentration condition: for each interval $J\subset[0,1]$, we have

\begin{equation}\label{twoEndsAlongEachCurveInProp}
    \# \{R\in\mathcal{R}(f) \colon I(R)\subset J\}\leq \delta^{-\eta} |J|^{\eps}\#\mathcal{R}(f).
\end{equation}
Let $W\subset [0,1]^{k+1}$ be a semi-algebraic set of complexity at most $\delta^{-\eta}$ and volume $|W|\leq \delta^\eps$. Suppose that $\hat R\subset W$ for each $R\in\mathcal{R}$. 

Then there exist $R\in\mathcal{R}$, $\tau\in [\delta,\delta^{c}]$, and a $(\tau; k+1; c)$ tangency rectangle $R_1\supset R$ with
\begin{equation}
\#\{f\in F(R)\colon f\sim R_1\} \gtrsim \#F(R).
\end{equation}
\end{prop}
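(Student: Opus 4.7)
The plan is to exploit the smallness of $|W|$ via a structural decomposition of low-volume semi-algebraic sets (established in Section \ref{tangenciesSemiSmallVolSec}) to reduce to the case where the prisms $\hat R$ lie in a thin neighborhood of the graph of a smooth function $y_{k-1} = L(t, y_0, \ldots, y_{k-2})$ with bounded derivatives. That decomposition writes $W$ as a union of $\delta^{-O(\eta)}$ pieces, each either a $\delta^{1/k}$-thickening of such a Lipschitz graph, or a residual piece whose projection to $[0,1]^k$ has small $k$-volume. Since Tarski–Seidenberg preserves semi-algebraic complexity under projection, I would iterate the decomposition at most $k$ times, at each step passing to a lower-dimensional projection in the residual branch, and then pigeonhole to extract a single graph piece $W_0$ containing a fraction $\delta^{O(\eta)}$ of the prisms $\hat R$, together with subsets $F_0(R) \subset F(R)$ with $\#F_0(R) \gtrsim \#F(R)$ whose jet-lift segments meeting $\hat R$ actually lie in $W_0$. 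The two-ends condition \eqref{twoEndsAlongEachCurveInProp} is what keeps the double pigeonhole quantitatively non-trivial under projection: it prevents all the rectangles tangent to a fixed $f$ from collapsing onto the same projected image.

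With prisms confined to $W_0$, each $f \in F_0(R)$ satisfies the approximate ODE $f^{(k-1)}(t) = L\bigl(t, f, f', \ldots, f^{(k-2)}\bigr) + O(\delta^{1/k})$ on a neighborhood of $I(R)$. Fix a pigeonholed $R$, fix $f_0 \in F_0(R)$, and pick $t_0 \in I(R)$. For any other $g \in F_0(R)$, the jets $\mathcal{J}_{k-1}f_0$ and $\mathcal{J}_{k-1}g$ both intersect $\hat R$, so at a common point of $I(R)$ their first $k-1$ derivatives agree up to error $O(\delta^{1-i/k})$. A quantitative Gronwall-type estimate applied to the pair of approximate ODEs then propagates this agreement to the interval of length $(c\tau)^{1/(k+1)}$ centered at $t_0$, for any $\tau \in [\delta, \delta^c]$ with $c = c(k) > 0$ small, yielding the uniform bound $|f_0^{(i)}(t) - g^{(i)}(t)| \lesssim \tau^{1 - i/(k+1)}$ for $0 \leq i \leq k$.

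To close, I would take $R_1$ to be the vertical $\tau$-neighborhood of $f_0$ above that longer interval. Differentiating the ODE and using smoothness of $L$ on its cell bounds $\|f_0\|_{C^{k+1}} = O(1)$, so after absorbing a constant into $\tau$ the set $R_1$ is a bona fide $(\tau; k+1; c)$ tangency rectangle; the Gronwall propagation places each $g \in F_0(R)$ inside $R_1$, yielding $\#\{g \in F(R) : g \sim R_1\} \geq \#F_0(R) \gtrsim \#F(R)$, while $R_1 \supset R$ follows from Lemma \ref{tangencyImpliesCover}. The hardest part is the iterated Tarski–Seidenberg / two-ends interplay: each projection blows up complexity polynomially and descends to a lower-dimensional incidence problem, and I must verify both that the cumulative complexity stays below $\delta^{-O(\eta)}$ after $k$ iterations and that at least one branch terminates with a quantitatively positive fraction of the original rectangles — the two-ends bound is the crucial non-trivial input here. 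A secondary obstacle is upgrading the Lipschitz regularity supplied by the structure theorem to $C^{k+1}$ control on the cell function $L$, which is precisely what permits the order of tangency to be promoted by exactly one in the passage from $R$ to $R_1$.
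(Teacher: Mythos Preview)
Your outline captures the two main ingredients---the iterated semi-algebraic decomposition (Proposition~\ref{coverByLipschitzGraph}) and Gronwall---but misidentifies how they are assembled, and in particular misreads the role of the two-ends hypothesis \eqref{twoEndsAlongEachCurveInProp}.

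The real difficulty is not pigeonholing the prisms $\hat R$ into a single graph piece $W_0$; it is ensuring that the jet-lift trajectories $t\mapsto(t,\mathcal J_{k-1}f(t))$ \emph{remain} inside a single piece for an interval of length $\sim\tau^{1/(k+1)}\gg\delta^{1/k}$, long enough to run Gronwall. Even if $\hat R\subset W_0$, the jet lift of $f$ may exit $W_0$ immediately outside $I(R)$, and different $f,g\in F(R)$ may exit into different pieces. The paper handles this by building, for each $f$ individually, a bad set $B_f\subset[0,1]$ consisting of the short ``ends'' of the intervals on which the jet lift of $f$ lies in each of the $\delta^{-O(\eta)}$ pieces (plus the residual bottom stratum); this is packaged as Proposition~\ref{cutCurvesAdaptedSASet}. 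Because $B_f$ is a union of $\delta^{-O(\eta)}$ intervals of total measure $\lesssim\delta^{1/O(1)}$, the two-ends bound \eqref{twoEndsAlongEachCurveInProp} forces at least half of the rectangles in $\mathcal R(f)$ to have $I(R)\cap B_f=\emptyset$---meaning $I(R)$ sits deep inside a long interval on which the jet lift of $f$ is trapped in one piece. Pigeonholing over $R$ (not over pieces) then gives a single $R$ with $\#\{f\in F(R):I(R)\cap B_f=\emptyset\}\gtrsim\#F(R)$; any two such $f,g$ have jet lifts in the \emph{same} (slightly expanded) piece near $I(R)$, and now Gronwall applies. Your ``preventing collapse under projection'' reading of two-ends does not supply this, and your claim that the jet-lift segments ``meeting $\hat R$'' lie in $W_0$ only gives you control on an interval of length $\delta^{1/k}$, which is too short.

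Two smaller points. First, the decomposition at level $i$ produces graphs with Lipschitz constant $L_i$ and thickness $\rho_i$, and Gronwall only propagates agreement over intervals of length $\sim L_i^{-1}$; this yields $k$ distinct candidate scales $\tau=\rho_i$, not ``any $\tau\in[\delta,\delta^c]$'', and one must pigeonhole over them (the set $\mathcal X$ in Proposition~\ref{cutCurvesAdaptedSASet}). Second, there is no need---and no way---to upgrade $L$ from Lipschitz to $C^{k+1}$: Gronwall only needs Lipschitz, and the promotion from a $(\delta;k)$ rectangle $R$ to a $(\tau;k+1;c)$ rectangle $R_1$ comes entirely from the numerology $\rho_i\sim L_i^{-1/\kappa n}$ with $\kappa=1/(k+1)$ built into the iterated decomposition, not from any extra smoothness of the cell function. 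The smoothness of $f_0$ itself is unrelated to $L$; $f_0$ is a polynomial.
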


We defer the proof of Proposition \ref{tangencyRectanglesInsideSemiAlgSetProp} to the next section. Using Proposition \ref{tangencyRectanglesInsideSemiAlgSetProp}, we will handle the algebraic case.  

\medskip
\noindent {\bf Step 3.1 A two-ends reduction.} 
Recall that for each $R\in\mathcal{R}$, there is a set $F(R)\subset F$ that satisfies the non-concentration condition \eqref{fExtendedRectangleBound} from Definition \ref{robustlyBroadDefn} with $\delta^{-\eta}$ in place of $B$, and $\eps$ as above. 
Since every set of pairwise incomparable $(\delta;k)$ rectangles has cardinality $O(\delta^{-k-1})$, we may suppose that $\#F\leq\delta^{-k}$, or else \eqref{RectBdEqnForPolysMuSmall} is immediately satisfied and we are done. Thus we have that $1\leq \# F(R)\leq \delta^{-k}$ for each $R\in\mathcal{R}$. After dyadic pigeonholing, we can find a set $\mathcal{R}_1\subset\mathcal{R}$ with $\#\mathcal{R}_1\gtrsim(\log1/\delta)^{-1}\#\mathcal{R}$ and a number $\mu$ so that $\mu\leq \#F(R)<2\mu$ for each $R\in\mathcal{R}_1$.  Define 
\[
\mathcal{I}_1 = \{ (f,R)\colon R\in\mathcal{R}_1,\ f\in F(R)\}.
\] 
We have
\begin{equation}\label{sizeOfI1}
\mu(\#\mathcal{R}_1)\leq \#\mathcal{I}_1 <2\mu(\#\mathcal{R}_1).
\end{equation}
For each $f\in F$, the curve $\zeta_f$ intersects $Z^*$ in a union of $O( (\delta^{-\eta}E)^{O(1)}) =O_\eps(\delta^{-O(\eta/\eps)})$ connected components, each of which is the graph of $\mathcal{J}_{k-1}f$ above an interval. Recall that if $(f,R)\in \mathcal{I}_1$, then the graph of $\mathcal{J}_{k-1}f$ above $I(R)$ is contained in $Z^*$, so $I(R)$ is contained in one of these intervals. 

Applying pigeonholing to each $f\in F$ in turn, we select a set $\mathcal{I}_2\subset\mathcal{I}_1$ with 
\[
\#\mathcal{I}_2\gtrsim_\eps (\log 1/\delta)^{-1}\delta^{O(\eta/\eps)}(\#\mathcal{I}_1)-\log(1/\delta)(\#F)
\]
so that the following holds: for each $f\in F$, there is an interval $I_f^\dag\subset[0,1]$ with the following properties:
\begin{itemize}
	\item $I_f^\dag$ is dyadic, i.e.~it is of the form $[2^{-j}a, 2^{-j}(a+1)]$ for some $j,a\in\ZZ$ with $\delta^{1/k}\leq 2^{-j} \leq 1$.
	\item The restriction of $\zeta_f$ to the graph of $\mathcal{J}_{j-1}f$ above $I_f^\dag$ is contained in $Z^*$.
	\item For every $R\in\mathcal{R}$ with $(f,R)\in \mathcal{I}_2$, we have $I(R)\subset I_f^\dag.$
\end{itemize}
For each $f\in F$, we select the interval $I_f^\dag$ as follows. First, choose an interval $J=J(f)\subset[0,1]$ so that the graph of $\mathcal{J}_{k-1}f$ above $J$ is contained in $Z^*$, and 
\begin{equation}\label{mostIncidenceRectanglesKept}
\#\{R\colon (f,R)\in \mathcal{I}_1,\ I(R)\subset J\}\gtrsim_\eps \delta^{O(\eta/\eps)}\#\{R\colon (f,R)\in \mathcal{I}_1\}.
\end{equation}
Divide the interval $J$ into $O(\log 1/\delta)$ interior-disjoint dyadic intervals $\{J_i\}$, each of length $\geq \delta^{1/k}$, plus two (possibly non-dyadic) intervals of length $\leq \delta^{1/k}$. Recall that the intervals $I(R)$ corresponding to the incidence rectangles on the LHS of \eqref{mostIncidenceRectanglesKept} are disjoint and have length $\delta^{1/k}$, and thus all but $O(\log 1/\delta)$ of these intervals are contained in a single interval from the set $\{J_i\}$ described above. By pigeonholing, we can select a dyadic interval $I_f^\dag \in \{J_i\}$ so that
\begin{equation*}
\begin{split}
\#\{R\colon (f,R)\in \mathcal{I}_1,\ & I(R)\subset I_f^\dag \}\\
&\gtrsim_\eps (\log 1/\delta)^{-1}\delta^{O(\eta/\eps)}\#\{R\colon (f,R)\in \mathcal{I}_1\}-\log 1/\delta.
\end{split}
\end{equation*}
The set $\mathcal{I}_2$ constructed in this way satisfies 
\[
\#\mathcal{I}_2\gtrsim_\eps (\log 1/\delta)^{-1}\delta^{O(\eta/\eps)}(\#\mathcal{I}_1) - (\log 1/\delta)(\# F).
\]
We may suppose that the first term dominates, i.e.~$\#\mathcal{I}_2\gtrsim_\eps (\log 1/\delta)^{-1}\delta^{O(\eta/\eps)}(\#\mathcal{I}_1)$, since otherwise we would have $\mathcal{I}_1\lesssim_\eps \log(1/\delta)(\#F)$, and hence \eqref{RectBdEqnForPolysMuSmall} holds and we are done.

Let $\eps_1>0$ be a small quantity to be chosen below; we will select $\eps_1$ small compared to $\eps$. For each $f\in F$, let $I_f\subset I_f^\dag$ be a dyadic interval that maximizes the quantity 
\[
|I_f|^{-\eps_1} \#\{R \colon (f,R)\in \mathcal{I}_2, I(R) \subset I_f \}.
\]

Define 
\[
\mathcal{I}_3 = \{(f,R)\in\mathcal{I}_2\colon  I(R)\subset I_f\}.
\]

By the maximality of $I_f$, for each dyadic interval $J\subset I_f$ we have
\begin{equation}\label{twoEndsAlongEachCurveInsideIf}
\begin{split}
    \# \{R & \colon (f,R)\in \mathcal{I}_2,\  I(R)\subset J\} \\
    & \leq (|J|/|I_f|)^{\eps_1} \# \{R \colon (f,R)\in \mathcal{I}_2,\ I(R)\subset I_f\} \\
    & = (|J|/|I_f|)^{\eps_1} \# \{R\colon (f,R)\in \mathcal{I}_3\}.
\end{split}
\end{equation}
On the one hand, since $I_f$ is a dyadic interval, each (not necessarily dyadic) interval $J\subset I_f$ is contained in a union of at most 2 dyadic intervals of length at most $2|J|$, and thus \eqref{twoEndsAlongEachCurveInsideIf} continues to hold, with the RHS weakened by a factor of 4, for all intervals $J\subset I_f$.
Since $|I_f^\dag|\leq 1$, we have
\begin{equation}\label{mostOfI2Captured}
\begin{split}
|I_f|^{-\eps_1}  \#\{R\colon (f,R)& \in \mathcal{I}_2,\ I(R) \subset I_f \} \\
& \geq \#\{R \colon (f,R)\in \mathcal{I}_2,\ I(R) \subset I_f^\dag \}.
\end{split}
\end{equation}
On the other hand, if the set on the RHS of \eqref{mostOfI2Captured} is non-empty, then we may suppose that $I_f$ contains at least one interval of length $\delta^{1/k}$, and in particular $|I_f|^{-\eps_1}\leq \delta^{-\eps_1}$. We conclude that $\#\mathcal{I}_3\geq\delta^{\eps_1}(\#\mathcal{I}_2)$. Observe that \eqref{twoEndsAlongEachCurveInsideIf} is a rescaled analogue of \eqref{twoEndsAlongEachCurveInProp} inside $I_f$. 

After further dyadic pigeonholing, we can select a set $F_4\subset F,$ a multiplicity $\nu$, and a dyadic length $\ell=2^{-j}\in[\delta^{1/k},1]$ so that the following conditions hold:
\begin{enumerate}
\item[(a)] 
For each $f\in F_4$, $I_f$ is a dyadic interval of length $\ell$.
\item[(b)] Each $f\in F_4$ satisfies 
\begin{equation}\label{multBdFForI3}
\nu\leq\# \{R\colon (f,R)\in \mathcal{I}_3\}< 2\nu.
\end{equation}
\item[(c)] The set $\mathcal{I}_4=\mathcal{I}_3\cap (F_4\times\mathcal{R})$ satisfies $\#\mathcal{I}_4\gtrsim (\log 1/\delta)^{-2}(\#\mathcal{I}_3)$.
\end{enumerate}
We have the following bounds on the size of $\mathcal{I}_4$:
\begin{equation}\label{sizeOfI3}
\begin{split}
(\log 1/\delta)^{-3}\delta^{\eps_1+O(\eta/\eps)}\mu(\#\mathcal{R}_1)\lesssim_\eps\phantom{.} & \#\mathcal{I}_4 <2\mu(\#\mathcal{R}_1),\\
 \nu(\#F_4) \leq\phantom{_\eps .} & \#\mathcal{I}_4 <2\nu(\#F_4).
\end{split}
\end{equation}
Note that \eqref{multBdFForI3} continues to hold with $\mathcal{I}_4$ in place of $\mathcal{I}_3$.

\medskip
\noindent {\bf Step 3.2 Graph refinement.} 
At this point, the functions $f\in F_4$ satisfy a rescaled analogue of \eqref{twoEndsAlongEachCurveInProp}. Unfortunately, while all of the rectangles $R\in\mathcal{R}_1$ are incident to at least $\mu$ functions from $F$ under the incidence relation $\mathcal{I}_1$, the same need not be true under the incidence relation $\mathcal{I}_4$. We can fix this by applying the following graph refinement lemma from \cite{dg}.

\begin{lem}[Graph refinement]\label{graphRefinementLemma}
Let $G = (A\sqcup B, E)$ be a bipartite graph. Then there is an induced sub-graph $G'=(A'\sqcup B', E')$ so that $\#E'\geq \#E/2$; each vertex in $A'$ has degree at least $\frac{\#E}{4\#A}$; and each vertex in $B'$ has degree at least $\frac{\#E}{4\#B}$.
\end{lem}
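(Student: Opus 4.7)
The plan is to prove this by an iterative vertex-removal argument with thresholds fixed in terms of the original quantities $\#E$, $\#A$, and $\#B$. Set the thresholds
\[
\tau_A = \frac{\#E}{4\#A}, \qquad \tau_B = \frac{\#E}{4\#B},
\]
and run the following procedure. Start with $G_0 = G$. At stage $i$, if some vertex $v \in A$ currently has degree strictly less than $\tau_A$, or some vertex $v \in B$ currently has degree strictly less than $\tau_B$, delete $v$ (and all its incident edges) to form $G_{i+1}$. Otherwise halt and set $G' = G_i$.

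The procedure halts in at most $\#A + \#B$ steps since each step removes a vertex. At termination every $v \in A'$ has degree at least $\tau_A$ and every $v \in B'$ has degree at least $\tau_B$ in $G'$, which is precisely the degree condition required. To control the number of edges removed, observe that each deletion of a vertex in $A$ removes strictly fewer than $\tau_A$ edges, and each deletion of a vertex in $B$ removes strictly fewer than $\tau_B$ edges. Since at most $\#A$ vertices are deleted from $A$ and at most $\#B$ vertices from $B$, the total number of edges lost is strictly less than
\[
\#A \cdot \tau_A + \#B \cdot \tau_B = \frac{\#E}{4} + \frac{\#E}{4} = \frac{\#E}{2}.
\]
Hence $\#E' > \#E/2 \geq \#E/2$, completing the argument.

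There is no real obstacle here; the only subtlety to flag is that the thresholds $\tau_A, \tau_B$ must be defined using the \emph{original} cardinalities $\#E, \#A, \#B$ rather than the updated ones, so that the accounting above is valid regardless of the order in which vertices are deleted. In particular, the procedure is well-defined and the degree lower bounds in the conclusion are with respect to these fixed quantities, matching the statement of the lemma.
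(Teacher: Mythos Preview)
Your proof is correct; this is the standard iterative deletion argument for this lemma. The paper itself does not supply a proof but simply cites the result from \cite{dg}, so there is nothing to compare against beyond noting that your argument is the expected one. One cosmetic remark: the chain ``$\#E' > \#E/2 \geq \#E/2$'' is redundant, and in the trivial case where no deletions occur (so nothing is removed) the strict inequality need not hold, but of course $\#E' = \#E \geq \#E/2$ is immediate there.
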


Apply Lemma \ref{graphRefinementLemma} to the bipartite graph $(F_4\sqcup \mathcal{R}_1, \mathcal{I}_4)$. We obtain an induced sub-graph $(F_5\sqcup \mathcal{R}_5, \mathcal{I}_5)$ with the following properties:
\begin{itemize}
\item $\#\mathcal{I}_5\geq \frac{1}{2}\#\mathcal{I}_4$, and hence \eqref{sizeOfI3} continues to hold with $\mathcal{I}_5$ in place of $\mathcal{I}_4$, with the LHS weakened by a factor of $1/2$. 
\item Each $f\in F_5$ satisfies an analogue of \eqref{multBdFForI3} with $\mathcal{I}_5$ in place of $\mathcal{I}_3$, except that the LHS is weakened to $\nu/4$.
\item Each $R\in \mathcal{R}_5$ is incident (under the incidence relation $\mathcal{I}_5$) to at least $(\#\mathcal{I}_4)/(4\# \mathcal{R}_1)\gtrsim_\eps (\log1/\delta)^{-3}\delta^{\eps_1+O(\eta/\eps)}\mu$ functions $f\in F_5$. 
\end{itemize}

\medskip
\noindent {\bf Step 3.3 Rescaling.} 
If $\ell\leq\delta^{1/k-\eps}$, then for each $f\in F_5$ there are at most $\delta^{-\eps}$ rectangles $R\in \mathcal{R}$ with $(f,R)\in \mathcal{I}_5$ (see the comment after Definition \ref{comparableDefn}). We conclude that 
\[
\#\mathcal{R}\lesssim (\log 1/\delta)\# \mathcal{R}_1  \lesssim (\log1/\delta)^{4}\delta^{-\eps_1-O(\eta/\eps)}\#\mathcal{I}_5\leq (\log1/\delta)^{4}\delta^{-\eps-\eps_1-O(\eta/\eps)}\#F,
\]
and hence \eqref{RectBdEqnForPolysMuSmall} holds and we are done (provided we select $\eps_1\leq\eps$, $\eta\leq c\eps^2$ for a sufficiently small constant $c\sim 1$, $B\geq 3$, and $C$ sufficiently large).

Next, suppose that
\begin{equation}\label{ellLong}
\ell\geq\delta^{1/k-\eps}.
\end{equation}
Our goal is to obtain a contradiction, and thereby finish the proof. 

Let $\rho=\ell^k\geq\delta^{1-k\eps}$. Let $\mathcal{S}$ be a set of $(\rho;k)$ rectangles constructed as follows. Let $\mathcal{J}$ be the set of dyadic intervals of length $\ell$ in $[0,1]$ (there are $\ell^{-1}$ such intervals). For each interval $J=[a, a+\ell]\in\mathcal{J}$, $\mathcal{S}$ contains all rectangles of the form $S^g(J)$, where $g$ is a function satisfying $\Vert g\Vert_{C^k(J)}\leq 1$ that is of the form $g(t) = \sum_{i=0}^{k-1}(t-a)^i b_i$ with $b_i\in \frac{1}{100 2^k} \rho^{1-i/k}\ZZ$. 

For every $f\in F_5$, there exists a function $g\colon I_f\to\RR$ of the above form so that
\begin{equation}\label{fCloseToGRho2}
\sup_{t\in I_f}|f(t)-g(t)|\leq \rho/2.
\end{equation}
 For each $f\in F_5$, let $\mathcal{S}(f)$ be the set of rectangles $S = S^g(I_f)$ for which \eqref{fCloseToGRho2} holds. In particular,  $\mathcal{S}(f)$ is non-empty. By Lemma \ref{tangencyImpliesCover} (here we require $\delta>0$ to be sufficiently small so that $\delta^{-k\eps}\geq 2^k$, and thus  $\rho\geq 2^k\delta$; if $\delta$ is not sufficiently small then \eqref{RectBdEqnForPolysMuSmall} is immediate), we have that if $(f,R)\in\mathcal{I}_5$ and $S\in \mathcal{S}(f)$, then $R\prec S$. 

Finally, each point in $\RR^{k+1}$ is contained in $O(1)$ sets $\{\hat S\colon S\in\mathcal{S}\}$, which means that for each $R\in\mathcal{R}$, there are 
$O(1)$ rectangles $S\in\mathcal{S}$ with $R\prec S$.

For each $S\in\mathcal{S}$, define 
\begin{equation}
\begin{split}
F_S & =\{f\in F_5\colon S\in\mathcal{S}(f)\},\\
\mathcal{R}_S & = \{R\in\mathcal{R}_5\colon R\prec S\},\\
\mathcal{I}_S & = \mathcal{I}_5\cap (F_S\times \mathcal{R}_S).
\end{split}
\end{equation}
Then 
\begin{equation}\label{FSVsIS}
\frac{1}{4}\nu(\#F_S) \leq \#\mathcal{I}_S \leq 2\nu(\# F_S),
\end{equation}
and
\begin{equation}\label{notTooMuchOverlap}
\mathcal{I}_5 = \bigcup_{S\in\mathcal{S}}\mathcal{I}_S,\qquad \sum_{S\in\mathcal{S}}\#\mathcal{R}_S \lesssim \#\mathcal{R}_5.
\end{equation}

Since each $R\in\mathcal{R}_5$ is incident to $\gtrsim_\eps (\log1/\delta)^{-3}\delta^{\eps_1+O(\eta/\eps)}\mu$ functions $f\in F_5$ under the incidence relation $\mathcal{I}_5$, by pigeonholing and \eqref{notTooMuchOverlap} we can select a rectangle $S\in\mathcal{S}$ so that
\[
\#\mathcal{I}_S \gtrsim_\eps (\log1/\delta)^{-3}\delta^{\eps_1+O(\eta/\eps)}\mu(\#\mathcal{R}_S).
\]

Apply Lemma \ref{graphRefinementLemma} to the graph $(F_S\sqcup \mathcal{R}_S, \mathcal{I}_S)$, and denote the output by $(F_S'\sqcup \mathcal{R}_S',\ \mathcal{I}_S')$. Then under the incidence relation $\mathcal{I}_S'$, each $f\in F_S'$ is incident to between $\frac{1}{16}\nu$ and $2\nu$ rectangles $R\in\mathcal{R}_S'$, and each $R\in\mathcal{R}_S'$ is incident to $\gtrsim_\eps (\log1/\delta)^{-3}\delta^{\eps_1+O(\eta/\eps)}\mu$ functions $f\in F_S'$. 

Furthermore, for each $f\in F_S'$ and each interval $J\subset I_f$, we have
\begin{equation}\label{stillObeyTwoEnds}
\begin{split}
\#\{R\colon (f,R)\in\mathcal{I}_S',\ I(R)\subset J\} &\leq  \# \{R  \colon (f,R)\in \mathcal{I}_2,\  I(R)\subset J\}\\
& \leq (|J|/|I_f|)^{\eps_1} \# \{R\colon (f,R)\in \mathcal{I}_3\}\\
& \lesssim (|J|/|I_f|)^{\eps_1} \nu\\
& \lesssim (|J|/|I_f|)^{\eps_1} \#\{R\colon (f,R)\in\mathcal{I}_S'\}.
\end{split}
\end{equation}

Apply the rescaling $f\mapsto f_S$ and $R\mapsto R_S$ from Definitions \ref{pullbackRect} and \ref{defnCapPhiRect} to the sets $F_S'$ and $\mathcal{R}_S'$. This gives sets $\tilde F_S$ and $\tilde{\mathcal{R}}_S,$ and an incidence relation $\tilde{\mathcal{I}}_S$.

For each $\tilde R\in \tilde{\mathcal{R}}_S,$ define 
\[
\tilde F_S(\tilde R)=\{\tilde f\in \tilde F_S\colon (\tilde f,\tilde R)\in \tilde{\mathcal{I}}_S \}.
\]
Then by \eqref{stillObeyTwoEnds}, the sets $\tilde F_S$ and $\tilde{\mathcal{R}}_S,$ and the sets $\{\tilde F_S(\tilde R)\}$ obey the two-ends non-concentration condition \eqref{twoEndsAlongEachCurveInProp} from Proposition \ref{tangencyRectanglesInsideSemiAlgSetProp} at scale $\tilde\delta=\delta/\rho$ in place of $\delta$, with $\eps_1$ in place of $\eps$ and a number $O(1)$ in place of $\delta^{-\eta}$ in Inequality \eqref{twoEndsAlongEachCurveInProp}. Note that each function in $\tilde F_S$ has degree at most $\delta^{-\eta}\leq \tilde\delta^{-\eta/k\eps}$.

Before we can apply Proposition \ref{tangencyRectanglesInsideSemiAlgSetProp}, however, we must show that the prisms $\{\hat{\tilde R}\colon \tilde R\in\tilde{\mathcal{R}}_S\}$  are contained in a semi-algebraic set $W$ of controlled complexity and small volume. 
First, observe that every such prism $\hat{\tilde R}$ is contained in $\psi^S(S\cap Z^*)$ (recall that $\psi^S$ is defined in Definition \ref{pullbackRect}), which in turn is contained in the union of all $(\tilde\delta;k)$ tangency prisms that intersect $\phi^S(S\cap Z)\subset ([0,1]\times[-1,1]^k) \cap \psi^S(Z)$. This in turn is contained in the set 
\[
W=\big([0,1]\times[-1,1]^k\big) \cap N_{\tilde\delta^{1/k}}(\psi^S(Z)).
\] 
$\psi^S(Z)$ is an algebraic variety of degree at most $\deg Q\leq E$, so by Theorem \ref{WonkewThm} we have
\[
|W| \lesssim E^{k+1}\tilde\delta^{1/k} \lesssim_\eps  \delta^{-O(\eta/\eps)}\tilde\delta^{1/k} \leq \tilde\delta^{1/k-O(\eta/\eps^2)},
\]
where we use the bound $\rho\geq\delta^{1-k\eps}$ (and thus $\tilde\delta\leq \delta^{k\eps}$) to replace $\delta^{-O(\eta/\eps)}$ with $\tilde\delta^{-O(\eta/\eps^2)}$.

It is straightforward to show that $W$ has complexity at most $E^{O(1)}\lesssim_\eps \delta^{O(\eta/\eps)}\lesssim \tilde\delta^{O(\eta/\eps^2)}$. We wish to apply Proposition \ref{tangencyRectanglesInsideSemiAlgSetProp} with $\tilde\delta$ in place of $\delta$ and $\eps_1$ in place of $\eps$. Let $c=c(k)>0$ and $\eta_1$ be the corresponding quantities from  Proposition \ref{tangencyRectanglesInsideSemiAlgSetProp}. If $\eta>0$ is selected sufficiently small depending on $\eta_1,k$, and $\eps$ (recall that $\eta_1$ depends on $k$ and $\eps_1$, and $\eps_1$ in turn depends on $k$ and $\eps$), then the hypotheses of Proposition \ref{tangencyRectanglesInsideSemiAlgSetProp} are satisfied. We conclude that there is a rectangle $\tilde R \in \tilde{\mathcal{R}}_S$; a number $c\gtrsim 1$; a scale $\tau\in [\tilde\delta,\tilde\delta^{c}]$; and a $(\tau;k+1; c)$ rectangle $\tilde R_1\supset\tilde R$ with
\begin{equation}\label{cardinalityBdTildeFPrimeS}
\# \{ \tilde f\in \tilde F_S(\tilde R)\colon \tilde f\sim \tilde R_1\} \gtrsim \# \tilde F_S(\tilde R) \gtrsim_\eps (\log1/\delta)^{-3}\delta^{\eps_1+O(\eta/\eps)}\mu.
\end{equation}

Undoing the rescaling, we have a curvilinear rectangle of dimensions $\tau\rho\times c^{\frac{1}{k+1}}\tau^{\frac{1}{k+1}}\rho^{1/k}=\tau\rho\times c^{\frac{1}{k+1}}\tau^{\frac{-1}{k(k+1)}}(\tau\rho)^{1/k}$; i.e., we have a $(\tau\rho; k; c^{\frac{k}{k+1}}\tau^{\frac{-1}{k+1}})$ tangency rectangle $R_1\supset R$, with 

\begin{equation}\label{concentratedInRect}
\# \{ f\in  F_S( R)\colon f\sim  R_1\}\gtrsim_\eps (\log1/\delta)^{-3}\delta^{\eps_1+O(\eta/\eps)}\mu.
\end{equation}

Finally, define $\rho_1 =\tau\rho$ and define $T=c^{\frac{k}{k+1}}\tau^{\frac{-1}{k+1}}\gtrsim \tilde\delta^{\frac{-c}{k+1}} \geq \delta^{\frac{-c \eps}{k+1}}$. Since the rectangles in $\mathcal{R}$ are $\mu$-rich and $\eps$-robustly broad with error $\delta^{-\eta}$, by \eqref{fExtendedRectangleBound} we have
\begin{equation}\label{upperBdCurvesInR1}
\# \{ f\in  F_S( R)\colon f\sim  R_1\} \leq \delta^{-\eta}T^{-\eps}(\#F(R)) \lesssim \delta^{-\eta+\frac{c \eps^2}{k+1}}\mu.
\end{equation}
Comparing \eqref{concentratedInRect} and \eqref{upperBdCurvesInR1}, we obtain a contradiction provided we select $\eps_1$ and $\eta$ sufficiently small depending on $\eps$ and $c$ (recall that $c$ in turn depends on $k$), and provided $\delta>0$ is sufficiently small. 

This contradiction shows that \eqref{ellLong} cannot hold. This completes the proof of Proposition \ref{tangencyRectanglesAlgCurvesProp}, except that we still need to prove Proposition \ref{tangencyRectanglesInsideSemiAlgSetProp}. This will be done in the next section. 
\end{proof}


\section{Tangencies inside a semi-algebraic set of small volume}\label{tangenciesSemiSmallVolSec}

In this section we will prove Proposition \ref{tangencyRectanglesInsideSemiAlgSetProp}. We begin by establishing a decomposition theorem for semi-algebraic sets with small volume.


\subsection{Covering semi-algebraic sets with thin neighborhoods of Lipschitz graphs}

In this section, we will show that a semi-algebraic set $W\subset[-1,1]^{n+1}$ with small volume can be covered by a controlled number of thin neighborhoods of Lipschitz graphs, plus a set that has small projection to $[-1,1]^n$. The precise statement is as follows. Throughout this section, all implicit constants may depend only on the dimension $n$. We write $A = \poly(B)$ to mean $A\leq C B^C$, where the constant $C$ may depend on the ambient dimension $n$.

\begin{prop}\label{coverByLipschitzGraph}
Let $M\geq 1$ and let $W\subset [-1,1]^{n+1}$ be a semi-algebraic set of complexity at most $M$. Let $0<u\leq 1$ and $L\geq 1$. Then we can cover $W$ by a collection of sets,
\begin{equation}\label{SCoveringEqn}
W \subset \bigcup_{i=0}^N W_i,
\end{equation}
with the following properties:
\begin{itemize}

\item[(i)]  $N = \poly(M).$

\item[(ii)] $W_0 = T_0\times [-1,1]$, where $T_0\subset [-1,1]^n$ is semi-algebraic with complexity $\poly(M)$, and 
\begin{equation}\label{boundOnSizeT0}
|T_0|\leq \poly(M)\big(L^{-1/n} + |W|/u\big).
\end{equation}

\item[(iii)] For each index $i=1,\ldots,N$, $W_i$ is of the form
\begin{equation}\label{WiOfTheForm}
W_i = \{(\underline x, x_{n+1})\colon \underline x \in T_i,\ f_i(\underline x)< x_{n+1} < f_i(\underline x)+u\},
\end{equation}
where $T_i\subset [-1,1]^n$ is semi-algebraic with complexity $\poly(M)$, and $f_i\colon [-1,1]^n\to[-1,1]$ is $L$-Lipschitz.
\end{itemize}
\end{prop}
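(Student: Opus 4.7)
The plan is to use a cylindrical semi-algebraic cell decomposition to describe $W$ fiberwise over a bounded-complexity base partition, then handle the fiber-thickness constraint and the Lipschitz constraint by two separate Fubini-type observations.

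First I would apply a cylindrical cell decomposition adapted to $W$ (in the style of Basu--Pollack--Roy). This partitions $[-1,1]^{n+1}$ into $\poly(M)$ semi-algebraic cells, and induces a partition of $[-1,1]^n$ into $\poly(M)$ semi-algebraic base cells $C_\alpha$ of complexity $\poly(M)$, so that over each $C_\alpha$ the fiber of $W$ is a disjoint union of ``slabs'' $\{\xi_j^{(\alpha)}(\underline x) < x_{n+1} < \xi_{j+1}^{(\alpha)}(\underline x)\}$, where the $\xi_j^{(\alpha)}$ are continuous (and, after one further refinement, $C^1$) semi-algebraic functions on $C_\alpha$. The lower-dimensional pieces of $W$ (graphs alone, boundary cells) have measure zero and can be absorbed into closures of the other $W_i$ without affecting the claimed bounds.

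Second, for each slab I would split $C_\alpha$ into the ``thick-fiber'' part $S_{\alpha,j} = \{\underline x \in C_\alpha : \xi_{j+1}^{(\alpha)}(\underline x) - \xi_j^{(\alpha)}(\underline x) > u\}$ and its complement. The portion of the slab above $S_{\alpha,j}$ contributes at least $u\cdot|S_{\alpha,j}|$ to $|W|$, so summing over the $\poly(M)$ slabs yields $\sum_{\alpha,j}|S_{\alpha,j}| \leq |W|/u$; these bases are added to $T_0$. Above the complement the slab is contained in a strip of thickness $u$ above the graph of $\xi_j^{(\alpha)}$, matching the form \eqref{WiOfTheForm}.

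Third, I would enforce the Lipschitz condition. Let $B_{\alpha,j}\subset C_\alpha$ be the set where some partial derivative $|\partial_i \xi_j^{(\alpha)}|$ exceeds $L/\sqrt{n}$. Along each axis-parallel segment in $[-1,1]^n$ the function $\xi_j^{(\alpha)}$ has total variation at most $2$, so $\{t : |\partial_i \xi_j^{(\alpha)}(t)| > L/\sqrt{n}\}$ has one-dimensional measure $\lesssim 1/L$; Fubini then yields $|B_{\alpha,j}|\lesssim 1/L \leq L^{-1/n}$ (using $L\geq 1$). Add these sets to $T_0$. On $T_i := C_\alpha\setminus(S_{\alpha,j}\cup B_{\alpha,j})$ the function $\xi_j^{(\alpha)}$ is genuinely $L$-Lipschitz, and I would define $f_i$ on $[-1,1]^n$ by the truncated McShane extension
\[
f_i(\underline x) = \max\!\Big(\!-1,\ \min\!\Big(1,\ \inf_{\underline y\in T_i}\big[\xi_j^{(\alpha)}(\underline y) + L|\underline x-\underline y|\big]\Big)\Big),
\]
which is $L$-Lipschitz, agrees with $\xi_j^{(\alpha)}$ on $T_i$, takes values in $[-1,1]$, and is semi-algebraic of complexity $\poly(M)$ by Tarski--Seidenberg. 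Summing the contributions to $|T_0|$ gives \eqref{boundOnSizeT0}, and the total number of pieces is $\poly(M)$ since each CAD base cell contributes boundedly many slabs.

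The main obstacle is the combinatorial bookkeeping: maintaining complexity $\poly(M)$ through the cell decomposition, through the gradient thresholding (so that the $B_{\alpha,j}$ and resulting $T_i$ are semi-algebraic of controlled complexity), and through the quantifier-based McShane extension. The geometric and measure-theoretic content is standard once the cell decomposition is in hand.
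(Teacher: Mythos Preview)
There is a genuine gap in your third step. You claim that on $T_i := C_\alpha\setminus(S_{\alpha,j}\cup B_{\alpha,j})$ the function $\xi_j^{(\alpha)}$ is ``genuinely $L$-Lipschitz,'' but bounding $|\nabla \xi_j^{(\alpha)}|\le L$ pointwise on $T_i$ does \emph{not} make $\xi_j^{(\alpha)}$ $L$-Lipschitz there: $T_i$ is a semi-algebraic set with no convexity or quasi-convexity guaranteed, so two points $\underline x,\underline y\in T_i$ may be close in $\RR^n$ yet connected only by paths in $T_i$ of much greater length (or not connected at all within $T_i$). The McShane formula then does not return an $L$-Lipschitz extension, and the pieces $W_i$ need not satisfy (iii).

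This is precisely the obstruction the paper works to remove. Rather than threshold the gradient once, the paper iterates the decomposition down through the coordinates so that each base $T_i$ becomes a \emph{regular $L'$-cell} in the sense of Paw{\l}ucki, with $L'=(L/n!)^{1/n}$. Paw{\l}ucki's result (Proposition~\ref{LFunctionsAreAlmostLipschitz}) then gives that an $L'$-function on a regular $L'$-cell is $n!(L')^n=L$-Lipschitz, after which Kirszbraun--Valentine supplies the extension. The price of this iteration is exactly the exponent $1/n$ in the $L^{-1/n}$ term of \eqref{boundOnSizeT0}: the gradient threshold is $(L/n!)^{1/n}$ rather than $L$, so your Fubini bound becomes $\poly(M)\,L^{-1/n}$ rather than $\poly(M)/L$. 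Your thick-fiber step and overall architecture are fine; what is missing is the mechanism (regular cells plus Paw{\l}ucki) that converts a pointwise gradient bound into an honest Lipschitz bound on a possibly non-convex semi-algebraic base.
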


One of the main tools that we will use to prove Proposition \ref{coverByLipschitzGraph} is the cylindrical algebraic decomposition. This is a technique from real algebraic geometry that was originally developed in the context of quantifier elimination.  The cylindrical algebraic decomposition decomposes an arbitrary semi-algebraic set into simpler sets, which are called cells\footnote{these are not to be confused with the connected components of $\RR^{k+1}\backslash \{Q=0\}$ from Section \ref{proofOfProptangencyRectanglesAlgCurvesPropPt1Sec}, which are also called cells.}. The standard references for this material are the textbooks by Bochnak, Coste, and Roy \cite{BCR} and Basu, Pollack, and Roy \cite{BPR}.

We begin with an informal definition of a semi-algebraic cylindrical decomposition of $\RR^n$; see \cite[Definition 5.1]{BPR} for a precise definition. In what follows, if $W\subset\RR^{n}$ is a semi-algebraic set, we say that $f\colon W\to\RR$ is a \emph{semi-algebraic function} if $\operatorname{graph}(f)$ is a semi-algebraic set. A \emph{cylindrical decomposition} of $\RR^n$ is a sequence $\mathcal{W}_1,\ldots,\mathcal{W}_n$, where for each index $j$, $\mathcal{W}_j$ is a partition of $\RR^j$ into semi-algebraic sets---these are called the cells of level $j$. Each cell $W\in\mathcal{W}_1$ is either a point or an open interval. For $1<j\leq n$, each cell of level $j$ is of the form 
\begin{equation}\label{UDecompType1}
U = \big\{(\underline x, x_j)\colon \underline x \in T,\ f(\underline x) < x_j < g(\underline x)\big\},
\end{equation}
or
\begin{equation}\label{UDecompType2}
U = \big\{(\underline x, x_{j})\colon \underline x \in T,\ x_j=f(\underline x)\big\}.
\end{equation}
In the above, $T$ is a cell from $\mathcal{W}_{j-1}$, and either $f$ is continuous and semi-algebraic, or else $f=-\infty$; similarly either $g$ is continuous and semi-algebraic, or else $g = \infty.$

We say that a cylindrical algebraic decomposition $\mathcal{W}_1,\ldots,\mathcal{W}_n$ of $\RR^n$ is \emph{adapted} to a semi-algebraic set $W$ if $W$ is a union of cells. Theorem 5.6 from \cite{BPR} says that for every semi-algebraic set $W\subset\RR^n$, there exists a cylindrical decomposition of $\RR^n$ adapted to $W$. In Section 11 (see specifically Algorithm 11.2) from \cite{BPR}, the authors describe an algorithm for computing a cylindrical decomposition adapted to a semi-algebraic set $W$, and they analyze this algorithm to show that the sum of the complexities\footnote{in \cite{BPR}, the term ``complexity'' often refers to the ``time complexity,'' (i.e.~the number of computational steps required to compute the decomposition $\mathcal{W}_1,\ldots,\mathcal{W}_n$) rather than complexity in the sense of Definition \ref{complexitySemiAlgSet}. However, the authors in \cite{BPR} also bound the number and degrees of the polynomials needed to describe the sets in $\mathcal{W}_1,\ldots,\mathcal{W}_n$; these latter quantities give an upper bound for the complexity in the sense of Definition \ref{complexitySemiAlgSet}.} of the cells in $\mathcal{W}_1,\ldots,\mathcal{W}_n$ is bounded by $\poly(M)$, where $M$ is the complexity of $W$. We remark that the complexity of the cylindrical algebraic decomposition is often referred to as ``doubly exponential.'' However, this is doubly exponential in $n$ (the dimension of the ambient space, or equivalently the number of variables); for $n$ fixed, the complexity is polynomial in the complexity of $W$.

Given a cylindrical decomposition $\mathcal{W}_1,\ldots,\mathcal{W}_n$, it is straightforward to find a new decomposition $\mathcal{V}_1,\ldots,\mathcal{V}_n$ such that the functions $f$ and $g$ from \eqref{UDecompType1} (resp.~the function $f$ from \eqref{UDecompType2}) satisfy $F(\underline x, f(\underline x))=0$ and $G(\underline x, g(\underline x)) = 0$ (resp.~$F(\underline x, f(\underline x))=0$) for each $x\in T$, where $F$ and $G$ (resp.~$F$) are polynomials whose degrees are bounded by $\poly(M)$ (here as above, $M$ is the complexity of $W$). The key step is that $T$ can be partitioned into a controlled number of semi-algebraic sets (each of controlled complexity), so that $f$ and $g$ (resp.~$f$) are of the claimed form on each of these sets. See e.g.~Lemma 2.6.3 from \cite{BCR}. In summary, we have the following


\begin{thm}[Effective cylindrical algebraic decomposition]\label{CADThm}
Let $W\subset\RR^{n+1}$ be a bounded, semi-algebraic set of complexity at most $M$ (see Definition \ref{complexitySemiAlgSet}). Then there exists a decomposition $W = \bigsqcup_{i=0}^N W_i$, with $N=\poly(M)$, where the sets $W_i$ have the following properties:
\begin{itemize}
\item[(i)] Each $W_i$ is semi-algebraic of complexity $\poly(M)$.
\item[(ii)] The projection of $W_0$ to the first $n$ coordinates is a semi-algebraic set of measure 0 and complexity  $\poly(M)$.
\item[(iii)] For each $i=1,\ldots,N$, the set $W_i$ is of one of the following two forms:
\begin{equation}\label{WiType1}
W_i = \big\{(\underline x, x_{n+1})\colon \underline x \in T_i,\ f_i(\underline x) < x_{n+1} < g_i(\underline x)\big\},
\end{equation}
or
\begin{equation}\label{WiType2}
W_i = \big\{(\underline x, x_{n+1})\colon \underline x \in T_i,\ x_{n+1}=f_i(\underline x)\big\}.
\end{equation}
In the representations \eqref{WiType1} and \eqref{WiType2} above, $T_i\subset\RR^n$ is an open semi-algebraic set of complexity $\poly(M)$; the function $f_i\colon T_i\to\RR$ is smooth; and there is a nonzero polynomial $F_i\colon\RR^{n+1}\to\RR$ of degree $\poly(M)$ so that
\[
F_i(\underline x, f_i(\underline x))=0\quad \textrm{and}\quad \partial_{x_{n+1}}F_i(\underline x, f_i(\underline x))\neq 0\qquad\textrm{for all}\ \underline x\in T_i.
\] 
The function $g_i\colon T_i\to\RR$ satisfies the analogous conditions. 
\end{itemize}
\end{thm}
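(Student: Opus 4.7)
The plan is to deduce Theorem \ref{CADThm} directly from the standard cylindrical algebraic decomposition together with one structural refinement. As sketched in the paragraphs preceding the statement, the CAD algorithm of \cite{BPR} (Algorithm 11.2) produces a partition $\mathcal{W}_1,\ldots,\mathcal{W}_{n+1}$ of $\RR^{n+1}$ adapted to $W$, whose cells in level $n+1$ are already of the forms \eqref{UDecompType1} and \eqref{UDecompType2}, with total complexity $\poly(M)$ (here $n$ is fixed, so the doubly-exponential dependence on dimension becomes an $O(1)$ factor). Since $W$ is bounded, any occurrences of $f=-\infty$ or $g=+\infty$ can be truncated to a fixed large box without affecting complexity. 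What remains is (i) to certify that the boundary functions $f_i, g_i$ are cut out by polynomial equations with non-vanishing $x_{n+1}$-derivative, and (ii) to absorb all genuinely lower-dimensional cells into $W_0$.

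For (i), I would apply Lemma 2.6.3 of \cite{BCR}: every continuous semi-algebraic function is, on each piece of a suitable semi-algebraic partition of its domain, a root of a fixed polynomial $F$ whose coefficients are polynomials in the base variables. Applying this to each $f_i$ and $g_i$ and refining $T_i$ accordingly produces the desired polynomial $F_i$ of degree $\poly(M)$ with $F_i(\underline{x}, f_i(\underline{x}))=0$. I would then further subdivide $T_i$ by the zero set of the discriminant-type polynomial $\partial_{x_{n+1}} F_i(\underline{x}, f_i(\underline{x}))$ (which is again semi-algebraic of complexity $\poly(M)$); on each remaining open piece, $\partial_{x_{n+1}}F_i$ is non-vanishing along the graph, and the implicit function theorem then gives smoothness of $f_i$ automatically. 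The same procedure applies to $g_i$.

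For (ii), I would declare $W_0$ to be the union of (a) every level-$(n+1)$ cell whose base cell $T\in \mathcal{W}_n$ has dimension strictly less than $n$, together with (b) the lower-dimensional boundary strata produced in the refinement above (the zero sets of the $\partial_{x_{n+1}} F_i$ composed with the graph, plus the semi-algebraic boundary of each $T_i$ inside its Zariski closure). All of these pieces are semi-algebraic of complexity $\poly(M)$ and their projections to $\RR^n$ are finite unions of semi-algebraic sets of dimension $<n$, hence of Lebesgue measure zero; their union is again semi-algebraic of complexity $\poly(M)$. The remaining cells are then exactly of the forms \eqref{WiType1} or \eqref{WiType2} with $T_i$ open, as required.

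I expect the main obstacle to be purely bookkeeping: tracking complexities through the refinements. Each step — invoking CAD, applying Lemma 2.6.3, intersecting with the zero locus of a derivative polynomial, taking connected components via \cite{BPR} — preserves polynomial complexity in $M$ when $n$ is fixed, but one must verify that the refinements do not cascade and blow up the bound. Since each refinement is by a set of complexity $\poly(M)$, after an $O_n(1)$ number of refinement passes the total complexity remains $\poly(M)$, which closes the argument. No genuinely new idea beyond the CAD framework is needed.
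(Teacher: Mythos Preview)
Your proposal is correct and mirrors the paper's own approach: the paper does not give a self-contained proof but instead derives the theorem from the CAD algorithm in \cite{BPR} (Algorithm 11.2) for the decomposition with $\poly(M)$ complexity bounds, together with Lemma 2.6.3 of \cite{BCR} for the polynomial representation of the boundary functions. Your treatment is in fact slightly more detailed than the paper's discussion---in particular your explicit handling of the non-vanishing $\partial_{x_{n+1}}F_i$ condition via a further refinement, and your accounting for $W_0$ as the union of lower-dimensional base cells---but the underlying strategy and the key references are identical.
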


Next, we will recall several definitions and results from \cite{Pawlucki}. Note that \cite{Pawlucki} works in the slightly more general context of an $o$-minimal structure on a real closed field; we specialize these results to the special case of semi-algebraic sets over the field $\RR$. The following are Definitions 1--4 from \cite{Pawlucki}.
\begin{defn}\label{defnLFunction}
Let $L\geq 0$, let $T\subset\RR^{n}$ be open, and let $f\colon T\to\RR$ be continuous and semi-algebraic. We say that $f$ is an $L$-\emph{function} if 
\[
|\partial_{x_j} f(x)|\leq L,\quad j=1,\ldots,n,
\]
for all points $x\in T$ for which $f$ continuously differential in a neighborhood of $x$. 
\end{defn}

\begin{defn}\label{defnLCell}
A bounded set $S\subset\RR^{n+1}$ is called an $L$-\emph{cell} if it is of the form
\begin{equation}\label{SMCell}
S = \{(\underline x, x_{n+1})\colon \underline x \in T,\ f(\underline x)< x_{n+1}< g(\underline x) \},
\end{equation}
where $T\subset\RR^n$ is open and semi-algebraic, and $f,g$ are $L$-functions. 
\end{defn}

\begin{defn}\label{defnRegularLCell}
A bounded set $S\subset\RR^{n+1}$ is called a \emph{regular} $L$-\emph{cell} if:
\begin{itemize}
	\item It is an open interval in the case $n=0$.
	\item For $n\geq 1$, $S$ is an $L$-cell, and the projection of $S$ to the first $n$ coordinates is a regular $L$-cell (i.e.~the set $T$ from \eqref{SMCell} is a regular $L$-cell).
\end{itemize}
\end{defn}

With these definitions, we can now state Proposition 1 from \cite{Pawlucki}. The following result says that every $C^1$ $L$-function whose domain is a regular $L$-cell is Lipschitz.
\begin{prop}\label{LFunctionsAreAlmostLipschitz}
Let $T$ be a regular $L$-cell in $\RR^{n}$, and let $f\colon T\to\RR$ be a continuously differentiable $L$-function. Then for all $x,y\in T$, we have
\begin{equation}
|f(x) - f(y)| \leq n! L^n |x-y|.
\end{equation}
\end{prop}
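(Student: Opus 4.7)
The plan is to proceed by induction on $n$. For the base case $n=1$, a regular $L$-cell in $\RR$ is by definition an open interval, so the hypothesis $|f'| \leq L$ combined with the fundamental theorem of calculus yields $|f(x) - f(y)| \leq L|x-y|$, matching the constant $1! \cdot L^1$.

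For the inductive step, I would write $T = \{(\underline x, x_n) : \underline x \in T',\ \phi(\underline x) < x_n < \psi(\underline x)\}$, where $T' \subset \RR^{n-1}$ is a regular $L$-cell and $\phi,\psi$ are continuous $L$-functions on $T'$. In order to apply the inductive hypothesis to the boundary functions $\phi$ and $\psi$ (which need not be $C^1$), I would first upgrade the inductive statement to apply to any continuous $L$-function on a regular $L$-cell. Since $\phi,\psi$ are semi-algebraic, they are $C^\infty$ on an open dense subset of $T'$, so the $C^1$ bound applies there and extends to all of $T'$ by continuity. With this upgrade, the inductive hypothesis yields that $\phi$ and $\psi$ are $((n-1)!\, L^{n-1})$-Lipschitz on $T'$.

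Given points $p=(\underline a, a_n)$ and $q=(\underline b, b_n)$ in $T$, I would then construct a piecewise smooth path $\gamma = (\alpha,\beta)\colon [0,1]\to T$ from $p$ to $q$, where $\alpha\colon [0,1]\to T'$ is a path from $\underline a$ to $\underline b$ (obtained by a recursive path-construction argument inside $T'$), and $\beta\colon [0,1]\to\RR$ is chosen to interpolate between $a_n$ and $b_n$ while maintaining $\phi(\alpha(t))<\beta(t)<\psi(\alpha(t))$. The Lipschitz control on $\phi,\psi$ bounds the vertical fluctuation of any such lift in terms of the horizontal displacement. Integrating $\nabla f$ along $\gamma$ and using the componentwise bound $|\partial_i f|\leq L$ produces
\begin{equation*}
|f(q)-f(p)| \;\leq\; L\int_0^1 \|\gamma'(t)\|_1\,dt,
\end{equation*}
and the task reduces to showing that the right-hand side is at most $n!\, L^n\,|p-q|$. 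The factor of $n$ in the inductive multiplication comes from splitting $\gamma'$ into horizontal and vertical components and applying the Lipschitz control.

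The main obstacle is the actual construction of the path $\alpha$ inside $T'$, since $T'$ is not in general convex, so the straight segment from $\underline a$ to $\underline b$ may leave $T'$. To handle this, I would strengthen the inductive hypothesis further to assert not only the Lipschitz bound but also the existence of a path joining any two points of a regular $L$-cell whose total coordinate-wise variation is controlled by $n!\, L^{n-1}|p-q|$. This stronger inductive statement runs in parallel with the Lipschitz bound for $f$: each application of the cell definition introduces one new factor of $L$ (from the Lipschitz constant of the new boundary functions) and one new factor of $n$ (from coupling the vertical adjustment to the existing horizontal motion), which together accumulate to the stated constant $n!\,L^n$.
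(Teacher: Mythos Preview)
The paper does not give its own proof of this proposition; it quotes it as Proposition~1 from Paw{\l}ucki's paper \cite{Pawlucki} and uses it as a black box. Your inductive strategy---strengthen the hypothesis to a path-existence statement with controlled $\ell^1$-length, then integrate $\nabla f$ along that path---is exactly the approach Paw{\l}ucki takes, so at the level of architecture your proposal is correct and matches the cited source.

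That said, one step in your sketch is not quite right as written. You propose to apply the $(n-1)$-dimensional $C^1$ Lipschitz bound to the boundary functions $\phi,\psi$ by restricting to the dense open set where they are smooth and then extending by continuity. The problem is that the smooth locus of $\phi$ is in general \emph{not} a regular $L$-cell, so the inductive hypothesis does not apply to $\phi$ restricted there; and knowing a function is Lipschitz on each component of a dense open set does not give a global Lipschitz bound without further input (think of the Cantor staircase, which has derivative $0$ on a dense open set). The correct argument sidesteps any Lipschitz statement about $\phi,\psi$: once the inductive path $\alpha\colon[0,1]\to T'$ is constructed, the composition $\phi\circ\alpha$ is a \emph{one-variable} semi-algebraic function, hence piecewise $C^1$ with finitely many pieces, hence absolutely continuous; and at almost every parameter value $|(\phi\circ\alpha)'(s)|\le L\|\alpha'(s)\|_1$ because $|\partial_j\phi|\le L$ wherever $\phi$ is differentiable. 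This directly bounds the total variation of $\phi\circ\alpha$ by $L\cdot\ell^1(\alpha)$, which is what you actually need to control the vertical component $\beta$.

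A smaller caveat: the specific constant $n!\,L^n$ tacitly assumes $L\ge 1$ (for $L<1$ the bound fails already at $n=2$, since an $L$-function on a near-rectangle is only $\approx\sqrt{2}L$-Lipschitz, and $\sqrt{2}L>2L^2$ for small $L$). You do not verify the recursion for the constant, and a naive bookkeeping gives something like $((1+L)^n-1)$ rather than $n!\,L^{n-1}$ for the path length; these agree up to harmless factors only when $L\ge 1$, which is the regime in which the paper uses the result.
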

We now begin the process of proving Proposition \ref{coverByLipschitzGraph}. To start, we will study structural properties of the cells arising from the cylindrical algebraic decomposition.

\begin{lem}\label{smallSetBigDerivative}
Let $M\geq 1$ and let $T\subset[-1,1]^n$ be an open semi-algebraic set of complexity at most $M$. Let $f\colon T\to[-1,1]$ be differentiable, and suppose there is a nonzero polynomial $F\colon\RR^{n+1}\to\RR$ of degree at most $M$ so that $F(\underline x, f(\underline x))=0$ and $\partial_{x_{n+1}}F(\underline x, f(\underline x))\neq 0$ for all $\underline x \in T$. Finally, let $L>0$ and suppose that for each point $\underline x \in T$, there is an index $1\leq i\leq n$ so that $|\partial_{x_i}f(\underline x)|\geq L$. 

Then
\begin{equation}\label{boundOnSizeT}
|T|\lesssim  M^2/L.
\end{equation}
\end{lem}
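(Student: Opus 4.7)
The plan is to decompose $T$ according to which partial derivative of $f$ is large, and then use Fubini together with the one-dimensional area formula to reduce the bound to counting roots of a univariate polynomial. Specifically, partition $T = T_1\cup\cdots\cup T_n$ with
\[
T_i=\{\underline x\in T:|\partial_{x_i}f(\underline x)|\geq L\},
\]
which by hypothesis covers $T$. It then suffices to prove $|T_i|\lesssim M/L$ for each $i$; by symmetry I focus on $i=n$ and split $\underline x=(\underline x',x_n)$ with $\underline x'\in[-1,1]^{n-1}$. Note that since $F(\underline x,f(\underline x))=0$ and $\partial_{x_{n+1}}F(\underline x,f(\underline x))\neq 0$ with $F$ a polynomial, the implicit function theorem upgrades $f$ from differentiable to $C^\infty$, so the partial derivatives appearing below are honest smooth functions.

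Next, I would apply Fubini to write $|T_n|=\int_{[-1,1]^{n-1}}|T_n|_{\underline x'}|\,d\underline x'$, where $T_n|_{\underline x'}=\{x_n\in[-1,1]:(\underline x',x_n)\in T_n\}$ is the vertical slice. Fix $\underline x'$ and set $g(x_n)=f(\underline x',x_n)$, so that $g$ is smooth on the open set $T|_{\underline x'}$, takes values in $[-1,1]$, and satisfies $|g'(x_n)|\geq L$ on $T_n|_{\underline x'}$. Combining the trivial bound $L\,|T_n|_{\underline x'}|\leq\int_{T_n|_{\underline x'}}|g'|\,dx_n$ with the one-dimensional area formula for $C^1$ maps gives
\[
L\,|T_n|_{\underline x'}|\leq\int_{\RR}\#\big\{x_n\in T_n|_{\underline x'}:g(x_n)=y\big\}\,dy.
\]

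The key pointwise estimate is then a root count: for each $y$ such that $F(\underline x',\cdot,y)$ is not identically zero as a polynomial in $x_n$, the inclusion $\{x_n:g(x_n)=y\}\subset\{x_n:F(\underline x',x_n,y)=0\}$ and the degree bound $\deg_{x_n}F\leq M$ give at most $M$ preimages. Since $g$ is supported in $[-1,1]$, integrating over $y\in[-1,1]$ yields the bound $2M$ on the integral above, whence $|T_n|_{\underline x'}|\leq 2M/L$. Integrating over $\underline x'$ and summing over $i$ produces $|T|\lesssim_n M/L$, which is strictly stronger than the claimed $M^2/L$.

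The main subtlety is handling the exceptional values of $y$ for which $F(\underline x',\cdot,y)\equiv 0$: these fail the polynomial-root bound. However, since $F\not\equiv 0$ as a polynomial in $n+1$ variables, for $\underline x'$ outside a semi-algebraic set of dimension less than $n-1$ (hence of measure zero), the polynomial $F(\underline x',\cdot,\cdot)$ is a nonzero element of $\RR[x_n,y]$, and so the set of exceptional $y$'s is finite; moreover, on any such exceptional fiber the constraint $|g'|\geq L>0$ forces $g^{-1}(y)\cap T_n|_{\underline x'}$ to be discrete, so the measure-zero contribution is immaterial for the $dy$-integral. This technicality is the only real obstacle; everything else is bookkeeping.
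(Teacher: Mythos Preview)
Your proof is correct, and in fact yields the sharper bound $|T|\lesssim M/L$. The paper's argument and yours share the same opening moves: decompose $T=\bigcup_i T_i$ according to which $|\partial_{x_i}f|\geq L$, then reduce via Fubini to a one-dimensional problem on lines in the $e_i$ direction. The divergence is in the one-dimensional step. The paper picks a single line $\ell$ with $|\ell\cap T_i|\gtrsim |T_i|$, argues that $\ell\cap T_i$ has $O(M^2)$ connected components (this is where the second factor of $M$ enters), selects a long subinterval, and uses $|f(a)-f(b)|\leq 2$ together with $|\partial_{x_i}f|\geq L$ to bound its length. Your approach instead runs the area formula on every slice and controls the level-set count $\#\{x_n:g(x_n)=y\}$ directly by the degree of $F(\underline x',\cdot,y)$, which costs only one factor of $M$. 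Your handling of the exceptional $(\underline x',y)$ where $F(\underline x',\cdot,y)\equiv 0$ is also fine: the bad $\underline x'$ form a null set, and for good $\underline x'$ the finitely many bad $y$'s contribute nothing to the $dy$-integral. The upshot is that your argument is both cleaner (it avoids any discussion of the semi-algebraic complexity of $T_i$) and quantitatively stronger; the paper's $M^2/L$ is simply what its method gives, and the extra $M$ is not needed downstream.
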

\begin{proof}

For $i=1,\ldots n$ define
\begin{align*}
T_i &= \Big\{\underline x\in T\colon \ \Big|\frac{\partial_{x_i} F(\underline x)}{\partial_{x_{n+1}} F(\underline x)}\Big|\geq L\Big\}\\
&=\Big\{\underline x\in T\colon\ \big(\partial_{x_i} F(\underline x)\big)^2 \geq L^2\big(\partial_{x_{n+1}} F(\underline x)\big)^2\Big\}.
\end{align*}
We have
\begin{equation}\label{mostOfTCoveredByTi}
T =  \bigcup_{i=1}^n T_i,
\end{equation}
and each set $T_i$ has complexity at most $2M.$ 

Fix an index $i$. By Fubini's theorem, we can select a line $\ell\subset\RR^n$ pointing in the $e_i$ direction with $|\ell\cap T_i|\geq |T_i|/2$ (here we use the fact that $T_i\subset[-1,1]^n$; the $|\cdot|$ on the LHS denotes one-dimensional Lebesgue measure, while the $|\cdot|$ on the RHS denotes $n$-dimensional Lebesgue measure). Since $T_i$ has complexity at most $2M$ (i.e.~it is described by a Boolean formula involving at most $2M$ polynomials, each of degree at most $2M$), $\ell\cap T_i$ contains at most $4M^2$ connected components. Thus we can select an interval $\ell'\subset \ell\cap T_i$ that has length at least $|T_i|/(8M^2)$. But since $|\partial_{x_i}f|\geq L$ on $T_i$, we have $|f(a)-f(b)|\geq L|T_i|/(8M^2)$, where $a$ and $b$ are the endpoints of $\ell'$. On the other hand, $f(a),f(b)\in[-1,1]$. We conclude that
\[
\frac{L|T_i|}{8M^2}\leq |f(a)-f(b)|\leq 2.
\]
Re-arranging we have $|T_i|\leq 16M^2/L$. Summing over $i$ we obtain \eqref{boundOnSizeT}. 
\end{proof}

\begin{lem}\label{bigSmallNablaDecompLem}
Let $M\geq 1$ and let $T\subset[-1,1]^n$ be an open semi-algebraic set of complexity at most $M$. Let $f\colon T\to[-1,1]$ be differentiable, and suppose there is a nonzero polynomial $F$ of degree at most $M$ so that $F(\underline x, f(\underline x))=0$ and $\partial_{x_{n+1}}F(\underline x, f(\underline x))\neq 0$ for all $\underline x \in T$.

Let $L>0$. Then we can write $T = T'\cup T''$, where
\begin{itemize}
\item[(i)] $T'$ and $T''$ are semi-algebraic of complexity $O(M)$. 
\item[(ii)] $T'$ is open, and the restriction $f\colon T'\to\RR$  is an $L$-function.
\item[(iii)] $|T''|\lesssim M^2/L$.
\end{itemize}
\end{lem}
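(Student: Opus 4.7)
The approach will be a direct thresholding: I would cut $T$ into the region $T'$ where all partials of $f$ are below $L$ in magnitude, and the complementary region $T''$ where some partial is at least $L$; the conclusion will then follow once I apply Lemma \ref{smallSetBigDerivative} to $T''$.

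First, because the implicit equation $F(\underline x, f(\underline x))=0$ has nonvanishing $x_{n+1}$-derivative on $T$, the implicit function theorem gives
\[
\partial_{x_j} f(\underline x)= -\frac{\partial_{x_j}F(\underline x, f(\underline x))}{\partial_{x_{n+1}}F(\underline x, f(\underline x))},\qquad j=1,\ldots,n,
\]
so each partial of $f$ is continuous and semi-algebraic in $\underline x$; note here that $f$ itself is semi-algebraic, since its graph is the intersection of $T\times\RR$ with the semi-algebraic set $\{F=0,\ \partial_{x_{n+1}}F\neq 0\}$, which has complexity $O(M)$. I then define
\[
T' = \{\underline x\in T \colon |\partial_{x_j}f(\underline x)| < L\ \text{for all}\ j\},\qquad T''=T\setminus T'.
\]
Continuity of the partials makes $T'$ open, and $f|_{T'}$ is an $L$-function by construction, which gives (ii). Rewriting each condition $|\partial_{x_j}f|<L$ as the polynomial inequality $(\partial_{x_j}F)^2<L^2(\partial_{x_{n+1}}F)^2$ evaluated along the graph of $f$, and invoking Tarski--Seidenberg, shows that $T'$ and $T''$ are semi-algebraic of complexity $O(M)$, which is (i).

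For (iii), the plan is to invoke (the proof of) Lemma \ref{smallSetBigDerivative} with $T''$ replacing $T$. By construction $T''\subset[-1,1]^n$ is semi-algebraic of complexity $O(M)$, the same polynomial $F$ still witnesses the implicit definition of $f$ on $T''$, and at every point of $T''$ some $|\partial_{x_i}f|\geq L$. The openness hypothesis of Lemma \ref{smallSetBigDerivative} was used there solely to make sense of differentiability of $f$ along horizontal line segments meeting the set; since $T''\subset T$ and $f$ is already differentiable on the open set $T$, the same Fubini-plus-mean-value-theorem argument applies verbatim to $T''$ and yields $|T''|\lesssim M^2/L$.

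I do not anticipate any substantive difficulty. The one point worth flagging is ensuring that $f$ qualifies as a semi-algebraic function, so that $T'$ and $T''$ are genuinely semi-algebraic of the claimed complexity; but the implicit description of $f$ via $F$ dispenses with this immediately, and the remainder of the argument is pure bookkeeping.
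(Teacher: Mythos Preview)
Your proposal is correct and follows essentially the same approach as the paper: both define the decomposition by thresholding the partial derivatives of $f$ (expressed via the implicit-function formula as ratios of partials of $F$), verify that $f$ is an $L$-function on the ``small-gradient'' piece, and apply Lemma~\ref{smallSetBigDerivative} to bound the measure of the ``large-gradient'' piece. The only cosmetic difference is that the paper writes $T'=\operatorname{int}(T_1)$ rather than arguing directly (as you do) that $T_1$ is already open by continuity of the partials; your handling of the openness hypothesis in Lemma~\ref{smallSetBigDerivative} is in fact slightly more explicit than the paper's.
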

\begin{proof}
Let
\begin{equation*}
\begin{split}
T_2 &= \bigcup_{i=1}^n \big\{ \underline x\in T\colon (\partial_{x_i} F(\underline x))^2\geq L^2(\partial_{x_{n+1}} F (\underline x))^2  \big\},\\
T_1 & =T\backslash T_2.
\end{split}
\end{equation*}
By the implicit function theorem, there is a neighbourhood $U$ of $T_1$ and an extension of $f$ to $U$ that satisfies $F(\underline x, f(\underline x))=0$ for all $\underline x\in U$. By the Leibniz rule, we have that $f$ is differentiable and satisfies $|\partial_{x_i}f(\underline x)|\leq L$ for each $i=1,\ldots n$ on $T_1$. Finally, by Lemma \ref{smallSetBigDerivative} we have $|T_2|\lesssim M^2/L$. To finish the proof, let $T'=\operatorname{int}(T_1)$ and $T'' = T\backslash T'$.
\end{proof}

Combining Theorem \ref{CADThm} and Lemma \ref{bigSmallNablaDecompLem}, we have the following.
\begin{lem}\label{oneStageDecompSets}
Let $M\geq 1$ and let $W\subset [-1,1]^{n+1}$ be a semi-algebraic set of the form
\begin{equation}\label{WOfTheForm1}
W = \{(\underline x, x_{n+1})\colon \underline x \in T,\ f(\underline x)< x_{n+1} < g(\underline x)\},
\end{equation}
or
\begin{equation}\label{WOfTheForm2}
W = \{(\underline x, x_{n+1})\colon \underline x \in T,\ x_{n+1} = f(\underline x)\},
\end{equation}

where $T\subset [-1,1]^n$ is an open semi-algebraic set of complexity at most $M$; $f\colon T\to[-1,1]$ is smooth; and there is a nonzero polynomial $F\colon\RR^{n+1}\to\RR$ of degree at most $M$ so that
\[
F(\underline x, f(\underline x))=0\quad \textrm{and}\quad \partial_{x_{n+1}}F(\underline x, f(\underline x))\neq 0\qquad\textrm{for all}\ \underline x\in T.
\] 
Suppose $g$ satisfies the analogous condition (if $W$ is of the form \eqref{WOfTheForm1}).

Then we can cover $W$ by a collection of sets
\[
W \subset\bigcup_{i=0}^N W_i,
\]
where 
\begin{itemize}
\item[(i)] $N = \poly(M)$.
\item[(ii)] The projection of $W_0$ to the first $n$ coordinates is semi-algebraic of complexity $\poly(M)$ and has measure at most $\poly(M)/L$.
\item[(iii)] For each $i=1,\ldots,N$, $W_i$ is of the form
\begin{equation}\label{WiType1InLem}
W_i = \big\{(\underline x, x_{n+1})\colon \underline x \in T_i,\ f_i(\underline x) < x_{n+1} < g_i(\underline x)\big\},
\end{equation}
or
\begin{equation}\label{WiType2InLem}
W_i = \big\{(\underline x, x_{n+1})\colon \underline x \in T_i,\ x_{n+1}=f_i(\underline x)\big\}.
\end{equation}
In the above, $T_i\subset\RR^n$ is an open semi-algebraic set of complexity $\poly(M)$; the function $f_i\colon T_i\to[-1,1]$ is smooth; and there is a nonzero polynomial $F_i\colon\RR^{n+1}\to\RR$ of degree $\poly(M)$ so that
\[
F_i(\underline x, f_i(\underline x))=0\quad \textrm{and}\quad \partial_{x_{n+1}}F_i(\underline x, f_i(\underline x))\neq 0\qquad\textrm{for all}\ \underline x\in T_i.
\] 
The function $g_i\colon T_i\to\RR$ satisfies the analogous conditions. 
\item[(iv)] For each $i=1,\ldots,N$, the set $T_i$ from \eqref{WiType1InLem} (resp.~\eqref{WiType2InLem}) is open, and the restriction of $f$ (if $W$ is of the form \eqref{WOfTheForm2}) or $f$ and $g$ (if $W$ is of the form \eqref{WOfTheForm1}) to $T_i$ is an $L$-function. 
\end{itemize}
\end{lem}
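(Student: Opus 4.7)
My plan is to apply Lemma \ref{bigSmallNablaDecompLem} directly to the function(s) defining $W$, without invoking any further cylindrical decomposition. In both cases \eqref{WOfTheForm1} and \eqref{WOfTheForm2}, a single invocation of Lemma \ref{bigSmallNablaDecompLem} per defining function already produces a splitting of the base $T$ into an open piece, on which the function is an $L$-function in the sense of Definition \ref{defnLFunction}, plus a semi-algebraic remainder of measure $\lesssim M^2 / L$. The remainders go into $W_0$, and the open piece (or the intersection of the two open pieces) yields a single good cell $W_1$; hence we may take $N = 1$.

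In case \eqref{WOfTheForm2}, I would apply Lemma \ref{bigSmallNablaDecompLem} to $f$, obtaining $T = T' \cup T''$, and set
\begin{equation*}
W_1 = \{(\underline x, f(\underline x)) \colon \underline x \in T'\}, \qquad W_0 = \{(\underline x, f(\underline x)) \colon \underline x \in T''\}.
\end{equation*}
In case \eqref{WOfTheForm1}, I would apply Lemma \ref{bigSmallNablaDecompLem} separately to $f$ and to $g$, obtaining $T = T'_f \cup T''_f$ and $T = T'_g \cup T''_g$; then let $T^* = T'_f \cap T'_g$ (the intersection of two open semi-algebraic sets of complexity $O(M)$, hence itself open and of complexity $O(M)$), and set
\begin{equation*}
\begin{split}
W_1 & = \{(\underline x, x_{n+1}) \colon \underline x \in T^*,\ f(\underline x) < x_{n+1} < g(\underline x)\},\\
W_0 & = \{(\underline x, x_{n+1}) \colon \underline x \in T''_f \cup T''_g,\ f(\underline x) < x_{n+1} < g(\underline x)\}.
\end{split}
\end{equation*}

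Verifying the conclusions (i)--(iv) is then routine. The defining polynomials $F_1, G_1$ of $W_1$ can be taken to be the original $F, G$, so their degrees (and the complexity of $T_1$) remain $O(M) = \poly(M)$; smoothness of $f_1 = f|_{T_1}$ and $g_1 = g|_{T_1}$ is inherited from $T$; the projection bound on $W_0$ (both $\poly(M)$ complexity and measure $\lesssim M^2/L$) comes directly from Lemma \ref{bigSmallNablaDecompLem}; and condition (iv) is precisely the $L$-function property that Lemma \ref{bigSmallNablaDecompLem} delivers on $T'_f$ and $T'_g$.

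There is no genuine obstacle here: the analytic and algebraic content is already packaged in Lemma \ref{bigSmallNablaDecompLem}, whose proof uses a Fubini-type argument on the implicit equation $F(\underline x, f(\underline x))=0$. Lemma \ref{oneStageDecompSets} is essentially the lift of that statement from the base $\RR^n$ to the ambient space $\RR^{n+1}$, phrased in a form tailored to be iterated (together with Theorem \ref{CADThm}) in the proof of Proposition \ref{coverByLipschitzGraph}. The only point worth flagging is that in case \eqref{WOfTheForm1} one takes an intersection of two open semi-algebraic sets; this preserves both openness and the $\poly(M)$ complexity, so condition (iii) is not jeopardized.
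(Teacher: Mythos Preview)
Your proof is correct and uses the same key ingredient as the paper, namely Lemma~\ref{bigSmallNablaDecompLem}. The paper's one-line proof says to combine Theorem~\ref{CADThm} with Lemma~\ref{bigSmallNablaDecompLem}, but you are right that for the lemma \emph{as stated} the CAD step is not needed: a single application of Lemma~\ref{bigSmallNablaDecompLem} to $f$ (and to $g$ in case~\eqref{WOfTheForm1}) already yields the required splitting with $N=1$, and all four conclusions are verified directly as you indicate. In particular, the intersection $T^* = T'_f \cap T'_g$ is open semi-algebraic of complexity $O(M)$, its complement in $T$ is contained in $T''_f \cup T''_g$ (measure $\lesssim M^2/L$), and taking $f_1 = f|_{T_1}$, $F_1 = F$ (and similarly for $g$) satisfies condition~(iii) verbatim.

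The reason the paper invokes CAD is most likely preparatory for the iteration in Lemma~\ref{iteratedOneStageDecompSets}: there one needs each base $T_i$ to itself be a cell (so that the lemma can be applied again one dimension down, eventually producing regular $L$-cells), whereas your $T_1 = T'$ or $T^*$ is merely an open semi-algebraic set. Applying CAD to $T'$ restores the cell structure at the cost of replacing $N=1$ by $N = \poly(M)$. Your version proves exactly what is claimed and is slightly cleaner; the paper's version packages the output in a form more directly iterable. Either is fine for the stated lemma.
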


Iterating Lemma \ref{oneStageDecompSets}, we have the following
\begin{lem}\label{iteratedOneStageDecompSets}
Let $W\subset[-1,1]^{n+1}$ be a semi-algebraic set of complexity at most $M$. Then we can cover $W$ by a collection of sets
\[
W \subset\bigcup_{i=0}^N W_i,
\]
where the sets $\{W_i\}$ satisfy Items (i), (ii), and (iii) from the conclusion of Lemma \ref{oneStageDecompSets}. In place of Item (iv), we have the following:

\begin{itemize}
\item[(iv$\:'$)] For each $i=1,\ldots,N$, the set $T_i$ from \eqref{WiType1InLem} (resp.~\eqref{WiType2InLem}) is a regular $L$-cell, in the sense of Definition \ref{defnRegularLCell}, and $f_i\colon T_i\to[-1,1]$ is an $L$-function, in the sense of Definition \ref{defnLFunction}. If $W_i$ is of the form \eqref{WiType1InLem}, then the function $g_i\colon T_i\to\RR$ is also an $L$-function.
\end{itemize}
\end{lem}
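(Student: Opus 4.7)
The approach is induction on the base dimension $n$. In the base case $n=0$, any bounded semi-algebraic $W\subset[-1,1]$ of complexity at most $M$ is a union of $\poly(M)$ points and open intervals by standard quantifier-elimination bounds; the open intervals are regular $L$-cells by the $n=0$ clause of Definition~\ref{defnRegularLCell}, and the finitely many points have measure zero and are absorbed into $W_0$. The polynomial-zero-locus conditions of Lemma~\ref{oneStageDecompSets}(iii) are trivial in this dimension.

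For the inductive step at $n\geq 1$, first apply Theorem~\ref{CADThm} to $W$, then apply Lemma~\ref{oneStageDecompSets} to each piece of the resulting cylindrical decomposition. This produces a cover $W\subset W_0^{(1)}\cup\bigcup_i W_i^{(1)}$ in which $W_0^{(1)}$ has projection of measure $\lesssim\poly(M)/L$ to $[-1,1]^n$, and each $W_i^{(1)}$ has the form \eqref{WiType1InLem} or \eqref{WiType2InLem}, with open semi-algebraic base $T_i^{(1)}\subset[-1,1]^n$ of complexity $\poly(M)$, and defining function $f_i$ (and $g_i$ in the \eqref{WiType1InLem} case) that is an $L$-function on $T_i^{(1)}$.

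To promote each $T_i^{(1)}$ from merely open semi-algebraic to a regular $L$-cell, apply the inductive hypothesis to $T_i^{(1)}$ viewed as a subset of $[-1,1]^{(n-1)+1}$. This covers $T_i^{(1)}$ by pieces of form \eqref{WiType1InLem}/\eqref{WiType2InLem} in ambient dimension $n$, whose bases in $[-1,1]^{n-1}$ are regular $L$-cells, together with an exceptional set of measure $\lesssim\poly(M)/L$. The form-\eqref{WiType1InLem} pieces are $L$-cells in $\RR^n$ (their top and bottom are $L$-functions) over regular $L$-cell bases in $\RR^{n-1}$, hence are themselves regular $L$-cells in $[-1,1]^n$ by Definition~\ref{defnRegularLCell}; the form-\eqref{WiType2InLem} pieces are graphs, have measure zero in $[-1,1]^n$, and are absorbed. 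Refining each $W_i^{(1)}$ along this cover of $T_i^{(1)}$ yields the final pieces: each new base is a regular $L$-cell, and $f_i$ (resp.~$g_i$) restricted to it is still an $L$-function, since the pointwise derivative bound in Definition~\ref{defnLFunction} is inherited by restriction to open subsets (and the polynomial-zero-locus conditions of Lemma~\ref{oneStageDecompSets}(iii) likewise survive restriction, keeping the same polynomial $F_i$). The portion of $W_i^{(1)}$ lying over the exceptional part of $T_i^{(1)}$ is absorbed into the final $W_0$; since the exceptional part has measure $\lesssim\poly(M)/L$ in $[-1,1]^n$ and fibers have length at most $2$, the projection of the resulting cylinder still has measure $\lesssim\poly(M)/L$.

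The main bookkeeping obstacle is ensuring that $\poly(M)$ bounds on complexity and exceptional measure survive the $O(n)$ inductive stages. This is immediate: $n$ is fixed so implicit constants absorb into $\poly(M)$; each stage introduces only $\poly(M)$ new pieces, each of complexity $\poly(M)$; and each stage contributes $\poly(M)/L$ to the exceptional measure, with the total remaining $\poly(M)/L$ after summing. No new scale parameter or refined estimate is needed—the inheritance of the $L$-function property under restriction, together with the recursive structure of Definition~\ref{defnRegularLCell}, allows the argument to close dimension by dimension.
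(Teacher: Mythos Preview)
Your proposal is correct and follows the approach the paper intends: the paper's entire proof is the phrase ``Iterating Lemma~\ref{oneStageDecompSets}, we have the following,'' and your induction on $n$ makes that iteration precise. The key observations you identify---that the $L$-function property in Definition~\ref{defnLFunction} is pointwise and hence inherited under restriction to open subsets, and that a form-\eqref{WiType1InLem} piece over a regular $L$-cell base with $L$-function boundaries is itself a regular $L$-cell by Definition~\ref{defnRegularLCell}---are exactly what makes the recursion close.
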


\noindent We are now ready to prove Proposition \ref{coverByLipschitzGraph}. 
\begin{proof}[Proof of Proposition \ref{coverByLipschitzGraph}]
Apply Lemma \ref{iteratedOneStageDecompSets} to $W$, with $(L/n!)^{1/n}$ in place of $L$. Let $W \subset\bigcup_{i=0}^N A_i$ be the resulting decomposition (i.e.~we will label the sets $A_0,\ldots,A_N$ rather than $W_0,\ldots,W_N$).

The projection of $A_0$ to the first $n$ coordinates has measure $\poly(M) L^{-1/n}$; denote this set by $C_0'$.
 
For each $i=1,\ldots,N$, the cell $A_i$ is of the form
\begin{equation}\label{AiType1}
A_i = \{(\underline x, x_{n+1})\colon \underline x \in B_i,\ f_i(\underline x) < x_{n+1} < g_i(\underline x)\},
\end{equation}
or
\begin{equation}\label{AiType2}
A_i = \{(\underline x, x_{n+1})\colon \underline x \in B_i,\ x_{n+1} = f_i(\underline x)\},
\end{equation}
where $B_i$ is a regular $(L/n!)^{1/n}$ cell and $f_i$ is an $(L/n!)^{1/n}$-function. By Proposition \ref{LFunctionsAreAlmostLipschitz}, we have that $f_i\colon B_i\to[-1,1]$ is $L$-Lipschitz.  By the Kirszbraun-Valentine Lipschitz extension theorem \cite{Kirs} (see \cite[Theorem 1.31]{Schw} for a proof in English), we can extend each $f_{i}$ to an $L$-Lipschitz function with domain $[-1,1]^n$.

For each cell $A_i$ of the form \eqref{AiType1}, write $B_i = C_i \sqcup C_i'$, where
\[
C_i = \{\underline x \in B_i\colon |g(\underline x) - f(\underline x)| \leq u \}.
\]
We have $u|C_i'|\leq |A_i|.$ Define
\[
W_i = \{(\underline x, x_{n+1})\colon \underline x \in C_i,\ f_i(\underline x)\leq x_{n+1}\leq f_i(\underline x)+u\}.
\] 
Then
\[
A_i \subset W_i \cup (C_i'\times[-1,1]).
\]

For each cell $A_i$ of the form \eqref{AiType2}, define $C_i=B_i$ and $C_i'=\emptyset.$ Define
\[
W_i = \{(\underline x, x_{n+1})\colon \underline x \in C_i,\ f_i(\underline x)-u/2\leq x_{n+1}\leq f_i(\underline x)+u/2\}.
\]

We have 
\begin{equation}\label{boundDiPrime}
\sum_{i=1}^N |C_i'|\leq u^{-1}\sum_{i=1}^N |A_i| = |W|/u.
\end{equation}

To conclude the proof, we define 
\[
T_0 = \bigcup_{i=0}^N C_i',
\]
and $W_0 = T_0\times[-1,1]$. Then $T_0$ is semi-algebraic of complexity $\poly(M)$, and has measure $\poly(M)(L^{-1/n}+|W|/u)$.
\end{proof}


\subsection{Jet lifts in thin neighborhoods of Lipschitz graphs}\label{jetLiftSection}
Our goal in this section is to prove the following result. In what follows, recall that the jet lift $\mathcal{J}_jf$ is given in Definition \ref{jetLiftDefn}.

\begin{prop}\label{cutCurvesAdaptedSASet}
For each $n\geq 0$ and $\kappa,\rho>0$, there are constants $A = A(n)$ and $B=B(n,\kappa),$ and a set $\mathcal{X}\subset[\rho, \rho^{1/B}]$ (depending on $n,\kappa,$ and $\rho$) of cardinality $n+1$ so that the following holds. Let $D,M\geq 1$. Let $W\subset[-1,1]^{n+2}$ be a semi-algebraic set of complexity at most $M$ and volume at most $\rho$. 

Then for each polynomial $f$ of degree at most $D$, there is a ``bad'' set $B_f\subset[0,1]$, which is a union of at most $A(DM)^A$ intervals and has measure at most $A (DM)^A \rho^{1/B}$, so that the following holds. 

Let $f,g$ be polynomials of degree at most $D$. Suppose there is a point $t_0\in [0,1]\backslash(B_f \cup B_g)$ that satisfies
\begin{equation}
\begin{split}
&(t_0, \mathcal{J}_{n}f(t_0))\in W,\quad (t_0, \mathcal{J}_{n}g(t_0))\in W, \label{fCloseToGAndliftOfFInS}\\
&|\mathcal{J}_n f(t_0)-\mathcal{J}_n g(t_0)|\leq \rho.
\end{split}
\end{equation}
Then there is a number $\tau\in \mathcal{X}$ so that
\begin{equation}\label{fRhoCloseToG}
|f(t)-g(t)|\leq  A \tau,\qquad t \in [t_0-\tau^\kappa,\ t_0 + \tau^\kappa].
\end{equation}
\end{prop}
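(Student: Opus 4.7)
The plan follows the strategy described in Section \ref{introProofSketch} after Simplifying Assumptions (A) and (B). First, I would iteratively apply Proposition \ref{coverByLipschitzGraph} to $W$. Starting from $W_{n+1} := W \subset [-1,1]^{n+2}$, apply the proposition with suitably small $u_j>0$ and suitably large $\Lambda_j$ (each a positive power of $\rho$ with exponent depending on $n$ and $\kappa$) to obtain a cover by vertical $u_n$-neighborhoods of graphs of $\Lambda_n$-Lipschitz functions $L_{n,a}\colon [-1,1]^{n+1}\to [-1,1]$ (with final coordinate $y_n$ distinguished), together with a product set $T_n\times[-1,1]$, where $T_n\subset [-1,1]^{n+1}$ is semi-algebraic of volume $\lesssim \poly(M)(\Lambda_n^{-1/(n+1)}+|W|/u_n)$. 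Repeat the decomposition on $T_n$, producing Lipschitz graphs at each subsequent level $j \in \{n-1,\ldots,0\}$ of the form $y_j=L_{j,a}(t,y_0,\ldots,y_{j-1})$, and terminating after $n+1$ iterations at a set $T_{-1}\subset [-1,1]$ (a union of intervals) of measure $\leq \rho^{c}$ for some $c=c(n,\kappa)>0$.

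Next, for each polynomial $f$ of degree at most $D$, I would define $B_f$ as the union of: (i) $T_{-1}$; and (ii) for each level $j\in\{0,\ldots,n\}$, a small neighborhood of each $t\in[0,1]$ at which the projection of $(t,\mathcal{J}_n f(t))$ to the first $j+2$ coordinates crosses the (algebraic) boundary of a Lipschitz piece at level $j$, enters or exits $T_j\times[-1,1]$, or approaches the boundary of $[-1,1]^{j+2}$. The boundaries of the Lipschitz pieces and of the sets $T_j$ are semi-algebraic of complexity $\poly(M)$ (by inspection of the proof of Proposition \ref{coverByLipschitzGraph}), so B\'ezout (Proposition \ref{BBProp}) bounds the number of crossings with the algebraic curve $t\mapsto(t,\mathcal{J}_n f(t))$ (of degree $\poly(D)$) by $\poly(DM)$, yielding $|B_f|\leq A(DM)^A\rho^{1/B}$ as a union of at most $A(DM)^A$ intervals. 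For $t_0\notin B_f\cup B_g$ satisfying the hypotheses, let $j^*=j^*(f,g,t_0)$ be the smallest $j$ such that both projections lie in a Lipschitz piece at level $j$; the buffering built into $B_f$, together with the hypothesis $|\mathcal{J}_n f(t_0)-\mathcal{J}_n g(t_0)|\leq \rho$ and the choice $u_j\gg \rho$, forces both curves into the same (or nearly agreeing) Lipschitz piece at level $j^*$.

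Finally, the Gronwall step. At level $j^*$ with associated Lipschitz function $L_{j^*,a}$ of Lipschitz constant $\Lambda_{j^*}$, we have $|f^{(j^*)}(t)-L_{j^*,a}(t,f(t),\ldots,f^{(j^*-1)}(t))|\leq u_{j^*}$ on a neighborhood of $t_0$ in $[0,1]\setminus(B_f\cup B_g)$, and likewise for $g$. Setting $h:=f-g$ and subtracting, Lipschitzness of $L_{j^*,a}$ yields $|h^{(j^*)}(t)|\lesssim \Lambda_{j^*}|\mathcal{J}_{j^*-1}h(t)|+O(u_{j^*})$, and a quantitative Gronwall inequality then gives
\[
|\mathcal{J}_{j^*-1}h(t)|\leq \bigl(\rho+u_{j^*}|t-t_0|\bigr)\,e^{(1+\Lambda_{j^*})|t-t_0|}.
\]
Choosing $\tau_{j^*}$ so that $\Lambda_{j^*}\tau_{j^*}^\kappa=O(1)$ and $\rho+u_{j^*}\tau_{j^*}^\kappa\leq A\tau_{j^*}$ produces $|f(t)-g(t)|\leq A\tau_{j^*}$ on $[t_0-\tau_{j^*}^\kappa,\,t_0+\tau_{j^*}^\kappa]$ with $\tau_{j^*}\in[\rho,\rho^{1/B}]$; appropriate choices of the $u_j,\Lambda_j$ (as powers of $\rho$ whose exponents depend on $\kappa$) make these constraints simultaneously satisfiable. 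Taking $\mathcal{X}=\{\tau_0,\tau_1,\ldots,\tau_n\}$ (one value per possible level $j^*$) then completes the proof. The main technical obstacle is the matching step: ensuring that for $t_0\notin B_f\cup B_g$ the curves $(t,\mathcal{J}_n f(t))$ and $(t,\mathcal{J}_n g(t))$ fall into the \emph{same} Lipschitz piece at a \emph{common} level. This requires both delicate parameter tracking (choosing $u_j\gg \rho$ so that two $\rho$-close jet lifts cannot lie in disjoint Lipschitz neighborhoods) and a mechanism, via the buffering in $B_f$, to absorb the potential overlap of Lipschitz pieces in the cover produced by Proposition \ref{coverByLipschitzGraph}.
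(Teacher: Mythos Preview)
Your proposal is correct and follows essentially the same route as the paper: iterate Proposition~\ref{coverByLipschitzGraph} with a decreasing sequence of Lipschitz constants and thicknesses (the paper takes $L_i=\rho^{-(\kappa/2)^{i+1}}$, $\rho_i=L_i^{-1/(\kappa n)}$, $\mathcal{X}=\{\rho_0,\ldots,\rho_n\}$, with indexing running top-down rather than bottom-up), define $B_f$ via buffered neighborhoods of the times at which the jet curve crosses piece boundaries (the paper calls these the ``Ends'' of the intervals in a semi-algebraic partition $\mathcal{I}_f$ of $[0,1]$, and bounds their number by complexity rather than B\'ezout), and conclude by Gronwall.

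On the matching step you flag as the main obstacle: the paper's resolution is slightly different from, and cleaner than, your ``same or nearly agreeing piece'' formulation. The concern is real---two $\rho$-close jets can lie in pieces associated to genuinely different Lipschitz functions that diverge away from $t_0$, so ``nearly agreeing'' does not close on its own. Rather than argue that $f$ and $g$ land in the same original piece, the paper \emph{expands} every piece $W^i_j$ by $\rho$ to $(W^i_j)^*$ and has $B_f$ track membership in both the original and the expanded pieces. One then picks the original piece $W^i_j$ containing $f$'s jet at $t_0$; $\rho$-closeness places $g$'s jet in $(W^i_j)^*$ at $t_0$; and the buffering in $B_f,B_g$ keeps \emph{both} trajectories inside the \emph{single} expanded set $(W^i_j)^*$ on an interval of length $\sim L_i^{-1}$, so both satisfy the same approximate ODE with right-hand side $G^i_j$. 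Your direct Gronwall on $h=f-g$ is a harmless variant of the paper's Lemma~\ref{polyCurvesCloseForever}, which instead passes through an auxiliary exact solution.
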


Gronwall's inequality will play an important role in the proof of Proposition \ref{cutCurvesAdaptedSASet}. We will use the following formulation. See, e.g., \cite{HowardGron} for a discussion and proof of this version.

\begin{thm}[Gronwall's inequality]\label{gronwallThm}
Let $n\geq 1$, let $I$ be an interval, let $F,G\colon I\times \RR^n\to\RR$, let $t_0\in I$, let $\tilde{\underline x}, \tilde{\underline y} \in \RR^n$, and let $f,g\colon I\to\RR$ satisfy the initial value problems
\begin{equation}
\begin{split}
f^{(n)}(t) &=  F\big(t, \mathcal{J}_{n-1}f(t)\big), \quad \mathcal{J}_{n-1}f(t_0) = \tilde{\underline x},\\
g^{(n)}(t) &=  G\big(t, \mathcal{J}_{n-1}g(t)\big), \quad \mathcal{J}_{n-1}g(t_0) = \tilde{\underline y}.
\end{split}
\end{equation}
Suppose that for $t$ fixed, $F$ is $L$-Lipschitz in $\underline x$, i.e.
\[
|F(t, \underline x)-F(t, \underline x')|\leq L|\underline x-\underline x'|,\qquad t\in I,\ \underline x, \underline x'\in \RR^n.
\]
Let $\rho>0$. Suppose that $|\tilde{\underline x}-\tilde{\underline y}|\leq\rho$, and 
\begin{equation}\label{FGClose}
|F(t, \underline x) - G(t, \underline x)|\leq \rho,\qquad t\in I,\ \underline x\in \RR^n.
\end{equation}

Then
\[
|f(t)-g(t)|\lesssim e^{L|I|}\rho,\qquad t\in I.
\]
\end{thm}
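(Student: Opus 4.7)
\medskip

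\noindent\textbf{Proof proposal for Theorem \ref{gronwallThm}.}

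The plan is to reduce the $n$-th order ODE to a first-order system on the jet lift, derive a scalar integral inequality for the difference of solutions, and then apply the classical one-dimensional Gronwall lemma.

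First I would introduce vector fields $\vec F,\vec G\colon I\times\RR^n\to\RR^n$ by
\[
\vec F(t,x_0,\ldots,x_{n-1}) = \bigl(x_1,x_2,\ldots,x_{n-1},F(t,x_0,\ldots,x_{n-1})\bigr),
\]
and analogously for $\vec G$. By the definition of the jet lift, $\mathcal{J}_{n-1}f$ and $\mathcal{J}_{n-1}g$ satisfy the first-order initial value problems
\[
\tfrac{d}{dt}\mathcal{J}_{n-1}f(t)=\vec F(t,\mathcal{J}_{n-1}f(t)),\quad \mathcal{J}_{n-1}f(t_0)=\tilde{\underline x},
\]
and similarly for $g$. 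The shift components of $\vec F$ are $1$-Lipschitz, and the last component is $L$-Lipschitz in the spatial variable by hypothesis, so $\vec F(t,\cdot)$ is $(L+n-1)$-Lipschitz on $\RR^n$. The pointwise bound \eqref{FGClose} transfers to $|\vec F(t,\underline x)-\vec G(t,\underline x)|\le\rho$ since $\vec F$ and $\vec G$ agree in their first $n-1$ components.

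Next I would write the standard integral identity
\[
\mathcal{J}_{n-1}f(t)-\mathcal{J}_{n-1}g(t)=(\tilde{\underline x}-\tilde{\underline y})+\int_{t_0}^{t}\!\bigl[\vec F(s,\mathcal{J}_{n-1}f)-\vec F(s,\mathcal{J}_{n-1}g)\bigr]ds+\int_{t_0}^{t}\!\bigl[\vec F(s,\mathcal{J}_{n-1}g)-\vec G(s,\mathcal{J}_{n-1}g)\bigr]ds,
\]
set $u(t)=|\mathcal{J}_{n-1}f(t)-\mathcal{J}_{n-1}g(t)|$, and take norms to obtain
\[
u(t)\le \rho+\rho|I|+(L+n-1)\Bigl|\int_{t_0}^{t} u(s)\,ds\Bigr|,\qquad t\in I.
\]
Applying the scalar Gronwall lemma then yields $u(t)\le\rho(1+|I|)\,e^{(L+n-1)|I|}$. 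Since $|f(t)-g(t)|\le u(t)$ and $|I|$ is bounded (as $I\subset[0,1]$ in the applications), the dimension-dependent factor $e^{(n-1)|I|}$ and the polynomial factor $1+|I|$ are absorbed into the implicit constant, producing the stated bound $|f(t)-g(t)|\lesssim e^{L|I|}\rho$.

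I do not anticipate a serious obstacle: this is the textbook Gronwall argument, with the only mild point being the bookkeeping required to lift the $n$-th order ODE to a first-order system and verify that the lifted vector field inherits a Lipschitz constant of the form $L+O(1)$, so that the dimension-dependent excess can be hidden in the $\lesssim$ notation. If one wanted the sharper constant $e^{L|I|}$ without hidden dimensional factors, one could instead measure $u(t)$ in a weighted norm that gives unit weight to the top derivative component and larger weight to the lower ones, but for the present application the crude bound above is sufficient.
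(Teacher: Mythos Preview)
Your proposal is correct and follows the standard textbook route. The paper does not actually supply its own proof of this theorem; it simply cites an external reference \cite{HowardGron} and uses the result as a black box. So there is nothing to compare against beyond noting that your argument is the expected one.

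Two small remarks. First, your Lipschitz constant $L+n-1$ for $\vec F$ is a slight overcount: the shift map $(x_0,\ldots,x_{n-1})\mapsto(x_1,\ldots,x_{n-1})$ has operator norm $1$, not $n-1$, so $\vec F(t,\cdot)$ is in fact $(L+1)$-Lipschitz (or $\sqrt{1+L^2}$-Lipschitz in the Euclidean norm). This changes nothing, since any bound of the form $L+O_n(1)$ suffices. Second, your absorption of the factor $(1+|I|)e^{(n-1)|I|}$ into the implicit constant does require $|I|$ to be bounded independently of $L$; you correctly flag this, and it is justified by the paper's standing convention that all intervals lie in $[0,1]$ unless stated otherwise.
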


The following result is a variant of Theorem \ref{gronwallThm}. Instead of requiring that $f$ and $g$ satisfy ``nearby'' initial value problems, in the sense of \eqref{FGClose}, we require that $f$ satisfies the initial value problem $f^{(n)}=F(t,\mathcal{J}_{n-1}f)$, and $g$ almost satisfies this same initial value problem, in the sense that $|g^{(n)}-F(t,\mathcal{J}_{n-1}g)|$ is small. The precise statement is as follows. 

\begin{lem}\label{gronwallCloseToF}
$\phantom{1}$
\begin{itemize}
    \item Let $n\geq 1$, let $I$ be an interval, and let $g\in C^n(I)$.
    \item Let $F\colon I\times\RR^n \to\RR$ be $L$-Lipschitz, let $\rho>0$, and suppose that
    \begin{equation}\label{jetGinS}
    \big|g^{(n)}(t) - F\big(t, \mathcal{J}_{n-1}g(t)\big)\big| \leq \rho,\qquad t\in I.
    \end{equation}

    \item Let $t_0\in I$, let $\tilde{\underline y}=\mathcal{J}_{n-1}g(t_0)$, and let $\tilde{\underline x}\in \RR^n$, with $|\tilde{\underline x}-\tilde{\underline y} |\leq\rho$. 

    \item Let $f\colon I\to\RR$ be a solution to the initial value problem
        \begin{equation}
        f^{(n)}(t) =  F\big(t, \mathcal{J}_{n-1}f(t)\big),\qquad \mathcal{J}_{n-1}f(t_0)=\tilde{\underline x}.
        \end{equation}
\end{itemize}
Then
\begin{equation}\label{gMinusFBound}
|g(t)-f(t)|\lesssim e^{L|I|}\rho,\quad t\in I.
\end{equation}
\end{lem}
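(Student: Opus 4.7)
The plan is to reduce Lemma \ref{gronwallCloseToF} directly to the already-stated symmetric version of Gronwall's inequality (Theorem \ref{gronwallThm}) by manufacturing an auxiliary driving function $G$ such that $g$ satisfies an \emph{exact} $n$-th order initial value problem governed by $G$, while $G$ remains both pointwise close to $F$ and Lipschitz in $\underline{x}$ with the same constant.

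First, I would define the defect $h(t) = g^{(n)}(t) - F(t, \mathcal{J}_{n-1}g(t))$, which by hypothesis \eqref{jetGinS} satisfies $|h(t)| \leq \rho$ on $I$. Then set $G(t, \underline{x}) = F(t, \underline{x}) + h(t)$. By construction $g^{(n)}(t) = G(t, \mathcal{J}_{n-1}g(t))$ identically on $I$, so $g$ is the solution of the IVP $g^{(n)} = G(t, \mathcal{J}_{n-1}g)$ with initial data $\mathcal{J}_{n-1}g(t_0) = \tilde{\underline y}$.

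Next I would check that the pair $(F, G)$ meets the hypotheses of Theorem \ref{gronwallThm}. Since $h$ is independent of $\underline{x}$, the map $G(t, \cdot)$ inherits the $L$-Lipschitz constant of $F(t, \cdot)$, and $|F(t, \underline{x}) - G(t, \underline{x})| = |h(t)| \leq \rho$ for all $(t, \underline{x}) \in I \times \RR^n$. The initial-data closeness $|\tilde{\underline x} - \tilde{\underline y}| \leq \rho$ is given outright.

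The conclusion then follows by applying Theorem \ref{gronwallThm} to the two IVPs driven by $F$ and $G$ with solutions $f$ and $g$ respectively: the theorem produces $|f(t) - g(t)| \lesssim e^{L|I|}\rho$ on $I$, which is exactly \eqref{gMinusFBound}. The only point of insight here is the packaging trick of absorbing the pointwise ODE-defect into a modified driving term, which converts the one-sided slack hypothesis \eqref{jetGinS} into the symmetric two-ODE setup of Theorem \ref{gronwallThm}; beyond that I do not anticipate any genuine obstacle.
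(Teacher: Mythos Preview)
Your proposal is correct and is essentially identical to the paper's own proof: the paper defines the error $e(t) = g^{(n)}(t) - F(t,\mathcal{J}_{n-1}g(t))$ (your $h$), sets $G(t,\underline x) = F(t,\underline x) + e(t)$, observes that $g$ exactly solves the IVP driven by $G$, and then invokes Theorem~\ref{gronwallThm}. Your extra remark that $G$ inherits the $L$-Lipschitz constant is true but not strictly needed, since Theorem~\ref{gronwallThm} only requires $F$ to be Lipschitz.
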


\begin{proof}
Define 
\begin{equation*}
\begin{split}
G(t, \underline x)&= F(t, \underline x)+e(t),\\
e(t) & = g^{(n)}(t) - F\big(t, \mathcal{J}_{n-1}g(t)\big).
\end{split}
\end{equation*}
The quantity $e(t)$ is intended to measure the error between the initial value problems $F$ and $G$. Inequality \eqref{jetGinS} says that $|e(t)|\leq\rho$ for $t\in I$, and thus
\begin{equation}
\big|G(t, \underline x)-F(t, \underline x)\big|\leq\rho, \qquad t\in I,\ \underline x \in \RR^n.
\end{equation}
But $g\colon I\to\RR$ is the solution to the initial value problem
       \begin{equation}
       g^{(n)}(t) =  G\big(t, \mathcal{J}_{n-1}g(t)\big),\qquad \mathcal{J}_{n-1}g(t_0)=\tilde{\underline y}.
        \end{equation}
Thus by Theorem \ref{gronwallThm}, we have
\begin{equation}
|f(t)-g(t)|\lesssim e^{L|I|}\rho\quad\textrm{for all}\ t\in I.
\end{equation} 
\end{proof}


\begin{lem}\label{polyCurvesCloseForever}
$\phantom{1}$
\begin{itemize}
    \item Let $n,L\geq 1$, let $I$ be an interval, and let $f,g \in C^n(I)$.

    \item Let $F\colon I\times\RR^n \to\RR$ be L-Lipschitz, let $\rho>0$, and suppose that
    \begin{equation}\label{jetGHinS}
    \begin{split}
    & \big|f^{(n)}(t) - F\big(t, \mathcal{J}_{n-1}f(t)\big)\big| \leq \rho,\qquad t\in I,\\
    & \big|g^{(n)}(t) - F\big(t, \mathcal{J}_{n-1}g(t)\big)\big| \leq \rho,\qquad t\in I.
    \end{split}
    \end{equation} 

    \item Suppose there exists $t_0\in I$ so that
    \begin{equation}\label{gAndHCloseAtOnePoint}
        |\mathcal{J}_n f(t_0) - \mathcal{J}_n g(t_0)|\leq\rho.
    \end{equation}
\end{itemize}

Then
\begin{equation}\label{gCloseToHGlobally}
|f(t)-g(t)|\lesssim e^{L|I|}\rho,\qquad t\in I.
\end{equation}
\end{lem}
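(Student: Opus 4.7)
The plan is to reduce to Lemma~\ref{gronwallCloseToF} via the triangle inequality, introducing an auxiliary exact solution as an intermediary. The moral is that both $f$ and $g$ are approximate solutions of the same ODE, so each is close to an exact solution with matching initial jet at $t_0$; if we choose the \emph{same} exact solution as witness for both, they must be close to each other.

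More precisely, since $F$ is globally $L$-Lipschitz in the spatial variables, the Picard--Lindel\"of theorem furnishes a (unique) solution $\tilde f \in C^n(I)$ to the initial value problem
\[
\tilde f^{(n)}(t) = F\big(t,\mathcal{J}_{n-1}\tilde f(t)\big), \qquad \mathcal{J}_{n-1}\tilde f(t_0) = \mathcal{J}_{n-1}f(t_0),
\]
and the global Lipschitz hypothesis rules out finite-time blow-up, so $\tilde f$ exists on all of $I$. This $\tilde f$ will play the role of the exact solution in two applications of Lemma~\ref{gronwallCloseToF}.

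First I would apply Lemma~\ref{gronwallCloseToF} with $f$ as the approximate solution (the first line of \eqref{jetGHinS} is precisely the hypothesis \eqref{jetGinS} of that lemma applied to $f$), taking $\tilde{\underline y} = \tilde{\underline x} = \mathcal{J}_{n-1}f(t_0)$; this yields $|f(t)-\tilde f(t)| \lesssim e^{L|I|}\rho$ on $I$. Then I would apply the same lemma a second time with $g$ as the approximate solution, now with $\tilde{\underline y} = \mathcal{J}_{n-1}g(t_0)$ and $\tilde{\underline x} = \mathcal{J}_{n-1}f(t_0)$; the required bound $|\tilde{\underline x}-\tilde{\underline y}| \leq \rho$ follows from the inequality
\[
|\mathcal{J}_{n-1}f(t_0) - \mathcal{J}_{n-1}g(t_0)| \leq |\mathcal{J}_n f(t_0) - \mathcal{J}_n g(t_0)| \leq \rho,
\]
which uses hypothesis \eqref{gAndHCloseAtOnePoint}. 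The corresponding exact solution produced by the lemma, having initial jet $\mathcal{J}_{n-1}f(t_0)$ at $t_0$, is exactly $\tilde f$, so we obtain $|g(t)-\tilde f(t)| \lesssim e^{L|I|}\rho$ on $I$.

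Combining the two bounds via the triangle inequality,
\[
|f(t)-g(t)| \leq |f(t)-\tilde f(t)| + |\tilde f(t)-g(t)| \lesssim e^{L|I|}\rho, \qquad t \in I,
\]
which is \eqref{gCloseToHGlobally}. There is no real obstacle here: the only non-routine point is recognizing that the intermediate exact solution should match $f$'s initial jet (rather than $g$'s, or some average), so that Lemma~\ref{gronwallCloseToF} can be invoked with the trivial ``zero initial offset'' on the $f$ side and the given offset $\rho$ on the $g$ side, producing two copies of the same estimate.
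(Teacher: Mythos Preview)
Your proof is correct and follows essentially the same approach as the paper: introduce an exact solution of the ODE as an intermediary, apply Lemma~\ref{gronwallCloseToF} twice, and finish by the triangle inequality. The only cosmetic difference is that the paper takes the intermediate initial data to be the average $\underline z = \tfrac12\big(\mathcal{J}_{n-1}f(t_0)+\mathcal{J}_{n-1}g(t_0)\big)$, which makes the two applications of Lemma~\ref{gronwallCloseToF} symmetric in $f$ and $g$; your asymmetric choice $\mathcal{J}_{n-1}f(t_0)$ works just as well (and so would $\mathcal{J}_{n-1}g(t_0)$, contrary to your closing remark).
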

\begin{rem}
In practice, we will use this lemma with intervals $I$ of length $O(L^{-1})$, and thus the RHS of \eqref{gCloseToHGlobally} is $O(\rho)$. 
\end{rem}
\begin{proof} 
Define
\[
\underline z = \frac{1}{2}\Big[ \mathcal{J}_{n-1}f(t_0) + \mathcal{J}_{n-1}g(t_0) \Big].
\]
By \eqref{gAndHCloseAtOnePoint}, we have 
\[
\big|\underline z - \mathcal{J}_{n-1}f(t_0)\big|\leq \rho/2.
\]

We now apply Lemma \ref{gronwallCloseToF}. Let $h\colon I \to\RR$ be the solution to the initial value problem
\[
h^{(n)}(t) =  F\big(t, \mathcal{J}_{n-1} h(t)\big),\qquad \mathcal{J}_{n-1} h( t_0)=\underline z.
\]
By Lemma \ref{gronwallCloseToF}, we have $|h(t) - f(t)|\lesssim e^{L|I|}\rho$ for $t\in I$. But note that the construction of $h$ is symmetric in the functions $f$ and $g$, and thus we also have $|h(t) - g(t)|\lesssim e^{L|I|}\rho$ for $t\in I$. The conclusion \eqref{gCloseToHGlobally} now follows from the triangle inequality. 
\end{proof}

With these tools, we are now ready to prove the main result in this section.

\begin{proof}[Proof of Proposition \ref{cutCurvesAdaptedSASet}]
Without loss of generality, we may suppose that $\rho,\kappa\leq 1$; otherwise we can replace $\kappa$ and/or $\rho$ by $1$ and the conclusion remains valid. For $i=0,\ldots,n$, define $L_i = \rho^{-(\kappa/2)^{i+1}}$ and define $\rho_i = L_i^{-1/\kappa n}$. We select the quantity $B(n,\kappa)$ sufficiently large so that $\rho_n\leq \rho^{1/B}$ and $\rho^{n^{-1} (\kappa/2)^{n+1}}\leq\rho^{1/B}$. With $B$ selected in this way, we have 
\begin{equation}\label{rhoIGeqRho}
\rho_i\in [\rho, \rho^{1/B}],\quad 0\leq i \leq n.
\end{equation}

Define 
\[
\mathcal{X} = \{\rho_0,\ldots\rho_n\}.
\]

We will define an iterative decomposition of $W$, which will take $n+1$ steps. We will call these steps ``step zero'' through ``step $n$.'' $L_i$ will be the allowable Lipschitz constant and $\rho_i$ will be the allowable thickness at step $i$.

For the zeroth step, apply Proposition \ref{coverByLipschitzGraph} to $W$ with $L_0$ in place of $L$, and $u=\rho_0$. We obtain sets $W^0_0, W^{0}_1,\ldots, W^{0}_{N_0}$, with $N_0 = \poly(M)$. For each index $1\leq j\leq N_0$, the set $W^0_j$ is semi-algebraic of complexity at most $\poly(M)$, and is contained in the $\rho_0$ neighborhood of an $L_0$-Lipschitz graph. In addition, $W^0_0 \subset T^0_0\times[-1,1]$, where $T_0^0\subset[-1,1]^{n+1}$ is semi-algebraic of complexity at most $\poly(M)$, and 
\[
|T^0_0|\leq \poly(M)(L_0^{-1/n}+|W|\rho_0^{-1}).
\] 
Since $|W|\leq\rho$ and $\kappa\leq 1$, our choice of $L_0$ and $\rho_0$ ensures that $L_0^{-1/n}\geq |W|\rho_0^{-1}$, and thus $|T^0_0|\leq \poly(M) L_0^{-1/n}$.

We now describe the $i$--th step of our decomposition, $i=1,\ldots,n$. Suppose that there exists a constant $C_{i-1}$ (which may depend on $n$) so that the following holds:
\begin{itemize}
	\item[(i)] $T^{i-1}_0\subset[-1,1]^{n+2-i}$ is a semi-algebraic set of complexity at most $C_{i-1}M^{C_{i-1}}$.
	\item[(ii)] $|T^{i-1}_0| \leq C_{i-1}M^{C_{i-1}}L_{i-1}^{-1/n}$.
\end{itemize}
Apply Proposition \ref{coverByLipschitzGraph} to $T^{i-1}_0$ with $L_i$ in place of $L$, and $u=\rho_i$. We obtain sets $W^i_0, W^{i}_1,\ldots, W^{i}_{N_i}$, where $N_i = \poly(C_{i-1}M^{C_{i-1}})=\poly(M)$. For each index $1\leq j\leq N_i$, the set $W^i_j$ is semi-algebraic of complexity at most $\poly(M)$, and is contained in the $\rho_i$ neighborhood of an $L_i$-Lipschitz graph. In addition, $W^i_0 \subset T^i_0\times[-1,1]$, where $T^i_0\subset[-1,1]^{n+1-i}$ is semi-algebraic of complexity $\poly(C_{i-1}M^{C_{i-1}})$, and  
\begin{equation}\label{Ti0Bd}
\begin{split}
|T^i_0| & = \poly(C_{i-1}M^{C_{i-1}})(L_i^{-1/n}+|T^{i-1}_0| \rho_i^{-1})\\
& =  \poly(C_{i-1}M^{C_{i-1}})\big(L_i^{-1/n}+C_{i-1}M^{C_{i-1}}L_{i-1}^{-1/n}\rho_i^{-1}\big)\\
&=\poly(C_{i-1}M^{C_{i-1}})(L_i^{-1/n}+L_i^{-2/\kappa n}L_i^{1/\kappa n}) \\
& =\poly(C_{i-1}M^{C_{i-1}}) L_i^{-1/n}.
\end{split}
\end{equation}
The first equality is the conclusion of Proposition \ref{coverByLipschitzGraph}. The second equality used Assumption (ii) above. The third equality used the definition of $L_{i-1}$. The fourth equality used the assumption $\kappa\leq 1$ and $\rho\leq 1$, which in turn implies $L_i\geq 1$. 

We will select $C_{i}$ sufficiently large (depending on $n$ and $C_{i-1}$, which in turn depends on $n$) so that the RHS of \eqref{Ti0Bd} is at most $C_i M^{C_i}$, and the complexity of $T^i_0$ is at most $C_i M^{C_i}$.

After the process described above is complete, we have a covering of $W$ of the form
\begin{equation}\label{SDecompose}
W \subset \bigcup_{i=0}^{n+1} \bigcup_{j=1}^{N_i} \big( W^i_j \times [-1,1]^{i}\big),
\end{equation}
where for $i=0,\ldots,n$, each set $W^i_j\subset [-1,1]^{n+2-i}$ is the vertical $\rho_i$ neighborhood of an $L_i$-Lipschitz graph (we denote the associated Lipschitz function by $G^i_j\colon[-1,1]^{n+1-i}\to[-1,1]$) above a set $T^i_j\subset[-1,1]^{n+1-i}$, and for $i=n+1$,  each set $W_j^{n+1}$ is an interval (these are precisely the intervals in the set $T_0^n\subset[-1,1]$). Furthermore, we have the following bound on the sum of the lengths of these intervals:
\begin{equation}
\sum_{j=1}^{N_{n+1}}|W^{n+1}_j| = |T_0^n| =\poly(M)L_n^{-1/n} = \poly(M)\rho^{n^{-1}(\kappa/2)^{n+1}}=\poly(M)\rho^{1/B}.
\end{equation}

We would like to claim that if $f$ and $g$ are two polynomials that satisfy \eqref{fCloseToGAndliftOfFInS} at some point $t_0\in[0,1]$, then the corresponding points $\big(t_0, \mathcal{J}_n f(t_0)\big)$ and $\big(t_0, \mathcal{J}_n g(t_0)\big)$ must be contained in a common set of the form $W^i_j \times [-1,1]^{i}$ from the decomposition \eqref{SDecompose}. Unfortunately this need not be true, since even though \eqref{fCloseToGAndliftOfFInS} guarantees that the points $\mathcal{J}_n f(t_0)$ and $\mathcal{J}_n g(t_0)$ are nearby, they might nonetheless be contained in different sets from \eqref{SDecompose}.

To handle this annoyance, we expand each set $W^i_j$ slightly. For $i=0,\ldots,n$ and $j=1,\ldots,N_i$, define $(W^i_j)^*$ to be the vertical $\rho_i+\rho$ neighborhood of the graph of the Lipschitz function $G^i_j$ above the $\rho$-neighborhood of $T^i_j$. For $i=n+1$ and $j=1,\ldots,N_{n+1}$, define $(W^{n+1}_j)^*$ to be the Cartesian product of the $\rho$-neighborhood of $T^{n+1}_j$ with $[-1,1]^{n+1}$, where $T^{n+1}_j\subset[-1,1]$ is the interval associated to $W^{n+1}_j$, i.e. $W^{n+1}_j = T^{n+1}_j\times[-1,1]^{n+1}$.

With the sets $(W^i_j)^*$ defined in this way, if $f$ and $g$ satisfy \eqref{fCloseToGAndliftOfFInS} at $t_0$, and if $\big(t_0, \mathcal{J}_n f(t_0)\big)\in W^i_j \times [-1,1]^{i}$, then $\big(t_0, \mathcal{J}_n g(t_0)\big)\in (W^i_j)^* \times [-1,1]^{i}$. We will see below why this is useful.

Our next task is to define the ``bad'' set $B_f$ from the statement of Proposition \ref{cutCurvesAdaptedSASet}. Let $f$ be a polynomial of degree at most $D$. Let 
\[
J_f=\big\{t\in [0,1]\colon \big(t, \mathcal{J}_n f(t)\big)\in W\big\}.
\]
$J_f$ is semi-algebraic of complexity $\poly(DM)$, and hence is a union of $\poly(DM)$ intervals. We further sub-divide these intervals into a collection of intervals (here we consider a point to be a closed interval of length 0) that we will denote by $\mathcal{I}_f$. This collection of intervals will have the following properties:
\begin{itemize}
	\item[(a)] $J_f = \bigcup_{J\in \mathcal{I}_f}J$. 
	\item[(b)] For each $J\in \mathcal{I}_f$ and each set of the form $X = W^i_j \times [-1,1]^{i}$ from the decomposition \eqref{SDecompose}, we either have $\big(t, \mathcal{J}_n f(t)\big)\in X$ for all $t\in J$, or $\big(t, \mathcal{J}_n f(t)\big)\not\in X$ for all $t\in J$. 
	\item[(c)] The analogue of Item (b) holds for each set of the form $X = (W^i_j)^* \times [-1,1]^{i}$, where $(W^i_j)^*$ is the expansion of the set $W^i_j$ described in the previous paragraph.
	\item[(d)]  $\#\mathcal{I}_f=\poly(DM)$.
\end{itemize}
To construct the set $\mathcal{I}_f$, we consider each interval $J\subset J_f$ in turn (here $J$ is a connected component of $J_f$). For each set $X$ of the type described in Items (b) and (c) above, we have that the sets 
\[
\big\{t\in J\colon \big(t, \mathcal{J}_n f(t)\big)\in X\big\},\quad \textrm{and}\quad \big\{t\in J\colon \big(t, \mathcal{J}_n f(t)\big)\not\in X\big\} 
\]
are semi-algebraic of complexity $\poly(DM)$, and hence each of these sets is a union of $\poly(DM)$ intervals. We add each of these intervals to $\mathcal{I}_f$.

For each closed interval $J=[a,b]\in \mathcal{I}_f$, if $|J|\leq L_n^{-1}$ then define $\operatorname{Ends}(J)=J$. If $|J|> L_n^{-1}$, then define $\operatorname{Ends}(J)=[a, a+L_n^{-1}] \cup [b-L_n^{-1}, b]$. Define $\operatorname{Ends}(J)$ analogously for intervals of the form $(a, b]$, $[a, b)$ and $(a,b)$. 

Define 
\[
B_f=\bigcup_{J\in\mathcal{J}_f}\operatorname{Ends}(J)\ \cup\ \bigcup_{j=1}^{N_{n+1}}W_j^{n+1}.
\]
If we define the quantity $A=A(n)$ and $B = B(n,\kappa)$ sufficiently large, then $B_f$ is a union of at most $A(DM)^A$ intervals, and has measure at most $\frac{1}{2} A(DM)^AL_n^{-1}+\poly(M)\rho^{1/B} \leq A(DM)^A\rho^{1/B}$.

Our final task is to show that $B_f$ satisfies the conclusion of Proposition \ref{cutCurvesAdaptedSASet}. Let $f,g$ be polynomials of degree at most $D$, and suppose there exists $t_0\in [0,1]\backslash(B_f \cup B_g)$ that satisfies \eqref{fCloseToGAndliftOfFInS}. By \eqref{SDecompose} and our definition of $B_f$, there is an index $0\leq i\leq n$ and $1\leq j\leq N_i$ so that the set $W^i_j\times[-1,1]^i$ contains $\big(t_0, \mathcal{J}_n f(t_0)\big)$. But \eqref{fCloseToGAndliftOfFInS} implies that the expanded set $(W^i_j)^*\times[-1,1]^i$ must contain both $\big(t_0, \mathcal{J}_nf(t_0)\big)$ and $\big(t_0, \mathcal{J}_ng(t_0)\big)$. Furthermore, since $t_0\not\in B_f\cup B_g$, if we define $I = [t_0 - L_i^{-1}, t_0+L_i^{-1}]\subset  [t_0 - L_n^{-1}, t_0+L_n^{-1}]$, then 
\begin{equation}\label{trajectoriesInSij}
\big(t, \mathcal{J}_nf(t)\big)\in (W^i_j)^*\times[-1,1]^i\quad\textrm{and}\quad \big(t, \mathcal{J}_ng(t)\big)\in (W^i_j)^*\times[-1,1]^i, \quad t\in I.
\end{equation}
Since $(W^i_j)^*$ is contained in the vertical $\rho_i+\rho\leq 2\rho_i$ (recall \eqref{rhoIGeqRho}) neighborhood of the $L_i$-Lipschitz function $G^i_j$, \eqref{trajectoriesInSij} implies that
\begin{equation}\label{trajectoriesNearODE}
\begin{split}
\big|f^{(n-i+1)}(t) - G^i_j\big(t, \mathcal{J}_{n-i}f(t)\big)\big|\leq 2\rho_i,\quad t\in I,\\
|g^{(n-i+1)}(t) - G^i_j\big(t, \mathcal{J}_{n-i}g(t)\big)|\leq 2\rho_i,\quad t\in I.
\end{split}
\end{equation}
The function $G^i_j$ from \eqref{trajectoriesNearODE} has Lipschitz constant at most $L_i$, and the interval $I$ has length $2L_i^{-1}$. Thus we can apply Lemma \ref{polyCurvesCloseForever} with $2\rho_i$ in place of $\rho$ and $L_i$ in place of $L$. The conclusion \eqref{gCloseToHGlobally} of Lemma \ref{polyCurvesCloseForever} says that
\[
|f(t)-g(t)|\lesssim e^{L_i|I|}\rho_i\lesssim \rho_i,\quad t\in I.
\]
This is exactly conclusion \eqref{fRhoCloseToG}, provided we select $A = A(n)$ sufficiently large.  
\end{proof}


\subsection{Proof of Proposition \ref{tangencyRectanglesInsideSemiAlgSetProp}}
Apply Proposition \ref{cutCurvesAdaptedSASet} to $W$, with $n = k-1$; $\kappa=1/(k+1)$; $\rho=\delta^{\kappa}$; and $D=M=\delta^{-\eta}$. Note that the quantities $A=A(n)$ and $B=B(n,\kappa)$ coming from Proposition \ref{cutCurvesAdaptedSASet} satisfy the bounds $A = O(1)$ and $B=O(1)$. Define
\begin{equation}\label{defnCPropTangencyRectanglesInsideSemiAlgSetProp}
c = \min\big(A^{-1}, \big(2(k+1)B\big)^{-1}\big);
\end{equation}
this will be our choice of $c = c(k)$ for the conclusion of Proposition \ref{tangencyRectanglesInsideSemiAlgSetProp}.

Since $\rho = \delta^{\frac{-1}{k(k+1)}}\delta^{\frac{1}{k}}$, if we choose the constant $\delta_0$ sufficiently small depending on $k$, then by Lemma \ref{tangencyPrismVsRect} we can ensure that whenever $f,g\in F$ are tangent to a common rectangle $R=R(I)$ from $\mathcal{R}$, there is a point $t_0\in I$ with 
\begin{equation}\label{JKm1FvsG}
|\mathcal{J}_{k-1}f(t_0) -  \mathcal{J}_{k-1}g(t_0)| \leq \rho.
\end{equation} 

By Proposition \ref{cutCurvesAdaptedSASet}, for each $f\in F$ we obtain a bad set $B_f$, which is a union of at most $A(DW)^A=\delta^{-O(\eta)}$ intervals and has measure at most $A (DW)^A \rho^{1/B}\lesssim \delta^{-O(\eta)} \delta^{1/O(1)}$. Thus if $\eta=\eta(k,\eps)$ and $\delta_0(k,\eps)$ are selected sufficiently small, then by \eqref{twoEndsAlongEachCurveInProp} we have
\begin{equation}\label{mostRectanglesGood}
 \# \{R\in\mathcal{R}(f) \colon I(R) \cap B_f\neq\emptyset \}\leq \frac{1}{2} \#\mathcal{R}(f).
\end{equation}

By pigeonholing, there exists a rectangle $R\in\mathcal{R}$ so that
\begin{equation}\label{popularR}
\#\{f\in F(R)\colon I(R) \cap B_f = \emptyset\} \geq \frac{1}{2}\#F(R).
\end{equation}
Fix such a rectangle $R$, and let $F'(R)$ denote the set on the LHS of \eqref{popularR}. For each $f,g\in F'(R)$, there is a point $t_0\in I$ that satisfies \eqref{fCloseToGAndliftOfFInS}; the first item in \eqref{fCloseToGAndliftOfFInS} follows from Lemma \ref{tangencyPrismVsRect} and the hypothesis that $f\sim R$ and $g\sim R$, while the second item follows from \eqref{JKm1FvsG}. By the definition of $F'(R)$, this point $t_0$ is not contained in $B_f$ or $B_g$. Thus by Proposition \ref{cutCurvesAdaptedSASet}, there is a scale $\tau=\tau(f,g)\in \mathcal{X}$ (recall that $\mathcal{X}\subset[\rho, \rho^{1/B}]$) so that
\begin{equation}\label{fCloseToG}
|f(t)-g(t)|\leq A \tau,\quad t\in [t_0-\tau^{1/(k+1)}, t_0+\tau^{1/(k+1)}]\supset I_\tau,
\end{equation}
where $I_\tau$ is the interval of length $(cA\tau)^{1/(k+1)}\leq \tau^{1/(k+1)}$ (recall that $c$ is defined in \eqref{defnCPropTangencyRectanglesInsideSemiAlgSetProp}) with the same midpoint as $I(R)$.

Since $\#\mathcal{X}\leq k$, by pigeonholing we can select a choice of $\tau\in \mathcal{X}$; a choice of $f\in F'(R)$; and a set $F''(R)\subset F'(R)$ with $\#F''(R)\geq k^{-1}(\#F'(R))$, so that \eqref{fCloseToG} holds for this choice of $\tau$ for all $g\in F''(R)$. 

To finish the proof, define $\tilde\tau = A\tau$. Define $R_1$ to be the vertical $\tilde\tau$ neighborhood of the graph of $f$ restricted to $I_\tau$; this is an interval of length $(c\tilde\tau)^{1/(k+1)}$. Thus $R_1$ is a $(\tilde\tau; k+1; c)$ tangency rectangle.  Since $f\sim R,$ $I(R)\subset I_\tau$, and $A\tau\geq 2\delta$, by the triangle inequality we have $R_1\supset R$. By \eqref{fCloseToG}, we have $g\sim R_1$ for all $g\in F''(R)$; the latter set satisfies $\#F''(R)\gtrsim\#F(R)$. 

All that remains is to ensure that $\tilde\tau \in [\delta, \delta^c]$. Recall that $A,B = O(1)$; $\rho = \delta^{\frac{1}{k+1}}$, $\tau \in [\rho, \rho^{1/B}]$, and $\tilde\tau = A\tau\leq A\rho^{1/B} \leq A \delta^{\frac{1}{(k+1)B}}$.  If $\delta_0$ is selected sufficiently small, then this latter quantity is at most $\delta^{\frac{1}{2(k+1)B}}\leq \delta^c,$ as desired. 


\section{From rectangle tangencies to maximal functions}\label{tangenciesToMaximalFnSection}
In this section we will use Theorem  \ref{RectBoundThm} to prove Proposition \ref{maximalFnBdNonconcentrated}. Our proof will follow the outline sketched in Section \ref{introProofSketch}. The next result helps us find the scale ``$\rho$'' discussed in the proof sketch (in what follows, this quantity will be called $\delta'$). Our proofs will involve repeated use of dyadic pigeonholing, which will induce refinements by factors of $(\log 1/\delta)^{O(1)}$ (recall that $O(1)$ denotes a quantity that may depend on $k$). To simplify notation, we write $A\lessapprox B$ if $A\lesssim (\log 1/\delta)^{O(1)}B$, and we write $A\lessapprox_\eps B$ if $A\lesssim_\eps (\log 1/\delta)^{O_\eps(1)}B$ 

In the result that follows, recall that if $f \colon[0,1]\to\RR$ and $\delta>0$, then $f^\delta$ is the vertical $\delta$ neighborhood of the graph of $f$. We will often consider subsets $Y(f)\subset f^\delta$; we will call such sets \emph{shadings} of $f^\delta$. A subset of $Y(f)$ will sometimes be called a \emph{sub-shading}.

\begin{prop}\label{rectangleDecompProp}
Let $k,N\geq 2$ and let $0<\eps,\eta\leq 1$. Then there exists $\delta_0>0$ such that the following is true for all $\delta\in (0,\delta_0]$. Let $F$ be a set of functions, each of which has $C^k$ norm at most $1$. Suppose furthermore that $\#F \leq \delta^{-N}$. For each $f\in F$, let $Y(f)\subset f^\delta$ be a shading of $f$.

Suppose that the functions in $F$ are broad with respect to $(\delta,k)$ tangency rectangles, in the following sense: for every $x\in[0,1]\times[-1,1]$, every $\rho\geq\delta$, every $T\in [1, 1/\rho],$ and every $(\rho; k; T)$ rectangle $R$ containing $x$, we have
\begin{equation}\label{inductionPropHypothesis}
\#\{f\in F\colon x\in Y(f),\ f\sim R\}\leq\delta^{-\eta}T^{-\eps}\#\{f\in F\colon x\in Y(f)\}. 
\end{equation}

Then there exists the following:
\begin{itemize}
	\item A sub-shading $Y'(f)\subset Y(f)$ for each $f\in F$.
	\item A scale $\delta'\in [\delta,1]$.
	\item A set $\mathcal{R}$ of pairwise incomparable $(\delta'; k)$ rectangles.
	\item A number $\mu$, and for each $R\in\mathcal{R}$, a set $F(R)\subset \{f\in F\colon f\sim R\}$ of size $\mu$.
\end{itemize}
These objects have the following properties:
\begin{itemize}
\item[(A)] \itemizeEqnVSpacing
\begin{equation}\label{LkBdControlledBySumOverRect}
\int_{[0,1]\times[-1,1]}\Big(\sum_{f\in F}\chi_{Y(f)}\Big)^{\frac{k+1}{k}}\leq \delta^{-O(\sqrt\eta)}\sum_{R\in\mathcal{R}} \int_{R}\Big(\sum_{f\in F(R)}\chi_{Y'(f)}\Big)^{\frac{k+1}{k}}.
\end{equation}

\item[(B)]
Either $\#\mathcal{R}=1$, or for every $R\in\mathcal{R}$, every $\rho\in[\delta',1]$, every $T\in [1,1/\rho]$, and every $(\rho; k; T)$ rectangle $R'\supset R$, we have
\begin{equation}\label{conclusionB}
\#\{f\in F(R)\colon f\sim R'\} \leq (\delta')^{-2\sqrt\eta}T^{-\eps} \mu.
\end{equation}
\item[(C)]
For every $R\in\mathcal{R}$, let $\tilde F(R)=\{f_R\colon f\in F(R)\}$, and for every $\tilde f\in \tilde F(R)$, let $\tilde Y'(\tilde f)=\phi^R(Y'(f)\cap R)$ (recall Definition \ref{pullbackRect}). Then for every point $x\in[0,1]\times[-1,1]$, every $\rho\geq\delta/\delta'$, every $T\in[1,1/\rho],$ and every $(\rho; k-1; T)$ rectangle $R^\dag$ containing $x$, we have
\begin{equation}\label{inductionPropHypothesisAfterIteration}
\#\{\tilde f\in \tilde F(R)\colon x\in \tilde Y'(\tilde f),\  \tilde f\sim R^\dag\}\lessapprox_\eps  T^{-\eta/2} \#\{\tilde f\in \tilde F(R)\colon x\in \tilde Y'(\tilde f)\}. 
\end{equation}
\end{itemize}
\end{prop}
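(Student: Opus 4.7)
The plan is to find $\delta'$ via a multi-scale dyadic pigeonhole over scales and multiplicities, then read off the three conclusions from that choice. The guiding principle is that $\delta'$ should be the scale at which most of the $L^{(k+1)/k}$ mass of $\sum_f \chi_{Y(f)}$ concentrates when decomposed by $(\delta';k)$ tangency rectangles. Above $\delta'$, the broadness hypothesis \eqref{inductionPropHypothesis} keeps the selected rectangles broad (yielding (B)); below $\delta'$, after rescaling, any remaining concentration must be visible in $(k-1)$-rectangles (yielding (C)).

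First I would run the scale selection. For each dyadic $\rho \in [\delta, 1]$ and each dyadic multiplicity $\mu \geq 1$, let $\mathcal{R}_{\rho, \mu}$ be a maximal collection of pairwise incomparable $(\rho; k)$ tangency rectangles $R$ with $\mu \leq \#\{f \in F : f \sim R\} < 2\mu$. Cover $[0,1]\times[-1,1]$ with bounded overlap by $(\rho;k)$ rectangles and note that the pointwise value $\sum_f \chi_{Y(f)}(x)$ is controlled, up to a factor of $O(\log 1/\delta)$, by the richness of some $(\rho;k)$ rectangle through $x$ (as $\rho$ varies dyadically). Breaking the integral on the left of \eqref{LkBdControlledBySumOverRect} into pieces indexed by $(\rho, \mu)$ and pigeonholing over the $O((\log 1/\delta)^2)$ dyadic pairs, I would select the winning pair $(\delta', \mu)$. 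Set $\mathcal{R} \subset \mathcal{R}_{\delta', \mu}$, let $F(R)$ be a choice of $\mu$ representative functions for each $R \in \mathcal{R}$, and define $Y'(f) = Y(f) \cap \bigcup \{R \in \mathcal{R} : f \in F(R)\}$. Conclusion (A) is then immediate, the $(\log 1/\delta)^{O(1)}$ pigeonhole loss being absorbed into $\delta^{-O(\sqrt\eta)}$ by choosing $\delta_0$ small.

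For (B), assume $\#\mathcal{R} \geq 2$; otherwise the conclusion is vacuous. Fix $R \in \mathcal{R}$ and an extending $(\rho;k;T)$ rectangle $R' \supset R$ with $\rho \geq \delta'$. I would choose a typical point $x$ in $R$ where a positive fraction of the graphs of the functions in $F(R)$ pass through $Y(f)$; such an $x$ exists because the $\mu$ graphs of $F(R)$ all lie in $R$ and their $\delta$-shadings cover a sizeable portion of it. Applying the hypothesis \eqref{inductionPropHypothesis} at $x$ to the rectangle $R'$ yields
$\#\{f \in F(R) : f \sim R'\} \leq \delta^{-\eta} T^{-\eps} \#\{f : x \in Y(f)\} \lesssim \delta^{-\eta} T^{-\eps} \mu$.
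Provided $\delta' \geq \delta^{\sqrt\eta/2}$, one has $\delta^{-\eta} \leq (\delta')^{-2\sqrt\eta}$ and \eqref{conclusionB} follows; in the opposite regime $\delta'$ is so close to $\delta$ that the $k$-rectangles themselves are (up to rescaling) a single rectangle and one falls back to the trivial $\#\mathcal{R}=1$ case, or one argues that the pigeonhole has placed all the mass on a single rectangle.

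For (C), the most delicate part, I would translate the broadness at the original scale $\delta$ into broadness at the rescaled scale $\tilde\delta = \delta/\delta'$ via the map $\phi^R$ of Definition \ref{pullbackRect}. A $(\rho; k-1; T)$ rectangle $R^\dag$ at the new scale, when pulled back via $(\phi^R)^{-1}$, is contained in a curvilinear rectangle at the original scale which, after computing with the rescaling factors $\rho^{1-j/k}$ built into $\phi^R$, is itself tangent to a $(k)$-rectangle at the original scale with extension parameters determined by $\rho$ and $T$. The hypothesis \eqref{inductionPropHypothesis} then controls the number of tangent functions through any point, and \eqref{inductionPropHypothesisAfterIteration} follows after a change of variables. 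The exponent $\eta/2$ in place of $\eps$ reflects the degradation in converting from $(k)$-type to $(k-1)$-type tangency: a $(k-1)$-rectangle is longer than a $(k)$-rectangle of the same width, so the extension parameter $T$ in the $(k-1)$-sense inflates to roughly $T^{k/(k-1)}$ in the $(k)$-sense, weakening the exponent. The hard part of the whole proof will be this step: matching the exponents exactly so that the bookkeeping of the rescaling and the maximality of $\delta'$ combine to give the claimed $T^{-\eta/2}$ rather than a weaker decay.
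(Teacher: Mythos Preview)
Your proposal has a genuine structural gap: it omits the \emph{two-ends reduction}, which is the engine driving all three conclusions in the paper's proof. Your pigeonholing over dyadic pairs $(\rho,\mu)$ selects a scale that captures a large share of the $L^{(k+1)/k}$ mass, but it does not select a scale with any \emph{extremal} property. Conclusions (B) and (C) require precisely such extremality.

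For (B), your argument breaks at the step ``$\#\{f:x\in Y(f)\}\lesssim\mu$''. Nothing in your construction forces the total multiplicity at a point of $R$ to be comparable to $\mu$; many curves through $x$ need not lie in $F(R)$ or even be tangent to $R$. The paper handles this by first running a pointwise two-ends argument: for each $x$, let $t(x)$ be the \emph{smallest} scale at which some $(t(x);k)$ rectangle captures a $t(x)^{\eta/2}$-fraction of the curves through $x$, and restrict the shading to those curves. After pigeonholing $t(x)\sim t$ and the multiplicity $\sim\nu$ on a set $X$, one builds $\mathcal{R}$ at scale $\delta'\sim t$ and, crucially, proves a support bound $|Y(R)|\lessapprox\lambda\mu/\nu$ via a disjointness-plus-mass-count argument. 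This support bound is what lets you find a point $x\in R$ where $\gtrsim\omega\nu$ curves from $F(R)$ are tangent to $R'$, and \emph{then} the hypothesis \eqref{inductionPropHypothesis} (which bounds things by $\delta^{-\eta}T^{-\eps}\nu$, not $\mu$) closes the loop. Your proof has no analogue of $\nu$ or of the support bound.

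For (C), the exponent $T^{-\eta/2}$ does not come from degrading $\eps$ through the $(k)\to(k-1)$ conversion, as you suggest. It comes directly from the \emph{minimality} of $t(x)$: below scale $t(x)$, every smaller $(\rho;k)$ rectangle through $x$ captures at most a $(\rho/t(x))^{\eta/2}$-fraction of the curves, by definition of the infimum. After rescaling $R$ to the unit square this becomes a $\rho^{\eta/2}$ bound at the new scale, and a $(\rho;k-1;T)$ rectangle is then trapped inside a $(\tau;k)$ rectangle with $\tau=\max(\rho,T^{-k})$, yielding $T^{-\eta/2}$. Your pigeonholed scale $\delta'$ carries no such sub-scale information, so there is no mechanism to produce \eqref{inductionPropHypothesisAfterIteration}. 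You would need to rebuild the scale selection around a two-ends stopping-time argument rather than a mass-pigeonhole.
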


We will briefly explain the main hypotheses and conclusions of Proposition \ref{rectangleDecompProp}. The hypothesis that $\#F \leq \delta^{-N}$ is a minor technical convenience, since it allows us to bound $(\log \#F) \lesssim \log(1/\delta)$. Without this hypothesis the conclusions of Proposition \ref{rectangleDecompProp} would have (harmless) additional terms of the form $\log \#F$. 

The hypothesis \eqref{inductionPropHypothesis} says that for each point $x=(x_1,x_2)\in[0,1]\times[-1,1]$, the graphs of the space curves $\{\mathcal{J}_{k-1}(f)\}$ with $x\in Y(f)$ are broad, in the sense that not too many of these curves can all intersect tangentially at a common point. 

Conclusion (A) says that the $L^{\frac{k+1}{k}}$ norm of $\sum \chi_{Y(f)}$ can be broken into pieces localized to the rectangles in $\mathcal{R}$; c.f.~\eqref{LpNormBreaksOverRectangles}. Conclusion (B) says that the rectangles in $\mathcal{R}$ are $\mu$-rich and $\eps$-robustly broad with error at most $(\delta')^{-2\eta}$. Conclusion (C) says that for each $R\in\mathcal{R}$, the functions in $F(R)$ satisfy a (rescaled) version of the hypotheses \eqref{inductionPropHypothesis} from Proposition \ref{rectangleDecompProp}, except that $\eps$ has been replaced by $\eta/2$, and $\delta^{-\eta}$ has been replaced by $O_\eps(\log 1/\delta)^{O_\eps(1)}$.

\begin{proof}$\phantom{1}$\\
\noindent {\bf Step 1: A two-ends reduction.} Let $1\leq A\lesssim 1$ be a constant to be specified below (the reason for introducing the constant $A$ will be explained at the beginning of Step 2) and let $B=O(1)$ be a constant chosen after $A$. For each $x\in[0,1]\times[-1,1]$, let $t=t(x)$ be the infimum of all numbers $\delta\leq w\leq A^{-1}$ with the property that there exists a $(w;k; A)$ tangency rectangle $R\subset[0,1]\times[-1,1]$ containing $x$ with
\begin{equation}\label{numberOfFSimR}
\#\{f\in F\colon x\in Y(f), f\sim R\} \geq B^{-1} w^{\eta/2}\#\{f\in F\colon x\in Y(f)\}.
\end{equation}
We claim that if $B=O(1)$ is chosen sufficiently large, then the set of numbers $w$ satisfying \eqref{numberOfFSimR} is non-empty, since it contains $w=A^{-1}$. We justify this claim as follows. If $B=O(1)$ is selected sufficiently large (here $B$ is chosen after $A$), then we can find a set of $(A^{-1}; k; A)$ tangency rectangles of cardinality $B^{1/2}$ with the property that each of these rectangles is contained in $[0,1]\times[-1,1]$, and every curve in $F$ is tangent to at least one of these rectangles. By pigeonholing, we can select an $(A^{-1}; k; A)$ rectangle $R$ so that the LHS of \eqref{numberOfFSimR} has size at least $B^{-1/2}\#\{f\in F\colon x\in Y(f)\}$. Such a choice of $R$ satisfies \eqref{numberOfFSimR}, provided we select $B\geq A$.

Since the set of numbers $w$ satisfying\eqref{numberOfFSimR} is non-empty and bounded below by $\delta$, the infimum $t(x)$ described above exists. Since the graph of each curve is compact (recall that these graphs are contained in $[0,1]\times[-1,1]$) and tangency rectangles are closed (recall Definition \ref{defnTangencyRectangle}), for each $x\in[0,1]\times[-1,1]$ there exists a $(t(x); k; A)$ rectangle $R$ containing $x$ that satisfies \eqref{numberOfFSimR}. Denote this rectangle by $R(x)$. If more than one choice of $R(x)$ exists, then choose a rectangle that maximizes the LHS of \eqref{numberOfFSimR}.

For each $f\in F$, define the shading
\[
Y_1(f) = \{x\in Y(f)\colon f\sim R(x)\}.
\]
Then for $x\in[0,1]\times [-1,1]$ we have the pointwise bound
\begin{equation}\label{LInftyNormNotMuchSmaller}
\sum_{f\in F}\chi_{Y_1(f)}(x) \geq B^{-1} t(x)^{\eta/2} \sum_{f\in F}\chi_{Y(f)}(x) \gtrsim \delta^{\eta/2} \sum_{f\in F}\chi_{Y(f)}(x).
\end{equation}
and for all $f\in F$ and $x\in Y_1(f)$, we have $f\sim R(x)$. 

For each $x\in [0,1]\times [-1,1]$, each $\rho\in [\delta,t(x)]$, and each $(\rho;k;A)$ rectangle $R$ containing $x$, we have
\begin{equation}\label{twoEndsForRectangles}
\#\{f\in F\colon x\in Y_1(f),\ f\sim R\} \leq \big(\frac{\rho}{t(x)}\big)^{\eta/2}\#\{f\in F\colon x\in Y_1(f)\}.
\end{equation}
Inequality \eqref{twoEndsForRectangles} is true since we selected $R(x)$ to be a $(t(x); k; A)$ rectangle that maximizes the LHS of \eqref{numberOfFSimR}. This means that
\begin{equation*}
\begin{split}
\#\{f\in F &\colon x\in Y_1(f),\ f\sim R\} \\
& \leq \#\{f\in F\colon x\in Y(f),\ f\sim R\}\\
&< B^{-1} \rho^{\eta/2}\#\{f\in F\colon x\in Y(f)\}\\
&\leq B^{-1} \rho^{\eta/2} \big(B^{-1}t(x)^{\eta/2}\big)^{-1} \#\{f\in F\colon x\in Y(f),\ f\sim R(x)\}\\
& \leq \big(\frac{\rho}{t(x)}\big)^{\eta/2}\#\{f\in F\colon x\in Y_1(f)\}.
\end{split}
\end{equation*}

We finish this step with a final round of pigeonholing. For each $x\in[0,1]\times [-1,1]$ we have a number $\delta\leq t(x)\leq A^{-1}$ and a non-negative integer $\#\{f\in F\colon x\in Y_1(f)\}\leq \#F \leq \delta^{-N}$. Thus by dyadic pigeonholing, we can select a number $t\in[\delta,1]$ and an integer $\nu\geq 1$ so that if we define
\begin{equation}\label{defnOfSetX}
X = \big\{x\in[0,1]\times [-1,1]\colon t\leq t(x)<2t,\ \nu\leq \#\{f\in F\colon x\in Y_1(f)\}< 2\nu\big\},
\end{equation}
then
\begin{equation}\label{nuXapproxSize}
\nu^{\frac{k+1}{k}} |X|\approx\int_{x\in[0,1]\times[-1,1]}\Big(\sum_{f\in F}\chi_{Y_1(f)}(x)\Big)^{\frac{k+1}{k}},
\end{equation}
where the implicit constant depends on $N$.

For each $f\in F$, define $Y_2(f) = Y_1(f)\cap X$. Then
\begin{equation}\label{chain1}
\begin{split}
\int_{[0,1]\times[-1,1]}\Big(\sum_{f\in F}\chi_{Y(f)}\Big)^{\frac{k+1}{k}} & \lesssim \delta^{-\frac{\eta}{2}(\frac{k+1}{k})} \int_{[0,1]\times[-1,1]}\Big(\sum_{f\in F}\chi_{Y_1(f)}\Big)^{\frac{k+1}{k}}\\
& \lessapprox \delta^{-\frac{\eta}{2}(\frac{k+1}{k})}\int_{[0,1]\times[-1,1]}\Big(\sum_{f\in F}\chi_{Y_2(f)}\Big)^{\frac{k+1}{k}}.
\end{split}
\end{equation}
Furthermore, \eqref{LInftyNormNotMuchSmaller} and \eqref{twoEndsForRectangles} continue to hold with the shading $Y_2$ in place of $Y_1$.

Finally, by \eqref{inductionPropHypothesis} and \eqref{LInftyNormNotMuchSmaller}, for every $x\in \bigcup_{f\in F} Y_2(f)$, every $\rho\geq\delta$, every $T\in [1, 1/\rho]$, and every $(\rho; k; T)$ rectangle $R$ containing $x$, we have
\begin{equation}\label{notTooManyKp1RectanglesThruX}
\begin{split}
\#\{f\in F\colon x\in Y_2(f), f\sim R\} & \leq \#\{f\in F\colon \ x\in Y(f),\ f\sim R\}\\
&\leq \delta^{-\eta}T^{-\eps}\#\{f\in F\colon x\in Y(f)\}\\
&\leq 2\delta^{-\frac{3}{2}\eta}T^{-\eps}\nu.
\end{split}
\end{equation}
This bound will help us establish Conclusion (B).

\medskip

\noindent {\bf Step 2: Clustering into rectangles.}
Our goal in this step is to find a set of rectangles $\mathcal{R}_0$ so that Item (A) is satisfied. The idea is as follows: We choose $\delta'\sim t$ and select a (maximal) set of pairwise incomparable $(\delta';k)$ rectangles $\mathcal{R}_0$. For each point $x\in\RR^2$, the rectangle $R(x)$ from Step 1 will be comparable to some rectangle $R\in \mathcal{R}_0$. If $f\in F$ and $x\in Y_2(f)$, then $f\sim R(x)$ and thus (one might hope!) $f\sim R$. Thus we would have the pointwise inequality
\begin{equation}\label{onlyComparableRectanglesContribute}
\Big(\sum_{f\in F}\chi_{Y_2(f)}(x)\Big)^{\frac{k+1}{k}}\lesssim \Big(\sum_{\substack{f\in F\\ f\sim R}}\chi_{Y_2(f)}(x)\Big)^{\frac{k+1}{k}},
\end{equation}
and \eqref{LkBdControlledBySumOverRect} would follow. The only problem with the above argument is that if $f\sim R(x)$ and $R(x)\sim R$, then it is almost true that $f\sim R$, but not quite---we only have that $f$ is tangent to a slight thickening of $R$. It is to deal with this technical annoyance that we introduced the number $A = O(1)$ in Step 1.

Let $\delta'=At$, i.e.~a $(\delta';k)$ rectangle can be thought of as a $(t;k;A)$ rectangle that has been thickened by a (multiplicative) factor of $A$ in the vertical direction. If the constant $A = O(1)$ is selected appropriately, then we can find a set $\mathcal{R}_0$ of $(\delta';k)$ rectangles with the following properties: 
\begin{itemize}
	\item[(i)] For each $R\in\mathcal{R}_0$, at most $O(1)$ rectangles from $\mathcal{R}_0$ are comparable to $R$. 
	\item[(ii)] Let $x\in X$ (this is the set from \eqref{defnOfSetX}), and let $R(x)$ be a $(t;k;A)$ rectangle from Step 1. Write $R(x) = R^g(I)$, and let $R'(x)\supset R(x)$ be the $(\delta';k)$ rectangle obtained by taking the vertical $\delta'$ neighborhood of the graph of $g$ above $I$ (i.e.~$R'(x)$ is the rectangle obtained by thickening $R(x)$ in the vertical direction). Let $f\in F$ with $x\in Y_2(f)$, and hence $f\sim R(x)$. Then there exists a rectangle $R_0\in \mathcal{R}_0$ that is comparable to $R'(x)$, and satisfies $f\sim R_0$. 
\end{itemize}

For each $x\in X$, define $R_0(x)\in\mathcal{R}_0$ to be a rectangle that maximizes the quantity
\begin{equation}\label{maximizeR0}
\#\{f\in F\colon x\in Y_2(f),\ f\sim R_0\}.
\end{equation}
If more than one choice of $R_0(x)$ exists, choose one arbitrarily. At the moment, we do not know that $x\in R_0(x)$; we only know that $x\in Y_2(f)$ and $f\sim R_0$ ensures that $x$ is contained in the vertical $\delta$ neighbourhood of $R_0(x)$. We fix this (with slight abuse of notation) by replacing each rectangle in $\mathcal{R}_0$ with its vertical $\delta$ neighbourhood.

By the discussion above, there always exists a rectangle so that $\eqref{maximizeR0}\sim \nu$. For each $R\in\mathcal{R}_0$, define
\[
Y(R) = \{x\in R\colon R_0(x)=R\}.
\]
By construction, we have 
\begin{equation}\label{XPartition}
X = \bigsqcup_{R\in\mathcal{R}_0}Y(R),
\end{equation}
and we have the pointwise quasi-equality
\begin{equation}\label{quasiEqualityNu}
\sum_{R\in\mathcal{R}_0}\sum_{\substack{f\in F \\ f\sim R}}\chi_{Y(R)\cap Y_2(f)}(x) \sim \nu,\quad x\in X.
\end{equation}

By dyadic pigeonholing, for each $R\in\mathcal{R}_0$ we can choose a number $\lambda_R>0$ so that 
\[
\sum_{\substack{f\in F,\ f \sim R \\ \lambda_R \leq |Y(R)\cap Y_2(f)|<2\lambda_R}} |Y(R)\cap Y_2(f)| \gtrsim (\log 1/\delta)^{-1} 	\sum_{\substack{f\in F,\ f \sim R}}|Y(R)\cap Y_2(f)|.
\]

After further dyadic pigeonholing, we can select numbers $\lambda,$ $\mu$, so that if we define $\mathcal{R}_1\subset\mathcal{R}_0$ to be the set of those rectangles $R\in\mathcal{R}_0$ for which $\lambda\leq \lambda_R<2\lambda$ and
\begin{equation}\label{numberOfFSimMu}
 \#\{f\in F\colon f\sim R,\ \lambda_R \leq |Y(R)\cap Y_2(f)|<2\lambda_R\} \in [\mu,2\mu),
\end{equation}
then
\begin{equation}\label{sumOverR1PreservesMass}
\begin{split}
(\#\mathcal{R}_1)(\lambda \mu) & \sim \sum_{R\in\mathcal{R}_1} \sum_{\substack{f\in F,\ f \sim R \\ \lambda_R \leq |Y(R)\cap Y_2(f)|<2\lambda_R}} |Y(R)\cap Y_2(f)| \\
&\gtrapprox \sum_{R\in\mathcal{R}_0} \sum_{\substack{f\in F,\ f \sim R \\ \lambda_R \leq |Y(R)\cap Y_2(f)|<2\lambda_R}} |Y(R)\cap Y_2(f)|\\
&\gtrapprox \nu|X|.
\end{split}
\end{equation}
After a final round of refinement (this time by a factor of $O(1)$), we can select a set $\mathcal{R}_2\subset\mathcal{R}_1$ so that \eqref{sumOverR1PreservesMass} remains true with $\mathcal{R}_2$ in place of $\mathcal{R}_1$ on the LHS, and furthermore for each $f\in F$ the sets $\{R\cap \operatorname{graph}(f)\colon R\in\mathcal{R}_2,\ f\sim R\}$ are disjoint. 

Define 
\[
\mathcal{R}_3 = \{R\in\mathcal{R}_2\colon |Y(R)|\leq 2 |X|(\#\mathcal{R}_2)^{-1}\}.
\] 
Since the sets $\{Y(R)\colon R\in\mathcal{R}_2\}$ are disjoint subsets of $X$, we have $2|X|(\#\mathcal{R}_2)^{-1} (\#(\mathcal{R}_2\backslash\mathcal{R}_3)) \leq |X|$, and thus $\#\mathcal{R}_3\geq\frac{1}{2}\#\mathcal{R}_2$. In particular, \eqref{sumOverR1PreservesMass} remains true with $\mathcal{R}_3$ in place of $\mathcal{R}_1$ on the LHS. 

For each $f\in F$, define
\[
Y_3(f) = \bigcup_{\substack{R\in \mathcal{R}_3, f\sim R \\ \lambda_R \leq |Y(R)\cap Y_2(f)|< 2\lambda_R }} Y(R)\cap Y_2(f).
\]
Thus for each $R\in\mathcal{R}_3$, we have
\begin{equation}\label{suppBd}
\Big|\operatorname{supp}\Big(\sum_{\substack{f\in F \\ f\sim R}}\chi_{R\cap Y_3(f)}\Big)\Big| \leq |Y(R)|\leq 2 |X|(\#\mathcal{R}_2)^{-1} \lessapprox \frac{\lambda\mu}{\nu},
\end{equation}
where the final inequality used \eqref{sumOverR1PreservesMass}.

Note that for each $R\in\mathcal{R}_3$, \eqref{numberOfFSimMu} remains true with $Y_3$ in place of $Y_2$. Thus for each $R\in\mathcal{R}_3$ we can select a set $F(R)\subset F$ of size $\mu$, of the form 
\begin{equation}\label{defnFR}
F(R) \subset \{f\in F\colon f\sim R,\ \lambda_R \leq |Y_3(f)\cap R| < 2\lambda_R\}.
\end{equation}

\medskip

\noindent We have
\begin{equation}\label{Y3VsY2}
\sum_{R\in\mathcal{R}_3} \sum_{\substack{f\in F(R)}}|R\cap Y_3(f)| \geq \frac{1}{4}  \sum_{f\in F}|Y_3(f)| \gtrapprox \sum_{f\in F}|Y_2(f)| \sim 
\sum_{R\in\mathcal{R}_0}\sum_{\substack{f\in F \\ f\sim R}}|Y(R)\cap Y_2(f)|.
\end{equation}
The first inequality is \eqref{numberOfFSimMu} (i.e. $\#F(R)\geq \frac{1}{2}\#\{f\in F\colon f\sim R,\ Y_3(f)\cap R\neq\emptyset\}$, and as $R$ ranges over the elements of $\mathcal{R}_3$,  each of these sets contributes about the same amount to the first sum), while the second follows from the fact that the sequence of pigeonholing refinements that we used to obtain the shading $Y_3(f)\subset Y_2(f)$ only lost a $\sim \big(\log(1/\delta)\big)^{-3}$ fraction of the overall mass.

We now compute
\begin{equation}\label{Y3VsOriginalY}
\begin{split}
\sum_{R\in\mathcal{R}_3}& \int_{R}\Big(\sum_{\substack{f\in F(R)}}\chi_{Y_3(f)}\Big)^{\frac{k+1}{k}}\\
& = \int_{[0,1]\times[-1,1]}\Big(\sum_{R\in\mathcal{R}_3}\sum_{\substack{f\in F(R)}}\chi_{Y_3(f)}\Big)^{\frac{k+1}{k}}\\
& \geq |X|^{\frac{-1}{k}}\Big(\int_{[0,1]\times[-1,1]}\sum_{R\in\mathcal{R}_3}\sum_{\substack{f\in F(R)}}\chi_{Y_3(f)}\Big)^{\frac{k+1}{k}}\\
& \gtrapprox |X|^{\frac{-1}{k}}\Big(\int_{[0,1]\times[-1,1]}\sum_{R\in\mathcal{R}_0}\sum_{\substack{f\in F \\ f\sim R}}\chi_{Y(R)\cap Y_2(f)}\Big)^{\frac{k+1}{k}}\\
& \gtrapprox |X|^{\frac{-1}{k}}(|X| \nu) ^{\frac{k+1}{k}} = \nu^{\frac{k+1}{k}}|X|\\
& \gtrapprox \delta^{\frac{\eta}{2}(\frac{k+1}{k})}\int_{[0,1]\times[-1,1]}\Big(\sum_{f\in F}\chi_{Y(f)}\Big)^{\frac{k+1}{k}}.
\end{split}
\end{equation}
The first equality follows from the fact that the sets $\{R\cap \bigcup_{f\in F(R)}Y_3(f):\ R\in\mathcal{R}_3\}$ are disjoint. The second inequality is H\"older (recall that each shading $Y_3(f)$ is contained in $X$), and the third inequality is \eqref{Y3VsY2}. The fourth inequality is \eqref{quasiEqualityNu}, and the final inequality is \eqref{nuXapproxSize} followed by \eqref{chain1}.

\medskip

We now divide into cases.\\
Case 1: If $\delta'\leq\delta^{\sqrt\eta}$ then define $\mathcal{R}=\mathcal{R}_3$. This is the main case.\\
Case 2: If $\delta'>\delta^{\sqrt\eta}$, then since $\#\mathcal{R}_3\leq \#\mathcal{R}_0\lesssim (\delta')^{-O(1)}\lesssim \delta^{-O(\sqrt\eta)}$, we can select $R_0\in\mathcal{R}_3$ with
\begin{equation}\label{chain3}
\sum_{R\in\mathcal{R}_3}\int_R \Big(\sum_{\substack{f\in F(R)}}\chi_{Y_3(f)} \Big)^{\frac{k+1}{k}}\leq \delta^{-O(\sqrt\eta)}\int_{R_0}\Big(\sum_{\substack{f\in F(R)}}\chi_{Y_3(f)} \Big)^{\frac{k+1}{k}}.
\end{equation}
Define $\mathcal{R}=\{R_0\}$.

We will show that in both Case 1 and Case 2, Item (B) is satisfied. In Case 2, we have $\#\mathcal{R}=1$ and Item (B) is satisfied. Suppose instead we are in Case 1, i.e.~$\delta'\in [\delta, \delta^{\sqrt\eta}]$; we will establish \eqref{conclusionB}. Let $\rho\in [\delta', 1]$, $T\in [1,1/\rho]$, let $R\in\mathcal{R}$, and let $R'\supset R$ be a $(\rho; k; T)$ rectangle. Suppose
\[
\#\{f\in F(R)\colon f\sim R'\} = \omega\mu,
\]
for some $\omega>0$. To show that Item (B) is satisfied, we need to prove that
\begin{equation}\label{neededBdOnOmega}
\omega \leq (\delta')^{-2\sqrt\eta}T^{-\eps}. 
\end{equation}
By the definition of $F(R)$ from \eqref{defnFR}, we have
\[
\int_R \sum_{\substack{f\in F(R)\\ f\sim R'}}\chi_{Y_3(f)} \geq \omega\lambda\mu.
\]
On the other hand, by \eqref{suppBd} the above function is supported on a set of size $\lessapprox \frac{\lambda\mu}{\nu}$. We conclude that there exists a point $x\in R$ with
\[
\#\{f\in F(R)\colon x \in Y_3(f),\ f\sim R'\} \gtrapprox \omega \nu,
\]
and thus
\begin{equation}\label{manyCurvesThruXInRPrime}
\#\{f\in F\colon x\in Y_1(f),\ f\sim R'\} \gtrapprox \omega \nu.
\end{equation}
Comparing \eqref{notTooManyKp1RectanglesThruX} and \eqref{manyCurvesThruXInRPrime}, we have 
\[
\omega \nu \lessapprox \delta^{-(3/2)\eta}T^{-\eps}\nu \lesssim (\delta^\prime)^{(3/2)\sqrt\eta}T^{-\eps}\nu,
\]
and thus we obtain \eqref{neededBdOnOmega}, provided $\delta_0=\delta_0(\eps,\eta)$ is selected sufficiently small---here we use the assumption that $\delta'\leq \delta^{\sqrt\eta}$ to dominate the implicit constant $(\log 1/\delta)^{O(1)}$ in \eqref{manyCurvesThruXInRPrime} by $(\delta')^{-\sqrt \eta/4}$. At this point, we have established Item (B).

\medskip

\noindent {\bf Step 3: Refining the shading.} Our next task is to establish Item (C). After dyadic pigeonholing, there exists a number $\nu_1\leq\nu$ so that if we define
\[
Y'(f) = \bigcup_{R\colon f\in F(R)} \Big\{x\in Y_3(f) \cap R \colon \nu_1 \leq \sum_{g\in F(R)}\chi_{Y_3(g)}(x)<2\nu_1\Big\},
\]
then
\begin{equation}\label{Y2VsYp}
\sum_{R\in\mathcal{R}}\int_R\Big(\sum_{f\in F(R)}\chi_{Y_3(f)}\Big)^{\frac{k+1}{k}}\lessapprox \sum_{R\in\mathcal{R}}\int_R\Big(\sum_{f\in F(R)}\chi_{Y'(f)}\Big)^{\frac{k+1}{k}}.
\end{equation}
We have $\nu_1\gtrapprox\nu$, and thus by \eqref{twoEndsForRectangles}, for each $x\in X$, each $\rho\in [\delta/\delta', 1]$, and each $(\rho\delta'; k)$ rectangle $R'$ containing $x$, we have
\begin{equation}\label{rectanglesTangentToPtInsideR}
\begin{split}
\#\{f\in F(R)\colon x\in &\phantom{.} Y'(f),\ f\sim R'\} \lesssim \rho^{\eta/2}\nu \\
& \lessapprox  \rho^{\eta/2}\nu_1\lessapprox  \rho^{\eta/2}\#\{f\in F(R)\colon x\in Y'(f)\}.
\end{split}
\end{equation}
Note that technically, we can only apply \eqref{twoEndsForRectangles} to establish \eqref{rectanglesTangentToPtInsideR} for $\rho\in[\delta/\delta', t(x)/\delta']$. However for $\rho\in(t(x)/\delta', 1]\subset [A^{-1},1]$, Inequality \eqref{rectanglesTangentToPtInsideR} is trivial, provided we select the implicit constant to be at least $A$. Inequality \eqref{rectanglesTangentToPtInsideR} also trivially holds for $x\not\in X$, since the LHS and RHS of \eqref{rectanglesTangentToPtInsideR} are both zero.

We will see how this establishes Item (C). Fix a rectangle $R=R(I)\in\mathcal{R}$, and let $\tilde F(R)$ and $\{ \tilde Y'(\tilde f) \colon \tilde f\in \tilde F(R)\}$ be the sets defined in Part (C) of the statement of Proposition \ref{rectangleDecompProp}. Under this rescaling, the inequality \eqref{rectanglesTangentToPtInsideR} becomes the following: For each $x\in [0,1]\times[-1,1]$, each $\rho\in [\delta/\delta', 1]$, and each $(\rho; k)$ rectangle $R'$ containing $x$, we have
\begin{equation}\label{rectanglesTangentToPtInsideRRescaled}
\#\{\tilde f\in \tilde F(R)\colon x\in \tilde Y'(f),\ \tilde f\sim R'\} \lessapprox \rho^{\eta/2}\#\{\tilde f\in \tilde F(R)\colon x\in \tilde Y'(\tilde f)\}.
\end{equation}

To show that Item (C) is satisfied, let $x\in [0,1]\times[-1,1]$, $\rho\in [\delta/\delta', 1]$, $T\in [1, 1/\rho]$, and let $R^\dag$ be a $(\rho; k-1; T)$ rectangle containing $x$. By Lemma \ref{longRectInsideKP1Rect}, the functions $\{\tilde f\in \tilde F(R)\colon x\in \tilde Y'(\tilde f),\ \tilde f\sim  R^\dag\}$ are all tangent to a curvilinear rectangle of dimensions $C\tau\times \tau^{1/k}$, where $C = O(1)$ and $\tau = \max(\rho, T^{-k})$. Thus by Lemma \ref{thickenedVsRegularRectangleLem}, at least an $\Omega(1)$ fraction of these rectangles are tangent to a common $(\tau;k)$ rectangle $R'$, which contains $x$, i.e.
\[
\#\{\tilde f\in \tilde F(R)\colon x\in \tilde Y'(f),\ \tilde f\sim  R^\dag\}\lesssim  \#\{\tilde f\in \tilde F(R)\colon x\in \tilde Y'(f),\ \tilde f\sim  R'\}.
\]
The size of the latter set is controlled by \eqref{rectanglesTangentToPtInsideRRescaled}. Thus we have

\begin{equation*} 
\begin{split}
\#\{\tilde f\in \tilde F(R)\colon x\in \tilde Y'(f),\ \tilde f\sim  R^\dag\}&  \lessapprox \tau^{\eta/2}\#\{\tilde f\in \tilde F(R)\colon x\in \tilde Y'(\tilde f)\}\\
& \lesssim \max(\rho^{\eta/2}, T^{-k\eta/2})\#\{\tilde f\in \tilde F(R)\colon x\in \tilde Y'(\tilde f)\}\\
& \leq T^{-\eta/2}\#\{\tilde f\in \tilde F(R)\colon x\in \tilde Y'(\tilde f)\},
\end{split}
\end{equation*}
where the second inequality used the fact that $T\leq \rho^{-1}$ and $k\geq 1$. This establishes Item (C).

Finally, by chaining inequalities \eqref{Y3VsOriginalY} (plus \eqref{chain3} if we are in Case 2 from Step 2)  and \eqref{Y2VsYp}, we verify that Item (A) is satisfied.
\end{proof}


Next, we will show how Proposition \ref{rectangleDecompProp} and Theorem \ref{RectBoundThm} can be combined to prove Proposition \ref{maximalFnBdNonconcentrated}, which is a variant of Theorem \ref{mainThm} where the non-concentration condition on $F$ is replaced by a (local) two-ends type non-concentration condition on the set of curves passing through each point. Before stating the result, we recall the following definition from \cite{KatzTao}.
\begin{defn}\label{defnKTSet}
Let $(M,d)$ be a metric space. Let $\alpha,\delta,C>0$. A set $E\subset M$ is called a $(\delta,\alpha;C)$-\emph{set} if for all $r\geq\delta$ and all metric balls $B$ of radius $r$, we have
\[
\mathcal{E}_{\delta}(E \cap B)\leq C(r/\delta)^{\alpha},
\] 
where $\mathcal{E}_{\delta}(X)$ denotes the $\delta$-covering number of the set $X$. In informal settings, we will sometimes abbreviate this to $(\delta,\alpha)$-set.
\end{defn}

Our proof below involves an anisotropic rescaling, which sends a $(\rho;k)$ rectangle to the unit square. Such a rescaling distorts $(\delta,\alpha)$-sets into slightly more complicated objects. The next definition describes a class of sets that is preserved (as a class) under this type of rescaling. 

\begin{defn}\label{defnOfStriped}
Let $\delta,\tau,\alpha>0$ and $C\geq 1$. Let $f\colon[0,1]\to\RR$. We say a set $Y(f)\subset f^\delta$ is a \emph{$\delta$--thick shading striped by a $(\tau,\alpha;C)$-set} if $Y(f)$ is contained in a set of the form $f^\delta\cap (E\times\RR)$, where $E\subset[0,1]$ is a $(\tau, \alpha;C)$-set.
\end{defn}

Note that in the special case $\tau=\delta$ and $f$ Lipschitz, a $\delta$--thick shadings striped by a $(\delta,\alpha;C)$-set is simply a $(\delta,\alpha; 2C)$-set, i.e.~Definition \ref{defnOfStriped} offers nothing new beyond what is already given in Definition \ref{defnKTSet} in the special case $\tau=\delta$. Our main task is to prove Theorem \ref{mainThm}$^\prime$ (stated in Section \ref{pfOfMainThmSec}). The hypotheses of Theorem \ref{mainThm}$^\prime$ require that each shading $Y(f)$ be a $(\delta,\alpha; C)$-set, i.e.~the statement of Theorem \ref{mainThm}$^\prime$ only uses the special case of Definition \ref{defnOfStriped} where $\tau=\delta$. The proof of Theorem \ref{mainThm}$^\prime$, however, will require the additional flexibility afforded by allowing $\tau$ and $\delta$ to differ. 

We exploit this flexibility as follows. Suppose $Y(f)\subset f^\delta$ is a $\delta$--thick shading striped by a $(\tau,\alpha;C)$-set. Suppose furthermore that $R=R(I)$ is a $(\delta';k)$ rectangle, and $f\sim R$. Let $\tilde f=f_R$ in the sense of Definition \ref{pullbackRect}, and let $\tilde Y(\tilde f)=\phi^R(Y(f)\cap R)$. Since this rescaling is anisotropic, it affects $\delta$ and $\tau$ differently---$\tilde Y(\tilde f)$ is a $\delta/\delta'$--thick shading striped by a $(\tau/\delta'^{1/k}, \alpha; C)$-set. This observation will play an important role in the proof below. 

\begin{prop}\label{maximalFnBdNonconcentrated}
Let $k,N\geq 1,$ $0\leq \alpha\leq 1$, and let $\eps>0$. Then there exists $\eta>0$ such that the following is true for all $\delta,\tau>0$. Let $F$ be a set of polynomials of degree at most $\delta^{-\eta}$, each of which has $C^{k}$ norm at most 1. Suppose that $\#F\leq\delta^{-N}$. For each $f\in F$, let $Y(f)$ be a $\delta$--thick shading striped by a $(\tau,\alpha;\delta^{-\eta})$-set. Suppose that for all $x\in [0,1]^2$, all $\rho\in[\delta,1]$, all $T\in [1, 1/\rho]$, and all $(\rho; k; T)$ rectangles $R$ containing $x$, we have
\begin{equation}\label{rectangleConcentrationBdNonconcentratedProp}
\#\{f\in F\colon x\in Y(f),\ f\sim R\} \leq \delta^{-\eta}T^{-\eps}\#\{f\in F\colon x\in Y(f)\}.
\end{equation}

Then
\begin{equation}\label{NonconcentratedPropLpBound}
\Big \Vert \sum_{f\in F}\chi_{Y(f)}\Big\Vert_{\frac{k+1}{k}}\lesssim_\eps  \delta^{-\eps}\delta^{\frac{k+\alpha}{k+1}}\tau^{\frac{k(1-\alpha)}{k+1}}
(\#F).
\end{equation}
\end{prop}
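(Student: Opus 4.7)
I will proceed by induction on $k$. The base case $k=1$ reduces to a direct $L^2$ estimate: expanding
\[
\Big\Vert \sum_{f\in F} \chi_{Y(f)}\Big\Vert_2^2 = \sum_{f,g\in F} |Y(f)\cap Y(g)|,
\]
the diagonal contribution is bounded using $|Y(f)| \lesssim \delta^{1-\eta}\tau^{1-\alpha}$ (since each shading is striped by a set of measure $\lesssim \delta^{-\eta}\tau^{1-\alpha}$). For the off-diagonal terms, the broadness hypothesis \eqref{rectangleConcentrationBdNonconcentratedProp} applied at scale $\rho=\delta$ forces typical pairs of curves to meet transversally, so $|Y(f)\cap Y(g)|\lesssim \delta^2$ for most pairs; the $(\tau,\alpha)$--striping then suppresses the count of pairs that actually contribute, yielding the desired $L^2$ bound.

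For the inductive step, assume the proposition holds at level $k-1$ for every $\alpha\in[0,1]$. I apply Proposition \ref{rectangleDecompProp} to the collection $\{Y(f)\}$, producing a scale $\delta'\in[\delta,1]$, a family $\mathcal{R}$ of incomparable $(\delta';k)$ tangency rectangles, a multiplicity $\mu$, sets $F(R)\subset\{f\in F\colon f\sim R\}$ with $\#F(R)=\mu$, and refined sub-shadings $Y'(f)\subset Y(f)$. Conclusion (A) bounds the $L^{(k+1)/k}$ norm (up to $\delta^{-O(\sqrt\eta)}$) by $\sum_{R\in\mathcal{R}} \int_R \bigl(\sum_{f\in F(R)}\chi_{Y'(f)}\bigr)^{(k+1)/k}$.

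For each $R\in\mathcal{R}$, I rescale via $\phi^R$ (Definition \ref{pullbackRect}); by Lemma \ref{CkNormRectangleRescaling} the rescaled polynomials $\tilde F(R)$ have $C^k$-norm $\leq 1$, and by Conclusion (C) of Proposition \ref{rectangleDecompProp} they satisfy the broadness hypothesis of Proposition \ref{maximalFnBdNonconcentrated} at level $k-1$ (with $\eps$ replaced by $\eta/2$ and error $O_\eps((\log 1/\delta)^{O_\eps(1)})$). The anisotropic rescaling carries a $(\tau,\alpha)$--striping set to a $(\tilde\tau,\alpha)$--striping set with $\tilde\tau=\tau/(\delta')^{1/k}$, and the thickness becomes $\tilde\delta=\delta/\delta'$. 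The induction hypothesis therefore gives
\[
\Big\Vert \sum_{\tilde f\in \tilde F(R)} \chi_{\tilde Y'(\tilde f)}\Big\Vert_{k/(k-1)} \lesssim_\eps \tilde\delta^{-\eps'}\,\tilde\delta^{(k-1+\alpha)/k}\,\tilde\tau^{(k-1)(1-\alpha)/k}\,\mu.
\]
Interpolating this with the trivial estimate $\|\cdot\|_1 \lesssim \tilde\delta\,\tilde\tau^{1-\alpha}\mu$ using
\[
\|h\|_{(k+1)/k}^{(k+1)/k} \leq \|h\|_1^{1/k}\,\|h\|_{k/(k-1)}
\]
and checking that the exponents $\tfrac{1}{k}+\tfrac{k-1+\alpha}{k}=\tfrac{k+\alpha}{k}$ and $\tfrac{1-\alpha}{k}+\tfrac{(k-1)(1-\alpha)}{k}=1-\alpha$ match, I obtain the target $L^{(k+1)/k}$ bound in the rescaled picture. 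Undoing the rescaling (which distorts volumes by $(\delta')^{1+1/k}$), a direct computation shows the powers of $\delta'$ cancel precisely, giving
\[
\int_R \Big(\sum_{f\in F(R)} \chi_{Y'(f)}\Big)^{(k+1)/k} \lesssim_\eps (\delta')^{-O(\eps)}\,\delta^{(k+\alpha)/k}\,\tau^{1-\alpha}\,\mu^{(k+1)/k}.
\]

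To finish, I bound $\#\mathcal{R}$. By Conclusion (B) of Proposition \ref{rectangleDecompProp}, either $\#\mathcal{R}=1$ or the rectangles are $\mu$-rich and $\eps$-robustly broad with error $(\delta')^{-2\sqrt\eta}$. In the robust case, Theorem \ref{RectBoundThm} gives $\#\mathcal{R}\lesssim (\delta')^{-O(\eps)}(\#F/\mu)^{(k+1)/k}$; summing and extracting the $k/(k+1)$--th root yields the proposition with accumulated factors $\delta^{-O(\sqrt\eta)}$ absorbed into $\delta^{-\eps}$ by choosing $\eta=\eta(k,\eps)$ sufficiently small. The main technical obstacle is verifying that the broadness condition \eqref{rectangleConcentrationBdNonconcentratedProp} at level $k$ genuinely translates, after rescaling a $(\delta';k)$ rectangle to the unit square, into the level $k-1$ hypothesis for the rescaled jet lifts; this translation from ``$k$--th order tangency modulo $R$'' to ``$(k-1)$--st order tangency in the rescaled coordinates'' is exactly the content of Conclusion (C). A secondary annoyance is the degenerate case $\#\mathcal{R}=1$, which arises only when $\delta'>\delta^{\sqrt\eta}$; there one iterates the decomposition inside the single rectangle, losing only controlled factors $\delta^{-O(\sqrt\eta)}$ per iteration and terminating after $O(1/\sqrt\eta)$ rounds.
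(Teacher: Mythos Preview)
Your overall architecture is exactly that of the paper: induction on $k$, an $L^2$ argument for the base case, and for the inductive step Proposition~\ref{rectangleDecompProp} followed by Theorem~\ref{RectBoundThm} to bound $\#\mathcal{R}$, then the induction hypothesis at level $k-1$ inside each rescaled rectangle, interpolated against the trivial $L^1$ bound. The exponent bookkeeping you sketch is correct.

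Two points need adjustment. First, your handling of the degenerate case $\#\mathcal{R}=1$ is both unnecessary and unclear: when $\#\mathcal{R}=1$ the bound $\#\mathcal{R}\leq(\#F/\mu)^{(k+1)/k}$ is trivial (since $\mu\leq\#F$), and one simply proceeds with the level-$(k-1)$ induction hypothesis inside the single rectangle exactly as in the generic case. No iteration at level $k$ is needed, and it is not obvious your proposed iteration terminates.

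Second, and more substantively, you omit the case where $\delta'$ is close to $\delta$ (in the paper, $\delta'\leq\delta^{1-\eps/2}$). In that regime $\tilde\delta=\delta/\delta'$ is too close to $1$ for the induction hypothesis to apply: the polynomials have degree $\leq\delta^{-\eta}$, which can exceed any power $\tilde\delta^{-\eta'}$, and the logarithmic error $O_\eps((\log 1/\delta)^{O_\eps(1)})$ from Conclusion~(C) cannot be absorbed into $\tilde\delta^{-\eta'}$. The paper disposes of this case directly: when $\delta'\leq\delta^{1-\eps/2}$ each $(\delta';k)$ rectangle is essentially a $\delta$-tube, so the trivial pointwise bound $\sum_{f\in F(R)}\chi_{Y'(f)}\leq\mu$ combined with the striping bound on the support gives the estimate without recourse to the induction hypothesis.
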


\begin{proof}
We prove the result by induction on $k$. 

\medskip

\noindent{\textbf{The base case}.}\ \ We begin with the base case $k=1$. By dyadic pigeonholing we can find a number $\mu$ and a set $X\subset[0,1]^2$ so that
\[
\Big \Vert \sum_{f\in F}\chi_{Y(f)}\Big\Vert_2^2\lesssim (\log 1/\delta) \mu^2|X|, \quad\textrm{and}\quad \mu\leq \sum_{f\in F}\chi_{Y(f)}(x)< 2\mu\quad\textrm{for all}\ x\in X.
\]
Thus in order to establish \eqref{NonconcentratedPropLpBound}, it suffices to show that
\begin{equation}\label{boundOnSizeOfX}
|X| \lesssim_\eps \delta^{-\eps/2}\delta^{1+\alpha}\tau^{1-\alpha}\mu^{-2}(\#F)^2.
\end{equation}

By \eqref{rectangleConcentrationBdNonconcentratedProp} with $\rho=2\delta$ and $T=2^{1/\eps}\delta^{-\eta/\eps}$, we have that for each $x\in X$, there are $\gtrsim\mu^2$ pairs $f,g\in F$ with the following two properties:
\begin{itemize}
\item[(i)] $x\in Y(f)\cap Y(g)$.
\item[(ii)] The connected component of $f^\delta\cap g^\delta$ containing $x$ projects to an interval of length at most $2^{1/\eps}\delta^{1-\eta/\eps}$ on the $x_1$-axis.
\end{itemize} 
For each pair $f,g\in F$, let 
\begin{equation*}
\begin{split}
W(f,g) = \{x\in & f^\delta\cap g^\delta\colon  \textrm{the connected component of}\ f^\delta\cap g^\delta\ \textrm{containing}\ x \\
& \textrm{projects to an interval of length}\ \leq 2^{1/\eps}\delta^{1-\eta/\eps}\ \textrm{on the $x_1$-axis}\}.
\end{split}
\end{equation*}
Since $f$ and $g$ are polynomials of degree at most $\delta^{-\eta}$, $f^\delta\cap g^\delta$ is a union of $O(\delta^{-\eta})$ connected components, and thus the projection of $W(f,g)$ to the $x_1$-axis is a union of $O(\delta^{-\eta})$ intervals, each of length at most $2^{1/\eps}\delta^{1-\eta/\eps-\eta}\lesssim_\eps \delta^{1-2\eta/\eps}$. Denote this set by $I(f,g)\subset[0,1]$.

Since $Y(f)$ is a $\delta$--thick shading striped by a $(\tau,\alpha;\delta^{-\eta})$-set, we have
\begin{equation}\label{Y1FcapIfg}
\begin{split}
|Y(f) \cap (I(f,g)\times\RR)|& \leq \sum_{I\subset I(f,g)} |Y(f) \cap (I\times\RR)|\\
&\lesssim_\eps \delta^{-\eta} \left\{ \begin{array}{ll} 
(\delta \tau) \big(\delta^{-\eta} (\delta^{1-2\eta/\eps}/\tau)^\alpha\big),& \delta^{1-2\eta/\eps}\geq\tau\\
\delta^{2-2\eta/\eps},& \delta^{1-2\eta/\eps}\leq\tau
\end{array}
\right.\\
&\leq \delta^{1+\alpha-3\eta/\eps}\tau^{1-\alpha},
\end{split}
\end{equation}
where in the first inequality, the sum is taken over the connected components (i.e.~intervals) in $I(f,g)$. This sum has $O(\delta^{-\eta})$ terms.

Let $\mathcal{T}$ be the set of triples $(x, f,g)\in X\times F^2$, where the pair $f,g$ satisfies Items (i) and (ii). Then $|\mathcal{T}|\sim \mu^2 X$, where $|\cdot|$ denotes the product of two-dimensional Lebesgue measure on $X$ and counting measure on $F^2$.

For each $f,g\in F$, we have
\[
|\{x\in X\colon (x, f,g)\in \mathcal{T}\}| \leq  |Y(f) \cap (I(f,g)\times\RR)|\leq \delta^{1+\alpha-3\eta/\eps}\tau^{1-\alpha},
\]
where the final inequality used \eqref{Y1FcapIfg}. We conclude that 
\[
|\mathcal{T}|\lesssim_\eps  \delta^{1+\alpha-3\eta/\eps}\tau^{1-\alpha}(\#F)^2.
\]
On the other hand, we have $|X|\leq \mu^{-2}|\mathcal{T}|$, and thus
\begin{equation}\label{boundOnX}
|X|\lesssim_\eps  \delta^{1+\alpha-3\eta/\eps}\tau^{1-\alpha}\Big(\frac{\#F}{\mu}\Big)^2.
\end{equation}
If we select $\eta\leq \eps^2/6$, then \eqref{boundOnX} implies \eqref{boundOnSizeOfX}. This completes our proof of the base case.

\medskip

\noindent \textbf{The induction step.}\ 
Suppose that $k\geq 2$ and that the result has been established for $k-1$. Fix $0\leq\alpha\leq 1$ and $\eps>0$. Let $\eta>0$ be a quantity to be specified below, let $\delta,\tau>0$, and let $F$ and $Y(f)$ satisfy the hypotheses of Proposition \ref{maximalFnBdNonconcentrated} with this value of $\eta$. First, let $\delta_0>0$ be a small quantity to be chosen below, which depends on $k$ and $\eps$. We may suppose that $\delta\leq\delta_0$, since otherwise \eqref{NonconcentratedPropLpBound} is trivial, provided we choose the implicit constant sufficiently large.  

Let $\eta_1=\eta_1(k,\eps)$ be the output from Theorem \ref{RectBoundThm}, with $\eps/2$ in place of $\eps$. Let $\eta_2 = \eta_1^2/4$. We will select $\eta>0$ sufficiently small so that $\eta\leq\eta_2$. Thus the shadings $\{Y(f)\colon f\in F\}$ satisfy Hypothesis \eqref{inductionPropHypothesis} of Proposition \ref{rectangleDecompProp} with $\eps/2$ in place of $\eps$ (this is because decreasing $\eps$ can only make \eqref{inductionPropHypothesis} easier to satisfy) and $\eta_2$ in place of $\eta$. Applying Proposition \ref{rectangleDecompProp}, we get a sub-shading $Y'(f)\subset Y(f)$; a scale $\delta'\in [\delta,1]$; a set $\mathcal{R}$ of $(\delta', k)$ rectangles; sets $F(R),$ $R\in\mathcal{R}$; and a multiplicity $\mu\leq\#F$.

By Proposition \ref{rectangleDecompProp} Item (B), either $\#\mathcal{R}=1$, or we can apply Theorem \ref{RectBoundThm} (recall that we selected $\eta_1$ sufficiently small to ensure that Theorem \ref{RectBoundThm} can be applied, and $\eta_2 = \eta_1^2/4$; c.f.~\eqref{conclusionB}) to conclude that
\begin{equation}\label{rectangleBdInPropMaximalFnBdNonconcentrated}
\# \mathcal{R} \leq \delta^{-\eps/2}  \Big(\frac{\# F}{\mu}\Big)^{\frac{k+1}{k}}.
\end{equation}
Technically, we can only apply Theorem \ref{RectBoundThm} if $\delta$ is sufficiently small in terms of $k$ and $\eps$. However if this is not the case then \eqref{rectangleBdInPropMaximalFnBdNonconcentrated} follows from the crude bound $\# \mathcal{R}\lesssim (\delta')^{-k-1}\lesssim_{\eps}1$.

We next explore the consequences of Item (C) from Proposition \ref{rectangleDecompProp}. We first consider the case where $\delta'\leq\delta^{1-\eps/2}$. By Item (A) from Proposition \ref{rectangleDecompProp}, we have 
\begin{equation}
\begin{split}
\Big \Vert \sum_{f\in F}\chi_{Y(f)}\Big\Vert_{\frac{k+1}{k}}^{\frac{k+1}{k}} 
& \leq \delta^{-O(\sqrt \eta_2)}\sum_{R\in\mathcal{R}} \int_{R}\Big(\sum_{f\in F(R)}\chi_{Y'(f)}\Big)^{\frac{k+1}{k}}\\
& \lesssim \left\{ \begin{array}{ll} \delta^{-O(\eta_1)}(\#\mathcal{R}) \mu^{\frac{k+1}{k}} (\delta\tau)\Big( \delta^{-\eta}(\delta^{1/k}/\tau)^{\alpha}\Big), & \tau\leq\delta^{1/k} \\ \delta^{-O(\eta_1)}(\#\mathcal{R}) \mu^{\frac{k+1}{k}} \delta^{\frac{k+1}{k}}, & \tau>\delta^{1/k} \end{array}\right. \\
& \lesssim\delta^{-\eps/2 -O(\eta_1)}\delta^{\frac{k+\alpha}{k}}\tau^{1-\alpha}  (\#F)^{\frac{k+1}{k}},
\end{split}
\end{equation}
and we have established \eqref{NonconcentratedPropLpBound} and completed the proof, provided we select $\eta_1$ sufficiently small depending on $\eps$ and $k$, and provided we select the implicit constant in \eqref{NonconcentratedPropLpBound} sufficiently large depending on $\eps$ and $k$.

For the remainder of the proof we consider the case where $\delta'>\delta^{1-\eps/2},$ so in particular the implicit constant $O_{\eps}(\log 1/\delta)^{O_\eps(1)}$ from \eqref{inductionPropHypothesisAfterIteration} is bounded by $O_{\eps}(\log \frac{1}{\delta/\delta'})^{O_\eps(1)}$. For each $R\in\mathcal{R}$, let $\tilde F(R)$ and $\tilde Y'(f)$ be as defined in Item (C) of Proposition \ref{rectangleDecompProp}; these sets satisfy \eqref{inductionPropHypothesisAfterIteration}.

If $\delta_0$ (and thus $\delta/\delta'$) is sufficiently small, then $\tilde F(R)$ and $\tilde Y'(f)$ will satisfy the induction hypothesis \eqref{rectangleConcentrationBdNonconcentratedProp} with the parameters changed as follows:
\begin{itemize}
\item $k$ is replaced by $k-1$.
\item $\delta$ is replaced by $\tilde\delta = \delta/\delta'$.
\item $\tau$ is replaced by $\tilde\tau = \tau/(\delta')^{1/k}$
\item The functions $f\in F$ are polynomials of degree $\delta^{-\eta} \leq \tilde\delta^{-2\eta/\eps}$, each of which have $C^k$ norm at most 1 (the latter property follows from Lemma \ref{CkNormRectangleRescaling}).
\item The shadings $\tilde Y'(f)$ are $\tilde\delta$--thick shadings striped by a $(\tilde\tau,\alpha;\tilde\delta^{-2\eta/\eps})$-set.
\item The shadings $\tilde Y'(f)$ satisfy \eqref{rectangleConcentrationBdNonconcentratedProp}, with $T^{-\eta_2/2}$ in place of $T^{-\eps}$, and $O_{\eps}(\log 1/\tilde\delta)^{O_\eps(1)}$ in place of $\delta^{-\eta}$.
\end{itemize}

It is now time to apply the induction hypothesis: we apply Proposition \ref{maximalFnBdNonconcentrated} with $k-1$ in place of $k$; $\alpha$ unchanged; and $\eta_2/2$ in place of $\eps$. Let $\eta_3$ be the output from this proposition. If $\eta>0$ is selected sufficiently small, then $2\eta/\eps\leq\eta_3$. This means that the functions $f\in F$ are polynomials of degree at most $\tilde\delta^{-\eta_3}$, and the shadings $\tilde Y'(f)$ are striped by $(\tilde\tau,\alpha;\tilde\delta^{-\eta_3})$-sets. If $\delta_0$ and thus $\tilde\delta$ are sufficiently small, then the quantity $O_{\eps}(\log 1/\tilde\delta)^{O_\eps(1)}$ from the final item above is at most $\tilde\delta^{-\eta_3}$. Thus we can use the induction hypotheses to conclude that 
\begin{equation}\label{Lkm1Bd}
\Big \Vert \sum_{\tilde f\in \tilde F(R) }\chi_{\tilde Y'(\tilde f)}\Big\Vert_{\frac{k}{k-1}}\lesssim \delta^{-\eta_2/2}\tilde\delta^{\frac{k-1+\alpha}{k}} \tilde\tau^{\frac{(1-\alpha)(k-1)}{k}} (\#F(R)).
\end{equation}
We also have the $L^1$ bound
\begin{equation}\label{L1Bd}
\Big \Vert \sum_{\tilde f\in \tilde F(R) }\chi_{\tilde Y'(\tilde f)}\Big\Vert_{1} \leq \delta^{-\eta}\tilde\delta\tilde\tau^{1-\alpha}(\#F(R)).
\end{equation}
Interpolating \eqref{Lkm1Bd} and \eqref{L1Bd} using H\"older and recalling the definition of $\tilde\delta$ and $\tilde\tau$ (and the fact that $\eta\leq\eta_2$), we have
\begin{equation*}
\begin{split}
\Big \Vert \sum_{\tilde f\in \tilde F(R) }\chi_{\tilde Y'(\tilde f)}\Big\Vert_{\frac{k+1}{k}}^{\frac{k+1}{k}} 
& \leq \Big \Vert \sum_{\tilde f\in \tilde F(R) }\chi_{\tilde Y'(\tilde f)}\Big\Vert_{1}^{\frac{1}{k}} \Big \Vert \sum_{\tilde f\in \tilde F(R) }\chi_{\tilde Y'(\tilde f)}\Big\Vert_{\frac{k}{k-1}}\\
& \leq \delta^{-\eta_2}(\delta')^{-\frac{k+1}{k}} \delta^{\frac{k+\alpha}{k}}\tau^{1-\alpha}(\#F).
\end{split}
\end{equation*}

Undoing the scaling that maps $R$ to the unit square (this scaling distorts volumes by a factor of $(\delta')^{\frac{k+1}{k}}$), and recalling that $\#F(R)=\mu$ for each $R\in\mathcal{R}$, we conclude that
\begin{equation}\label{inductionHypothesesRectBd}
\int_R \Big( \sum_{f\in  F(R) }\chi_{Y'(f)}  \Big)^{\frac{k+1}{k}} \lesssim 
 \delta^{-\eta_2}\delta^{\frac{k+\alpha}{k}}\tau^{1-\alpha} \mu^{\frac{k+1}{k}}.
\end{equation}
Combining Item (A) from Proposition \ref{rectangleDecompProp} and \eqref{inductionHypothesesRectBd}, we conclude that 
\begin{equation}\label{controlOFLkNormInductionClose}
\begin{split}
\Big \Vert \sum_{f\in F}\chi_{Y(f)}\Big\Vert_{\frac{k+1}{k}}^{\frac{k+1}{k}} & \lesssim_\eps \delta^{-O(\sqrt\eta_2)}\sum_{R\in\mathcal{R}} \int_{R}\Big(\sum_{f\in F(R)}\chi_{Y'(f)}\Big)^{\frac{k+1}{k}}\\
& \lesssim  \delta^{-O(\eta_1)-\eta_2}(\#\mathcal{R})\delta^{\frac{k+\alpha}{k}}\tau^{1-\alpha} \mu^{\frac{k+1}{k}}\\
& \leq \delta^{-O(\eta_1)-\eta_2-\eps/2}\delta^{\frac{k+\alpha}{k}}\tau^{1-\alpha} (\# F)^{\frac{k+1}{k}}.
\end{split}
\end{equation} 
If we select $\eta_1$ and $\eta_2$ sufficiently small (depending on $\eps$ and $k$), then the term $\delta^{-O(\eta_1)-\eta_2-\eps/2}$ has size at most $\delta^{-\eps}$. This establishes \eqref{NonconcentratedPropLpBound} and closes the induction. 
\end{proof}


\section{Proof of Theorem \ref{mainThm}}\label{pfOfMainThmSec}
In this section, we will prove the following slightly more technical generalization of Theorem \ref{mainThm}.
\begin{mainThmTechnical}
Let $k\geq 1$, let $I$ be a compact interval, and let $\mathcal{F}\subset C^\infty(I)$ be uniformly smooth and forbid $k$--th order tangency. Let  $0<\beta\leq\alpha\leq 1$, and let $\eps>0$. 
Then there exists $\eta,\delta_0>0$ so that the following is true for all $\delta\in (0,\delta_0]$. Let $F\subset\mathcal{F}$ be a $(\delta, \beta; \delta^{-\eta})$-set (here $\mathcal{F}$ is given the usual metric on $C^k(I)$). For each $f\in F$, let $Y(f)\subset f^\delta$ be a $(\delta, \alpha;\delta^{-\eta})$-set (here we use the usual Euclidean metric on $\RR^2$). Then 
\begin{equation}\label{LpBoundTech}
\Big\Vert \sum_{f\in F} \chi_{Y(f)} \Big\Vert_{\frac{k+1}{k}} \leq \delta^{-\eps}\big( \delta^{2-\alpha}\#F\big)^{\frac{k}{k+1}}.
\end{equation}
If instead $0\leq\alpha\leq 1$ and $\beta>\alpha$, then in place of \eqref{LpBoundTech} we have
\begin{equation}
\Big\Vert \sum_{f\in F} \chi_{Y(f)} \Big\Vert_{\frac{k+1}{k}} \leq \delta^{-\eps}\big( \delta^{2-\alpha-\frac{\beta-\alpha}{k}}\#F\big)^{\frac{k}{k+1}}.
\end{equation}
\end{mainThmTechnical}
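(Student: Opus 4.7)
The plan is to deduce Theorem \ref{mainThm}$'$ from Proposition \ref{maximalFnBdNonconcentrated} via a two-ends rescaling on the arrangement $(F,\{Y(f)\})$. The first step is routine: uniform smoothness of $\mathcal{F}$ lets us replace each $f\in F$ by its Taylor polynomial of degree $\lfloor\delta^{-\eta}\rfloor$, with $C^k$-error smaller than any polynomial in $\delta$, so the Frostman conditions on $F$ and on the shadings are preserved and Theorem \ref{RectBoundThm} becomes applicable.

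I would next verify that when the local non-concentration hypothesis \eqref{rectangleConcentrationBdNonconcentratedProp} of Proposition \ref{maximalFnBdNonconcentrated} already holds, a direct application of that proposition with $\tau=\delta$ yields the theorem. The proposition gives $\|\sum\chi_{Y(f)}\|_{(k+1)/k}\lesssim\delta^{-\eps}\delta^{(2k+\alpha-\alpha k)/(k+1)}\#F$; combining this with the global Frostman estimate $\#F\leq\delta^{-\eta-\max(\alpha,\beta)}$ matches both target expressions up to a factor $\delta^{-O(\eta)}$ absorbable into $\delta^{-\eps}$. A short arithmetic check confirms equality of exponents at saturation $\#F=\delta^{-\alpha}$ when $\beta\leq\alpha$, and at saturation $\#F=\delta^{-\beta}$ when $\beta>\alpha$, which is why the exponent $2-\alpha-(\beta-\alpha)/k$ appears only in the latter case.

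The bulk of the work is thus to reduce to the broad case, which I would carry out by iterative two-ends rescaling in the spirit of Proposition \ref{rectangleDecompProp}. If broadness fails at some point $x$, there is a $(\rho;k;T)$ tangency rectangle $R\ni x$ concentrating an anomalously large fraction of the curves through $x$; dyadic pigeonholing in $\rho$, $T$, and richness produces a family $\mathcal{R}$ of pairwise incomparable rectangles at a common scale capturing most of the $L^{(k+1)/k}$-mass. Inside each $R\in\mathcal{R}$, the anisotropic rescaling $\phi^R$ from Definition \ref{pullbackRect} transforms the problem to scale $\tilde\delta=\delta/\rho$, with shadings that are $\tilde\delta$-thick striped by $(\tilde\tau,\alpha)$-sets with $\tilde\tau=\delta/\rho^{1/k}$ --- exactly the striped setup of Definition \ref{defnOfStriped} handled by Proposition \ref{maximalFnBdNonconcentrated}. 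Each rescaling strengthens the non-concentration requirement (from $T^{-\eps}$ to $T^{-\eps/2}$, say), so after $O_\eps(1)$ iterations broadness is enforced at every scale.

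The main obstacle will be the bookkeeping required to combine the rescaled contributions. One must verify that the rescaled Frostman exponent of $F$ at scale $\tilde\delta$, together with Theorem \ref{RectBoundThm}'s bound $\#\mathcal{R}\leq\delta^{-\eps}(\#F/\mu)^{(k+1)/k}$ on broad $\mu$-rich rectangles, causes the powers of $\rho$, the richness $\mu$, and the striping scale $\tilde\tau$ to cancel against the Frostman exponent $\beta$ when summing $\sum_{R\in\mathcal{R}}\int_R(\sum_{f\in F(R)}\chi_{Y(f)})^{(k+1)/k}$. Choosing $\rho$ at each step to be the smallest scale at which the two-ends condition fails should make the cancellation exact; the Frostman hypothesis on $F$ then provides the final input to close the recursion on scale and yield \eqref{LpBoundTech}, with harmless losses $\delta^{-O(\eta/\eps)}$ absorbable into $\delta^{-\eps}$ for $\eta$ small relative to $\eps$ and $k$.
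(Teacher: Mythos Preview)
Your overall architecture---polynomial approximation, then reduction to Proposition~\ref{maximalFnBdNonconcentrated}---matches the paper's, and your arithmetic for the already-broad case is correct. But your proposed reduction to the broad case takes a wrong turn.

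You plan to iterate a Proposition~\ref{rectangleDecompProp}-style decomposition into $(\rho;k)$ tangency rectangles, with anisotropic rescaling $\phi^R$. That machinery drives the \emph{induction on $k$} inside the proof of Proposition~\ref{maximalFnBdNonconcentrated}; it is not the right tool for deducing Theorem~\ref{mainThm}$'$ from that proposition as a black box. Your claim that ``each rescaling strengthens the non-concentration requirement \ldots\ so after $O_\eps(1)$ iterations broadness is enforced'' is backwards: conclusion~(C) of Proposition~\ref{rectangleDecompProp} outputs a \emph{weaker} exponent ($T^{-\eta/2}$ from input $T^{-\eps}$), and there is no mechanism by which iterating at fixed $k$ forces broadness after boundedly many steps. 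More fundamentally, your plan never explicitly invokes the ``forbids $k$-th order tangency'' hypothesis on $\mathcal{F}$, which is the one ingredient distinguishing Theorem~\ref{mainThm}$'$ from Proposition~\ref{maximalFnBdNonconcentrated}.

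The paper runs the two-ends argument not on tangency rectangles in $\RR^2$ but on balls in the metric space $C^k(I)$, which is where the Frostman hypothesis on $F$ lives. For each point $x$ one finds the smallest radius $t(x)$ for which some $C^k$-ball of that radius contains a $t(x)^{\eps_1}$-fraction of the curves through $x$; pigeonholing fixes a common scale $t$ and covers $F$ by $C^k$-balls $B$ of radius $\sim t$. Inside each ball one rescales isotropically by $\tilde f=(4t)^{-1}(f-g_B)$ (not $\phi^R$), landing at scale $\tilde\delta=\delta/t$ with $\tau=\delta$. The step you are missing is the use of forbidden tangency: if several curves are tangent to a common $(\rho;k;T)$ rectangle, then by Lemma~\ref{smallImpliesSmallCkOnJ} and \eqref{cinematicFunctionCondition} they lie in a $C^k$-ball of radius $O(T^{-1})$, so the two-ends non-concentration in $C^k$-balls directly yields the rectangle non-concentration~\eqref{rectangleConcentrationBdNonconcentratedProp}. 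One application of Proposition~\ref{maximalFnBdNonconcentrated} per ball, followed by summing with the $(\delta,\beta)$-set bound $\#(F\cap B)\lesssim(t/\delta)^\beta$, produces the factor $(\delta/t)^{(\alpha-\beta)/k}$ in \eqref{boundForOneBall} and hence both conclusions of the theorem. No iteration is needed: a single two-ends pass in the correct metric space suffices.
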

\noindent Theorem \ref{mainThm} is the special case where $\alpha=\beta=1$ and $Y(f)=f^\delta$ for each $f\in F$.

Before proving Theorem \ref{mainThm}$'$, let us examine how it differs from Proposition \ref{maximalFnBdNonconcentrated}. First, the functions in Proposition \ref{maximalFnBdNonconcentrated} are polynomials, while those in Theorem \ref{mainThm}$'$ are uniformly smooth; moving between these two conditions will not pose any difficulties. More care, however, is needed to move between the different non-concentration hypotheses imposed by Theorem \ref{mainThm}$'$ versus Proposition \ref{maximalFnBdNonconcentrated}. In brief, if $F$ is a family of curves that violates the non-concentration hypothesis \eqref{rectangleConcentrationBdNonconcentratedProp} from Proposition \ref{maximalFnBdNonconcentrated}, then for a typical point $x\in [0,1]^2$, the curves whose $\delta$ neighborhoods contain $x$ will be concentrated inside a small ball (in the metric space $C^k(I)$) in $\mathcal{F}$. Thus the curves in $F$ can be partitioned into non-interacting pieces, each of which is localized to a small ball in $\mathcal{F}$. Since $F$ is a $(\delta, \beta; \delta^{-\eta})$-set, each of these pieces only contains a small fraction of the total collection of curves.  Each of these pieces can then be rescaled to create an arrangement of curves that satisfies the non-concentration hypothesis \eqref{rectangleConcentrationBdNonconcentratedProp}. We now turn to the details.

\begin{proof}[Proof of Theorem \ref{mainThm}\,$'$]
$\phantom{1}$\\
\noindent\textbf{Step 1: Polynomial approximation.}
First, after a harmless rescaling we may suppose that $I=[0,1]$ and $\sup_{f\in\mathcal{F}}\Vert f \Vert_{C^{k+1}}\leq 1/2$. Let $\eta>0$ be a quantity to be chosen below. By Jackson's approximation theorem (see e.g.~\cite{BBL}), for each $f\in F$ there exists a polynomial $P_f$ of degree at most $K\delta^{-\eta/2}$, so that $\Vert f-P_f\Vert_{C^{k+1}}\leq \delta/4$. The quantity $K$ depends only on $\sup_{f\in \mathcal{F}}\Vert f^{(i)} \Vert_{C^m}$ for some $m\lesssim 1/\eta$. Crucially, $K$ is independent of $\delta$ and the specific choice of $F\subset\mathcal{F}$. In particular, if $\delta_0$ and hence $\delta$ is sufficiently small depending on $\eta$ and $\mathcal{F}$, then the degree of each polynomial $P_f$ is at most $\delta^{-\eta}$. Define $F_1=\{P_f\colon f\in F\}$. For each $P_f\in F_1$, define the shading $Y(P_f) = P_f^{2\delta}\cap N_{\delta}(Y(f))$. Abusing notation slightly, we replace $\delta$ by $2\delta$, so $Y(P_f)\subset P_f^\delta$. It suffices to prove Theorem \ref{mainThm}$'$ with $F_1$ in place of $F$, i.e.~we must show that
\begin{equation}\label{polyApproxBd}
\Big\Vert \sum_{f\in F_1} \chi_{Y(f)} \Big\Vert_{\frac{k+1}{k}} \leq  \delta^{-\eps}\big( \delta^{2-\alpha-\max\big(0, \frac{\beta-\alpha}{k}\big)}\#F\big)^{\frac{k}{k+1}}.
\end{equation}
Note that since the set $F$ is a $(\delta,\beta;\delta^{-\eta})$-set, we may suppose after a (harmless) refinement by a factor of $O(\delta^{\eta})$ that the points in $F$ are $2\delta$-separated. Hence the set $F_1$ is $\delta$-separated, and is a $(\delta, \beta; 2\delta^{-\eta})$-set. The set $F_1$ also satisfies the ``forbidding $k$--th order tangencies'' condition \eqref{cinematicFunctionCondition}, with $c$ replaced by $c/2$.

\medskip

\noindent\textbf{Step 2: A two-ends reduction.} Let $\eps_1>0$ be a small quantity to be specified below. For each $x\in\RR^2$, let $t(x)$ be the infimum of all $s\geq\delta$ for which there exists a closed ball $B\subset C^k$ of radius $s$ satisfying
\begin{equation}\label{sufficientlyManyFInBall}
\#\{f\in F_1 \cap B\colon x\in Y(f)\}\geq s^{\eps_1}\#\{f\in F_1\colon x\in Y(f)\}.
\end{equation}
The infimum $t(x)$ exists since the set of admissible $s$ contains $s=1$ and thus is non-empty; this is because our reductions from Step 1 ensure that $F_1$ is contained in the unit ball of $C^k$. For each $x\in\RR^2$, let $B(x)$ be a closed ball of radius $t(x)$ that satisfies \eqref{sufficientlyManyFInBall}. Then for each $x\in\RR^2$ we have
\begin{equation}\label{mostMassPreservedInTwoEnds}
\#\{f\in F_1\colon x\in Y(f),\ f\in B(x)\} \geq \delta^{\eps_1}\#\{f\in F_1\colon x\in Y(f)\}.
\end{equation}
For every $r\in[\delta,t]$ and all closed balls $B'\subset C^k$ of radius $r$, we have
\begin{equation}\label{ballConditionInsideBx}
\begin{split}
\#\{f\in F_1 \cap B'\colon x\in Y(f)\} & < r^{\eps_1}\#\{f\in F_1\colon x\in Y(f)\}\\
& \leq (r/t(x))^{\eps_1}\#\{f\in F_1\cap B(x)\colon x\in Y(f)\}.
\end{split}
\end{equation}
After dyadic pigeonholing, we can select a radius $t\in[\delta,1]$ so that
\[
\int_{\substack{x\in[0,1]^2 \\ t\leq t(x)< 2t}} \Big(\sum_{f\in F_1}\chi_{Y(f)}\Big)^{\frac{k+1}{k}}\gtrapprox \int_{[0,1]^2} \Big(\sum_{f\in F_1}\chi_{Y(f)}\Big)^{\frac{k+1}{k}}.
\]
For $f\in F_1$, define
\[
Y_1(f) = \{ x\in Y(f)\colon  t(x)\in[t, 2t),\ f\in B(x)\}.
\]
Then for each $x\in \bigcup_{f\in F_1}Y_1(f)$, there is a ball (abusing notation, we will denote this ball by $B(x)\subset C^k$) of radius $2t$ that contains every $f\in F_1$ with $x\in Y_1(f)$. 

By \eqref{mostMassPreservedInTwoEnds} we have 
\begin{equation}\label{LkBdControlledBySumOverShading}
        \Big\Vert \sum_{f\in F_1} \chi_{Y(f)} \Big\Vert_{\frac{k+1}{k}}  \lessapprox \delta^{-\eps_1}\Big\Vert \sum_{f\in F_1} \chi_{Y_1(f)} \Big\Vert_{\frac{k+1}{k}}.
 \end{equation}
 Finally, by \eqref{ballConditionInsideBx} we conclude that for each $x\in \RR^2$, each $r\in[\delta,t]$, and each ball $B\subset C^k$ of radius $r$, we have
 \begin{equation}\label{twoEndsBallStep2}
       \#\{f\in F_1 \cap B\colon x\in Y_1(f) \}\leq (2r/t)^{\eps_1}\#\{f\in F_1\colon x\in Y_1(f)\}.
 \end{equation}

Let $\mathcal{B}_0$ be a maximal set of pairwise non-intersecting balls in $C^k$ of radius $2t$, each of which intersects the unit ball in $C^k$. For each $B\in\mathcal{B}$, let $8B$ denote the ball with the same center and radius $8t$; denote this new set of balls by $\mathcal{B}_1$. Then for every $x\in \bigcup_{f\in F_1} Y_1(f)$, the ball $B(x)$ is contained in at least one of the balls from $\mathcal{B}_1$, and hence we have the pointwise bound
\begin{equation}\label{sumOverF1VsSumOverBalls}
\int \Big(\sum_{f\in F_1} \chi_{Y_1(f)} \Big)^{\frac{k+1}{k}}\leq \sum_{B\in\mathcal{B}_1} \int \Big(\sum_{f\in F_1\cap B} \chi_{Y_1(f)} \Big)^{\frac{k+1}{k}}.
\end{equation}

We claim that each $f\in\mathcal{F}$ is contained in $O(c^{-O(1)})$ balls from $\mathcal{B}$, where $c>0$ is the quantity from \eqref{cinematicFunctionCondition} associated to the family $\mathcal{F}$. From this it follows that
\[
\sum_{B\in\mathcal{B}_1}\#(F_1\cap B)\lesssim c^{-O(1)}\#F.
\]
To verify the above claim, suppose that $f\in F$ is contained in $\ell$ distinct balls with centers $g_1,\ldots,g_\ell$. Since the points $g_1,\ldots,g_\ell$ are $2t$-separated in $C^k$, by \eqref{cinematicFunctionCondition} we have that the vectors $v_j = \big(g_j(0), g_j'(0),\ldots, g_j^{(k)}(0)\big)$, $j=1,\ldots,\ell$ are at least $c t$-separated in $\RR^{k+1}$ with the $L^1$ metric. But since $\Vert f - g_j\Vert_{C^k}\leq 8t$ for each index $j$, by the triangle inequality the vectors $\{v_j\}_{j=1}^\ell$ are contained in a ball of radius $16t$. We conclude that $\ell\lesssim c^{-O(1)}$, as desired. 

Finally, we claim that either $t\geq\delta^{1-\frac{\eps}{2+2\beta}}$, or else \eqref{polyApproxBd} holds and we are done. Indeed, if $t<\delta^{1-\frac{\eps}{2+2\beta}}$ then for each $B\in\mathcal{B}_1$ we have
\[
\Big\Vert \sum_{f\in F_1\cap B} \chi_{Y_1(f)}\Big\Vert_\infty \leq \#(F_1\cap B)\lesssim \delta^{-\frac{\beta\eps}{2+2\beta}} \leq\delta^{-\eps/2},
\]
and thus
\begin{equation*}
\begin{split}
\Big\Vert \sum_{f\in F_1} \chi_{Y(f)} \Big\Vert_{\frac{k+1}{k}}^{\frac{k+1}{k}}
&\lessapprox \delta^{-\eps_1} \sum_{B\in\mathcal{B}_1} \int \Big(\sum_{f\in F_1\cap B} \chi_{Y_1(f)} \Big)^{\frac{k+1}{k}}\\
&\leq \delta^{-\eps/(2k)-\eps_1}\sum_{B\in\mathcal{B}_1}\int \Big(\sum_{f\in F_1\cap B} \chi_{Y_1(f)} \Big)\\
& \lesssim  \delta^{-\eps/(2k)-\eps_1}\sum_{f\in F_1}|Y_1(f)|\\
& \lesssim \delta^{-\eps/(2k)-\eps_1-\eta}\delta^{2-\alpha}(\#F),
\end{split}
\end{equation*}
and thus \eqref{polyApproxBd} holds, provided we select $\eps_1,\eta$ and $\delta_0$ sufficiently small. Henceforth we shall assume that $t\geq\delta^{1-\frac{\eps}{2+2\beta}}$. 

\medskip

\noindent\textbf{Step 3: Rescaling and Applying Proposition \ref{maximalFnBdNonconcentrated}.} 
For each $B\in\mathcal{B}_1$, with center $g_B$ and each $f\in F_1\cap B$, define $\tilde f_B(x) =(4t)^{-1}\big(f(x) - g_B(x)\big)$. Then $\Vert \tilde f_B\Vert_{C^{k}}\leq 1$ for each $f\in F_1\cap B$. Define $\tilde\delta = \delta/(2t)$; since $t\geq\delta^{1-\frac{\eps}{2+2\beta}}$ we have that $\tilde\delta\leq \delta^{\frac{\eps}{2+2\beta}}$. Let $\tilde Y_1(\tilde f_B)$ be the image of $Y_1(f)$ under the map $\phi_B(x,y)= \big(x, (4t)^{-1}(y - g_B(x))\big)$. Then $\tilde Y_1(\tilde f_B)\subset \tilde f_B^{\tilde\delta}$. The shading $\tilde Y_1(\tilde f_B)$ now satisfies the hypotheses of Proposition \ref{maximalFnBdNonconcentrated}, with $\tilde\delta$ in place of $\delta$ and $\tau = \delta$ (the bound $\tilde\delta\leq \delta^{\frac{\eps}{2+2\beta}}$ allows us to bound the error term $\delta^{-\eta}$ by $\tilde\delta^{-(2+2\beta)\eta/\eps}$). Define $\tilde F_B = \{\tilde f_B\colon f\in F_1\cap B\}$. 

The non-concentration estimate \eqref{twoEndsBallStep2} now has the following consequence. For each $r\in[\tilde\delta, 1]$ and each ball $B'\subset C^k$ of radius $r$, we have
    \begin{equation}\label{twoEndsBallRescaled}
        \#\{\tilde f_B\in \tilde F_B \cap B'\colon x\in \tilde Y_1(\tilde f_B) \}\leq 8 r^{\eps_1}\#\{\tilde f_B\in \tilde F_B\colon x\in \tilde Y_1(\tilde f_B)\}.
    \end{equation}
 Indeed, the inequality holds with constant $4$ in place of $8$ for $r\in[2\tilde\delta,1]$, and for $r\in[\tilde\delta,2\tilde\delta]$ we can replace $r$ by $2\tilde\delta.$

The consequence of \eqref{twoEndsBallRescaled} is the following: for each $x\in \bigcup_{\tilde f_B \in \tilde F_B}\tilde Y_1(\tilde f_B)$, each $\rho\geq\tilde\delta$, each $T\in [1,1/\rho]$, and each $(k; \rho; T)$ rectangle $R$ containing $x$, we have
\begin{equation}\label{rectEstimate}
\# \{\tilde f_B \in \tilde F_B \colon x\in \tilde Y_1(\tilde f_B),\ \tilde f_B\sim R\}\lesssim T^{-\eps_1}\# \{\tilde f_B \in \tilde F_B \colon x\in \tilde Y_1(\tilde f_B)\}.
\end{equation}
To verify \eqref{rectEstimate}, we argue as follows. Let $R = R^h(I)$, where $I$ is an interval of length $(T\rho)^{1/k}$. By Lemma \ref{smallImpliesSmallCkOnJ}, each function $\tilde f_B$ on the LHS of \eqref{rectEstimate} satisfies
\[
\Vert \tilde f_B - h\Vert_{C^k} \lesssim \sum_{i=0}^k \rho|I|^{-i} = \sum_{i=0}^k T^{-i/k}\rho^{1-i/k}\lesssim T^{-1},
\]
where the final inequality follows from the fact that $T\rho\leq 1$. 

Thus the set of functions on the LHS of \eqref{rectEstimate} is contained in a ball (in the metric space $C^k$)  of diameter $O(T^{-1})$. Comparing with \eqref{twoEndsBallRescaled}, we obtain \eqref{rectEstimate}. 

For each $B\in\mathcal{B}_1$, define $F_B = F_1\cap B$. Applying Proposition \ref{maximalFnBdNonconcentrated} with $\eps_1$ in place of $\eps$; $\tilde\delta$ in place of $\delta$; and $\tau=\delta$ (note that $\delta^{-\eta}\leq\tilde\delta^{-(2+2\beta)\eta/\eps}$), we conclude that if $\eta>0$ is sufficiently small, then for each ball $B\in\mathcal{B}_1$ we have (provided $\delta$, and thus $\tilde\delta$ is sufficiently small)
\begin{equation*}
\begin{split}
\int\Big(  \sum_{\tilde f_B\in\tilde F_B}\chi_{\tilde Y_1(\tilde f_B)}\Big)^{\frac{k+1}{k}} 
& \lesssim_\eps \tilde\delta^{-\eps_1} \tilde\delta^{1+\alpha/k}\tau^{1-\alpha}(\#\tilde F_B)^{\frac{k+1}{k}} \\
& \leq \delta^{-\eps_1} t^{-1-\alpha/k} \delta^{2-\alpha+\alpha/k}(\#\tilde F_B)^{\frac{k+1}{k}}.
\end{split}
\end{equation*} 
Undoing the scaling $\phi_B$ (which distorted volumes by a factor of $4t$) and using the fact that $\#F_B\lesssim \delta^{-\eta}(t/\delta)^\beta$ (since $F$ is a $(\delta, \beta; \delta^{-\eta})$-set), we have
\begin{equation}\label{boundForOneBall}
\begin{split}
\int\Big(  \sum_{  f\in F_B}\chi_{ Y_1(f)}\Big)^{\frac{k+1}{k}} 
&\lesssim_\eps  \delta^{-\eps_1} t^{-\alpha/k} \delta^{2-\alpha+\alpha/k}(\# F_B)^{\frac{k+1}{k}}\\
&\lesssim  \delta^{-\eps_1-\eta} t^{\frac{\beta-\alpha}{k}} \delta^{2 - \alpha + \frac{\alpha-\beta}{k}}(\# F_B)\\
& = \delta^{-\eps_1-\eta} (\delta/ t)^{\frac{\alpha-\beta}{k}}\delta^{2 - \alpha}(\# F_B).
\end{split}
\end{equation}
Combining \eqref{LkBdControlledBySumOverShading}, \eqref{sumOverF1VsSumOverBalls}, and \eqref{boundForOneBall}, we conclude that
\begin{equation}\label{mainThmFinalProofIneq}
\begin{split}
\Big\Vert \sum_{f\in F_1} \chi_{Y(f)} \Big\Vert_{\frac{k+1}{k}}^{\frac{k+1}{k}}
 & \lessapprox_{\eps} \delta^{-2\eps_1-\eta} \sum_{B\in\mathcal{B}_1} (\delta/ t)^{\frac{\alpha-\beta}{k}}\delta^{2 - \alpha}(\# F_B)\\ 
&\lesssim \delta^{-2\eps_1-\eta}(\delta/ t)^{\frac{\alpha-\beta}{k}}\delta^{2 - \alpha}(\# F).
\end{split}
\end{equation}
If $\alpha\geq\beta$, then the worst case occurs when $t=\delta$. This is unsurprising, in light of the behavior of Arrangements 1, 2, and 3 from Section \ref{introProofSketch}. If instead $\beta>\alpha$, then the worst case occurs when $t=1$. Regardless, we obtain \eqref{polyApproxBd}, provided we select $\eta,\eps_1\leq\eps/4$, and choose $\delta_0>0$ sufficiently small so that the implicit constant $O_{\eps}(\log(1/\delta)^{O_\eps(1)})$ in inequality \eqref{mainThmFinalProofIneq} is at most $\delta^{\eps/4}$.
\end{proof}


\section{From Theorem \ref{mainThm}$'$ to Theorem \ref{mainThmMaximalKakeya}}\label{proofOfThmMainThmMaximalKakeyaSection}

In this section we will prove Theorem \ref{mainThmMaximalKakeya}. We begin with the case $s=m-1$. It suffices to establish \eqref{maximalFnBdMDelta} for $p = s+1 = m$, since the case $p>s+1$ then follows by interpolation with the bound $\Vert M_\delta \Vert_{L^\infty\to L^\infty}\leq 1$.

 Let $h\colon \mathcal{C}\times I\to\RR$, $\Phi\colon \mathcal{C}\to\RR^{m-s}$, $\mathcal{C}_0\subset \mathcal{C}$, and $I_0\subset I$ be as in the statement of Theorem \ref{mainThmMaximalKakeya}. Since $\mathcal{C}_0$ and $I_0$ are compact and $h,\Phi$ are smooth, it suffices to consider the case where $\mathcal{C}=B(u_0,r)$ is a small neighborhood of a point $u_0$, and $I$ is a short interval. Since $h$ parameterizes an $m$-dimensional family of cinematic curves, if the neighborhood $\mathcal{C}$ and the interval $I$ are chosen sufficiently small, then there exists $c>0$ so that
\[
\inf_{t\in I}\sum_{j=0}^{m-1} |\partial_t^j h(u;t) - \partial_t^j h(u';t)|\geq c \sup_{t\in I}\sum_{j=0}^{m-1} |\partial_t^j h(u;t) - \partial_t^j h(u';t)|,\quad u,u'\in \mathcal{C},
\]
and thus the family $\mathcal{F} = \{h(\cdot, u)\colon u\in \mathcal{C}_0\}$ is uniformly smooth and forbids $(m-1)$--st order tangency.

The reduction from Theorem \ref{mainThm} to Theorem \ref{mainThmMaximalKakeya} now proceeds by a standard $L^p$ duality argument. We will briefly sketch the proof, and refer the reader to Lemma 10.4 from \cite{WolffBook} for further details. Let $\{v_i\}$ be a maximal $\delta$-separated subset of $\Phi(\mathcal{C}_0)$. Since $\Phi$ is a submersion, if the neighborhood of $u_0$ and the interval $I$ were selected sufficiently small, then there is a number $A$ so that for all $v,v'\in \Phi(\mathcal{C})$ and all $u\in\Phi^{-1}(v)$, there exists $u'\in\Phi^{-1}(v')$ so that
\begin{equation}\label{curvesComparableByA}
\sup_{t\in I}|h(u;t)-h(u',t)|\leq A|v-v'|.
\end{equation}
This implies that for all $v,v'\in \Phi(\mathcal{C})$ with $|v-v'|\leq\delta$, we have $M_\delta f(v)\leq A M_{A\delta} f(v')$. Define $\tilde\delta = A\delta$. We have
\[
\Vert M_\delta f\Vert_m \lesssim A\Big(\delta \sum_j |M_{\tilde\delta} f(v_j)|^m\Big)^{1/m}.
\]
By the duality of $\ell^m$ and $\ell^{m/(m-1)}$, there exists a sequence $\{y_j\}$ with $\delta\sum_j |y_j|^{m/(m-1)}=1$, so that
\[
\Big(\delta \sum_j |M_{\tilde\delta} f(v_j)|^m\Big)^{1/m}=\delta \sum_j y_j|M_{\tilde\delta} f(v_j)|,
\]
and thus 
\[
\Vert M_\delta f\Vert_m \lesssim  \tilde \delta \sum_j \Big(y_j \frac{1}{\tilde\delta}\int_{g_j^{\tilde\delta}}|f|\Big)=\int \Big(\sum_j y_j\chi_{g_j^{\tilde\delta}}\Big)|f|,
\]
where $g_j\in \mathcal{F}$ is a function that comes within a factor of $1/2$ of achieving the supremum $M_{\tilde\delta} f(v_j)$. We now use H\"older's inequality to bound
\[
\int \Big(\sum_j y_j\chi_{g_j^{\tilde\delta}}\Big)|f| \leq \Big\Vert \sum_j y_j\chi_{g_j^{\tilde\delta}} \Big\Vert_{m/(m-1)}\Vert f\Vert_m.
\] 

Thus in order to establish \eqref{maximalFnBdMDelta}, it suffices to show that
\begin{equation}\label{dualizedLpIneq}
\Vert \sum_j y_j\chi_{g_j^{\tilde\delta}} \Big\Vert_{m/(m-1)} \leq \delta^{-\eps}.
\end{equation}
We would like to apply Theorem \ref{mainThm}, but we must first deal with the weights $\{y_j\}$. Since we do not care about factors of $\log(1/\delta)$, this can be handled using dyadic pigeonholing. Let $G_0 = \{g_j\colon |y_j|\leq 1\}$. For $k\geq 1$, let $G_k = \{ g_j\colon 2^k\leq |y_j| < 2^{k+1}\}$. We have $\#G_0\lesssim \delta^{-1}$, and for $k\geq 1$ we have $\# G_k\lesssim \delta^{-1} 2^{-km/(m-1)}$. By the triangle inequality, we have

\begin{equation}
\begin{split}
\Big\Vert \sum_j y_j\chi_{g_j^{\tilde\delta}} \Big\Vert_{\frac{m}{m-1}}
& = \Big\Vert\sum_{j\colon |y_j|\leq 1} y_j\chi_{g_j^{\tilde\delta}}\ + \!\!\sum_{\substack{k\\ 1 \leq 2^k\leq \delta^{-1}}}\ \sum_{\substack{j\\ 2^k\leq |y_j|<2^{k+1}}} y_j\chi_{g_j^{\tilde\delta}} \Big\Vert_{\frac{m}{m-1}}\\
&\lesssim \sum_{k=0}^{O(\log 1/\delta)} 2^{k} \Big\Vert \sum_{g\in G_k} \chi_{g^{\tilde\delta}} \Big\Vert_{\frac{m}{m-1}}.
\end{split}
\end{equation}
We can now apply Theorem \ref{mainThm} (with $\eps/2$ in place of $\eps$) to bound each summand. Summing the resulting contributions, we obtain \eqref{dualizedLpIneq},  provided $\delta>0$ is selected sufficiently small.

\begin{rem}\label{dependenceOnDeltaRemark}
The conclusion \eqref{maximalFnBdMDelta} of Theorem \ref{maximalFnBdMDelta} holds for all $\delta>0$ sufficiently small. More precisely, the conclusion holds for all $\delta\in (0,\delta_0]$, where $\delta_0$ depends on the following quantities:
\begin{itemize}
\item $m$ and $\eps$ (recall that the argument above assumes that $s = m-1$).

\item The infimum of $|\det DF^h_t(u)|$ from Definition \ref{paramFamily}, for $(u,t)\in \mathcal{C}_0\times I_0$; this quantifies the property that $h$ parameterizes an $m$-dimensional family of cinematic curves.


\item $\sup|\nabla \Phi|$; in order for $F$ to be a $(\delta, \beta; \delta^{-\eta})$-set, we need this supremum to be at most $\delta^{-\eta}$. 

\item The quantity $A$ from \eqref{curvesComparableByA}.

\item The $C^m$ norm of $h$, where $m=m(\eps)$ is a large integer depending on $\eps$. More precisely, we can cover $\mathcal{C}_0\subset\mathcal{C}$ by a finite (independent of $\delta$) set of coordinate charts, and our choice of $\delta_0$ will depend on the maximum of the $C^m$ norm of $h$ in these coordinate charts. 
\end{itemize}
\end{rem}

Next, we consider the case $s<m-1$. The reduction from $s<m-1$ to $s=m-1$ is a ``slicing'' argument. Again, since $\mathcal{C}_0$ and $I_0$ are compact and $h,\Phi$ are smooth, it suffices to consider the case where $\mathcal{C}=B(u_0,r)$ is a small neighborhood of a point $u_0$, and $I$ is a short interval. In particular, we can suppose that there is a unit vector $e\in \RR^{m-s}$ so that for each $u\in \mathcal{C}$ and each $t\in I$, if we consider the manifold $V_{u;t}$ given by \eqref{manifoldVut},
then $\Phi(V_{u;t})$ is a codimension-1 manifold in $\RR^{m-s}$, and at each point $p\in \Phi(V_{u;t})$, the tangent plane $T_p \Phi(V_{u;t})$ has normal vector that makes angle at most $1/100$ with $e$. For example, in Example \ref{runningExampleMomentCurve} from the introduction, for each $(u,t)\in\mathcal{C}\times[0,1]$ and each $p\in \Phi(V_{u;t})$, the tangent plane $T_p \Phi(V_{u;t})$ is given by $\operatorname{span}\{e_{s+2},\ldots,e_m\}$, and thus is normal to $e_{s+1}$ (recall that in Example \ref{runningExampleMomentCurve}, $\Phi$ is the projection to the last $m-s$ coordinates, and thus we identify $\RR^{m-s}$ with $\operatorname{span}\{e_{s+1},\ldots,s_m\}$; in general, however, we will use coordinates $e_1,\ldots,e_{m-s}$ for $\RR^{m-s}$).

After a harmless rotation (and using the convention that the basis vectors of $\RR^{m-s}$ are $e_1,\ldots,e_{m-s}$), we may suppose that $e = e_1$ is the first standard basis vector. After further restricting $\mathcal{C}$ and translating, we may suppose that $\Phi(\mathcal{C})$ is the cube $Q=[0,r]^{m-s}$ for some small $r>0$ (recall that by hypothesis, $\Phi$ is an immersion). Writing $v = (v_1, \bar v)\in\RR\times \RR^{m-s-1}$ and $Q=[0,r]\times \bar Q$, we have

\begin{equation}\label{slicingM}
\begin{split}
\big\Vert M_\delta f\big\Vert_{L^{s+1}(Q)}& =\Big(\int_{\bar Q}\int_0^r (M_\delta f(v))^{s+1}dv\Big)^{\frac{1}{s+1}}\\
&\leq \Big(\sup_{\bar v\in\bar Q}\int_0^r(M_\delta f( v_1, \bar v)^{s+1}dv_{1}\Big)^{\frac{1}{s+1}}\\
&=\sup_{\bar v\in\bar Q}\big\Vert M_\delta f(\cdot, \bar v)\big\Vert_{L^{s+1}([0,r])}\\
&=\sup_{\bar v\in\bar Q}\big\Vert M^{\bar v}_\delta f\big\Vert_{L^{s+1}([0,r])},
\end{split}
\end{equation}
where 
\[
M^{\bar v}_\delta f(v_1)=\frac{1}{\delta}\sup_{u\in \Phi^{-1}(v_1,\bar v)}\Big|\int_{\gamma_u^\delta} f\Big|.
\]
The purpose of the above computation is that $M^{\bar v}_\delta$ is a maximal operator in the sense of Definition \ref{defnClassOfMaximalOperators}, with $s+1$ in place of $m$. To see this, define 
\[
\mathcal{C}^{\bar v} = \{u\in\mathcal{C}\colon \pi(\Phi(u)) = \bar v\},
\]
where $\pi\colon\RR^{m-s}\to\RR^{m-s-1}$ is the projection to the last $m-s-1$ coordinates. Since $\Phi$ is an immersion, this is a manifold of dimension $m - (m-s-1) = s+1$. Define $h^{\bar v}$ to be the restriction of $h$ to $\mathcal{C}^{\bar v}\times I$. For each $t\in I$, define
\begin{equation*}
\begin{array}{lll}
F^{h^{\bar v}}_t(u)  \colon& \mathcal{C}^{\bar v}\to\RR^{s+1},\quad & u\mapsto \big(h^{\bar v}(u;t),\ \partial_t h^{\bar v}(u;t),\ldots,\partial_t^{s} h^{\bar v}(u;t)\big),\\
\tilde F^{h}_t(u)  \colon& \mathcal{C}\to\RR^{s+1},\ & u\mapsto \big(h(u;t),\ \partial_t h(u;t),\ldots,\partial_t^{s} h(u;t)\big).
\end{array}
\end{equation*}
Note that $F^{h^{\bar v}}_t$ is the restriction of $\tilde F^{h}_t$ to $\mathcal{C}^{\bar v}$. 

We claim that for each $t\in I$, $F^{h^{\bar v}}_t$ is a local diffeomorphism, and thus $h^{\bar v}$ parameterizes an $(s+1)$-dimensional family of cinematic curves, in the sense of Definition \ref{paramFamily}. To verify this claim, it suffices to show that for each $t\in I$ and each $u\in\mathcal{C}^{\bar v}$, $DF^{h^{\bar v}}_t(u)$ has full rank. 

Fix a choice of $t\in I$ and $u\in\mathcal{C}^{\bar v}$. Since $\mathcal{C}^{\bar v}\subset\mathcal{C}$, we can identify $u$ with a point in $\mathcal{C}$, and we can identify $T_u \mathcal{C}^{\bar v}$ with an $(m-s-1)$-dimensional subspace of $T_u\mathcal{C}$; denote this subspace by $W$. Since $h$ parameterizes an $m$-dimensional family of cinematic curves, we have that $DF^h_u$ has full rank (i.e.~rank $m$), and thus $D\tilde F^h_u$ also has full rank (i.e.~rank $s+1$). Our goal is to show that the restriction of $D\tilde F^{h}_t$ to $W$ still has rank $s+1$. 

By construction, $T_u V_{u;t}$ (recall \eqref{manifoldVut} for a definition) is orthogonal to $W$. By hypothesis, we have that $\Phi$ is transverse to $h$, and thus the manifolds $\mathcal{C}^{\bar v}$ and $V_{u;t}$ intersect transversely at $u$. The restriction of $D\tilde F^{h}_t$ to $T_u V_{u;t}$ has rank 0---this is because $\tilde F^h_t$ is constant on $V_{t;u}$. We conclude that the restriction of $D\tilde F^{h}_t$ to $W$ has the same rank as $D\tilde F^h_t$, i.e.~the restriction of $D\tilde F^{h}_t$ to $W$ has rank $s+1$, as desired.

Define $\Phi^{\bar v}$ to be the restriction of $\Phi$ to $\mathcal{C}^{\bar v}$. Since $\Phi$ is transverse to $h$, in the sense of Definition \ref{defnQuantTranverse}, we have that $\Phi^{\bar v}$ is a submersion. We have
\[
M^{\bar v}_\delta f(v_1)=\frac{1}{\delta}\sup_{u\in (\Phi^{\bar v})^{-1}(v_1)}\Big|\int_{\gamma_u^\delta} f\Big|,
\]
and this is an $s$-parameter maximal function associated to an $(s+1)$-dimensional family of cinematic curves, in the sense of Definition \ref{defnClassOfMaximalOperators}. Each of the quantities discussed in Remark \ref{dependenceOnDeltaRemark} associated to the maximal function $M^{\bar v}_\delta$ is uniformly bounded in the choice of $\bar v$. 

 Thus we can apply Theorem \ref{mainThmMaximalKakeya} with $s+1$ in place of $m$ to conclude that there exists a choice of $\delta_0>0$ (which is uniform in our choice of $\bar v$---see Remark \ref{dependenceOnDeltaRemark}) so that $\Vert M^{\bar v}_\delta\Vert_{L^{s+1}\to L^{s+1}}\leq\delta^{-\eps}$ for all $\bar v\in\bar Q$ and all $\delta\in (0,\delta_0]$. This means that for $\delta\in (0,\delta_0]$, we have
\begin{equation}\label{unifBdOnSlices}
\sup_{\bar v\in\underline Q}\big\Vert M^{\bar v}_\delta f\big\Vert_{L^{s+1}([0,r])}\leq\delta^{-\eps}\Vert f\Vert_{s+1}.
\end{equation}
Combining \eqref{slicingM} and \eqref{unifBdOnSlices}, we obtain \eqref{maximalFnBdMDelta}.


\section{From Theorem \ref{mainThmMaximalKakeya} to Theorem \ref{LpMaximalFnBd} }\label{proofOfTheoremLpMaximalFnBdSec}
In this section we will prove Theorem \ref{LpMaximalFnBd}. The main new input is a local smoothing estimate by Chen, Guo, and Yang \cite{CGYv6}. As noted in the introduction, Chen, Guo, and Yang prove sharp $L^p\to L^p$ bounds for the axis-parallel elliptic maximal function by combining their local smoothing theorem with an estimate similar to \eqref{maximalFnBdMDelta}. We will follow a similar strategy. We begin by recalling the setup from \cite{CGYv2}.

\subsection{Local smoothing: The Chen-Guo-Yang framework}
Let $s\geq 2$, $w = (w_1,\ldots w_s)$. Let $\zeta(w;t)\colon \RR^{s}\times\RR \to\RR$ be smooth, let $\phi(w, t)$ be a smooth bump function supported near the origin. Define
\begin{equation}\label{GuoA}
A_{\zeta,\phi}f(x,y; w)=\int_{\RR}f\big(x-t, y - \zeta(w; t)\big)\phi(w, t)dt,
\end{equation}
and define 
\begin{equation}\label{GuoM}
G_{\zeta,\phi}f(x,y) = \sup_{w\in\RR^{s}}| A_{\zeta,\phi}f(x,y;w) |.
\end{equation}

Next, we define an analogue of Sogge's cinematic curvature condition from \cite{Sogge} in this setting.  Define the column vector
\[
T^\zeta(w;t) = \big(\partial_{t}\zeta(w; t),\ \partial^2_{t}\zeta(w;t),\ldots,\partial^{s+1}_{t}\zeta(w;t)\big)^T.
\]
\begin{defn}\label{sParamCinematicCondition}
We say that $G_{\zeta,\phi}$ satisfies the \emph{$s$ parameter curvature condition} at the origin if
\begin{equation}\label{GuoCinematic}
\det\big[\partial_{t}T^\zeta,\  \partial_{w_1}T^\zeta,\  \partial_{w_2}T^\zeta, \ldots, \partial_{w_{s}}T^\zeta\big]\Big|_{(w,t)=(0,0)}\neq 0.
\end{equation}
\end{defn}
By continuity, if \eqref{GuoCinematic} is satisfied, then the determinant continues to be nonzero for $(w,t)$ in a small neighborhood of the origin. The bump function $\phi$ is chosen so that this determinant is uniformly bounded away from $0$ on the support of $\phi$. 

Now we can state Proposition 3.2 from \cite{CGYv2}. In what follows, $P_kf$ denotes the Littlewood-Paley projection to the frequency annulus of magnitude $\sim 2^k$. 
\begin{prop}[Chen, Guo, Yang]\label{CGYProp}
Let $\zeta(w,t)\colon \RR^s\times\RR\to\RR$ satisfy the $s$ parameter curvature condition at the origin. Then there exists $p_s=p(s)<\infty$ so that for all $\eps>0$ and all smooth bump functions $\phi(w,t)$ whose support is contained in a sufficiently small neighborhood of the origin, there is a constant $C = C(\eps,\zeta, \phi)$ so that

\begin{equation}\label{LpBoundSmoothingAGamma}
\Vert A_{\zeta,\phi}(P_kf)\Vert_{L^p(\RR^2\times\RR^{s})}\leq C 2^{-(\frac{s+1}{p}+\eps)k}\Vert f \Vert_{L^p(\RR^2)}.
\end{equation}
\end{prop}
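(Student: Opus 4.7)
The plan is to treat Proposition \ref{CGYProp} as a local smoothing estimate and reduce it to an $L^p$ decoupling inequality. First, I would Fourier-analyze the averaging operator: writing $P_kf$ via its Fourier transform,
\begin{equation*}
A_{\zeta,\phi}(P_kf)(x,y;w) = \int \widehat{P_kf}(\xi,\eta)\, e^{2\pi i(x\xi+y\eta)}\, m(\xi,\eta;w)\, d\xi\,d\eta,
\end{equation*}
with $m(\xi,\eta;w) = \int e^{-2\pi i(t\xi+\zeta(w;t)\eta)}\phi(w,t)\,dt$. At frequency scale $2^k$, stationary phase in $t$ localizes $m(\cdot,\cdot;w)$ to a $2^{-k/2}$-neighborhood of the curve $\{\xi = -\eta\,\partial_t\zeta(w;t)\}$; as $w$ varies these curves sweep out an $(s+1)$-parameter family, so that jointly in $(x,y,w)$ the multiplier $\widehat{A_{\zeta,\phi}(P_kf)}(\xi,\eta,\omega)$ is concentrated on the conical set $\Sigma_k$ parameterized by $(t,w,\eta) \mapsto (-\eta\,\partial_t\zeta(w;t),\,\eta,\,-\eta\,\partial_w\zeta(w;t))$.

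The second step is to exploit the curvature hypothesis \eqref{GuoCinematic}: the determinantal condition on $T^\zeta$ is precisely the non-degeneracy of the $(s+1)\times(s+1)$ Hessian (in $(t,w)$) of the full phase $\Phi = t\xi + \zeta(w;t)\eta + w\cdot\omega$ at critical points. Equivalently, the Gauss-type map associated to $\Sigma_k$ has maximal rank, so $\Sigma_k$ looks locally like a non-degenerate $(s+1)$-dimensional conical surface with higher-order curvature uniformly bounded below on the support of $\phi$. This is the natural $s$-parameter generalization of Sogge's cinematic curvature condition, and is exactly what is needed to invoke the relevant decoupling machinery.

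The third step is the main analytic input. I would decompose the frequency annulus $|(\xi,\eta)|\sim 2^k$ into $\sim 2^{(s+1)k/2}$ caps $\theta$ of diameter $\sim 2^{-k/2}$ adapted to $\Sigma_k$, write $P_kf = \sum_\theta f_\theta$, and invoke a Bourgain--Demeter--Guth type $L^p$ decoupling inequality for $\Sigma_k$: for $p \geq p_s$ sufficiently large,
\begin{equation*}
\bigl\|A_{\zeta,\phi}(P_kf)\bigr\|_{L^p(\RR^{s+2})} \lesssim_\eps 2^{k\eps}\biggl(\sum_\theta \bigl\|A_{\zeta,\phi}(f_\theta)\bigr\|_{L^p(\RR^{s+2})}^2\biggr)^{1/2}.
\end{equation*}
Each single-cap piece $A_{\zeta,\phi}(f_\theta)$ is essentially supported on a Knapp-type slab, on which one combines Plancherel with an $L^\infty$-bound, so that summing produces the claimed decay $2^{-(s+1)k/p}$; the factor $(s+1)$ reflects the $(s+1)$-dimensional integration in the $(t,w)$ variables.

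The main obstacle is the third step: identifying the precise decoupling inequality that is needed for $\Sigma_k$ and checking that \eqref{GuoCinematic} supplies the hypothesis it requires. For $s=1$ this reduces to classical cone decoupling; for $s\geq 2$ one needs a decoupling for a smooth non-degenerate $(s+1)$-dimensional conical hypersurface in $\RR^{s+2}$, applied uniformly in the parameter space of $\phi$. The exponent $p_s$ produced this way is typically much larger than sharp, which is consistent with Proposition \ref{CGYProp} only asserting existence of some finite $p_s$; carrying this out in detail is the content of \cite{CGYv2}.
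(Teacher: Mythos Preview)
This proposition is not proved in the paper: it is quoted as Proposition~3.2 of \cite{CGYv2} and used as a black box. The only commentary the paper adds is a remark that the ``normal form'' hypothesis appearing in \cite{CGYv2} can be removed via the reduction in their Section~4, and that $A_{\zeta,\phi}(P_kf)$ is the intended expression. So there is no ``paper's own proof'' to compare against.

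Your outline is a reasonable sketch of the Fourier-integral/decoupling strategy one expects to find in \cite{CGYv2}, and the identification of \eqref{GuoCinematic} with nondegeneracy of the $(s+1)\times(s+1)$ Hessian of the phase is the right heuristic. But as a standalone proof it has genuine gaps: you have not specified which decoupling theorem applies to the surface $\Sigma_k$ (for $s\geq 2$ this is not an off-the-shelf result---one needs either a decoupling for nondegenerate cones of higher codimension or an iterated/variable-coefficient argument), nor verified that \eqref{GuoCinematic} actually implies the curvature hypotheses that theorem requires, nor carried out the per-cap estimate that converts decoupling into the exponent $-(s+1)/p$. These are precisely the steps that constitute the content of \cite{CGYv2}, so your proposal is really a table of contents for that paper rather than a proof. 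Since the present paper only \emph{cites} the result, the appropriate thing to do here is simply to cite it as well.
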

\begin{rem}
In \cite{CGYv2}, Proposition 3.2 is stated with the additional hypothesis that $\zeta(w,t)$ is a ``normal form'' at the origin (this is defined in Definition 3.1 from \cite{CGYv2}). However, the argument immediately following Proposition 4.2 from \cite{CGYv2} shows how an arbitrary  $\zeta(w,t)$ can be reduced to the case where $\zeta$ is a normal form. We also remark that the analogue of \eqref{LpBoundSmoothingAGamma} from \cite{CGYv2} has the expression $A_{\zeta,\phi}f$ rather than $A_{\zeta,\phi}(P_kf)$, but the latter is what is intended. 
\end{rem}

Note that
\[
\Vert G_{\zeta,\phi}(P_kf)(x,y))\Vert_{L^p_{xy}}=
\Vert A_{\zeta,\phi}(P_k f)(x,y;w))\Vert_{L^p_{xy}(L^\infty_{w})},
\]
where $L^p_{xy}$ denotes $L^p(\RR^2)$ in the variables $(x,y)$ and $L^\infty_{w}$ denotes $L^\infty(\RR^{s})$ in the variable $w$. Thus by Sobolev embedding, \eqref{LpBoundSmoothingAGamma} implies
\begin{equation}\label{conclusionSobolev}
\Vert G_{\zeta,\phi}(P_kf) \Vert_{L^p(\RR^2)}\leq C 2^{-(\frac{1}{p}+\eps)k}\Vert f \Vert_{L^p(\RR^2)},
\end{equation}
with $p=p(s)$ as above, and for a (possibly different) constant $C = C(\eps,\gamma,\chi)$. Inequality \eqref{conclusionSobolev} says that the sublinear operator $G_{\zeta,\phi}$ has high frequency decay, in the sense of Definition \ref{highFrequencyDecayCondition}.

\subsection{From local smoothing to maximal averages over curves}

Our next task is to relate the maximal operator $G_{\zeta,\phi}f$ from \eqref{GuoM} to the operator $M$ from Definition \ref{defnClassOfMaximalOperators}. By compactness, it suffices to consider the case where $\mathcal{C}$ is a small neighborhood of a point $u_0$, and $I$ is a small interval. Since we restrict to the case where $M$ is translation invariant, we may choose local coordinates of the form $u=(x,y,w_1,\ldots,w_s)$ so that the parameterization and projection functions $h\colon  \mathcal{C}\times I\to\RR$ and $\Phi\colon \mathcal{C} \to\RR^{2}$ can be expressed in the form $h(u; t) = \zeta(w_1,\ldots,w_s; t-x) + y$ and  $\Phi(u) = (x,y)$; we can choose these coordinates so that $u_0 = 0$ and $I$ is an interval centered at 0. Let $G = G_{\zeta,\phi}$, where $\phi$ is a bump function chosen so that Proposition \ref{CGYProp} holds. We further restrict $\mathcal{C}$ and $I$ so that $\phi$ is identically 1 on $\mathcal{C}\times I$. With these restrictions, for every non-negative function $f\colon\RR^2\to\RR$ we have
\begin{equation}\label{MfDominatedGuo}
Mf(x,y) = \sup_{w\colon (x,y,w)\in\mathcal{C}_0}\int_{\gamma_w}f\ \  \leq\ \ \sup_{w\in\RR^s}A_{\zeta,\phi}f(x,y; w)\leq G_{\zeta,\phi}f(x,y).
\end{equation}

Let us suppose for the moment that $\zeta(w,t)\colon \RR^s\times\RR\to\RR$ satisfies the $s$ parameter curvature condition at the origin. Theorem \ref{mainThmMaximalKakeya} says that for each $\eps>0$, there exists a constant $C_\eps$ so that
\begin{equation}\label{sharpLsBdMaximal}
\Vert G_{\zeta,\phi}P_kf \Vert_{L^{s+1}(\RR^2)}\leq C_\eps 2^{\eps k}\Vert f \Vert_{L^{s+1}(\RR^2)}.
\end{equation}
Indeed, we have
\begin{equation}
G_{\zeta,\phi}(P_kf)(x,y) \lesssim \sum_{j=0}^k M_{2^{-j}}f(x,y),
\end{equation}
where $M_{2^{-j}}$ is the maximal operator from \eqref{defnMaximalFnDelta} associated to $h$ and $\Phi$ at scale $\delta=2^{-j}$. The conclusion of Theorem \ref{mainThmMaximalKakeya} holds for all $\delta>0$ sufficiently small (depending on $\eps, \mathcal{C}, h,$ and $\Phi$), but this may be extended to all $\delta>0$ by selecting a sufficiently large constant $C_\eps$.

Let $p>s+1$. If we select $\eps>0$ sufficiently small depending on $p$ and the Lebesgue exponent $p(s)$ from \eqref{conclusionSobolev}, then by interpolating \eqref{conclusionSobolev} and \eqref{sharpLsBdMaximal} we conclude that there exist constants $\eta>0$ (small) and $C$ (large) so that
\[
\Vert G_{\zeta,\phi}P_kf \Vert_{L^{p}(\RR^2)}\leq C 2^{-\eta k}\Vert f \Vert_{L^p(\RR^2)},
\]
and hence there is a constant $C_p$ so that
\begin{equation}\label{LpBdChi}
\Vert G_{\zeta,\phi}f \Vert_{L^{p}(\RR^2)}\leq C_p \Vert f \Vert_{L^p(\RR^2)}.
\end{equation}
Since it suffices to prove Theorem \ref{LpMaximalFnBd} for non-negative functions, the theorem now follows from \eqref{MfDominatedGuo}.

It remains to verify that $\zeta(w,t)\colon \RR^s\times\RR\to\RR$ satisfies the $s$ parameter curvature condition at the origin. By hypothesis, $h$ parameterizes an $(s+2)$-dimensional family of cinematic curves, in the sense of Definition \ref{paramFamily}. To slightly simplify notation below, we write coordinates $u = (y, x, w_1,\ldots,w_s)$ rather than $(x,y, w_1,\ldots,w_s)$. We have

\begin{equation}\label{determinantH}
DF^h_0(0)=
\left( 
\begin{array}{ccccc}
1 & \partial_t h & \partial_{w_1}h  & \cdots & \partial_{w_s}h \\
0 & \partial_t\partial_t h & \partial_t \partial_{w_1}h  & \cdots & \partial_t \partial_{w_s}h \\
\vdots&\vdots&\vdots&\ddots & \vdots\\
0 & \partial_t^{s+1}\partial_t h & \partial_t^{s+1} \partial_{w_1}h& \cdots & \partial_t^{s+1} \partial_{w_s}h 
\end{array}
\right).
\end{equation}
The bottom-right $(s+1)\times (s+1)$ minor of the above matrix is precisely $T^\zeta(w;t)$, and hence these matrices have the same determinant. Since $h$ parameterizes an $(s+2)$-dimensional family of cinematic curves, this determinant is non-vanishing at $(u;t)=(0;0)$ (recall that $u_0 = 0$). We conclude that $\zeta(w,t)\colon \RR^s\times\RR\to\RR$ satisfies the $s$ parameter curvature condition at the origin.


\section{From Theorem \ref{mainThm}$'$ to Theorems \ref{KaufmanThm} and \ref{generalizedFurstenbergSetsThm}}\label{reductionMainThmsSection}
In this section we will briefly discuss the reduction from Theorem \ref{mainThm}$'$ to Theorems \ref{KaufmanThm} and \ref{generalizedFurstenbergSetsThm}. Reductions of this type are already present in the literature, so we will just provide a brief sketch and refer the reader to the appropriate sources for further details.

\subsection{Restricted Projections} 
The connection between exceptional set estimates for projections in restricted sets of directions, and maximal function estimates for curves was first explored by K\"{a}enm\"{a}ki, Orponen, and Venieri in \cite{KOV2021}. We will follow the framework from Section 2 of \cite{PYZ}. We will only briefly sketch the numerology of the problem, and refer the reader to \cite{PYZ} for details. 

Let $\gamma\colon[0,1]\to\RR^n$ and $E\subset\RR^n$ be as in the statement of Theorem \ref{KaufmanThm}. After rescaling and replacing $E$ by a subset, we may suppose that $E\subset[-1,1]^n$ and $\dim E\leq 1$. Suppose for contradiction that there exists some $0\leq q<\dim E$ so that the set
\[
S = \{t\in [0,1]\colon \dim(E\cdot\gamma(t))<q\}
\]
satisfies $\dim S>q$. After possibly replacing $S$ and $E$ by subsets, we may suppose that $q<\dim S = \dim E$. Let $\alpha= \dim S$. Let $\mathcal{F} = \{t\mapsto z\cdot \gamma(t)\colon z\in [-1,1]^n\}$. Since $\gamma$ is smooth, the set $\mathcal{F}$ is uniformly smooth, and the nondegeneracy condition \eqref{nonDegenCurveCondition} implies that $\mathcal{F}$ forbids $(n-1)$-st order tangency. Define $\mathcal{F}_E = \{t\mapsto z\cdot \gamma(t)\colon z\in E\}$.

Let $\eta,\delta_0>0$. In Section 2 of \cite{PYZ}, the authors explain how to extract a $(\delta, \alpha; \delta^{-\eta})$-set $F\subset\mathcal{F}_E$, for some $\delta\in(0,\delta_0]$, and how to construct a shading $Y(f)\subset f^\delta$ for each $f\in F$, where each set $Y(f)$ is a $(\delta,\alpha;\delta^{-\eta})$-set (in the metric space $\RR^2$), with the property that the union $\bigcup_{f\in F}Y(f)$ is contained in a $(\delta, \alpha; \delta^{-\eta})\times (\delta, q; \delta^{-\eta})$ quasi-product, i.e.~a set $X\subset\RR^2$ whose projection to the $x$-axis is a $(\delta, \alpha; \delta^{-\eta})$-set, and every fiber above this projection is a $(\delta, q; \delta^{-\eta})$-set. In particular, such a quasi-product has measure at most $\delta^{2-\alpha-q-2\eta}$. Since $\sum_{f\in F}|Y(f)|\gtrsim\delta^{2 - 2\alpha+2\eta},$ by H\"older's inequality we have
\begin{equation}\label{HolderLowerBd}
\Big\Vert \sum_{f\in F}\chi_{Y(f)} \Big\Vert_{\frac{n}{n-1}}^{\frac{n}{n-1}} \geq \delta^{-2\alpha + \frac{q-\alpha}{n-1}+O(\eta)}.
\end{equation}
On the other hand, by Theorem \ref{mainThm} with $k=n-1$, for each $\eps>0$ we have
\begin{equation}\label{consequenceOfMainThmMaximal}
\Big\Vert \sum_{f\in F}\chi_{Y(f)} \Big\Vert_{\frac{n}{n-1}}^{\frac{n}{n-1}} \leq \delta^{-2\alpha-\eps},
\end{equation}
provided $\delta$ is sufficiently small. Since $q<\alpha$, we obtain a contradiction provided $\eps,\eta$, and $\delta_0$ are chosen sufficiently small. We refer the reader to Section 2 of \cite{PYZ} for details.

\subsection{Furstenberg sets of curves}
In this section we will briefly discuss the proof of Theorem \ref{generalizedFurstenbergSetsThm}. In \cite{HSY}, H\'era, Shmerkin, and Yavicoli obtained new bounds for the dimension of $(\alpha, 2\alpha)$ Furstenberg sets. They did this by first introducing the notion of a discretized $(\alpha,\beta)$ Furstenberg set, and then showing that covering number bounds on the size of such discretized Furstenberg sets imply Hausdorff dimension bounds for the corresponding $(\alpha,\beta)$ Furstenberg sets. An identical strategy will work here. The corresponding notion of a discretized Furstenberg set of curves is as follows. 

\begin{defn}\label{discretizedFurstenbergSetOfCurves}
Let $\mathcal{F}\subset C^k$. For $\alpha,\beta,\delta, C>0$, we say a set $E\subset[0,1]^2$ is a discretized $(\delta, \alpha, \beta)$ Furstenberg set of curves (with error $C$) from the family $\mathcal{F}$, if $E=\bigcup_{f\in F}A_f$, where
\begin{itemize}
    \item The set $F\subset \mathcal{F}$ is a $(\delta, \beta; C)$-set (in the metric space $C^k$, with $\#F \geq C^{-1}\delta^{-\beta}$.
    \item For each $f\in F$, the set $A_f$ is a $(\delta,\alpha;C)$-set (in the metric space $\RR^2$), with $|A_f|\geq C^{-1}\delta^{2-\alpha}$, which is contained in $f^{2\delta}$.
\end{itemize}
\end{defn}

Definition \ref{discretizedFurstenbergSetOfCurves} is modeled after Definition 3.2 from \cite{HSY}. The definitions are very similar, with the following two differences: in \cite{HSY}, the authors considered lines in $\RR^n$ rather that a family $\mathcal{F}$ of curves, and the authors used the notation ``$\lessapprox$'' to suppress the role of the constant $C$. 

In Lemma 3.3 from \cite{HSY}, the authors prove the following: Let $\alpha,\beta,s\geq 0$. Suppose that for every $\eps>0$, there exists $\eta>0$ so that every $(\delta, \alpha,\beta)$ Furstenberg set of lines (with error $\delta^{-\eta})$ has measure at least $\delta^{2-s+\eps}$. Then every $(\alpha,\beta)$ Furstenberg set has Hausdorff dimension at least $s$. 

An identical proof yields the analogous result for Furstenberg sets of curves: Fix a family $\mathcal{F}\subset C^k$, and fix $\alpha,\beta,s\geq 0$. Suppose that for every $\eps>0$, there exists $\eta>0$ so that every $(\delta, \alpha,\beta)$ Furstenberg set of curves (with error $\delta^{-\eta})$ from the family $\mathcal{F}$ has measure at least $\delta^{2-s+\eps}$. Then every $(\alpha,\beta)$ Furstenberg set of curves from $\mathcal{F}$ has Hausdorff dimension at least $s$. Thus in order to prove Theorem \ref{generalizedFurstenbergSetsThm}, it suffices to obtain the corresponding bound on the volume of discretized $(\delta, \alpha, \beta)$ Furstenberg sets of curves from $\mathcal{F}$. To this end, we have the following result.

\begin{lem}\label{volumeBdDiscretizedFurstenbergCurves}
Let $\eps>0, k\geq 1,$ and let $0\leq\beta\leq \alpha\leq 1.$ Then there exists $\eta,\delta_0>0$ so that the following holds for all $\delta\in(0,\delta_0]$. Let $\mathcal{F}$ be a family of uniformly sooth curves that forbid $k$--th order tangency. Let $E\subset[0,1]^2$ be a discretized $(\delta, \alpha, \beta)$ Furstenberg set of curves (with error $\delta^{-\eta}$) from the family $\mathcal{F}$. Let $F\subset\mathcal{F}$ and $\{Y(f)\colon f\in F\}$ be as in Definition \ref{discretizedFurstenbergSetOfCurves}. Then 
\begin{equation}
|E|\geq \delta^{2-\alpha-\beta+\eps}.
\end{equation}
\end{lem}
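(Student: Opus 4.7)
The plan is to apply Theorem \ref{mainThm}$'$ directly with $Y(f) := A_f$, and then convert the resulting $L^{(k+1)/k}$ bound into a lower bound on $|E|$ via H\"older's inequality. Since Definition \ref{discretizedFurstenbergSetOfCurves} places $A_f$ inside $f^{2\delta}$ rather than $f^\delta$, the first step is a trivial rescaling $\delta \mapsto 2\delta$ which only affects constants; after this, each $A_f \subset f^\delta$ is a $(\delta, \alpha; C\delta^{-\eta})$-set with $|A_f| \gtrsim \delta^{\eta + 2 - \alpha}$, and $F$ remains a $(\delta, \beta; C\delta^{-\eta})$-set with $\#F \gtrsim \delta^{\eta - \beta}$ (the harmless multiplicative constants are absorbed into a mild reduction of $\eta$).

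Let $\eps' > 0$ be a small parameter to be chosen, and let $\eta' = \eta'(k, \eps')$ be the output of Theorem \ref{mainThm}$'$, applied in the regime $\beta \leq \alpha$. Provided $\eta \leq \eta'$ and $\delta$ is sufficiently small, Theorem \ref{mainThm}$'$ gives
\begin{equation*}
\Big\Vert \sum_{f \in F} \chi_{A_f} \Big\Vert_{\frac{k+1}{k}} \;\leq\; \delta^{-\eps'} \bigl(\delta^{2-\alpha}\,\#F\bigr)^{\frac{k}{k+1}}.
\end{equation*}
On the other hand, since $\sum_f \chi_{A_f}$ is supported in $E$, H\"older's inequality with dual exponents $(k+1)/k$ and $k+1$ yields
\begin{equation*}
\sum_{f \in F} |A_f| \;=\; \int_E \sum_{f \in F} \chi_{A_f} \;\leq\; |E|^{\frac{1}{k+1}} \Big\Vert \sum_{f \in F} \chi_{A_f} \Big\Vert_{\frac{k+1}{k}}.
\end{equation*}
Combining these two displays and substituting the lower bounds $|A_f| \gtrsim \delta^{\eta + 2 - \alpha}$ and $\#F \gtrsim \delta^{\eta - \beta}$ produces, after elementary algebra,
\begin{equation*}
|E| \;\gtrsim\; \delta^{\,2 - \alpha - \beta \;+\; (k+2)\eta \;+\; (k+1)\eps'}.
\end{equation*}

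To conclude, I choose $\eps' := \eps / (2(k+1))$, apply Theorem \ref{mainThm}$'$ to obtain the corresponding $\eta'$, and then take $\eta := \min\bigl(\eta',\; \eps / (2(k+2))\bigr)$, with $\delta_0$ small enough to dominate the implicit constants. This yields $|E| \geq \delta^{2 - \alpha - \beta + \eps}$ for all $\delta \in (0, \delta_0]$, as required. The argument presents no real obstacle, because Theorem \ref{mainThm}$'$ is sharp in precisely the direction needed and H\"older is tight for the near-uniform densities guaranteed by the Furstenberg hypothesis; the only care required is the routine constant bookkeeping associated with the $\delta \mapsto 2\delta$ rescaling.
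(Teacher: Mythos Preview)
Your proof is correct and follows essentially the same route as the paper: apply Theorem \ref{mainThm}$'$ to bound $\big\Vert \sum_f \chi_{A_f}\big\Vert_{(k+1)/k}$, then use H\"older against $\chi_E$ to extract the lower bound on $|E|$. You are in fact slightly more careful than the paper in tracking the $\eta$-losses and in noting the $2\delta \mapsto \delta$ adjustment.
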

Lemma \ref{volumeBdDiscretizedFurstenbergCurves} follows from Theorem \ref{mainThm} and H\"older's inequality; we have
\begin{align*}
\delta^{2-\alpha-\beta}=\Big\Vert \chi_E \sum_{f\in F}\chi_{Y(f)}\Big\Vert_1 
&\leq \Big\Vert \chi_E\Big\Vert_{k+1}\Big\Vert \sum_{f\in F}\chi_{Y(f)}\Big\Vert_{\frac{k+1}{k}}\\
&\leq |E|^{\frac{1}{k+1}}\delta^{-\frac{\eps}{k+1}}(\delta^{2-\alpha-\beta})^{\frac{k}{k+1}}.
\end{align*}

\appendix


\section{Examples}\label{examplesSection}

In this section, we will show that the maximal functions discussed in the introduction can be expressed in the framework described in Section \ref{setupSection}. The Kakeya maximal function is straightforward: select $\mathcal{C}_0=[0,1]^2,$ $I_0 = [0,1]$, $\mathcal{C}$ a neighborhood of $\mathcal{C}_0$, and $I$ a neighborhood of $I_0$. Let $h(m,b;t) = mt+b$, and let $\Phi(m,b)= m$. Then $F^h_t(m, b) = \big(mt+b, m)$, and $DF^h_t = \binom{t, 1}{1, 0}$, which is invertible. Since $s=m-1$, $\Phi$ is automatically transverse to $h$.

\medskip

For the Wolff and Bourgain circular maximal functions, we can use translation and rotation symmetry to reduce to the case where $r$ takes values in a  neighborhood of $1$ and the centers $(x,y)$ take values in a neighborhood of $(0,0)$. Finally, we may restrict the integral \eqref{WolffMaximalFn} (resp. \eqref{bourgainMaxmlFn}) to the upper arc of $C(x,y, r)$ above the interval $[-\rho, \rho]$, for $\rho>0$ a small (fixed) quantity. With these reductions, define $\mathcal{C}$ to be a neighborhood of $(0,0,1)$; $I$ a neighborhood of $0$; $h(x,y,r;t) = \sqrt{r^2-(t-x)^2}+y$. Then it suffices to verify that $DF^h_0$ has full rank at $(x,y,r) = (0,0,1)$; this is a straightforward computation. 

For the Wolff circular maximal function, define $\Phi(x,y,r)= r$. we have $m=s+1$, and hence $\Phi$ is automatically transverse to $h$. For the Bourgain circular maximal function we have $\Phi(x,y,r)=(x,y)$, and thus we must verify that $D\Phi$ restricted to the manifold
\begin{equation*}
\begin{split}
V_{(0,0,1;0)} & = \{(x',y',r')\in \mathcal{C}\colon h(x',y',r'; 0) = 1, \partial_t h(x',y',r'; t)|_{t=0} = 0\}\\
& = \{(x',y',r')\in \mathcal{C}\colon y' = 0\ r' = 1- x'\}
\end{split}
\end{equation*}
has rank 1 at $(0,0,1)$. But this is evidently the case, since we can write this manifold as $\{ (t, 0, 1-t) \}$ for $t$ in a neighborhood of 0.

\medskip

Finally, we discuss the Erdo\smash{\u{g}}an elliptic maximal function. Given an ellipse with semi-major axis $a$, semi-minor axis $b$, center $(x,y)$, and rotation angle $\theta$, define
\begin{equation}\label{AThruF}
\begin{array}{ll}
A= a^2\sin^2\theta + b^2\cos^2\theta & 
B=2(b^2-a^2)\sin\theta\cos\theta \\
C=a^2\cos^2\theta+b^2\sin^2\theta &
D=-2Ax-By \\ 
E=-Bx-2Cy &
F=Ax^2+Bxy+Cy^2-a^2b^2.
\end{array}
\end{equation}

Then the corresponding ellipse is the locus of points $(X,Y)$ satisfying
\[
AX^2+BXY+CY^2+DX+EY+F=0.
\]

In light of the above, define
\[
h(a,b,x,y,\theta,t) = \frac{-(Bt+E) + \sqrt{(Bt+E)^2 - 4C(At^2+Dt+F)}}{2C}.
\]
Again, after a translation, rotation, and anisotropic rescaling, it suffices to consider the case where $\mathcal{C}$ is a neighborhood of $(2,1,0,0,0)$, i.e.~the semi-major axis has length close to 2, the semi-minor axis has length close to 1, the center is close to $(0,0)$, and the rotation is close to 0. With $A,\ldots,F$ as given by \eqref{AThruF}, $h$ is a function from $\mathcal{C}$ to $\RR$. The graph of $t\mapsto h(a,b,x,y,\theta;t)$ is the (upper half) of the ellipse with major axis $a$, minor axis $b$, center $(x,y)$, and rotation $\theta$. A direct computation shows that $DF^h_0(1,1,0,0,0)$ has non-zero determinant.

Next, we have $\Phi(a,b,x,y,\theta) = (x,y)$. We must verify that $D\Phi$ restricted to the manifold
\begin{equation*}
\begin{split}
V_{(1,1,0,0,0;0)} & = \{(a', b', x',y',\theta')\in \mathcal{C}\colon h(a', b', x',y',\theta';t) = 1/4,\\
& \ \partial_t h(a', b', x',y',\theta';t)|_{t=0} = 0,\ \partial^2_t h(a', b', x',y',\theta';t)|_{t=0}=-\sqrt 2,\\
& \qquad\qquad\qquad \partial^3_t h(a', b', x',y',\theta';t)|_{t=0}=0\}
\end{split}
\end{equation*}
has rank 1 at $(1,1,0,0,0).$ But in a neighborhood of $(1,1,0,0,0)$, this manifold can be written as $(1+a_1(t), 1+a_2(t), 0, b(t), a_3(t))$, where $a_1,a_2,a_3$ are smooth and satisfy $a_i(0)=0$, and $\partial_t b(t)|_{t=0}\sim 1$. Since $\Phi(a,b,x,y,\theta)=(x,y)$, we conclude that  $D\Phi$ restricted to $V_{(1,1,0,0,0;0)} $ has rank 1, as desired.

\subsection{The range of $p$ in Theorem \ref{LpMaximalFnBd} is sharp}\label{rangeOfPIsSharp}
In this section we will give an example showing that the range of $p$ in Theorem \ref{LpMaximalFnBd} is sharp. Define 
\[
h(x,y,w_1,\ldots,w_s;t)= y + w_1(t-x)^2 + w_2(t-x)^3 + w_2(t-x)^4 + \ldots + w_s(t-x)^{s+1}.
\] 
It is straightforward to show that every polynomial (in $t$) of degree at most $s+1$ can be uniquely expressed as $h(x,y,w_1,\ldots,w_s;t)$ for an appropriate choice of $x,y,w_1,\ldots,w_s$. 

For $\rho>0$ small, define
\[
f(x,y) = (y+\rho)^{-1/(s+1)}\chi_{[0,1]^2}.
\]
Then $\Vert f \Vert_{s+1}\sim (\log 1/\rho)^{1/(s+1)}$. On the other hand, for $(x,y)$ in a neighborhood of $(0,1)$, we can select $w$ so that the graph of $t\mapsto h(x,y,w_1,\ldots,w_s;t)$ is tangent to the $x$-axis to order $s$ at the origin, and hence
\[
\int_{\gamma_u}f \sim \log(1/\rho),
\]
where $\gamma_u$ is the graph of $t\mapsto h(x,y,w_1,\ldots,w_s;t)$ over $[0,1]$. Letting $\rho\searrow 0$, we conclude that the operator $M$ from \eqref{defnMaximalFnNoDelta} cannot be bounded from $L^p\to L^p$ for $p=s+1$. To show that no $L^p\to L^p$ bound is possible for $p<s+1$ is straightforward: let $h$ be as above, and let $f$ be the characteristic function of a $1\times \rho$ rectangle.

\section{Geometric lemmas}\label{geometricLemmasSection}
In this section we will record several computations that explore some of the consequences of curve-rectangle tangency. Our main tool will be Taylor's theorem with remainder. In a typical argument in this section, we will approximate a function $f$ by its $k$--th order Taylor polynomial, which we denote by $f_k$. To show that the function $f$ cannot be small on a large set, we will need the analogous result for $f_k$. The following inequalities will be useful for this purpose.

\begin{thm}[Remez inequality \protect{\cite{Reme}}]\label{remezInequalityThm}
Let $I\subset\RR$ be a finite interval, let $E\subset I$ be measurable, and let $P$ be a polynomial of degree at most $D$. Then
\[
\sup_{x\in I}|P(x)|\leq \Big(\frac{4|I|}{|E|}\Big)^D\sup_{x\in E}|P(x)|.
\]
\end{thm}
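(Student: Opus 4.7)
The plan is to prove the inequality by reducing to the extremal configuration where $E$ is an interval adjacent to an endpoint of $I$, and then comparing $P$ against the Chebyshev polynomial $T_D(x) = \cos(D \arccos x)$.

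First, I would normalize by an affine change of variables in $x$: since both sides of the inequality transform identically under $x \mapsto ax + b$, I may assume $I = [-1, 1]$ and set $\alpha = |E|/|I| = |E|/2 \in (0, 1]$. I would also rescale $P$ so that $\sup_{x \in E}|P(x)| = 1$, so the goal becomes showing $\sup_I |P| \leq (4/\alpha)^D$.

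Second, I would invoke the classical extremal property: among measurable subsets $E \subseteq [-1,1]$ of fixed measure $2\alpha$ and polynomials $P$ of degree $\leq D$ with $\sup_E|P|\leq 1$, the maximum of $\sup_{[-1,1]}|P|$ is attained when $E$ is a single interval adjacent to an endpoint (say $E^* = [-1, -1+2\alpha]$) and $P$ is an affinely shifted Chebyshev polynomial. So it suffices to prove the inequality for this extremal configuration.

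Third, in the extremal case I would use the affine map $\psi(x) = (x + 1 - \alpha)/\alpha$, which sends $E^*$ bijectively onto $[-1, 1]$. Defining $\widetilde P = P \circ \psi^{-1}$ yields a polynomial of degree $\leq D$ with $\sup_{[-1,1]}|\widetilde P| = \sup_{E^*}|P| \leq 1$. By Chebyshev's classical extremal property, any such polynomial satisfies $|\widetilde P(z)| \leq T_D(|z|)$ for $|z| \geq 1$. For any $y \in [-1,1]$, the point $\psi(y) = (y+1-\alpha)/\alpha$ has absolute value at most $(2-\alpha)/\alpha$, so
\[
\sup_I |P| \leq T_D\!\left(\frac{2-\alpha}{\alpha}\right).
\]
Combining this with the standard estimate $T_D(z) \leq (2z)^D$ valid for $z \geq 1$ gives
\[
\sup_I |P| \leq \left(\frac{2(2-\alpha)}{\alpha}\right)^D \leq \left(\frac{4}{\alpha}\right)^D = \left(\frac{4|I|}{|E|}\right)^D,
\]
as desired.

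The main obstacle is the extremality reduction in the second step: establishing that a single interval adjacent to an endpoint is worst, among all $E$ of fixed measure. This is the classical Remez extremal problem, and can be handled either by Chebyshev's equioscillation/alternation theorem (identifying the extremizing polynomial directly as a rescaled $T_D$) or by a direct comparison/rearrangement argument showing that shrinking $E$ toward an endpoint never decreases the ratio $\sup_I|P|/\sup_E|P|$. An alternative route that bypasses this reduction is to partition $E$ into $D+1$ equal-measure pieces, pick one point from each to form $x_0 < \cdots < x_D$, and apply Lagrange interpolation; the spacing bound $|x_j - x_i| \geq |j-i|\,|E|/(D+1)$ follows from the equal-measure partition, and summing the Lagrange coefficients yields a bound of the form $(C|I|/|E|)^D$ with a slightly worse (but still explicit) constant $C$ in place of $4$.
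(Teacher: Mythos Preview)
The paper does not prove this theorem; it is stated with a citation to Remez's original paper \cite{Reme} and used as a black box in Appendix~\ref{geometricLemmasSection}. So there is no ``paper's own proof'' to compare against.

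Your outline is the standard argument for the Remez inequality and is essentially correct. The reduction to $I=[-1,1]$ and normalization are fine; the Chebyshev bound $T_D(z)\le (2z)^D$ for $z\ge 1$ is correct (from $T_D(z)=\tfrac12[(z+\sqrt{z^2-1})^D+(z-\sqrt{z^2-1})^D]$ and $z+\sqrt{z^2-1}\le 2z$); and the computation in your third step yields exactly the constant $(4|I|/|E|)^D$. You correctly identify the second step---that among sets $E$ of fixed measure the extremal configuration is an interval at an endpoint, with the extremal polynomial a shifted Chebyshev---as the substantive content; this is Remez's original theorem and is usually proved via a Chebyshev-type alternation argument. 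Your alternative Lagrange-interpolation route is also valid but, as you note, gives a worse constant than $4$, so it would not recover the stated inequality exactly.
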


\begin{thm}[P\'olya inequality \protect{\cite{Polya}}]\label{PolyaIneq}
Let $P$ be a polynomial of degree at most $D$, with leading coefficient $a\in\RR$. Then for $\lambda>0$, we have 
\[
|\{x\in\RR \colon |P(x)|\leq\lambda \}|\leq 4\Big(\frac{\lambda}{2|a|}\Big)^{1/D}.
\]
\end{thm}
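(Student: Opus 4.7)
The plan is to deduce P\'olya's inequality from the classical Chebyshev extremal theorem together with the structure of real polynomial sub-level sets. By the rescaling $P \mapsto P/a$ and writing $\mu = \lambda/|a|$, I reduce to showing $|\{x \in \RR : |Q(x)| \le \mu\}| \le 4(\mu/2)^{1/D}$ for a real monic polynomial $Q$ of degree exactly $D$; the degree $<D$ case then follows by a limiting argument (perturb the leading coefficient to be nonzero and pass to the limit).

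First I would note that the set $E = \{x \in \RR : |Q(x)| \le \mu\}$ is compact and decomposes as a disjoint union of at most $D$ closed intervals $[\alpha_j,\beta_j]$, since $\partial E \subseteq \{Q = \mu\} \cup \{Q = -\mu\}$ and each of $Q \pm \mu$ is a polynomial of degree $D$ with at most $D$ real roots. The main tool is Chebyshev's classical theorem, which states that among monic polynomials $R$ of degree $D$, the minimum of $\max_{[a,b]}|R|$ equals $2\bigl((b-a)/4\bigr)^D$, attained by an affine rescaling of the Chebyshev polynomial $T_D$. Applied to $Q$ restricted to a component $[\alpha_j,\beta_j]$, on which $|Q| \le \mu$, this directly gives $\beta_j - \alpha_j \le 4(\mu/2)^{1/D}$.

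Naively summing this componentwise estimate over the at most $D$ components yields only $|E| \le 4D(\mu/2)^{1/D}$, which has a spurious factor of $D$. To remove it, one needs a global argument that treats $E$ as a single compact set rather than as a disjoint union. The standard route uses P\'olya's isoperimetric inequality for logarithmic capacity, $\operatorname{cap}(K) \ge |K|/4$ with equality iff $K$ is an interval, combined with the Fekete--Szeg\H{o} lower bound for the Chebyshev constant in terms of capacity. The extremality of intervals in P\'olya's isoperimetric inequality upgrades the asymptotic bound to the sharp form $\min_{\mathrm{monic},\ \deg D}\,\max_K|R| \ge 2(|K|/4)^D$; taking $K = E$ yields $\mu \ge 2(|E|/4)^D$, and solving for $|E|$ gives the stated inequality.

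The main obstacle is obtaining the sharp constant $4$ rather than $4D$; this step is precisely the content of P\'olya's classical result from \cite{Polya} and requires the isoperimetric-type extremality of intervals among compact subsets of $\RR$ of fixed Lebesgue measure. I would expect the author to cite this result rather than reprove it, since for the applications in the present paper only the qualitative polynomial scaling $\mu^{1/D}$ is essential, and the crude componentwise bound with constant $4D$ would already suffice if one wished to avoid the machinery of logarithmic potential theory.
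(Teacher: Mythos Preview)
The paper does not prove this statement at all; it is stated as a classical theorem with a citation to P\'olya \cite{Polya} and then used as a black box in the proof of Lemma~\ref{smallImpliesSmallCkOnJ}. You correctly anticipated this in your final paragraph.

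Your sketch of the underlying argument is accurate: the reduction to monic $Q$, the decomposition of the sub-level set into at most $D$ intervals, the componentwise Chebyshev bound giving $4D(\mu/2)^{1/D}$, and the observation that removing the factor $D$ is exactly the nontrivial content of P\'olya's original result (via the isoperimetric inequality for logarithmic capacity). You are also right that for the paper's purposes the crude bound $|E| \le 4D(\lambda/2|a|)^{1/D}$ would suffice, since in Lemma~\ref{smallImpliesSmallCkOnJ} the degree $D$ is at most $k$, which is $O(1)$ in the paper's notation.
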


The next inequality says that if $f$ is small on a long interval, then the derivatives of $f$ must also be small on this interval.

\begin{lem}\label{smallImpliesSmallCkOnJ}
Let $k\geq 1$, $\delta>0$, and let $f\in C^k$ with $\Vert f \Vert_{C^k}\leq 1$. Let $I\subset[0,1]$ be a closed interval of length at most $\delta^{1/k}$, and suppose that $|f(x)|\leq\delta$ for $x\in I$. 

Then 
\begin{equation}\label{smallImpliesSmallCkOnJConclusion}
\sup_{x\in I}|f^{(i)}(x)|\lesssim\delta|I|^{-i},\quad i=0,\ldots,k-1.
\end{equation}
\end{lem}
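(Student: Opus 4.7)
The plan is to approximate $f$ by its degree $k-1$ Taylor polynomial and exploit a Markov-type inequality for polynomials on a short interval. Fix an interior point $x_0 \in I$ (say the midpoint) and let
\[
P(x) = \sum_{j=0}^{k-1} \frac{f^{(j)}(x_0)}{j!}(x-x_0)^j
\]
be the Taylor polynomial of degree $k-1$ of $f$ at $x_0$. Since $\Vert f \Vert_{C^k} \le 1$, Taylor's theorem with remainder gives $|f(x) - P(x)| \le |x-x_0|^k/k! \le |I|^k/k! \le \delta/k!$ for all $x \in I$. Combined with the hypothesis $|f(x)| \le \delta$ on $I$, this yields $|P(x)| \lesssim \delta$ uniformly on $I$.

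The next step is the main point: I will invoke the Markov brothers inequality, which says that any polynomial $P$ of degree at most $k-1$ satisfies
\[
\sup_{x \in I} |P^{(i)}(x)| \lesssim_k |I|^{-i} \sup_{x \in I} |P(x)|, \qquad i=0,\ldots,k-1.
\]
Applied to our $P$, this gives $\sup_{x \in I}|P^{(i)}(x)| \lesssim \delta |I|^{-i}$.

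Finally, I will relate $f^{(i)}$ back to $P^{(i)}$. Since $P^{(i)}$ is exactly the Taylor polynomial of degree $k-1-i$ of $f^{(i)}$ at $x_0$, Taylor's theorem with remainder (again using $\Vert f^{(k)}\Vert_\infty \le 1$) gives
\[
|f^{(i)}(x) - P^{(i)}(x)| \le \frac{|x-x_0|^{k-i}}{(k-i)!} \le \frac{|I|^{k-i}}{(k-i)!} \lesssim \delta |I|^{-i},
\]
where the last inequality uses $|I|^k \le \delta$, i.e. $|I|^{k-i} \le \delta |I|^{-i}$. The triangle inequality then yields the desired bound $|f^{(i)}(x)| \lesssim \delta |I|^{-i}$ on $I$.

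No step is really an obstacle: the scheme is Taylor approximation plus Markov's inequality plus remainder control, and the miraculous cancellation that makes it work is simply that the hypothesis $|I| \le \delta^{1/k}$ was engineered so that the Taylor remainder $|I|^{k-i}$ matches the target bound $\delta |I|^{-i}$. The only thing to be mindful of is the choice of $x_0$ (taking it to be the midpoint ensures $|x-x_0| \le |I|/2$ for $x \in I$, which keeps all constants under control), and the fact that the implicit constant in Markov's inequality depends only on $k$, which is harmless since all estimates in the paper allow constants depending on $k$.
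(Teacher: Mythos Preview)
Your proof is correct but takes a different route from the paper. The paper argues by contradiction: it assumes \eqref{smallImpliesSmallCkOnJConclusion} fails for some index, takes the largest such index $j$, forms the degree-$j$ Taylor polynomial at a point where $|f^{(j)}|$ is maximal, and applies P\'olya's sublevel set inequality (Theorem~\ref{PolyaIneq}) to find a point in $I$ where this polynomial is large enough to force $|f(x)|>\delta$, a contradiction. Your argument is instead direct: approximate $f$ globally on $I$ by its degree-$(k-1)$ Taylor polynomial, invoke the Markov brothers inequality to control the derivatives of that polynomial, and then add back the Taylor remainder for each derivative. Your approach is cleaner and avoids the somewhat delicate bookkeeping with the constants $K8^{-ik}$ in the paper's proof; the paper's approach has the minor advantage of using only P\'olya's inequality, which is already stated in the appendix, whereas Markov's inequality is an external ingredient (though entirely standard and of the same flavor).
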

\begin{proof}
First, we may suppose that $\delta^{1/(k-1)}<|I|\leq\delta^{1/k}$, since otherwise \eqref{smallImpliesSmallCkOnJConclusion} for $i=k-1$ follows from the assumption $\Vert f \Vert_{C^k}\leq 1$, and we may replace $k$ by $k-1$. 

Let $K=2\cdot 8^{k^2}k$. We will prove that \eqref{smallImpliesSmallCkOnJConclusion} holds with implicit constant $K$. Suppose not; then there exists an index $0\leq i< k$ so that
\begin{equation}\label{largeCiNorm}
\sup_{x\in I}|f^{(i)}(x)| > K 8^{-ik}\delta|I|^{-i}.
\end{equation}
Let $j$ be the largest index for which \eqref{largeCiNorm} holds. Select $x_0\in I$ with $|f^{(j)}(x_0)|=\sup_{x\in I}|f^{(j)}(x)|$. Define 
\[
Q(x) = f(x_0) + \sum_{i=1}^j \frac{(x-x_0)^i}{i!} f^{(i)}(x_0).
\]
 By P\'olya's sub-level set inequality (Theorem \ref{PolyaIneq}) with $\lambda = K8^{-(k+1)j}\delta/j!$, we have
\begin{equation}
|\{x\in I\colon |Q(x)| \leq \lambda \}| \leq 4\Big( \frac{ \lambda }{2 |f^{(j)}(x_0)|/j! } \Big)^{1/j}\leq 4\Big( \frac{K8^{-(k+1)j}\delta/j!}{ 2\cdot K8^{-jk}\delta|I|^{-j}/j!  } \Big)^{1/j}\leq \frac{1}{2}|I|.
\end{equation}
In particular, there exists a point $x\in I$ with $|Q(x)|\geq K8^{-(k+1)j} \delta/j!$. On the other hand, by Taylor's theorem there is a point $x_1$ between $x_0$ and $x$ so that 
\[
f(x) = Q(x) + \frac{(x-x_0)^{j+1}}{(j+1)!}f^{(j+1)}(x_1),
\]
and hence
\begin{equation}
\begin{split}
|f(x)| & \geq |Q(x)| - \frac{|x-x_0|^{j+1}}{(j+1)!}|f^{(j+1)}(x_1)|\\
&\geq \frac{K8^{-(k+1)j} \delta}{j!}  - \frac{|I|^{j+1}}{(j+1)!}\big( K8^{-(j+1)k} \delta|I|^{-j-1} \big)\\
& \geq K8^{-jk}\delta\Big( \frac{1}{8^jj!} - \frac{1}{8^k(j+1)!}\big) \geq \frac{K}{8^{k^2}k}\delta  >\delta.
\end{split}
\end{equation}
This contradicts the assumption that $|f(x)|\leq\delta$ on $I$.   We conclude that \eqref{smallImpliesSmallCkOnJConclusion} holds.
\end{proof}

The next result says that if $f$ is tangent to a $(\delta; k-1; T)$ rectangle $R$, then there is a corresponding value of $\rho\geq\delta$ (which depends on $\delta, k,$ and $T$) so that $f$ is tangent to a $(\rho;k)$ rectangle associated to $R$. 

\begin{lem}\label{longRectInsideKP1Rect}
Let $k\geq 1$, $\delta>0$, $T\in[1,1/\delta]$, and let $f\in C^{k}$ with $\Vert f \Vert_{C^k}\leq 1$. Let $I = [a,a+(T\delta)^{\frac{1}{k-1}}] \subset[0,1]$. Suppose that $|f(x)|\leq\delta$ for $x\in I$. 

Let $\rho = \max(\delta, T^{-k})$. Then $|f(x)|\lesssim\rho$ for $x\in [a, a+\rho^{1/k}]$. 
\end{lem}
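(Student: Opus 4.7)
My plan is to split into two cases according to whether the interval $J := [a, a+\rho^{1/k}]$ is a sub-interval of $I$. A short algebraic check shows that the inclusion $J\subset I$ (i.e.\ $\rho^{1/k}\leq (T\delta)^{1/(k-1)}$) is equivalent to $T \geq \delta^{-1/k}$, which is precisely the condition under which the maximum defining $\rho$ is attained at $\delta$. In the easy case $T \geq \delta^{-1/k}$ we have $\rho = \delta$, and the hypothesis gives $|f(x)|\leq\delta=\rho$ for all $x\in J\subset I$.

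In the remaining case, $T<\delta^{-1/k}$ and $\rho = T^{-k}>\delta$. Here $J$ is strictly larger than $I$, so the hypothesis cannot be applied directly on $J\setminus I$. The saving observation is that the same inequality $T\leq \delta^{-1/k}$ translates to $|I| = (T\delta)^{1/(k-1)} \leq \delta^{1/k}$, which is precisely the range where Lemma~\ref{smallImpliesSmallCkOnJ} applies (with the given $k$ and $\delta$). That lemma yields the derivative bounds
\[
|f^{(i)}(a)| \lesssim \delta\,(T\delta)^{-i/(k-1)},\quad i=0,1,\dots,k-1.
\]

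To conclude, I would Taylor expand $f$ around $a$ to order $k$, using $\|f^{(k)}\|_\infty\leq 1$ to control the remainder. For $x\in J$, $|x-a|\leq \rho^{1/k}$, so
\[
|f(x)| \leq \sum_{i=0}^{k-1}\frac{|x-a|^i}{i!}|f^{(i)}(a)| + \frac{|x-a|^k}{k!} \lesssim \sum_{i=0}^{k-1} \rho^{i/k}\,\delta\,(T\delta)^{-i/(k-1)} + \rho.
\]
Substituting $\rho^{1/k}=T^{-1}$, the $i$-th term in the sum simplifies to
\[
T^{-ik/(k-1)}\,\delta^{(k-1-i)/(k-1)} = \rho^{i/(k-1)}\,\delta^{(k-1-i)/(k-1)}.
\]
Since $0 \leq (k-1-i)/(k-1)\leq 1$ and $\delta\leq \rho$ in this case, each such term is bounded by $\rho^{i/(k-1)}\cdot\rho^{(k-1-i)/(k-1)}=\rho$, giving $|f(x)|\lesssim\rho$ on $J$ as required.

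The proof is essentially mechanical once the case split is made; there is no substantive obstacle. The only bookkeeping is the arithmetic matching the two cases $\rho=\delta$ and $\rho=T^{-k}$ with the regimes $T\geq\delta^{-1/k}$ and $T<\delta^{-1/k}$ respectively, and verifying that the Taylor-term exponents add up correctly in the second case.
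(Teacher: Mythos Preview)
Your proof is correct and follows essentially the same approach as the paper: the same case split at $T=\delta^{-1/k}$, the same application of Lemma~\ref{smallImpliesSmallCkOnJ} in the nontrivial case, and the same Taylor expansion with the identical verification that each term is bounded by $\rho$. The only cosmetic difference is that the paper writes the $i$-th Taylor term as $\delta(\rho/\delta)^{i/(k-1)}$ rather than $\rho^{i/(k-1)}\delta^{(k-1-i)/(k-1)}$, which is the same quantity.
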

\begin{proof}
If $T\geq\delta^{-1/k}$ then $(T\delta)^{\frac{1}{k-1}}\geq\delta^{1/k}$ the conclusion is immediate. 

Suppose instead that $T<\delta^{-1/k}$, and thus $\rho=T^{-k}$. Since $\Vert f \Vert_{C^k}\leq 1$ and $|I|\leq\delta^{1/k}$, we can apply Lemma \ref{smallImpliesSmallCkOnJ} to conclude that
\begin{equation}\label{fMinusF0Bound}
\sup_{x\in I}|f^{(i)}(x)|\lesssim\delta|I|^{-i} = \delta^{1-\frac{i}{k-1}}\rho^{\frac{i}{k(k-1)}},\quad i=0,\ldots,k-1.
\end{equation}

Since $\Vert f \Vert_{C^k}\leq 1$, we can apply Taylor's theorem to conclude that for each $x\in [a, a+\rho^{1/k}]$, there is a point $x_1$ between $a$ and $x$ so that
\[
f(x) = \sum_{i=0}^{k-1} \frac{f^{(i)}(a)}{i!}(x-a)^i + \frac{f^{k}(x_1)}{(k)!}(x-a)^{k},
\]
and hence by \eqref{fMinusF0Bound} (and noting that $\rho\geq\delta)$,
\begin{equation*}
\begin{split}
|f(x)|
\lesssim  \sum_{i=0}^{k-1}   \delta^{1-\frac{i}{k-1}}\rho^{\frac{i}{k(k-1)}} \rho^{\frac{i}{k}} + \rho^{k/k}
= \sum_{i=0}^{k-1}   \delta(\rho/\delta)^{\frac{i}{k-1}} + \rho
\lesssim \rho.\qedhere
\end{split}
\end{equation*}
\end{proof}

The next result says that if a set of functions are all tangent to a common curvilinear rectangle of dimensions $A\delta\times \delta^{1/k}$, then a large fraction of these functions must be tangent to a common curvilinear rectangle of dimensions $\delta\times\delta^{1/k}$. The proof is an application of pigeonholing, and is omitted. 
\begin{lem}\label{thickenedVsRegularRectangleLem}
Let $k\geq 1, A\geq 1$. Then there exists $\eps>0$ so that the following holds. Let $F$ be a set of functions with $C^k$ norm at most 1. Suppose that there is an interval $I\subset[0,1]$ of length $\delta^{1/k}$ so that $\sup_{x\in I}|f(x)-g(x)|\leq A \delta$ for all $f,g\in F$. Then there is a set $F'\subset F$ of cardinality at least $\eps(\#F)$ so that $\sup_{x\in I}|f(x)-g(x)|\leq  \delta$ for all $f,g\in F'$. 
\end{lem}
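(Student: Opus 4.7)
The plan is to pigeonhole on the $(k-1)$-jet of each function at a reference point inside $I$, then apply Taylor's theorem with remainder to propagate jet-closeness to sup-norm closeness.

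First I would normalize: WLOG assume $A\geq 2$ (enlarging $A$ only weakens the hypothesis), fix a reference function $f_0\in F$, and let $a_0$ be the midpoint of $I$. For each $f\in F$, set $h_f = (f-f_0)/2$; then $\|h_f\|_{C^k}\leq 1$ and $|h_f|\leq A\delta/2$ on $I$, and the interval $I$ of length $\delta^{1/k}$ is contained in an interval of length $(A\delta/2)^{1/k}$. Applying Lemma \ref{smallImpliesSmallCkOnJ} to $h_f$ (with $A\delta/2$ in place of $\delta$) yields
\[
|f^{(i)}(a_0) - f_0^{(i)}(a_0)|\lesssim_k A\delta^{1-i/k},\quad i=0,\ldots,k-1.
\]
Thus the map $\Psi(f) = (f(a_0), f'(a_0),\ldots,f^{(k-1)}(a_0))$ sends $F$ into a box $\Psi(f_0) + \prod_{i=0}^{k-1}[-C_0 A\delta^{1-i/k}, C_0 A\delta^{1-i/k}]$ with $C_0 = C_0(k)$.

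Next I would partition this box into sub-boxes of side length $c\delta^{1-i/k}$ in the $i$-th direction, where $c=c(k)>0$ is a small constant to be chosen. The total number of sub-boxes is $O_k((A/c)^k)$, so by pigeonholing at least one sub-box contains $\Psi(f)$ for an $\eps_0 = (c/(2C_0 A))^k$ fraction of $F$; call this subset $F'$. For $f,g\in F'$, we have $|f^{(i)}(a_0)-g^{(i)}(a_0)|\leq c\delta^{1-i/k}$ for $i=0,\ldots,k-1$, and of course $\|f^{(k)}-g^{(k)}\|_\infty\leq 2$. Taylor's theorem with remainder, expanding around the midpoint $a_0$ and using $|x-a_0|\leq \delta^{1/k}/2$ for $x\in I$, gives
\[
|f(x)-g(x)|\leq c\delta\sum_{i=0}^{k-1}\frac{1}{i!\,2^{i}} + \frac{\delta}{k!\,2^{k-1}} \leq 2c\delta + \frac{\delta}{k!\,2^{k-1}}.
\]
For $k\geq 2$ the remainder term is at most $\delta/4$, so choosing $c=c(k)$ small (e.g.\ $c\leq 1/4$) forces the right-hand side to be at most $\delta$, establishing the conclusion with $\eps = \eps_0$.

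The main obstacle is the case $k=1$, where the remainder $\delta/(k!\,2^{k-1})$ already equals $\delta$ and the jet-pigeonholing argument just misses the target. I would handle this case separately by pigeonholing on the values of $f$ at three equally spaced sample points $x_0,x_1,x_2$ in $I$: the images lie in $[-A\delta,A\delta]^3$, so subdividing each coordinate into intervals of length $c\delta$ (with $c$ small) and pigeonholing extracts an $\Omega((c/A)^3)$-fraction $F'$ on which $|f(x_j)-g(x_j)|\leq c\delta$ for $j=0,1,2$. Since $\|f'-g'\|_\infty\leq 2$ and every $x\in I$ is within $\delta/4$ of some $x_j$, the mean value theorem gives $|f(x)-g(x)|\leq c\delta + 2\cdot \delta/4 = (c+1/2)\delta\leq \delta$ for $c\leq 1/2$. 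This closes the $k=1$ case and completes the proof.
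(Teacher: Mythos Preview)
Your proof is correct and is precisely the kind of argument the paper has in mind: the paper omits the proof entirely, stating only that it ``is an application of pigeonholing.'' Your jet-pigeonholing via Lemma~\ref{smallImpliesSmallCkOnJ} plus Taylor's theorem is the natural way to implement this, and your separate treatment of $k=1$ (where the Taylor remainder saturates the budget) via multi-point sampling is the right fix.
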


The next result records a useful property of families of curves that forbid $k$--th order tangency: if two functions from this family are both tangent to a common $(\delta; k; T)$ rectangle for $T\geq 1$, then these functions must be close in $C^k$ norm. The proof is similar to that of Lemma \ref{smallImpliesSmallCkOnJ}, and is omitted. 
\begin{lem}\label{smallImpliesSmallCkOnJCinematic}
Let $k\geq 2$, $\delta>0$, $K\geq 1.$ Let $I$ be a compact interval and let $f\in C^k(I)$ with $\Vert f \Vert_{C^k(I)}\leq 1$. Let $J\subset I$ be a closed interval of length at most 1. Suppose that 
\begin{equation}\label{fxLessDelta}
\sup_{x\in J}|f(x)|\leq\delta,
\end{equation}
and 
\begin{equation}\label{CkNormControlledInf}
\Vert f \Vert_{C^k(I)}\leq K\inf_{x\in I}\sum_{j=0}^k|f^{(j)}(x)|.
\end{equation}
Then 
\begin{equation}\label{smallImpliesSmallCkOnJConclusion2}
\Vert f \Vert_{C^k(I)}\lesssim (K/|J|)^k \delta.
\end{equation}
\end{lem}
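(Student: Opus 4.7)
The plan is to mirror the proof of Lemma \ref{smallImpliesSmallCkOnJ}, replacing the role of $I$ there with $J$ here, and using the hypothesis \eqref{CkNormControlledInf} to locate a point in $J$ where some derivative of $f$ is large. Specifically, I would argue by contradiction: suppose $M := \|f\|_{C^k(I)} > C_0(K/|J|)^k \delta$ for a large constant $C_0 = C_0(k)$, with the aim of deriving $|f(x)| > \delta$ at some $x \in J$, contradicting the hypothesis.

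The first step is to find a ``high-derivative point'' in $J$. At every $x \in J$ the hypothesis \eqref{CkNormControlledInf} gives $\sum_{j=0}^k |f^{(j)}(x)| \geq M/K$, and since $|f(x)| \leq \delta \ll M/K$ (using $K \geq 1$, $|J| \leq 1$, and the contradiction hypothesis), we obtain $\sum_{j=1}^k |f^{(j)}(x)| \geq M/(2K)$. Pigeonholing over $j$ and $x$ produces an index $j^* \in \{1,\ldots,k\}$ and a point $x_0 \in J$ with $|f^{(j^*)}(x_0)| \gtrsim_k M/K$; I would choose $j^*$ to be the largest index for which $\sup_J |f^{(j)}| \geq \tau$ with $\tau \sim M/(k^2 K)$, so that by maximality $\sup_J |f^{(j)}| < \tau$ for all $j > j^*$ with $j \leq k$.

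Next I would apply Pólya's sub-level set inequality (Theorem \ref{PolyaIneq}) to the Taylor polynomial $Q$ of $f$ of degree $j^*$ centered at $x_0$. The leading coefficient $|a| = |f^{(j^*)}(x_0)|/j^*!$ is $\gtrsim_k M/K$, and choosing $\lambda$ so that the sub-level set $\{|Q|\leq \lambda\}$ has measure strictly smaller than $|J|$ produces a point $x \in J$ with $|Q(x)| \gtrsim_k M|J|^{j^*}/K$. Taylor's theorem with integral remainder yields $|f(x)-Q(x)| \leq |J|^{j^*+1}|f^{(j^*+1)}(\xi)|/(j^*+1)!$ for some $\xi \in J$ (using $x,x_0 \in J$), and when $j^* \leq k-1$ the maximality of $j^*$ gives $|f^{(j^*+1)}(\xi)| < \tau$, so the remainder is $\lesssim_k M|J|^{j^*+1}/K$. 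Assuming $|J| \leq 1/2$, we conclude $|f(x)| \geq |Q(x)|-|R(x)| \gtrsim_k M|J|^{j^*}/K \geq M|J|^k/K$. Contradicting $|f(x)|\leq \delta$ then forces $M \lesssim_k K\delta/|J|^k \leq (K/|J|)^k \delta$, since $K \leq K^k$ for $K\geq 1$ and $k\geq 2$.

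The main obstacle is the case $j^* = k$, where there is no $(k+1)$-th derivative to control the Taylor remainder. To handle this, I would combine iterated divided differences on $k+1$ equally spaced points in $J$ (which, by the Lagrange mean value formula for divided differences together with $|f| \leq \delta$ on $J$, produce a point $\xi \in J$ with $|f^{(k)}(\xi)| \lesssim_k \delta/|J|^k$) with an additive Landau--Kolmogorov-type bound $\sup_J |f^{(j)}| \lesssim_k \delta/|J|^j + M|J|^{k-j}$ for $j < k$, derived from Taylor expansion centered at the midpoint of $J$. These together give $\sum_{j=0}^k|f^{(j)}(\xi)| \lesssim_k \delta/|J|^k + M|J|$, and inserting this into \eqref{CkNormControlledInf} produces $M \lesssim_k K(\delta/|J|^k + M|J|)$. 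In the regime $K|J|$ small this rearranges to $M \lesssim_k K\delta/|J|^k \lesssim (K/|J|)^k\delta$, while the complementary regime is reduced to the first by shrinking $J$ to a subinterval of appropriate length, with any factor loss absorbed into the implicit constant using the crude bound $M \leq 1$. The most delicate technical point will be keeping constants controlled independent of $K$ when performing the shrinking step.
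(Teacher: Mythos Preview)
The paper omits the proof, saying only that it is similar to that of Lemma~\ref{smallImpliesSmallCkOnJ}. Your overall strategy is sound, and your argument for the case $j^*\le k-1$ via P\'olya's inequality is correct. However, your handling of the case $j^*=k$ has a genuine gap, and you correctly flag it as ``the most delicate technical point'' --- but your proposed resolution does not work.

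Your Landau--Kolmogorov bound on $J$ gives $\sum_{j=0}^k|f^{(j)}(\xi)|\lesssim_k\delta/|J|^k+M|J|$, and combining with \eqref{CkNormControlledInf} yields $M(1-C_kK|J|)\lesssim_k K\delta/|J|^k$. This suffices when $K|J|$ is small, but your fix for the complementary regime --- shrinking $J$ to a subinterval $J'$ of length $\sim 1/K$ and rerunning the whole argument --- loses a factor of $K$: the divided-difference bound on $J'$ is only $|f^{(k)}(\xi')|\lesssim\delta/|J'|^k\sim K^k\delta$, and you end up with $M\lesssim_k K^{k+1}\delta$ rather than $(K/|J|)^k\delta$ (which, since $|J|\le1$, is at best $K^k\delta$). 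This extra factor of $K$ cannot be absorbed using $M\le1$. For a concrete obstruction, take $k=2$, $I=J=[0,1]$, and $f(x)=\delta\sin(2\pi Nx)$: one computes $M\sim N^2\delta$ and $\inf_I\sum_j|f^{(j)}|\sim N\delta$ (achieved at zeros of the sine), so $K\sim N$. The lemma's bound $M\lesssim(K/|J|)^2\delta\sim N^2\delta$ is then \emph{sharp}, whereas your shrinking argument gives only $M\lesssim N^3\delta$.

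The fix is to decouple the two length scales rather than shrink $J$ wholesale. Keep the divided-difference step on (the middle third of) $J$, producing $\xi$ in the interior of $J$ with $|f^{(k)}(\xi)|\lesssim_k\delta/|J|^k$. For the lower derivatives, apply the rescaled Lemma~\ref{smallImpliesSmallCkOnJ} to $g=f/M$ on the \emph{shorter} interval $[\xi,\xi+L]\subset J$ with $L=(\delta/M)^{1/k}$ (this fits inside $J$ once $M\gtrsim_k\delta/|J|^k$; otherwise the conclusion is immediate). This yields $|f^{(j)}(\xi)|\lesssim_k M^{j/k}\delta^{1-j/k}$ for $j<k$, hence
\[
\frac{M}{K}\ \le\ \sum_{j=0}^k|f^{(j)}(\xi)|\ \lesssim_k\ \frac{\delta}{|J|^k}+M^{(k-1)/k}\delta^{1/k}.
\]
If the first term dominates, $M\lesssim_k K\delta/|J|^k$; if the second dominates, $M^{1/k}\lesssim_k K\delta^{1/k}$, i.e.\ $M\lesssim_k K^k\delta$. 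In either case $M\lesssim_k(K/|J|)^k\delta$, with no regime splitting on $K|J|$. With this argument in hand, your P\'olya-based case $j^*<k$ becomes unnecessary.
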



\bibliographystyle{plain}
\bibliography{references}

\end{document}